\newcommand{\R}{\mathbb{R}}
\newcommand{\C}{\mathbb{C}}
\newcommand{\h}{\mathbb{H}}
\newcommand{\cp}{\mathbb{CP}}
\newcommand{\Z}{\mathbb{Z}}
\newcommand{\s}{\mathcal{S}}
\newcommand{\m}{\mathcal{M}}
\newcommand{\mo}{\mathcal{M}^\circ}
\newcommand{\mC}{\mathcal{M}_{\mathbb{C}}}
\newcommand{\mCo}{\mathcal{M}_{\mathbb{C}}^\circ}
\newcommand{\n}{\mathcal{N}}
\newcommand{\mrs}{M_{\mc{R},\s}}
\newcommand{\ms}{M_\s}
\newcommand{\mf}[1]{\mathfrak{#1}}
\newcommand{\mc}[1]{\mathcal{#1}}
\newcommand{\tn}[1]{\textnormal{#1}}
\newcommand{\tb}[1]{\textbf{#1}}
\newtheorem{T}{Theorem}[section]
\newtheorem{TP}[T]{Proposition}
\newtheorem{TL}[T]{Lemma}
\newtheorem{TC}[T]{Corollary}
\theoremstyle{definition} \newtheorem{TE}[T]{Example}
\theoremstyle{definition}  \newtheorem{TR}[T]{Remark}
\theoremstyle{definition}  \newtheorem{TD}[T]{Definition}
\theoremstyle{definition}  \newtheorem*{Acknowledgements}{Acknowledgements}
\theoremstyle{plain} \newtheorem*{TheoremA}{Theorem A}
\theoremstyle{plain} \newtheorem*{TheoremB}{Theorem B}
\theoremstyle{plain} 
\begin{document}
\title{Compact anti-self-dual orbifolds with torus actions}
\author{Dominic Wright}
\email{domawright@gmail.com}
\maketitle
\begin{abstract}
We give a classification of toric anti-self-dual conformal structures on compact 4-orbifolds with positive Euler characteristic. Our proof is twistor theoretic: the interaction between the complex torus orbits in the twistor space and the twistor lines induces meromorphic data, which we use to recover the conformal structure. A compact anti-self-dual orbifold can also be constructed by adding a point at infinity to an asymptotically locally Euclidean (ALE) scalar-flat K\"ahler orbifold. We use this observation to classify ALE scalar-flat K\"ahler 4-orbifolds whose isometry group contain a 2-torus.
\end{abstract}

\section{Introduction}
A Riemannian curvature tensor on a 4-manifold can be decomposed into two components: the Ricci curvature and the \emph{Weyl curvature}. The latter of these is conformally invariant; therefore, a conformal structure has a well-defined Weyl curvature. The Weyl curvature can be decomposed further into self-dual and anti-self-dual components using the Hodge star operator, and a conformal structure is referred to as \emph{anti-self-dual} if the self-dual component vanishes.

For example, let $\big(P(x,y), Q(x,y)\big)$  be a pair of $\R^2$-valued functions satisfying both $P_1 Q_2 - P_2 Q_1 > 0$ and the linear differential equations
\begin{equation}\label{Equation: Joyce equation intro}P_x = Q_y,\quad P_y + Q_x = y^{-1}P\end{equation} over the half-plane $(y > 0)$, then
$$\frac{\tn{d}x^2 + \tn{d}y^2}{y^2} + \frac{(P_2 \tn{d}\theta_1 - P_1 \tn{d}\theta_2)^2+(Q_2 \tn{d} \theta_1 - Q_1 \tn{d} \theta_2)^2}{(P_1Q_2 - P_2Q_1)^2}$$
is an anti-self-dual metric on a 2-torus bundle over the half-plane. These \emph{torus symmetric} examples were discovered by Joyce in \cite{J95} and, by solving (\ref{Equation: Joyce equation intro}) with appropriate boundary conditions, he used them to construct anti-self-dual conformal structures on $\overline{\cp}^2\#\ldots\#\overline{\cp}^2$.

The classification of anti-self-dual conformal structures on compact 4-manifolds with isometry group containing a 2-torus was completed by Fujiki in \cite{F00}. He showed that, when such manifolds have positive Euler characteristic, they are diffeomorphic to $\overline{\cp}^2\#\ldots\#\overline{\cp}^2$ and their conformal structures coincide with those constructed by Joyce in \cite{J95}. Fujiki's proof exploits the \emph{twistor correspondence} between anti-self-dual manifolds and a class of complex 3-manifolds, known as \emph{twistor spaces}. This correspondence was developed by Atiyah, Hitchin and Singer in \cite{AHS78} following Penrose's original formulation for Minkowski space in \cite{P75}.

A compact 4-manifold with torus action is diffeomorphic to $\overline{\cp}^2\#\ldots\#\overline{\cp}^2$ precisely when it is \emph{simply-connected} and has \emph{negative-definite} intersection form. These properties are well-defined in the context of orbifolds; moreover, 4-orbifolds satisfying both these properties, and their quotients by finite subgroups of the torus action, admit anti-self-dual conformal structures arising from Joyce's construction. Thus, our first theorem provides a straightforward generalization of Theorem 1.1 of \cite{F00} from manifolds to orbifolds.

\begin{TheoremA}\label{Theorem: Theorem A}
Let $M$ be a compact 4-orbifold with positive orbifold Euler characteristic. If $M$ admits an anti-self-dual conformal structure with isometry group containing a 2-torus then:
\begin{enumerate}
\item{there is a torus equivariant diffeomorphism between $M$ and the quotient of a simply-connected 4-orbifold by a finite subgroup of the torus action;}
\item{$M$ has negative-definite intersection form;}
\item{and the conformal structure on $M$ is equivalent to one arising from Joyce's construction in \cite{J95}.}
\end{enumerate}
\end{TheoremA}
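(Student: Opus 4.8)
The plan is to run the twistor correspondence in the orbifold category, following the strategy of Fujiki \cite{F00}. Let $Z$ be the twistor space of $(M,[g])$: a compact complex $3$-orbifold equipped with a fixed-point-free antiholomorphic involution $\sigma$ and fibred over $M$ by the \emph{twistor lines}, which are $\sigma$-invariant rational curves with normal bundle $\mc{O}(1)\oplus\mc{O}(1)$. The conformal $T^2$-action lifts to a holomorphic action of the complexified torus $T^2_{\C}\cong(\C^*)^2$ on $Z$ commuting with $\sigma$. I would first extract the consequences of positive orbifold Euler characteristic: since $\chi(M)$ equals the Euler characteristic of the $T^2$-fixed locus, the action has nonempty and (for the effective action) isolated fixed point set, which forces the orbit space $M/T^2$ to be a closed $2$-disc whose boundary is cut by the fixed points (corners) into finitely many arcs (edges), each edge carrying the isotropy weight in $\Lambda:=\pi_1(T^2)\cong\Z^2$ of the circle subgroup degenerating there. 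The orbifold structure of $M$ is recorded by this labelled polygon, together with the orders of the cyclic isotropy groups along the edges and at the corners.

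The heart of the proof is the step advertised in the abstract: producing the meromorphic Joyce data from the interaction between the $(\C^*)^2$-orbits and the twistor lines. The generic orbit in $Z$ is $2$-dimensional, and the degenerate orbits together with the fixed loci fill out a $\sigma$-invariant divisor $D\subset Z$ whose combinatorial type matches the labelled polygon. Because this divisor and the torus action make $Z$ Moishezon, there are meromorphic functions on $Z$ that are semi-invariant under $(\C^*)^2$; a twistor line meets $D$ in a controlled configuration of points, and restricting these functions to a twistor line, then passing to the $T^2$-quotient and using $\sigma$, yields a pair of functions $(P,Q)$ on the hyperbolic half-plane. One then verifies that anti-self-duality forces precisely Joyce's linear system (\ref{Equation: Joyce equation intro}), that the $\mc{O}(1)\oplus\mc{O}(1)$ splitting of the normal bundle is equivalent to the positivity $P_1Q_2-P_2Q_1>0$, and that the boundary behaviour of $(P,Q)$ is dictated by the edge labels read off from $D$ — in other words, the data is exactly an admissible input for Joyce's construction in \cite{J95}.

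With this in hand, part (iii) follows by rigidity: the conformal structure that Joyce's construction produces from this data has twistor space, real structure and $(\C^*)^2$-action isomorphic to those of $Z$, so by the Penrose correspondence it is conformally equivalent to $[g]$. For (i), let $\Lambda'\subseteq\Lambda$ be the sublattice generated by all edge labels; it has finite index, the associated cover $\tilde M\to M$ carries an action of the torus $\R^2/\Lambda'$ with deck group the finite subgroup $\Lambda/\Lambda'\subset T^2$, and since the edge labels form a lattice basis at every corner of its orbit space (still a disc), $\tilde M$ is simply connected — giving the required $T^2$-equivariant diffeomorphism $M\cong\tilde M/(\Lambda/\Lambda')$. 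For (ii), reading the labelled polygon shows that $M$ is, equivariantly, an iterated blow-up of the conformal round $S^4$ (or of a cyclic quotient of it), whence its intersection form is negative definite; alternatively one deduces $b_+(M)=0$ directly from the fact that $Z$ is Moishezon.

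The main obstacle will be pushing the twistor machinery, and in particular the construction of the meromorphic data, through the orbifold singularities of both $Z$ and $M$. This means developing Kodaira--Spencer theory for rational curves in a complex $3$-orbifold and checking the normal-bundle condition with the correct fractional weights; verifying that the semi-invariant meromorphic functions extend across the orbifold strata of $Z$ and that their restriction to twistor lines sees the isotropy as the expected denominators in the edge labels; and, in the reconstruction, matching not merely the underlying spaces but the orbifold structures and the isotropy representations at the fixed points, so that the diffeomorphism in (i) is genuinely equivariant and the finite group is a subgroup of the torus. The genuinely new difficulty compared with \cite{F00} is the analysis near fixed points where the local $T^2$-action is through a non-surjective homomorphism to the $2$-torus, producing a cyclic quotient singularity of $M$; this is where I expect the argument to require the most care.
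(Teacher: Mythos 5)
Your overall strategy (run the twistor correspondence equivariantly, extract Joyce's data, then reconstruct) is the same in spirit as the paper's, but the two steps that carry the real weight are asserted rather than proved, and one of them is asserted incorrectly. First, the source of the meromorphic data: you claim that the divisor of degenerate orbits together with the torus action makes $Z$ Moishezon, and that semi-invariant meromorphic functions on $Z$ restricted to twistor lines give $(P,Q)$. The implication ``invariant divisor plus $(\C^*)^2$-action $\Rightarrow$ Moishezon'' is not valid, and even in the smooth case the Moishezon property of such twistor spaces is a hard theorem of Fujiki, not a starting point; nothing in your outline supplies it in the orbifold setting. The paper deliberately avoids any global algebraicity statement: after showing (Proposition \ref{Proposition: Analytic orbits}) that closures of $G$-orbits meeting the twistor lift of the boundary surface are complex orbifolds, it defines the involution $\tau$ and the $G$-valued function $\psi(z).z=-z$ on a single principal line and proves directly (Proposition \ref{Proposition: meromorphic psi}) that $\psi$ extends meromorphically with poles of order $\pm v_i$ at the special points; the conformal structure is then reconstructed explicitly from $\psi$ and extended over the boundary via the microtwistor correspondence (Proposition \ref{Proposition: Conformal map}), with uniqueness obtained by density rather than by the twistor-space ``rigidity'' isomorphism you invoke but do not construct. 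Relatedly, your opening claim that positive orbifold Euler characteristic already forces the orbit space to be a disc (and, implicitly, excludes exceptional orbits) is unjustified: in the paper these are outputs of the twistor analysis, not of the Euler characteristic alone.

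Second, part (ii) as you argue it would fail. Reading the labelled polygon does \emph{not} show that $M$ is an iterated equivariant blow-up of $S^4$ or a cyclic quotient thereof: a labelled disc such as the one for $S^2\times S^2$ (labels $(1,0),(0,1),(1,0),(0,1)$) has positive Euler characteristic and is simply connected but has indefinite intersection form, and excluding such sequences is precisely what must be proved using anti-self-duality. The paper does this through the scalar-flat K\"ahler representative on $M\backslash\{x_1\}$: properness of the moment map gives convexity of the moment polytope, hence $u_i^{\perp}.u_{i-1}<0$, which is equivalent to negative-definiteness by Joyce and Calderbank--Singer (Lemma \ref{Lemma: Intersection form}). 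Your fallback, deducing $b_+=0$ from Moishezon-ness, rests on the unestablished Moishezon claim and on an orbifold version of the Campana-type theorem that you would also need to prove. A smaller point: in part (i), simple connectivity of the cover does not require the edge labels to form a lattice basis at every corner (that would force smoothness there); the correct Haefliger--Salem criterion, used in the paper, is that the orbit space is a disc, there are no exceptional orbits, and the labels generate the lattice --- the first two of which again come from the twistor argument you have skipped.
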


In \cite{J95} Joyce found \emph{K\"ahler representatives} of his conformal structures defined away from a torus fixed point. Moreover, these metrics are \emph{scalar-flat}, owing to the well-known result that a K\"ahler metric is anti-self-dual if and only if the scalar curvature vanishes (see for instance \cite{LS93}). The metrics constructed in this way are asymptotically locally Euclidean (ALE); thus, using Theorem A and a procedure suggested by Chen, LeBrun and Weber in \cite{CLW07}, we are able to prove the next theorem.

\begin{TheoremB}\label{Theorem: Theorem B}
Let $M$ be a compact anti-self-dual orbifold with positive orbifold Euler characteristic whose isometry group contains a 2-torus. Then there is an ALE scalar-flat K\"ahler representative of the conformal class away from a torus fixed point. Moreover, every scalar-flat K\"ahler 4-orbifold that is ALE to order $l > 3/2$ and has isometry group containing a torus arises in this way.
\end{TheoremB}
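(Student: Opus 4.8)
I would read Theorem B as two statements — existence of an ALE scalar-flat K\"ahler representative, and the claim that every scalar-flat K\"ahler orbifold that is ALE to order $l>3/2$ with torus symmetry arises in this way — and prove each by combining Theorem A with the standard passage between a compact anti-self-dual orbifold and its conformal rescalings at a point. For the existence statement: by Theorem A the conformal class of $M$ coincides with one of Joyce's, and Joyce's construction \cite{J95} provides, on the complement of a torus fixed point $p$, a scalar-flat K\"ahler representative, which (as recalled above) is ALE; pulling this back through the conformal equivalence gives the metric wanted on $M$ minus a torus fixed point. Equivalently, following the recipe of \cite{CLW07}: the underlying orbifold is, up to a finite quotient, a connected sum of copies of $\overline{\cp}{}^2$, so its Yamabe invariant is positive and the conformal Laplacian has a positive Green's function $G_p$ at $p$; then $G_p^{\,2}g$ is complete, scalar-flat and anti-self-dual on $M\setminus\{p\}$ and ALE with group the isotropy of $p$, and since a positive solution of the conformal Laplace equation blowing up at $p$ is unique up to scale, it is a constant multiple of Joyce's K\"ahler metric.

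For the converse, given a scalar-flat K\"ahler four-orbifold $(N,g,J)$, ALE of order $l>3/2$, with a two-torus in its isometry group, I would first note that in dimension four the K\"ahler form is, up to scale, the unique parallel self-dual form, so the torus acts holomorphically and $N$ is toric. Now invert the end: in inverted coordinates $y$ the metric $|y|^{4}g$ has the form $\delta+O(|y|^{l})$ near the new point, so adjoining a point $\infty$ produces a topological orbifold $M=N\cup\{\infty\}$, with orbifold group at $\infty$ the ALE group $\Gamma_\infty$, carrying a conformal structure that extends across $\infty$ as a $C^{1,\alpha}$ orbifold conformal structure with $\alpha>\tfrac12$. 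Because scalar-flat K\"ahler metrics are anti-self-dual, this structure is anti-self-dual away from $\infty$; the local regularity theory for the anti-self-duality equation (in a suitable gauge) then upgrades it to a smooth anti-self-dual orbifold conformal structure on $M$. The isometric torus action preserves the end, acts there asymptotically linearly, hence extends over $\infty$, fixing it as an isolated fixed point; averaging yields a representative with torus symmetry. Orbifold localization gives $\chi^{\tn{orb}}(M)=\sum_{q\in M^{T^2}}|\Gamma_q|^{-1}>0$, so $M$ satisfies the hypotheses of Theorem A. Finally $g$ is, by construction, an ALE scalar-flat representative of the conformal class on $M\setminus\{\infty\}$, and such a representative is unique up to homothety — two scalar-flat ALE metrics in one conformal class differ by the square of a bounded harmonic function, which the maximum principle forces to be constant — so $(N,g)$ is exactly the orbifold produced by the existence statement applied to $M$ and the torus fixed point $\infty$.

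The main obstacle is the removable-singularity step in the converse: that ALE order $l>3/2$ is enough for the anti-self-dual conformal structure to extend across $\infty$ as a \emph{smooth} orbifold conformal structure. The threshold is consumed precisely here — it is what makes the inverted structure $C^{1,\alpha}$ with $\alpha>\tfrac12$, the borderline regularity from which the (distributionally anti-self-dual) conformal structure can, after a suitable change of gauge, be put into an elliptic system and bootstrapped to $C^\infty$. By comparison, verifying that Joyce's metrics are ALE, that the torus action extends, and that $\chi^{\tn{orb}}(M)>0$ is routine.
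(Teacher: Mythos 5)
Your proposal is correct and follows essentially the same route as the paper: existence comes from Theorem A together with Joyce's explicit scalar-flat K\"ahler representative away from $x_1$ (ALE to order 2 by \cite{RS05}), while the converse compactifies the ALE end by one point, observes that the added point is a torus fixed point so that $\chi_{orb}(M)>0$, extends the anti-self-dual conformal structure across it, applies Theorem A, and concludes by uniqueness of the distinguished representative on the complement of the fixed point. The only differences are presentational: where you sketch the removable-singularity step at the threshold $l>3/2$, the paper simply invokes Proposition 12 of \cite{CLW07}, and the paper phrases the final identification as uniqueness of the K\"ahler representative in the conformal class rather than your bounded-harmonic-conformal-factor argument.
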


In the smooth case, surfaces satisfying Theorem B arise naturally when considering extremal K\"ahler metrics, such as in \cite{CLW07}, \cite{L91} and \cite{RS05}. Although implicit in the work of Joyce \cite{J95}, Calderbank and Singer \cite{CS04} constructed these scalar-flat K\"ahler metrics explicitly and showed that they were biholomorphic to toric resolutions of $\C^2/\Gamma$, for some cyclic $\Gamma \subset U(2)$. Thus, we have the following corollary.

\begin{TC}\label{Corollary: Smooth SFK}
Let $X$ be a smooth K\"ahler toric surface that is scalar-flat and ALE to order $l > 3/2$. Then, $X$ is biholomorphic to a toric resolution of $\mathbb{C}^2/\Gamma$ for some cyclic subgroup $\Gamma\subset U(2)$. Furthermore, $X$ is isometric to one of the scalar-flat K\"ahler metrics constructed by Calderbank and Singer in \cite{CS04}.
\end{TC}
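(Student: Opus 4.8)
\emph{Proof proposal.} The plan is to feed the hypotheses into Theorem B and then appeal to the explicit description of Joyce's metrics due to Calderbank and Singer \cite{CS04}. First I would check that Theorem B applies: since $X$ is toric, a scalar-flat K\"ahler metric on it is, by a Calabi-type uniqueness argument, invariant (after conjugating the torus action inside the automorphism group) under a maximal compact torus of holomorphic automorphisms, so the isometry group of $X$ contains a $2$-torus. The second part of Theorem B then produces a compact anti-self-dual orbifold $M$ with positive orbifold Euler characteristic and isometry group containing a $2$-torus, together with a torus fixed point $p \in M$, such that $X$ is isometric to $M \setminus \{p\}$ equipped with the scalar-flat K\"ahler representative of the conformal class. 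By Theorem A the conformal structure on $M$ arises from Joyce's construction \cite{J95} and $M$ has negative-definite intersection form.

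Next I would pin down the orbifold structure of $M$. Because $X = M \setminus \{p\}$ is a smooth manifold, the orbifold singular locus of $M$ is contained in $\{p\}$, and the local uniformizing group $\Gamma$ at $p$ is exactly the ALE group at the end of $X$; being an ALE group it is a finite subgroup of $U(2)$ acting freely on $S^3$. Since $M$ is a toric orbifold and $p$ is a fixed point of the torus action, a neighbourhood of $p$ is modelled on $\C^2/\Gamma$ with $\Gamma$ contained in the complexified torus; intersecting with $U(2)$ gives $\Gamma \subset T^2 \subset U(2)$, so $\Gamma$ is abelian, and a finite abelian group acting freely on $S^3$ is cyclic. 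Hence $M$ is one of Joyce's anti-self-dual orbifolds whose only orbifold singularity is a single cyclic quotient singularity located at the torus fixed point $p$.

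Finally I would invoke Calderbank and Singer \cite{CS04}: for such an $M$, the complement $M \setminus \{p\}$ is biholomorphic to a toric resolution of $\C^2/\Gamma$, and the scalar-flat K\"ahler representative there agrees, up to a homothety fixed by the ALE normalization, with the metric they construct explicitly. Since $X$ is isometric to $M \setminus \{p\}$ carrying precisely this metric, $X$ is biholomorphic to a toric resolution of $\C^2/\Gamma$ for the cyclic group $\Gamma \subset U(2)$ and is isometric to a Calderbank--Singer metric, which is the assertion.

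I expect the main obstacle to be the passage from \emph{biholomorphic} to \emph{isometric} in the last step: this requires that the scalar-flat K\"ahler metric compatible with the given complex structure on $X$, with prescribed ALE asymptotics, is determined up to scale by the anti-self-dual conformal class. This should follow from the twistor description of the scalar-flat K\"ahler structure (equivalently from Calabi uniqueness within the K\"ahler class combined with the identification of conformal classes), but it is the point needing genuine justification rather than bookkeeping. Secondary points to treat with care are the verification that the torus acts isometrically, so that Theorem B is legitimately applicable, and the reduction establishing that the ALE group $\Gamma$ is cyclic.
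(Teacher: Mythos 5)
Your skeleton (apply Theorem B, then identify the result with a Calderbank--Singer metric) is the same as the paper's, and your observation that the uniformizing group at the added point is a finite abelian subgroup of the torus acting freely on $S^3$, hence cyclic in $U(2)$, is correct and consistent with the paper. However, the decisive step is missing: you ``invoke Calderbank and Singer'' for the claim that, for such an $M$, the complement $M\setminus\{p\}$ is biholomorphic to a toric resolution of $\C^2/\Gamma$ and that the scalar-flat K\"ahler representative agrees (up to homothety) with one of their metrics. The paper \cite{CS04} is a construction: starting from an \emph{admissible sequence} it builds explicit metrics via Joyce's ansatz and identifies the underlying surfaces; it contains no classification statement asserting that every toric ALE scalar-flat K\"ahler surface, or every toric anti-self-dual orbifold with a single cyclic singular fixed point, arises from their construction. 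That assertion is precisely the content of the corollary you are trying to prove, so as written your last step assumes the conclusion rather than deriving it.

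The paper closes exactly this gap by a combinatorial matching which your proposal does not perform. After Theorem B, $X$ is isometric to the scalar-flat K\"ahler representative on $\mrs\setminus\{x_1\}$, whose metric is given by the explicit Joyce-ansatz data (\ref{Equation: Joyce solution})--(\ref{Equation: Joyce conformal structure}) determined by $(\mc{R},\s)$. Smoothness of $X$ forces every fixed point other than $x_1$ to be smooth, so consecutive normals are $\Z$-bases of $\Lambda$ and coordinates can be chosen with $u_1=(0,1)$, $u_2=(1,0)$, $u_k=(p,q)$ with $p,q$ coprime; negative-definiteness, i.e.\ convexity of the moment polytope (Lemma \ref{Lemma: Intersection form}), then shows that $\s$ is an admissible sequence in the sense of \cite{CS04}. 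Since the Calderbank--Singer metrics are built from the same ansatz on the same data, the metric on $X$ is literally one of theirs, and their identification of the underlying complex surface as a toric resolution of $\C^2/\Gamma$, $\Gamma\subset U(2)$ cyclic, gives the biholomorphism statement. Two smaller points: the obstacle you flag (passing from biholomorphic to isometric via uniqueness of the scalar-flat K\"ahler representative) is not where the difficulty lies, since uniqueness of the K\"ahler representative in the conformal class on $M\setminus\{x_1\}$ is already part of the proof of Theorem B; and the preliminary Calabi-type argument for torus invariance is unnecessary, because in the paper's formulation ``K\"ahler toric'' already carries the hypothesis that the torus acts by isometries, and in the noncompact ALE setting such a uniqueness argument would itself require justification.
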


We begin in Section \ref{Section: Compact Toric 4-Orbifolds} with a description of compact toric 4-orbifolds based on Orlik-Raymond \cite{OR70} and Haefliger-Salem \cite{HS91}. Then in Section \ref{Section: Definitions} we review quaternionic geometry and the quaternionic quotient, as well as twistor spaces and related definitions.

%mention example here

Our first step towards proving Theorem A is to establish that the closure of the complex torus orbits in the twistor space are complex orbifolds, this is done in sections \ref{Section: Local model} and \ref{Section: Analyticity of orbit closures}, and the method of proof is based on sections 4 and 5 of \cite{F00}. Next we focus our attention on twistor lines about which the torus action is free, which we will refer to as \emph{principal lines}. At this stage our approach diverges from Fujiki's and is motivated by the work of Donaldson and Fine in \cite{DF06}. They showed that a holomorphic involution and a $\C^2$-valued holomorphic function could be used to construct the twistor space of the germ of a toric anti-self-dual 4-manifold. In sections \ref{Section: The Involution} and \ref{Section: The meromorphic data} we construct a \emph{holomorphic involution} and a pair of \emph{meromorphic functions} on a principal line. The linearization of this data about a principal line can be thought of as a special case of Donaldson and Fine's holomorphic data. Our meromorphic data can be used to recover the conformal structure of a dense open subset in an anti-self-dual 4-orbifold. Then, in sections \ref{Section: The Riemann surface R} and \ref{Section: Uniqueness}, we extend this conformal structure using a correspondence between real torus invariant divisors in the twistor space and torus orbits in the 4-orbifold. At a local level, this correspondence is an example of a more general correspondence for anti-self-dual manifolds with torus symmetry, referred to as a \emph{microtwistor} correspondence \cite{C09}, in analogy with Hitchin's \emph{minitwistor} correspondence \cite{H82}. By this point we have established part (i) of Theorem A and, in Section \ref{Section: Theorems A}, we complete the proof of Theorem A and deduce Theorem B.

\begin{Acknowledgements}
This work forms part of the author's PhD thesis at Imperial College London, funded by the Engineering and Physical Sciences Research council.
I would like to thank my thesis advisor Simon Donaldson for useful discussions during the preparation of this article. Thanks also to David Calderbank, Joel Fine, Dmitri Panov, Stuart Hall and Richard Thomas for their helpful comments.
\end{Acknowledgements}

\section{Compact 4-orbifolds with torus actions}\label{Section: Compact Toric 4-Orbifolds}
\subsection{Orbifolds}\label{Subsection: Orbifolds}
Orbifolds are generalizations of manifolds, attributed to Satake \cite{S56} and Thurston \cite{T78}, with singularities arising locally from taking the
quotient by a discrete group. Let $M$ be a Hausdorff topological space, then a smooth n-dimensional orbifold structure can be defined by the following data:
\begin{enumerate}
\item{an open cover $\{U_i\}$ of $M$ that is closed under finite intersections;}
\item{an \emph{orbifold chart} associated to each $U_i$, which consists of an open subset $\tilde U_i \subset \R^n$ invariant under the effective action of a finite group of diffeomorphisms $\Gamma_i$, and a $\Gamma_i$-invariant continuous map $\varphi_i: \tilde U_i \rightarrow U_i$ that induces a homeomorphism from $\tilde U_i/\Gamma_i$ to $U_i$;}
\item{associated with each inclusion $U_i \subset U_j$, there is a smooth embedding $\psi_{ij} : \tilde U_i \rightarrow \tilde U_j$ and an injective homomorphism $f_{ij}: \Gamma_i \rightarrow \Gamma_j$ such that $\psi_{ij}$ is $f_{ij}$-equivariant and satisfies $\varphi_i = \varphi_j\circ\psi_{ij}$.}
\end{enumerate}

For a point $p \in U_i$ the subgroup of $\Gamma_i$ fixing $q \in \varphi^{-1}(p)\subset \tilde U_i$ is referred to as the \emph{orbifold structure group} of $p$ and the non-smooth or \emph{orbifold points} are those $p$ where the orbifold structure group is non-trivial.

We will be concerned with \emph{smooth} and \emph{effective} torus actions on $M$. We refer to an action of a torus $T^k$ on $M$ as effective if, for any $\theta \in T^k$, there exists $x \in M$ such that $\theta.x \neq x$. A smooth action of $T^k$ is defined to be a continuous action of $T^k$ on the underlying topological space $|M|$ satisfying the following two conditions \cite{HS91}:
\begin{enumerate}
\item{for each $\theta_0 \in T^k$, $x_0 \in M$ there are orbifolds charts $$\varphi: \tilde U \rightarrow U,\quad \varphi': \tilde U' \rightarrow U'$$ about $x_0$ and $\theta_0.x_0$ respectively, together with an open neighbourhood $A$ of $\theta_0$ in $T^k$, and a smooth map $$A\times \tilde U \rightarrow \tilde U': x \mapsto \theta.x$$ such that $\theta.\varphi(x) = \varphi'(\theta.x)$, }
\item{and, for each $\theta \in T^k$, the map $x \mapsto \theta.x$ is a diffeomorphism.}
\end{enumerate}

\subsection{Tensors on orbifolds}\label{Subsection: Tensors}
We now briefly outline how some standard structures on smooth manifolds, which we will make use of later, extend to the context of orbifolds. For a point $p$ in an orbifold $M$ with chart $\varphi:\tilde U_p \rightarrow U_p$ and structure group $\Gamma_p$, we define the orbifold tangent space at $p$ to be the quotient of $T\tilde U_p$ by the induced action of $\Gamma_p$. The tangent bundle of $M$ is then defined to be the union of the tangent spaces for all $p \in M$ equipped with transition functions that are induced from the diffeomorphisms $\psi_{ij}$. In analogy with manifolds the tensor bundles and subbundles can be defined in terms of the tangent bundle. The sections of these bundles that we are familiar with in the smooth case, such as Riemannian metrics and complex structures, are defined as sections over the orbifold covers $\tilde U_i$ that are invariant under the induced action of $\Gamma_i$ and agree on the overlaps. Moreover, these definitions allow us to generalize to the orbifold case geometric structures that are defined in the smooth case in terms of tensors, such as K\"ahler structures. The invariance under the action of the orbifold structure groups allows us to think of these structures as descending to $M$ in a well-defined manner.

Using these definitions we can define the Euler characteristic on an orbifold. Suppose that $X$ is a vector field on a compact orbifold $M$ that vanishes at isolated points. Let $p\in M$ be a zero of $X$ and let $\varphi: \tilde U \rightarrow U$ be a orbifold chart at $p$ with $\varphi(p) = 0$. We define the \emph{index} of $X$ at $p$ to be
$$\textnormal{ind}_p(X) := \frac{\textnormal{ind}_{0}(\varphi^* X)}{|\Gamma_p|}.$$ Then the index of $X$ on $M$ can be defined as the sum of the indices over all the zeros of $X$. Therefore, it is a rational number, which we will denote by $\textnormal{ind}_{orb}(X)$.

The \emph{orbifold Euler characteristic} was defined by Satake in \cite{S57} using a triangulation, in analogy with the definition of the topological Euler characteristic of a manifold. It follows that the orbifold Euler characteristic of $M$ is a rational number, which we will denote by $\chi_{orb}(M)$. Using this definition Satake extended the Poincar\'{e}-Hopf theorem to orbifolds:
$$\chi_{orb}(M) = \textnormal{ind}_{orb}(X).$$

\subsection{Orbit types}\label{Subsection: Compact 4-orbifolds} Let $M$ be a compact oriented 4-orbifold and let $F$ be a 2-torus acting smoothly and effectively on $M$. We will denote the Lie algebra of $F$ by $\mf{f}$ and the corresponding lattice by $\Lambda$, so that $F = \mf{f}/\Lambda$. We choose coordinates $(\theta_1, \theta_2)$ on $F$ that are linear on $\mf{f}$, with $0\leq \theta_1, \theta_2 < 1$. Thus, $\Lambda$ is generated by $(1,0)$ and $(0,1)$. We will use this notation throughout.

As shown in \cite{HS91}, the quotient of $M$ by the action of $F$ is a compact 2-orbifold with boundary $\partial N$ and interior $N^{\circ}$. The orbits over $\partial N$ are either isolated fixed points or have stabilizer subgroup $S^1$, while the orbits over $N^{\circ}$ are either free or isolated orbits with finite stabilizer. So there are four types of $F$-orbit:
\begin{description}
\item [\textbf{Principal orbits}] where the action of $F$ is free;
\item [\textbf{Exceptional orbits}] where the stabilizer is non-trivial and finite;
\item [\textbf{Boundary orbits}] where the stabilizer is isomorphic to $S^1$;
\item[\textbf{Fixed points}] where the stabilizer is $F$.
\end{description}
In this terminology, $N^\circ$ consists of principal orbits and isolated exceptional orbits, while $\partial N$, which has a finite number of components, consists of boundary orbits punctuated by isolated fixed points. Some examples of this decomposition are illustrated in Figure \ref{Figure: orbit spaces}.

\subsection{Boundary orbits and fixed points}\label{Subsection: Boundary orbits and fixed points}
We now describe the orbifold charts about a boundary orbit and then about a fixed point following \cite{HS91}. Let $x\in M$ be contained in a boundary orbit, let $$\varphi: \tilde U \rightarrow U \cong \tilde U/\Gamma$$ be an orbifold chart about $x$ and let $\tilde x = \varphi^{-1}(x)$. The compatibility of the orbifold chart with the action of $F$ implies that there is a smooth and effective 2-torus $\tilde F$ acting on $\tilde U$, whose generators span $\tilde \Lambda \subset \Lambda$, where $\Lambda/\tilde \Lambda = \Gamma$. There is an $S^1$-subgroup of $\tilde F$ stabilizing $\tilde x$, which corresponds $u \in \tilde \Lambda$. If we write $u = p.u'$, where $p$ is a positive integer and $u'$ is a primitive element of $\Lambda$, then $u'$ generates $\Gamma \cong \Z_p$.

This procedure associates some $u\in \Lambda$ to each boundary orbit. Since $u$ generates the same subgroup as $-u$, the correspondence between $x$ and $u$ is only well-defined up to sign. To avoid this ambiguity, we restrict these lattice points to a $180^\circ$ sector in $\mf{f}$ by assuming that either $u = (p,0)$ or $(0,1).u > 0$. Since $\Lambda$ is a discrete set, a continuity argument can be applied to conclude that there is a unique $u \in \Lambda$ associated to the connected component of the boundary orbits containing $x$.

For example suppose that $u = (p,0)$. We define $$\tilde U := \{(z_1,z_2)\in \C^2: |z_1|<\epsilon, |z_2 - 1| < \epsilon\},$$ and let $\tilde F$ act on $\tilde U$ as $$(\tilde \theta_1,\tilde \theta_2):(z_1,z_2) \rightarrow (e^{2\pi i\tilde \theta_1} .z_1, e^{2 \pi i\tilde \theta_2} .z_2).$$ An orbifold chart about $x$ can be written as $$\varphi:\tilde U \rightarrow U; (z_1, z_2) \mapsto (z_1^p, z_2),$$ where $\tilde x$ is identified with $(0,1) \in \tilde U$.

We now give a local model about $F$-fixed points that is determined by these lattice points. As explained in Subsection \ref{Subsection: Compact 4-orbifolds}, $\partial N$ consists of boundary orbits and isolated fixed points. Thus, a neighbourhood of a fixed point $x\in M$ contains boundary orbits associated with the linearly independent lattice points  $u$ and $u'$. We will now denote the lattice with generators $u,u'$ by $\tilde \Lambda$ and the corresponding torus by $\tilde F$. Then we define $\Gamma = \Lambda/\tilde \Lambda$ and so, $F = \tilde F/\Gamma$.

Let $$\tilde U := \{ (z_1,z_2) \in \C^2: r_1 <\epsilon, r_2 < \epsilon \}$$
and let $\tilde F$ act on $\tilde U$ by
$$(\tilde\theta_1,\tilde\theta_2):(z_1,z_2) \rightarrow (e^{2 \pi i\tilde\theta_1} .z_1, e^{2 \pi i\tilde\theta_2} .z_2).$$
The action of $\Gamma \subset \tilde F$ on $\tilde U$ induces a quotient map $$\varphi: \tilde U \rightarrow U = \tilde U/\Gamma,$$ and the action of $\tilde F$ on $\tilde U$ descends to an action of $F$ on $U$. If we identify $x$ with $0 \in U$, then $\varphi: \tilde U \rightarrow U$ is an orbifold chart about $x$. The orbifold structure group of $x$ is $\Gamma$; thus, $x$ is smooth if and only if $u$ and $u'$ generate $\Lambda$.

\subsection{Simply connected orbifolds}\label{Subsection: M simply
connected}
It follows from the work of Haefliger and Salem in \cite{HS91} that $M$ is simply connected ($\pi_1^{orb}(M) = 0$) when:
\begin{enumerate}
\item{the orbit space $N$ is topologically a closed disc;}
\item{there are no exceptional orbits;}
\item{and the elements of $\Lambda$ associated to the components of the boundary orbits generate $\Lambda$.}
\end{enumerate}

An orientation can be given to $N^{\circ}$, which induces an orientation on $\partial N$; accordingly, we can label the fixed points in $M$ as $x_1, \ldots, x_k$. The element in $\Lambda$ associated with the connected component of the boundary orbits between $x_i$ and $x_{i+1}$ will be labeled $u_i$. Thus, associated with $M$ and the singled out fixed point $x_1$ there is an ordered set $$\s := \{u_1,\ldots,u_n\} \subset \Lambda.$$

Similarly to the simply connected case, a compact 4-orbifold with smooth and effective torus action, which satisfies conditions (i) and (ii), determines an ordered set $\s\subset \Lambda$. If $\s$ generates the sublattice $\Lambda_\s \subset \Lambda$, then $M$ has a universal cover with deck transformation group $\Gamma_\s = \Lambda/\Lambda_\s$.

In \cite{HS91} it was shown that $M$ is determined, up to an $F$-equivariant diffeomorphism, by $\s$. Thus, a diffeomorphism between two compact toric 4-orbifolds of this form is equivalent to a \emph{weighted diffeomorphism} of the orbit spaces, by which we mean a diffeomorphism that respects the orbit types. %A smooth map is a diffeomorphism at the boundary if derivative has full rank - not just restricting to diffeomorphism between boundaries.

The compact orbifold with an action of $F$ that determines $\s$, in the way described above, will be denoted by $M_\s$. Note that there is a torus equivariant diffeomorphism between $M_\s$ and $M_{\s'}$ if some cyclic permutation of $\s'$ can be transformed to $\s$ under the action of $SL(2,\Z)$.

\begin{figure}[htbp]
  \psfrag{x}{$\zeta_1$}
  \psfrag{y}{$\zeta_3$}
  \psfrag{z}{$\zeta_2$}
  \psfrag{p}{$(1,0)$}
  \psfrag{q}{$(p,1)$}
  \psfrag{r}{$(0,1)$}
  \psfrag{s}{$(1,0)$}
  \psfrag{t}{$(0,1)$}
  \psfrag{a}{$\zeta_1$}
  \psfrag{b}{$\zeta_4$}
  \psfrag{c}{$\zeta_3$}
  \psfrag{d}{$\zeta_2$}
  \psfrag{A}{$(2,0)$}
  \psfrag{B}{$(0,1)$}
  \psfrag{C}{$(2,0)$}
  \psfrag{D}{$(0,1)$}
  \psfrag{X}{$\cp^2_{1,1,p}$}
  \psfrag{Y}{Hopf surface}
  \psfrag{Z}{$(S^2\times S^2)/\Z_2$}
 \centering \scalebox{0.8}{\includegraphics{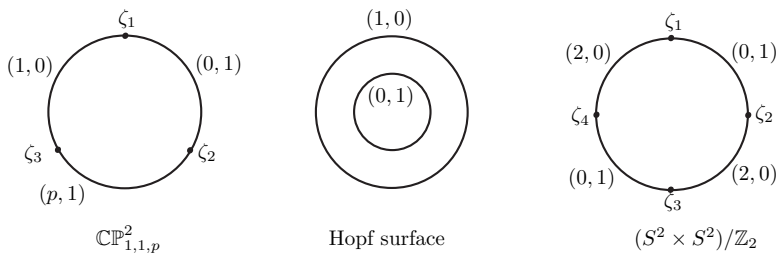}}
  \caption{Orbit spaces of compact toric 4-orbifolds} \label{Figure: orbit spaces}
\end{figure}

\begin{TE}\label{Example: Toric orbifolds}
Consider $\R^4\cong \C^2$ with the action of $F$ given by $$(\theta_1, \theta_2): (z_1, z_2)\mapsto (e^{2 \pi i\theta_1}z_1, e^{2 \pi i\theta_2}z_2).$$ The quotient can be identified with the quarter plane, and the two edges have stabilizer subgroups $(1,0)$ and $(0,1)$ in $\Lambda$. By adding a fixed point at infinity, we obtain the compact manifold $S^4$ with an action of $F$ that can be defined by $\s = \{(1,0), (0,1)\}$. In Figure \ref{Figure: orbit spaces} the only simply connected example is $\cp^{2}_{1,1,p}$. The Hopf surface does not fit into the framework described in this subsection, as it has fundamental group isomorphic to $\Z$ and so, the universal cover is not compact. The example $(S^2\times S^2)/\Z_2$ can be constructed as the quotient of the simply connected 4-manifold $S^2\times S^2$ with respect to the action of $\Z_2$ generated by $(\frac{1}{2},0)\in F$.
\end{TE}

\section{Twistor spaces}\label{Section: Definitions}
\subsection{Twistor spaces}\label{Subsection: Twistor spaces}
We define a \emph{twistor line} in a complex 3-manifold to be a projective line $L$ with normal bundle $$N_L \cong \mathcal{O}(1)\oplus\mathcal{O}(1),$$ and we define a \emph{twistor space} to be a complex 3-manifold containing a twistor line. Sections of $N_L$ correspond to infinitesimal deformations of $L$. Since $H^1(L,N_L) = 0$, Kodaira's deformation theorem \cite{KS58} tells us that genuine deformations of $L$ are parameterized by a complex 4-manifold $M_\C$ with $T_L M_\C \cong H^0(L,N_L)$. Moreover these deformations have isomorphic normal bundles, since $H^1(L,\tn{End}(N_L)) = 0$.

These twistor lines can be used to give $M_\C$ a natural conformal geometry. We refer to a section of $N_{L}$ with a zero as a null vector; thus, we are characterizing null vectors as infinitesimal deformations intersecting $L$. Since a section of $N_{L}$ can be written as $(az_0 +bz_1, cz_0+dz_1)$, the null vectors are determined by the equation $ad-bc = 0$. This equation defines a \emph{null cone}, which is enough to specify a conformal structure on $T_{L}M_\C$ and hence, a conformal structure on $TM_\C$.

An anti-holomorphic involution $\gamma$ on a twistor space is referred to as a \emph{real structure}, if $\gamma$ has no fixed points, and the $\gamma$-fixed twistor lines are referred to as \emph{real twistor lines}. If a twistor space is equipped with a real structure, then the set of real twistor lines is a real 4-manifold with a conformal structure induced from $TM_\C$.

Let $Z$ be a twistor space and let $M$ be the conformal 4-manifold constructed in the manner described. We define the \emph{twistor fibration} to be the projection
$$\pi: Z \rightarrow M$$ mapping real twistor lines to points in $M$. The cornerstone of the twistor correspondence for Riemannian metrics is the equivalence between integrability of the complex structure on $Z$ and anti-self-duality of the conformal structure on $M$, \cite{AHS78}.

We now outline how $Z$ can be constructed from $M$. Let $M$ be an oriented conformal 4-manifold and let $J_x$ be an almost complex structure on the tangent space at $x\in M$. We can fix a choice of metric in the conformal structure on $T_xM$ and, with respect to this, we can choose the unit vectors $v \in T_x M$ and $$w \in \langle \{ v, J_x v \}\rangle^{\perp}.$$ We say that $J_x$ is compatible with the metric and the orientation, if $$\{v, J_x v, w, J_x w\}$$ forms an oriented orthonormal basis of $T_x M$. On $T_x M$ we can choose three almost complex structures $I,J,K$ compatible with the metric and the orientation such that $$\{v, I v, J v, K v\}$$ is an orthonormal basis. These almost complex structures can be used to construct a $\cp^1$-bundle $Z$ over $M$: the \emph{bundle of almost complex structures} on $M$ compatible with the metric and orientation. There is a natural almost complex structure on $Z$ and, if the conformal structure on $M$ is anti-self-dual, then $Z$ is a complex manifold. Moreover, the fibers of $Z$ are twistor lines and $Z$ is a twistor space.

\subsection{The twistor lift}\label{Subsection: Twistor lifts}
Let $M$ be an oriented conformal 4-manifold and let $$\phi: N \rightarrow M$$ be an immersion of an oriented surface. The conformal structure on $M$ induces a conformal structure on $N$. Since $N$ is an oriented surface, this conformal structure induces two complex structure $\pm J$ on $N$. For $n\in N$, let $J_n$ be the compatible complex structure on $T_{\phi(n)}M$ that restricts to $J$ on $T_{\phi(n)} N$. We define a \emph{twistor lift} \cite{ES85} of $N$ to $Z$ as
$$\tilde\phi_+: N \rightarrow Z;\; n \mapsto J_n.$$
This map can be broken down into two components: a map sending $n$ to the 2-plane $T_{\phi(n)}N \subset T_{\phi(n)} M$, followed by a map sending $T_{\phi(n)}N$ to $J_n$. The significance of this decomposition is that the first component is the \emph{Gauss map} for a surface immersed in a conformal 4-manifold. Thus, $\phi_+$ can be thought of as the projection of the Gauss map to the twistor space. Similarly, we can define the twistor lift $\phi_{-}$ with respect to $-J$ on $N$, and we will denote the image of $\phi_{\pm}$ by $N_{\pm}$. It follows from these definitions that $N_{+}$ and $N_{-}$ are interchanged by the real structure $\gamma$.

\begin{figure}[htbp]
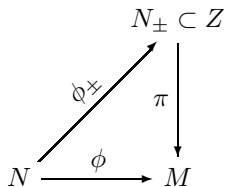

\begin{diagram}
         &                       &     N_{\pm} \subset Z    \\
         &  \ruTo^{\phi_{\pm}}  &       \dTo^{\pi}        \\
    N    &    \rTo^{\phi}        &          M                \\
\end{diagram}
  \caption{The twistor lift}
\end{figure}

If $\phi$ is an immersion of an unoriented surface, then the twistor lift can be defined locally in the manner described above, since there is no global choice of sign for a complex structure on $N$. Extending this lift produces an unbranched double cover of $N$ in $Z$, on which the real structure restricts to an anti-holomorphic involution.

%Since the complex structure on $Z$ commutes and the complex structure $\pm J$ on $N$ commute with $\phi_\pm$ and therefore complex structure induced by $Z$ is same as $J$ on $\phi(N)$.
The twistor lift is holomorphic if its image is a complex curve in $Z$. In this case, the restriction of $\pi$ to this curve is a conformal map onto $\phi(N)$. Conversely, a complex curve in $Z$, which intersects with twistor lines transversally, is the holomorphic twistor lift of an immersed surface in $M$.

\subsection{Orbifold twistor spaces}\label{Subsection: Orbifold twistor
space}
The definitions in the preceding subsections have been made in terms of manifolds, rather than orbifolds, with the intention of simplifying the exposition. However, these definitions are formulated in terms of tensors and tensor bundles and so, they can be extend as per Subsection \ref{Subsection: Tensors}. In particular, a twistor space can be described in terms of tensors: as the bundle of almost complex structures that are compatible with both the orientation and the conformal structure. Next, we will present the definition of the twistor space of an anti-self-dual orbifold in more detail.

Let $M$ be an anti-self-dual orbifold with charts $\tilde{U_i}\rightarrow U_i := \tilde{U_i}/\Gamma_i$. Since the manifolds $\tilde{U_i}$ are equipped with $\Gamma_i$-invariant anti-self-dual metrics, their twistor spaces, which we will denote by $\tilde{V_i}$, have $\Gamma_i$-invariant complex structures. The twistor space of $U_i$ is defined as the complex orbifold $V_i := \tilde{V_i}/\Gamma_i$, where $\Gamma_i$ acts by biholomorphisms on $\tilde{V_i}$ that are induced from the isometric action of $\Gamma_i$ on $\tilde{U_i}$. The $\Gamma_i$-invariant diffeomorphisms between the $\tilde{U_i}$, with which we ``glue" the $U_i$, then induce $\Gamma_i$-invariant biholomorphisms between the $\tilde{V_i}$. Thus, we have an atlas of charts defining a complex orbifold. This orbifold is defined to be the twistor space of $M$, and we will denote it by $Z$. Note that an orbifold point in $Z$ can only arise on a twistor line over an orbifold point in $M$, since a point $p$ with isotropy group $\Gamma_p$ corresponds to a twistor line $L_p \cong \cp^1/\Gamma_p$.

\section{An example}
\subsection{Gravitational Instantons}
In Section \ref{Subsection: Theorem B} we explain how to compactify asymptotically locally Euclidean (ALE) scalar-flat K\"ahler 4-orbifolds by adding one point at infinity, thus obtaining anti-self-dual 4-orbifolds. In this example we apply this procedure to ALE hyperK\"ahler 4-manifolds, although we defer the details of the compactification until Subsection \ref{Subsection: Theorem B}. Note that this procedure is applicable because hyperK\"ahler manifolds coincide with Ricci-flat K\"ahler manifolds in four dimensions and so, they are a special case of scalar-flat K\"ahler manifolds.

In \cite{GH78} Gibbons and Hawking constructed a family of ALE hyperK\"ahler metrics on the minimal resolution of the orbifold $\C^2/\Gamma$, where $\Gamma \subset SU(2)$ is cyclic of order $k \geq 1$. Each metric in this family has $S^1$-symmetry and there is a sub-family, with rotational symmetry about the exceptional set, that has 2-torus symmetry. When examples in this sub-family are compactified, the underlying 4-orbifold with torus action can be described in the terminology of Subsection \ref{Subsection: M simply connected} by the ordered set of combinatorial data $$\s : = \{(m_0,n_0), \ldots,(m_k,n_k)\}\subset \Lambda \cong \Z^2,$$ where $(m_i,n_i) = (1,k-i)$. We will denote this compact 4-orbifold with torus action by $M_\s$. (This observation can be verified using the toric description of the resolution, presented for instance in \cite{CS04}.)

We will denote the twistor space of one of these compact anti-self-dual 4-orbifolds by $Z$. In \cite{H79} Hitchin constructed $Z$ explicitly as a complex 3-orbifold bimeromorphic to a singular hypersurface $\widetilde{Z}$ in the total space of \begin{equation}\label{Equation: Hitchin Bundle}\mathcal{O}(k)\!\oplus\!\mathcal{O}(k)\!\oplus\!\mathcal{O}(2) \rightarrow \mathbb{CP}^1.\end{equation} The defining equation of $\widetilde{Z}$ can be written \begin{equation}\label{Hitchin Equation}xy = \prod_{i=1}^k(z - p_i),\end{equation} where $x\in\mathcal{O}(k)$, $y\in\mathcal{O}(k)$, $z\in\mathcal{O}(2)$ and $p_i$ are holomorphic sections of $\mc{O}(2)$. In terms of the affine coordinate $u$ on $\cp^1$, the twistor space of a toric example can be defined using $$p_i(u) = 2b_iu$$ for $i=1,\ldots, k$, where $b_i$ are real and ascending. In particular, $\{b_1,\ldots, b_k\}$ parameterizes the family of anti-self-dual metrics on $M_\s$.

Away from its singular set $\widetilde{Z}$ is biholomorphic to $Z$ and so, we will identify the two non-singular sets for the purposes of this example. As a consequence, the twistor lines are identified with sections of the bundle (\ref{Equation: Hitchin Bundle}) away from the singular set. So suppose we want to choose a twistor line corresponding to the holomorphic section $z(u) = u^2 - 1$ of $\mc{O}(2)$. Then we can factorize equation (\ref{Equation: Hitchin Bundle}) as $$xy = \prod_{i=1}^k (u - \beta_i^+)(u - \beta_i^-),$$ where $\beta_i^\pm = b_i \pm (b_i^2 +1)^{1/2}.$ Therefore, there is a twistor line $L$ given by \begin{equation}\label{Equation: Hitchin Section}(u, x(u), y(u), z(u)) = (u, \prod_{i=1}^k (u - \beta_i^+), \prod_{i=1}^k (u - \beta_i^-), u^2 - 1).\end{equation} In the next subsection we find a pair meromorphic functions defined on $L$.

\subsection{A meromorphic function}
The holomorphic action of $\C^*\times\C^*$ on the total space of (\ref{Equation: Hitchin Bundle}) given by $$(s,t):(u,x,y,z) \mapsto (su,tx,t^{-1}y,sz)$$ preserves $\widetilde{Z}$. This $\C^*\times\C^*$ action $\widetilde{Z}$ is induced from the 2-torus action on $M_\s$. If we choose a point $z \in L$ we can ask: does there exist non-trivial $(s,t) \in \C^*\times\C^*$ such that $(s,t).z \in L$?

A positive answer to this question involves finding $(s,t)$ such that $$(su,tx(u),t^{-1}y(u),sz(u)) = (su,x(su),y(su),z(su)) \in L,$$ or equivalently solving $$su^2 - s = sz(u) = z(su) = s^2u^2 - 1$$ and then setting $$t = x(su)/x(u).$$ Away from a finite number of points there is a single non-trivial solution $$(s,t) =  (-u^{-2}, x(-u^{-1})/x(u))$$ and this defines a pair of meromorphic function $s(u)$ and $t(u)$ associated with $L$. If we change affine coordinate on $L$ using $u = i(1-z)/(1+z)$, then we can use the formula for $x(u)$ given in equation (\ref{Equation: Hitchin Section}) to show that this pair can be written as
$$(s(z), t(z)) = \Big(\Big(\frac{z-1}{z+1}\Big)^2,\Big(\frac{z-1}{z+1}\Big)^k\prod_{i=0}^k\Big(\frac{z + z_i}{z - z_i}\Big)\Big),$$ where $z_i = (\beta_i^+ + i)/(\beta_i^+ - i)$. By the definition of the $\beta_i^+$ it follows that $|z_i| = 1$ for each $i$ and $0 < \tn{arg}(z_1) < \cdots < \tn{arg}(z_k) < \pi.$

\subsection{Encoding the topological and conformal structure}
We can rewrite $(s(z), t(z))$ as $$\Big(\prod_{i=0}^k\Big(\frac{z-z_i}{z + z_i}\Big)^{a_i},\prod_{i=0}^k\Big(\frac{z-z_i}{z + z_i}\Big)^{b_i}\Big),$$ where $z_0 = 1$, $(a_0, b_0) = (m_0,n_0) + (m_k,n_k)$ and $(a_i,b_i) = (m_i,n_i) - (m_{i-1}, n_{i-1})$ for $i = 1,\ldots, k$. Then, if we repeat this calculation using another twistor line about which the action of $\C^*\times\C^*$ is free, we obtain the same pair of functions, after an appropriate change in affine coordinate. Thus, there is a correspondence between $Z$ and the meromorphic functions $(s(z),t(z))$. This leads to two important observations: one is that the $b_i$, which parameterize the family of conformal structures on $M_\s$, are determined by the position of these poles and zeros; the other is that there is a one-to-one correspondence between the order of the poles and zeros of the combinatorial data $\s$. Thus, $(s(z),t(z))$ encodes both the topology of $M_\s$ and the conformal structure on $M_\s$.

So it is natural to ask: do all compact anti-self-dual orbifolds with torus actions correspond to pairs of meromorphic functions in this way? Then, a positive answer to this would prompt the second question: is this correspondence one-to-one or equivalently, can we recover the anti-self-dual orbifolds from these functions? In what follows we provide positive answers to both these questions under the (necessary) assumption that our orbifold has positive orbifold Euler characteristic: in Sections \ref{Section: Local model} and \ref{Section: Analyticity of orbit closures} we establish that the $\C^*\times\C^*$-orbits define complex orbifolds in the twistor space and this enables us to proceed with the calculation of $(s(z),t(z))$ in Sections \ref{Section: The Involution} and \ref{Section: The meromorphic data}; then in Sections \ref{Section: The conformal structure} to \ref{Section: Uniqueness} we use these functions to recover the conformal structure.

\section{A local model}\label{Section: Local model}
\subsection{A set $B \subset M$ containing a fixed point}\label{subsection: constructing B}
We will assume that $M$ is a compact 4-orbifold with $\chi_{orb}(M)>0$ equipped with an anti-self-dual conformal structure that is invariant under the action of a two torus. As in Section \ref{Section: Compact Toric 4-Orbifolds}, we denote this torus by $F$, its Lie algebra by $\mf{f}$ and its lattice of generators by $\Lambda \subset \mf{f}$.

Let $X$ be a vector field generating a generic $S^1$-subgroup of $F$. The points where $X$ vanishes are precisely the same as the fixed points of $F$ and so, any zeros of $X$ are isolated. Since $\chi_{orb}(M)>0$, the Poincar\'{e}-Hopf theorem for orbifolds (stated in Subsection \ref{Subsection: Tensors}) implies that $X$ must have zeros on $M$. Thus, the action of $F$ has fixed points.

In the terminology of Subsection \ref{Subsection: Compact 4-orbifolds}, the boundary of $N$ consists of $F$-orbits in $N$ where the action of $F$ has infinite stabilizer. In particular, it comprises boundary orbits and fixed orbits. Let $\partial N_0 \subset \partial N$ be a connected component containing a fixed orbit $\zeta_1$. The orientation on $N^{\circ}$ induces an orientation on $\partial N_0$; accordingly, we can label the remaining fixed orbits on $\partial N_0$ as $\zeta_{2}, \ldots, \zeta_k$. We will denote the preimage of $\partial N_0$ in $M$ by $B$ and the preimage of $\zeta_i$ by $x_i$, for $i = 1, \ldots, k$. The preimage of the closed segment of $\partial N_0$ between $\zeta_i$ and $\zeta_{i+1}$ is a compact 2-orbifold \cite{HS91}, which we will denote by $B_i$.

So, we have a cycle of surfaces $$B = \bigcup_{i=1}^n B_i,$$ which intersect transversely at $$B_{i-1}\cap B_{i} = \{x_{i}\}.$$ As explained in Subsection \ref{Subsection: Boundary orbits and fixed points}, each $B_i$ corresponds to a unique $u_i\in \Lambda$, where consecutive $u_i$ are linearly independent. Thus, we have an ordered set of combinatorial data $\s = \{u_1, \ldots, u_k\}$ associated to $B$. We will choose the coordinates for $F$ so that $u_1 = (p,0)$, and from now on we will use this fixed choice of coordinates for $F$. We will denote the orbifold structure group of non-fixed points on $B_i$ by $\Gamma^i$ and the orbifold structure group of $x_i$ by $\Gamma_i$, for $i = 1,\ldots, k$.

\subsection{A set $\Sigma \subset Z$ containing a fixed point}\label{Subsection: Describing a singular set}
Let $\pi:Z\rightarrow M$ be the twistor fibration with real structure $\gamma$. The action of $F$ on $M$ induces a natural action of $F$ on $Z$. We will denote the complexification of $F$ by  $G\cong\C^*\times\mathbb{C^*}$. Since $Z$ is compact, there is a biholomorphic action of $G$ on $Z$. Also note that $\pi$ and $\gamma$ commute with the action of $F$; moreover, \begin{equation}\label{Equation: gamma anticommutes}\gamma(g.z) = \bar{g}^{-1}.\gamma(z)\end{equation} for $g\in G$ and $z \in Z$.
%Note that $F$ extends to $G$ because the holomorphic vector fields are integrable by compactness.

We will denote the subset of $\pi^{-1}(B)$ where the action of $F$ is not free by $\Sigma$ and the twistor line corresponding to the fixed point $x_i$ by $L_i$. This line is preserved by the action of $F$, since $x_i$ is fixed by the action of $F$. Therefore, $L_i$ is contained in $\Sigma$ (as $F$ cannot act freely on $L_i$) and we will write $$\mc{L} := \cup_{i=1}^k L_i.$$

In what follows we will see that $\Sigma$ decomposes as $C\cup \mc{L},$ where $C$ denotes the twistor lift of $B$.
In subsections 4.5 and 4.6 of \cite{F00}, Fujiki described $C$ in some detail in the smooth case. In the orbifold case the picture is much the same, as we can pass to orbifold covers. We will briefly reinterpret this description in terms of the twistor lift and generalize it to orbifolds.

We will denote the set of non-fixed points in $B_i$ by $B_i'$ and the complexification of the stabilizer subgroup of points in $B_i$ by $G^i$. If $L_x$ denotes the twistor line of $x \in B_i'$, then the action of $G^i$ on $Z$ preserves $L_x$. Recall from Subsection \ref{Subsection: Twistor lifts} that there are two twistor lifts to $Z$ of an oriented surface in $M$ and these are interchanged by $\gamma$. So, the twistor lift maps $B_i$ at $x$ to a pair of complex structures, which we will denote by $x^\pm \in L_x$.

For a smooth point $x$, let $F_x \subset F$ denote the stabilizer of $x$. Then the induced action of $F_x$ on $T_xM$ acts trivially on $T_x B_i$. Since $T_x B_i$ is an eigenspace of the $F_x$-action, the 2-dimensional subspace orthogonal to $T_x B_i$ with respect to the conformal structure is also preserved by the action of $F_x$. A complex structure is fixed by $F_x$ if it preserves this splitting and so, the only complex structures preserved by $F_x$ are $x^\pm$. Furthermore, every other complex structure at $x$ has non-trivial stabilizer in $F$, as noted by Fujiki in Lemma 4.1 of \cite{F00}. If $x$ is an orbifold point, then the orbifold chart is given by taking the quotient of the smooth case by $\Gamma^i$. It follows that $x^\pm$ are orbifold points with structure group $\Gamma^i$, and the other points on $L_x$ are smooth. The next lemma summarizes these observations.

\begin{TL}\label{Lemma: Constructing C}
Let $x\in B_i'$ and let $L_x$ be the corresponding twistor line. There are two points on $L_x$ that are fixed by the action of $G^i$ and at all other points on $L_x$ the action of $F$ is free. These $G^i$-fixed points are interchanged by $\gamma$ and have orbifold structure group $\Gamma^i$.
\end{TL}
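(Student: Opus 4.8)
<br>

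The plan is to reduce everything to the smooth, well-understood situation by working in orbifold charts, exactly as the preceding discussion suggests. First I would fix $x \in B_i'$ and pass to a smooth orbifold chart $\tilde{U} \to U = \tilde{U}/\Gamma^i$ around $x$; since $x$ is a non-fixed point of a boundary orbit, its stabilizer in $F$ is a circle, and the lifted torus $\tilde{F}$ acts on $\tilde{U}$ with an $S^1$-stabilizer at $\tilde{x}$ corresponding to $u_i \in \Lambda$. On the twistor space $\tilde{V}$ of $\tilde{U}$ the situation is precisely the one Fujiki analyses in sections 4.5--4.6 of \cite{F00}: the twistor line $L_{\tilde{x}}$ is preserved by the complexified stabilizer, the Gauss lift of the surface through $\tilde{x}$ picks out the two complex structures $\tilde{x}^\pm$ compatible with the splitting $T_{\tilde{x}}B_i \oplus (T_{\tilde{x}}B_i)^\perp$, and Lemma 4.1 of \cite{F00} says these are the only two complex structures at $\tilde{x}$ with an $S^1$ (hence the full complexified) stabilizer — every other point of $L_{\tilde{x}}$ has trivial $F$-stabilizer.

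Next I would transport this down to $Z$. The chart for the orbifold twistor space around the relevant line is $V_i = \tilde{V}/\Gamma^i$, with $\Gamma^i$ acting biholomorphically, induced from its isometric action on $\tilde{U}$. The two distinguished points $\tilde{x}^\pm$ are $\Gamma^i$-fixed: indeed $\Gamma^i$ fixes $\tilde{x}$ and acts on $T_{\tilde{x}}\tilde{U}$ preserving the $\tilde{F}$-eigenspace decomposition (because $\Gamma^i \subset \Lambda/\tilde\Lambda$ commutes with $\tilde{F}$), hence preserves $T_{\tilde{x}}B_i$ and its orthogonal complement, hence fixes exactly the two complex structures $\tilde{x}^\pm$ compatible with that splitting. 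Therefore in $V_i$ the images $x^\pm$ of $\tilde{x}^\pm$ are orbifold points with structure group $\Gamma^i$, while all other points of $L_x$ — being smooth points of $\tilde{U}$ quotiented by a group acting freely near them — are smooth. The action of $G^i$ on $Z$ is the descent of the $\tilde{G}^i$-action, so the $G^i$-fixed points of $L_x$ are exactly the images of the $\tilde{G}^i$-fixed points of $L_{\tilde{x}}$, namely $x^\pm$, and freeness of the $F$-action elsewhere descends likewise. Finally, that $\gamma$ interchanges $x^\pm$ is immediate: $\gamma$ restricts to the antiholomorphic involution on $L_x \cong \cp^1$ coming from the two Gauss lifts $B_{i,\pm}$ of the oriented surface $B_i$ (Subsection \ref{Subsection: Twistor lifts}), and it has no fixed points on $L_x$ since $L_x$ is not a real twistor line, so it must swap the two distinguished points.

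The main obstacle I anticipate is purely bookkeeping rather than conceptual: one must check that the local orbifold-chart picture on the twistor side is genuinely compatible with the global $G$-action on $Z$ — i.e. that $G^i$ acts on the chart $V_i$ as the descent of $\tilde{G}^i$ on $\tilde{V}$, and that ``$G^i$-fixed'' computed downstairs agrees with what one computes upstairs modulo $\Gamma^i$. Since $\Gamma^i$ is finite and abelian (a subquotient of $\Lambda \cong \Z^2$) and commutes with the torus action, a fixed point upstairs maps to a fixed point downstairs and, conversely, a $G^i$-fixed point of $L_x$ lifts to a $\Gamma^i$-orbit of $\tilde{G}^i$-fixed points of $L_{\tilde{x}}$; since there are only two such points and they are $\Gamma^i$-invariant individually, the correspondence is clean. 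Everything else is a direct translation of Fujiki's smooth statements through the quotient, so I expect the proof to be short.
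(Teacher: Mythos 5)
Your argument is correct and follows essentially the same route as the paper: split $T_xM$ into $T_xB_i$ and its conformally orthogonal complement, identify the two $F_x$-fixed complex structures with the twistor lifts $x^\pm$, invoke Fujiki's Lemma 4.1 for freeness of the action at the other points of $L_x$, and obtain the orbifold statement by viewing the chart as the quotient of the smooth picture by $\Gamma^i$. One small slip worth noting: $L_x$ \emph{is} a real twistor line (it is the fibre $\pi^{-1}(x)$), and $\gamma$ is fixed-point free on all of $Z$ by the definition of a real structure, not because $L_x$ fails to be real --- but this remark of yours is redundant anyway, since the interchange of $x^\pm$ already follows from the fact recalled in Subsection \ref{Subsection: Twistor lifts} that the two twistor lifts of an oriented surface are swapped by $\gamma$, which is exactly what the paper uses.
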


If we denote the union of the twistor lifts of $B_1, \ldots, B_k$ by $C$, then it is a direct consequence of lemma \ref{Lemma: Constructing C} that $$\Sigma = C\cup \mc{L}.$$  The next lemma establishes some details about $C$.

\begin{TL}\label{Lemma: Defining C}
The twistor lift of $B_i$ consists of a pair of curves, which we will denote by $C_i^\pm$. These curves satisfy $C_i^\pm \cap C_{i+1}^\pm = \{z_{i+1}^\pm\}$ for $i = 1, \ldots, k-1$, where $z_i^\pm \in L_i$ have stabilizer subgroup $G$ and orbifold structure group $\Gamma_i$. Furthermore, if we denote the non-fixed points on $C_i^\pm$ by ${C_i^\pm}'$, then points in ${C_i^\pm}'$ have stabilizer subgroup $G^i$ and orbifold structure group $\Gamma^i$.
\end{TL}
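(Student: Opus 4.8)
The plan is to analyze $C$ by working on the smooth orbifold covers and then descending, exactly as in the construction of $C$ that precedes the statement. Fix $i$ and let $x \in B_i'$ be a non-fixed point with complexified stabilizer $G^i$. By Lemma \ref{Lemma: Constructing C}, the twistor lift of $B_i$ meets the twistor line $L_x$ in the two $G^i$-fixed points $x^\pm$; letting $x$ vary over $B_i'$ traces out two curves, which we call ${C_i^\pm}'$, and each point on these curves has stabilizer exactly $G^i$ and orbifold structure group $\Gamma^i$. The first task is to show these curves extend across the fixed points $x_i, x_{i+1}$ to give the closed curves $C_i^\pm$. This is where the results of Sections \ref{Section: Local model}--\ref{Section: Analyticity of orbit closures} on the analyticity of orbit closures are needed: the closure of the $G$-orbit corresponding to the one-parameter family $\{x^\pm : x \in B_i'\}$ is a complex (orbi-)curve in $Z$, so the limits as $x \to x_i$ and $x \to x_{i+1}$ exist and lie in $L_i$ and $L_{i+1}$ respectively. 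Continuity of the twistor lift and the fact that $B_{i-1}\cap B_i = \{x_i\}$ force these limit points to be the lifts of $x_i$; I would denote them $z_i^\pm \in L_i$.

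Next I would pin down the behaviour at $z_i^\pm$. Since $x_i$ is an $F$-fixed point, the whole line $L_i$ is $F$-invariant, and the local model about a fixed point from Subsection \ref{Subsection: Boundary orbits and fixed points} (the chart $\tilde U \subset \C^2$ with the diagonal torus action and $\varphi: \tilde U \to \tilde U/\Gamma_i$) gives an explicit model for $Z$ near $L_i$: on the smooth cover $\tilde U$, the twistor space is the standard one for flat $\C^2$, on which $G = \C^*\times\C^*$ acts with exactly two fixed points on the central line — the two integrable complex structures compatible with the two coordinate splittings. These two points are precisely the lifts $z_i^\pm$ of the two boundary pieces $B_{i-1}$ and $B_i$ meeting at $x_i$ (here one uses that $B_{i-1}$ and $B_i$ correspond to the two coordinate 2-planes $T_{x_i}B_{i-1}, T_{x_i}B_i$, whose orthogonal complements pick out the two $G$-fixed complex structures, as in Lemma 4.1 of \cite{F00}). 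Hence $C_{i-1}^\pm$ and $C_i^\pm$ both pass through $z_i^\pm$, giving $C_i^\pm \cap C_{i+1}^\pm = \{z_{i+1}^\pm\}$; the matching of the $\pm$ labels across consecutive $B_i$ is dictated by $\gamma$, which interchanges the two lifts globally and so cannot swap them along the connected curve. The structure group of $z_i^\pm$ is read off from the chart: $\varphi$ quotients by $\Gamma_i$ and the fixed point of the lifted action survives to an orbifold point with group $\Gamma_i$, while the structure group of a generic point of ${C_i^\pm}'$ is $\Gamma^i$, consistent with Lemma \ref{Lemma: Constructing C}.

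I expect the main obstacle to be the extension/matching step at the fixed points $z_i^\pm$: verifying that the two curves ${C_{i-1}^\pm}'$ and ${C_i^\pm}'$ limit onto the \emph{same} pair of $G$-fixed points in $L_i$ with the \emph{consistent} sign labelling, rather than possibly onto a single point or with the labels transposed. Handling this cleanly requires the explicit twistor-space model near $L_i$ coming from the fixed-point chart, together with the identification of which coordinate splitting of $T_{x_i}\tilde U$ corresponds to which boundary component $B_{i-1}, B_i$ — i.e.\ tracking the combinatorial data $u_{i-1}, u_i$ through the local model. Once that local picture is in hand, the stabilizer and orbifold-structure-group claims are immediate from the quotient description, and the global statement follows by running $i$ from $1$ to $k-1$ and invoking $\gamma$-equivariance (equation (\ref{Equation: gamma anticommutes})) to fix the signs.
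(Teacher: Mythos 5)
There is a genuine gap, and it comes in two places. First, your extension step is circular: you invoke ``the results of Sections \ref{Section: Local model}--\ref{Section: Analyticity of orbit closures} on the analyticity of orbit closures'' (and an explicit flat model for $Z$ near $L_i$) to show that ${C_i^\pm}'$ closes up at points of $L_i$ and $L_{i+1}$, but in the paper those results come \emph{after} this lemma and depend on it: the admissible coordinates of Lemma \ref{Lemma: Fixed point neighbourhood} are built around the curves $C_{i-1}^\pm, C_i^\pm$ and the points $z_i^\pm$ that this lemma provides, and Lemma \ref{Lemma: Foliation} cites Lemma \ref{Lemma: Defining C} directly. Moreover no limiting argument is needed at all: $B_i$ is a compact $2$-orbifold, so its twistor lift is already defined at the fixed points $x_i, x_{i+1}$. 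What actually has to be proved is that the lift of $B_i$ at $x_{i+1}$ coincides (as an unordered pair of points of $L_{i+1}$) with the lift of $B_{i+1}$ there, and the paper gets this from pure linear algebra at $x_{i+1}$: $T_{x_{i+1}}M$ splits into the tangent spaces of $B_i$ and $B_{i+1}$, which are eigenspaces of the $F$-action and hence orthogonal for the conformal structure, so both surfaces single out the same pair of compatible complex structures. Your appeal to ``the standard twistor space of flat $\C^2$'' near $L_i$ is exactly the content of the later local-model lemmas and cannot be assumed here. (Also, $\gamma$-equivariance does not ``fix the signs'': for $i=1,\ldots,k-1$ the labels $C_{i+1}^\pm$ are simply chosen so that $C_i^\pm$ passes through $z_{i+1}^\pm$, and whether the cycle closes up consistently, i.e.\ whether $C$ is connected, is deliberately left open here and settled only in Lemma \ref{Lemma: C connected}.)

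Second, you assert that points of ${C_i^\pm}'$ have $G$-stabilizer exactly $G^i$ as if it were immediate from Lemma \ref{Lemma: Constructing C}. That lemma only says $G^i$ fixes $x^\pm$ and that $F$ acts freely at the other points of $L_x$; it does not rule out the full stabilizer $G'$ of points of ${C_i^\pm}'$ being strictly larger than $G^i$, with elements of $G\setminus F$. This is a real issue, not a formality: the paper's remark after the lemma points out that for the Hopf surface the stabilizer of $C$ genuinely is larger than the complexification of the stabilizer of $B$. The paper closes the gap with a separate argument: $G$ preserves $C_i^\pm$, so if $G' \supsetneq G^i$ then $G/G' \cong {C_i^\pm}'$ would be a complex torus (an element of $G'\setminus G^i$, not lying in $F$, generates an infinite cyclic subgroup of $G/G^i$), contradicting ${C_i^\pm}' \cong \C^*$; here one uses that $C_i^\pm$ is a sphere-like lift of $B_i$ with two fixed points removed, i.e.\ the cycle structure of $B$. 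Your proposal needs this step (or an equivalent) to be a complete proof.
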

\begin{proof}
We will denote the two twistor lifts of $B_1$ at $x_1$ and $x_2$ by $z_1^\pm$ and $z_2^\pm$, respectively. The subset of points in $\pi^{-1}(B_i)$ stabilized by $G_i$ is a $\gamma$-invariant complex subvariety. Therefore, $C_1^\pm$ are complex curves in $Z$ interchanged by $\gamma$. %Therefore, holomorphic
%It is a complex subvariety by looking at tangent space and using that to give local charts.

We can decompose $F$ as the product of the stabilizers at $B_1$ and $B_2$. The isotropic representation of $F$ with respect to this splitting is equivalent to the decomposition of $T_{x_2}M$ into the tangent spaces to $B_1$ and $B_2$. These tangent spaces are eigenspaces of the $F$-action and so, they are orthogonal with respect to the orbifold conformal structure. Thus, the twistor lifts of $B_2$ at $x_2$ are the same as those for $B_1$ at $x_2$. Therefore, we can denote the twistor lifts of $B_2$ by $C_2^\pm$, where $$C_1^\pm \cap C_2^\pm = \{z_2^\pm\}.$$ Since $z_2^+$ and $z_2^-$ are fixed by the action of $G^1$ and $G^2$, they are fixed by $G$. Moreover, $z_2^\pm$ has the same orbifold structure group as $x_2$. We can continue inductively to define the curves $C_i^\pm$ for $i = 1, \ldots, k$, where $$C_i^\pm \cap C_{i+1}^\pm = \{z_{i+1}^\pm\},$$ for $i = 1, \ldots, k-1$.

Finally, we establish that the stabilizer subgroup of non-fixed points in $C_i^\pm$ is $G^i$. The action of $F$ preserves $B_i$ and so, the action of $G$ preserves $C_i^\pm$. Let $G'$ be the stabilizer of non-fixed points in $C_i^\pm$, so $G^i \subset G'$ and dim$G' = 1$. Then, $$G/G' \cong {C_i^\pm}' \cong \C^*.$$ Since $g \notin F$, $g \in G'\backslash G^i$ generates an infinite cyclic subgroup in $G/G_i$. Therefore, $G/G'$ is isomorphic to a complex torus. This gives a contradiction; thus, $G'  = G^i$.
%Also noted in Lemma 4.5 of \cite{F00}
\end{proof}

\begin{figure}[htbp]
\centering \scalebox{1.0}{\includegraphics{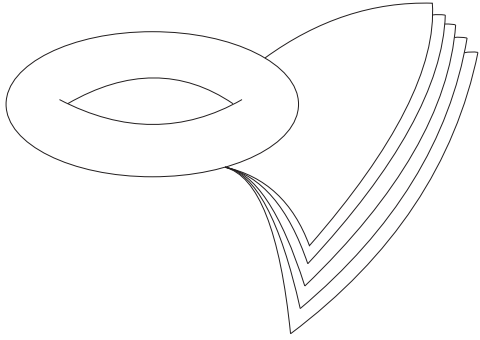}}
  \caption{What we want to avoid: a $G$-orbit intersecting $C$ in more than one leaf}\label{Figure: Torus Foliation}
\end{figure}

\begin{TR}
Our goal in this section and the next is to establish that the closure of the $G$-orbits that intersect $C$ are complex 2-orbifolds embedded in $Z$. In order to do this we first (in the next two subsections) establish a local model for a neighbourhood of points in $C$, based on the model for Euclidean twistor space. Then we show that the action of $G$ induces a foliation of these neighbourhoods in Subsection \ref{Subsection: Local analyticity}. Finally in Subsection \ref{Subsection: analyticity of orbit closures} we establish that each $G$-orbit intersects these neighbourhoods in a single leaf of the foliation.

What we intend to rule out is the possibility that a single $G$-orbit intersects one of these neighbourhoods in more than one leaf. An example in which $G$-orbits have this undesirable property is the Hopf-surface, the orbit space of which was given in Figure \ref{Figure: orbit spaces}. In this case $B$ must be a 2-torus and then $C$ must be a disjoint pair of complex curves in $Z$. As a result we cannot follow the proof of Lemma \ref{Lemma: Defining C} to obtain a contradiction and so, the stabilizer of $C$ may be larger than the complexification of the stabilizer of $B$. If we choose a leaf in the local foliation about a point in one of the tori in $C$, then we can follow a path around this torus, within the $G$-orbit containing that leaf, to arrive at a different leaf, see Figure \ref{Figure: Torus Foliation}. In analogy with an irrational circle action on a 2-torus, a single $G$-orbit contains infinitely many leaves in the Hopf-surface example.
\end{TR}

So, we can write the twistor lift of $B$ as $$C = \bigcup_{i=1}^{k} C_i^+ \cup \bigcup_{i=1}^{k} C_i^-.$$ Note that we have not yet determined whether $C_k^- \cap C_{1}^+ = \{z_{1}^+\}$ or $C_k^+ \cap C_{1}^+ = \{z_{1}^+\}.$ The former possibility implies $C$ is connected, while the latter implies $C$ consists of two connected components that are interchanged by $\gamma$. In Lemma \ref{Lemma: C connected}, we prove that $C$ is connected, as depicted in Figure \ref{Figure: C projects to B}. In this figure the M\"obius band represents $\pi^{-1}(B)$ and so, twistor lines are represented by lines crossing this band. The boundary of the band along with the lines $L_i$ represent $\Sigma$.

\begin{figure}[htbp]
\psfrag{a1}{$z_1^+$}\psfrag{a2}{$z_2^+$} \psfrag{a3}{$z_3^+$} \psfrag{a4}{$z_1^-$} \psfrag{a5}{$z_2^-$}\psfrag{a6}{$z_3^-$}\psfrag{c1}{$x^-$}\psfrag{c2}{$x^+$}\psfrag{A1}{$C_1^+$}\psfrag{A2}{$C_2^+$}\psfrag{A3}{$C_3^+$}\psfrag{A4}{$C_1^-$}\psfrag{A5}{$C_2^-$}\psfrag{A6}{$C_3^-$}\psfrag{L1}{$L_1$}\psfrag{L2}{$L_2$}\psfrag{L3}{$L_3$}\psfrag{L4}{$L_x$}\psfrag{b1}{$x_1$}\psfrag{b2}{$x_2$}\psfrag{b3}{$x_3$}\psfrag{d1}{$x$}\psfrag{B1}{$B_3$}\psfrag{B2}{$B_2$}\psfrag{B3}{$B_1$}\psfrag{p}{$\pi$}
  \centering \scalebox{0.8}{\includegraphics{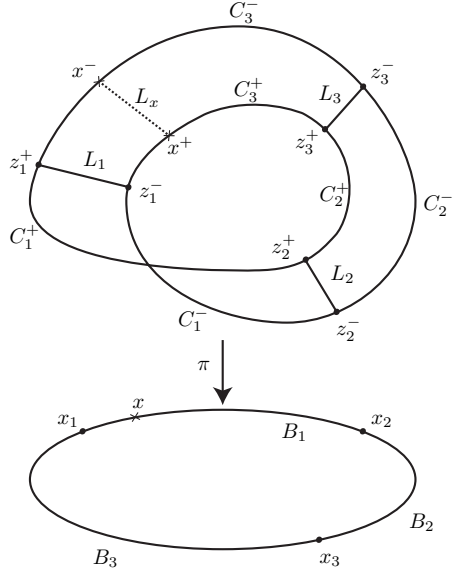}}
  \caption{The projection from $\Sigma \subset Z$ to $B \subset M$}\label{Figure: C projects to B}
\end{figure}

\subsection{The local model about fixed points in $\Sigma$}\label{Subsection: Local Model}
The next two subsections provide local models for the action of $G$ on neighbourhoods of points in $\Sigma$. Although the construction of these local models is rather involved, it is worth keeping in mind that, in principle, we are proving that $z$ has an orbifold chart that can be modeled on the following example.

\begin{TE}\label{Example: Twistor space}
Recall from Subsection \ref{Subsection: Twistor spaces}, that $\mc{O}(1)\oplus\mc{O}(1)$ is the twistor space of $\R^4$. A point can be added to $\R^4$ at infinity to obtain $S^4$. In the twistor space this corresponds to adding a twistor line to produce $\cp^3$. If we identify $S^4$ with $\mathbb{HP}^1$, then the twistor fibration, which is the bundle of compatible almost complex structures, is given by $$\pi: \cp^3 \rightarrow S^4,\quad [x_1:y_1:x_2:y_2]\mapsto [x_1+y_1j:x_2+y_2j].$$ In example \ref{Example: Toric orbifolds} we exhibited $S^4$ as a compact toric 4-manifold corresponding to $\s = \{(1,0),(0,1)\}\subset\Lambda$. With respect to $\s$, the action of $F$ on $S^4$ can be written as $$(\theta_1,\theta_2):[x + yj: 1] \mapsto [e^{\pi i\theta_1}x + e^{\pi i\theta_2}yj: 1].$$ For $q_1, q_2 \in \h$ satisfying $x + yj = q_2^{-1}q_1$ this action can be written as
$$[e^{\pi i(\theta_1 + \theta_2)}q_2^{-1}q_1 e^{\pi i(\theta_1 - \theta_2)}: 1] = [q_1 e^{\pi i(\theta_1 - \theta_2)}: q_2 e^{-\pi i(\theta_1 + \theta_2)}].$$
The induced action of $F$ on $\cp^3$ is thus given by $$[e^{\pi i(\theta_1 - \theta_2)} x_1 : e^{-\pi i(\theta_1 - \theta_2)} y_1 : e^{-\pi i(\theta_1 + \theta_2)} x_2  : e^{\pi i(\theta_1 + \theta_2)}y_2],$$ where $q_1 = x_1 + y_1j$ and $q_2 = x_2 + y_2 j$.
Then, the action of $F$ can be complexified to the following action of $G$ on $\cp^3$  $$(s,t):[x_1:y_1:x_2:y_2] \mapsto [sx_1:ty_1:x_2:sty_2].$$

In Figure \ref{Figure: twistor space} we depict $\cp^3$ using its Delzant polytope \cite{D88} arising from the standard toric perspective. We can label this twistor space within the framework of the previous subsection. The points $z_1^+$, $z_2^+$, $z_1^-$ and $z_2^-$ correspond to $[0:0:1:0]$, $[1:0:0:0]$, $[0:0:0:1]$ and $[0:1:0:0]$, respectively. The edges $L_1$, $L_2$, $C_1^+$, $C_2^+$, $C_1^-$ and $C_2^-$ correspond to the lines $[0:0:x_2:y_2]$, $[x_1:y_1:0:0]$, $[x_1:0:x_2:0]$, $[x_1:0:0:y_2]$, $[0:y_1:0:y_2]$ and $[0:y_1:x_2:0]$, respectively.
\end{TE}

\begin{figure}[htbp]
  \psfrag{C1+}{$C_1^+$}
  \psfrag{C2+}{$C_2^+$}
  \psfrag{C1-}{$C_1^-$}
  \psfrag{C2-}{$C_2^-$}
  \psfrag{L1}{$L_2$}
  \psfrag{L2}{$L_1$}
  \psfrag{z1+}{$z_2^+$}
  \psfrag{z2+}{$z_1^+$}
  \psfrag{z1-}{$z_2^-$}
  \psfrag{z2-}{$z_1^-$}
 \centering \scalebox{1.0}{\includegraphics{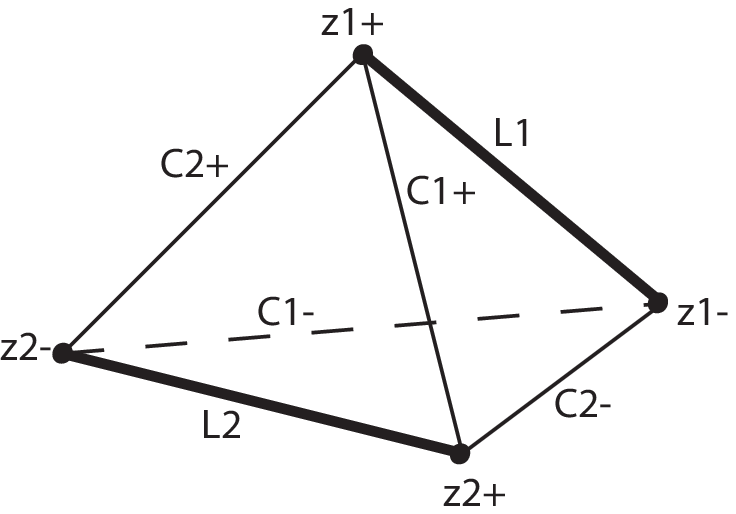}}\label{Figure: twistor space}
  \caption{The Delzant polytope for $\cp^3$}
\end{figure}

\begin{TL}\label{Lemma: First fixed point}
Let $x$ be a smooth point in an anti-self-dual orbifold $U$ with a smooth and effective action of $F$. If $z$ is a $G$-fixed point in the twistor line $L = \pi^{-1}(x)$, then coordinates $(u,v,w)$ can be chosen in a neighbourhood of $z = (0,0,0)$ such that $|u|, |v|, |w| < 1$ and the action of $G$ is given by $$(s,t): (u,v,w)\mapsto(su,tv,stw),$$ for $|s| \leq 1$ and $|t| \leq 1$.
\end{TL}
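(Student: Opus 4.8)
The plan is to reduce the statement to a holomorphic linearization of the $G$-action at $z$ and then to pin down the weights of that linearization from the twistor geometry. Since $z$ is $G$-fixed it is in particular $F$-fixed, so $F$ preserves $L=\pi^{-1}(x)$; equivariance of $\pi$ then forces $x$ to be an $F$-fixed point. As $x$ is smooth, a neighbourhood of $x$ in $U$ is an anti-self-dual $4$-manifold on which $F$ acts smoothly and effectively with $x$ an isolated fixed point, so the isotropy representation on $T_xU\cong\C^2$ is faithful; because $x$ is smooth we may choose a basis of $\Lambda$ in which its two weights are $(1,0)$ and $(0,1)$ (cf.\ the local model for fixed points in Subsection \ref{Subsection: Boundary orbits and fixed points}). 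Writing $J=J_z$ for the complex structure represented by $z$, this says $(T_xU,J)\cong\C_{(1,0)}\oplus\C_{(0,1)}$ as a complex $F$-representation.

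Next I would apply the holomorphic Bochner linearization theorem to the compact torus $F$ acting biholomorphically on $Z$ and fixing $z$: this yields holomorphic coordinates $(u,v,w)$ centred at $z$ in which $F$ acts by its (complex-linear) isotropy representation on $T_zZ$, which, $F$ being a torus, is diagonal with weights $\chi_1,\chi_2,\chi_3\in\Lambda^*$. Since the full $G$-action is holomorphic and restricts on the maximal totally real subgroup $F\subset G$ to this diagonal linear action, analytic continuation (equivalently, linearizability of holomorphic torus actions at a fixed point) shows that $G$ acts near $z$ by $(s,t)\cdot(u,v,w)=(s^{a_1}t^{b_1}u,\,s^{a_2}t^{b_2}v,\,s^{a_3}t^{b_3}w)$ where $\chi_i=(a_i,b_i)$. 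It then remains to compute these weights, which are exactly the weights of $F$ on $T_zZ$.

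These I would read off from the $F$-equivariant exact sequence $0\to T_zL\to T_zZ\to N_{L,z}\to 0$. Since $d\pi_z$ identifies the complex structure on $N_{L,z}$ with $J_z$, the first paragraph gives $N_{L,z}\cong\C_{(1,0)}\oplus\C_{(0,1)}$, accounting for two of the weights. The line $L\cong\cp^1$ carries a $G$-action with exactly two fixed points, $z$ and $z':=\gamma(z)$ (the antipode), and since $\gamma$ reverses the fibrewise complex structure, $N_{L,z'}\cong(T_xU,-J_z)\cong\C_{(-1,0)}\oplus\C_{(0,-1)}$. For a $G$-equivariant line bundle of degree $1$ on $\cp^1$ the weights at the two fixed points differ by the weight $c$ of $G$ on $T_zL$; applying this to the two $\mathcal{O}(1)$-summands of $N_L$ and matching the weight sets $\{(1,0),(0,1)\}$ at $z$ with $\{(-1,0),(0,-1)\}$ at $z'$ forces $c=(1,1)$. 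Hence the weights of $F$ on $T_zZ$ are $(1,0)$, $(0,1)$, $(1,1)$, with $w$ spanning $T_zL$, and the $G$-action becomes $(s,t)\cdot(u,v,w)=(su,tv,stw)$. Rescaling the coordinates so that their common domain is the unit polydisc, the inequalities $|s|,|t|\le 1$ give $|su|,|tv|,|stw|<1$, so this map preserves the polydisc, which is the assertion.

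I expect the main obstacle to be the linearization step, namely checking that the \emph{holomorphic} coordinates produced by Bochner's theorem for the compact torus $F$ genuinely diagonalize the whole $G$-action and not merely $F$; once the set-up is in place the weight computation is essentially forced, and it is reassuring that it reproduces the flat model of Example \ref{Example: Twistor space}, where $L_1=\{\xi=\eta=0\}$ and $G$ acts by $(s,t)\cdot(\xi,\eta,\omega)=(s\xi,t\eta,st\omega)$.
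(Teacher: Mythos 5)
Your proposal is correct, and its skeleton is the same as the paper's: show the linearized action at $z$ has weights $(1,0)$, $(0,1)$, $(1,1)$, linearize holomorphically, and then partially complexify to the semigroup $|s|\leq 1$, $|t|\leq 1$. The difference is in how the two main steps are carried out. The paper gets the weights concretely, by identifying the twistor space of $T_xU\cong\R^4$ with $N_L\cong\mathcal{O}(1)\oplus\mathcal{O}(1)$ and reading the $G$-action off the explicit $\cp^3$ model of Example \ref{Example: Twistor space} (so the flat model you invoke only as a sanity check is, for the paper, the actual computation); you instead derive them intrinsically from the sequence $0\to T_zL\to T_zZ\to N_{L,z}\to 0$, the identification $N_{L,z}\cong (T_xU,J_z)$, and an equivariant-degree argument on $N_L$ using $z'=\gamma(z)$. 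That route works, with two points to tighten: the appeal to ``the two $\mathcal{O}(1)$-summands'' presupposes an equivariant splitting of $N_L$ (this exists --- average a holomorphic splitting over the compact group $F$, using $H^1(\cp^1,\mathcal{O})=0$ --- or avoid it altogether by applying the same degree relation to $\det N_L\cong\mathcal{O}(2)$, which gives $2c=(1,1)-(-1,-1)$ at once). For the linearization/complexification, the paper simply quotes Subsection 5.1 and Lemma 5.1 of \cite{F00}, whereas you sketch a self-contained argument via Bochner averaging plus the identity theorem on the maximally totally real $F\subset G$; this is the right idea, but note that the identity theorem only yields agreement of the global action with the monomial formula for $(s,t)$ in a neighbourhood of $F$, and since the coordinate neighbourhood is not $G$-invariant, extending the formula to all of $\{(s,t): |s|\leq 1,\ |t|\leq 1\}$ still requires an open-and-closed (semigroup) argument using that the monomial map preserves the polydisc --- which is exactly the content of Fujiki's partial complexification lemma that the paper cites. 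With those two repairs your argument is complete and somewhat more intrinsic than the paper's, at the cost of reproving material the paper outsources to \cite{F00}.
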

\begin{proof}
Since $F$ stabilizes $x$, there is an induced action of $F$ on $T_{x} U$, which can be written as $$(\theta_1,\theta_2):(z_1, z_2)\mapsto (e^{2 \pi i\theta_1}z_1, e^{2 \pi i\theta_2}z_2).$$ The conformal structure on $U$ induces a conformal structure on $T_{x} U$, which makes $T_{x} U$ conformally equivalent to $\R^4$.

The twistor space for $T_{x} U$ is given by the normal bundle $N_{L}\cong \mc{O}(1)\oplus\mc{O}(1)$, as explained in Subsection \ref{Subsection: Twistor spaces}. Recalling example \ref{Example: Twistor space}, we can choose coordinates $[x_1: y_1: x_2: y_2]$ (away from $x_2 = y_2 = 0$) on $N_{L}$, where $L$ corresponds to $x_1 = y_1 = 0$ and the action of $G$ is given by
$$(s,t):[x_1:y_1:x_2:y_2] \mapsto [sx_1:ty_1:x_2:sty_2].$$
We will make the choice that $z$ corresponds to $[0:0:1:0] \in N_{L}$ and therefore, the other $G$-fixed point on $L$ corresponds to $[0:0:0:1]$.

If $V$ is an open neighbourhood of $z$ in the twistor space, then there is an isomorphism $T_{z} N_{L} \cong T_{z} V$. Therefore, we can choose coordinates on $T_{z}V$ such that the induced action of $F$ on $T_{z} V$ (with the same $F$-coordinates) is given by $$(s,t):(u,v,w) \mapsto (su, tv, st w),$$
where $z$ corresponds to $(0,0,0)$. There is a $F$-equivariant biholomorphism between a neighbourhood of $0 \in T_{z} V$ and a neighbourhood of $z \in  V$, as noted in Subsection 5.1 in \cite{F00}. This neighbourhood can be chosen to satisfy $|u|< 1, |v|<1$ and $|w|<1$. Furthermore, Lemma 5.1 of \cite{F00} states that the action of $F$ on this neighbourhood can be partially complexified to an action by $(s,t) \in  G$ with $|s| \leq 1$ and $|t| \leq 1$.
\end{proof}

We will denote the orbifold chart about $x_i\in B$ that was constructed in Subsection \ref{Subsection: Boundary orbits and fixed points} by $\varphi_i: \tilde U_i \rightarrow U_i$. We will also denote the lattice with generators $u_{i-1}$ and $u_i$ by $\Lambda_i \subset \Lambda$, and the corresponding 2-torus by $F_i$. So, the orbifold structure group of $x_i$, which we will denote by $\Gamma_i$, is isomorphic to $\Lambda/\Lambda_i$ and the action of $F$ on $U_i$ lifts to an action of $F_i$ on $\tilde U_i$.

Let $\varphi_i^+: \tilde V_i^+ \rightarrow V_i^+$ be the orbifold chart for $z_i^{+} \in \Sigma$ that is induced from $\varphi_i$. The action of $F$ on $V_i^+$ lifts to an effective action of $F_i$ on $\tilde V_i^+$, and we will denote the complexification of $F_i$ by $G_i$. We will denote the preimages of $z_i^\pm$, $L_i$ and $C_i^\pm$ under $\varphi_i^\pm$ by $\tilde z_i^\pm$, $\tilde L_i$ and $\tilde C_i^\pm$, respectively. We will use coordinates $(s, t)$ on $G_i$, where the action of $(s,1)$ and $(1,t)$ stabilize points on $\tilde C_{i-1}^\pm$ and $\tilde C_i^\pm$, respectively.

\begin{TL}\label{Lemma: Fixed point neighbourhood}
Coordinates $(u,v,w)$ can be chosen in a neighbourhood of $\tilde z_i^{\pm} = (0,0,0)$ satisfying $|u|, |v|, |w| < 1$, such that
\begin{equation}\label{Equation: Local model equation}\begin{matrix}\tilde C_{i-1}^{\pm}\cap \tilde V_i^{\pm} = \{u = w = 0\}\\ \tilde C_{i}^{\pm}\cap \tilde V_i^{\pm} = \{v = w =0\} \\ \tilde L_i\cap \tilde V_i^{\pm} = \{u = v = 0\}\end{matrix}.\end{equation} The action of $G_i$ about $\tilde z_i^+$, in the coordinates given above, is $$(s,t): (u,v,w)\mapsto(su,tv,stw),$$ for $|s| \leq 1$ and $|t| \leq 1$. The action of $G_i$ about $\tilde z_i^-$, with respect to the same coordinates on $G_i$, is  $$(s,t): (u,v,w)\mapsto(s^{-1}u,t^{-1}v,s^{-1}t^{-1}w),$$ for $|s| \geq 1$ and $|t| \geq 1$.
\end{TL}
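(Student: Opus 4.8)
The plan is to build on Lemma~\ref{Lemma: First fixed point}, which already gives the statement about the $G$-action in suitable coordinates near an isolated $G$-fixed point on a twistor line over a \emph{smooth} $F$-fixed point. So first I would pass to the orbifold cover: by the construction of the charts in Subsection~\ref{Subsection: Boundary orbits and fixed points}, the point $x_i \in B$ has an orbifold chart $\varphi_i : \tilde U_i \to U_i$ in which $\tilde x_i$ is a \emph{smooth} $F_i$-fixed point, and the induced chart $\varphi_i^{+} : \tilde V_i^{+} \to V_i^{+}$ presents $\tilde z_i^{+}$ as an isolated $G_i$-fixed point on the twistor line $\tilde L_i = \pi^{-1}(\tilde x_i)$. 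Thus Lemma~\ref{Lemma: First fixed point}, applied to $\tilde U_i$ with its torus $F_i$, already hands me coordinates $(u,v,w)$ with $|u|,|v|,|w| < 1$ near $\tilde z_i^{+} = (0,0,0)$ in which the action of $G_i$ is $(s,t) \cdot (u,v,w) = (su, tv, stw)$, for $|s| \le 1$, $|t| \le 1$. The one thing that is not yet pinned down is which coordinate axes are $\tilde L_i$, $\tilde C_{i-1}^{\pm}$ and $\tilde C_i^{\pm}$, i.e.\ the normalization~(\ref{Equation: Local model equation}), together with matching the weights to the chosen coordinates $(s,t)$ on $G_i$.

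The second step identifies the three curves with coordinate axes. The twistor line $\tilde L_i$ is $G_i$-invariant and passes through $\tilde z_i^{+}$; in the model of Example~\ref{Example: Twistor space} and Lemma~\ref{Lemma: First fixed point} the twistor line through $[0:0:1:0]$ is $\{x_1 = y_1 = 0\}$, which corresponds to the $w$-axis being transverse to it and $\tilde L_i = \{u = v = 0\}$ --- so this is forced by the proof of Lemma~\ref{Lemma: First fixed point}. For $\tilde C_{i-1}^{\pm}$ and $\tilde C_i^{\pm}$: by Lemma~\ref{Lemma: Defining C}, these are $G$-invariant complex curves meeting $\tilde L_i$ at $\tilde z_i^{+}$ whose generic points have one-dimensional stabilizer $G^{i-1}$, resp.\ $G^{i}$. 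The two coordinate axes other than the $w$-axis are exactly the two one-dimensional $G_i$-invariant subvarieties through the origin whose generic point has one-dimensional stabilizer (the $u$-axis is fixed by $(1,t)$, the $v$-axis by $(s,1)$), and any $G$-invariant curve through the fixed point must be one of the coordinate axes. Since $C_{i-1}^{\pm}$ is the twistor lift of $B_{i-1}$, whose stabilizer complexifies to $G^{i-1}$, and the coordinates $(s,t)$ on $G_i$ were chosen precisely so that $(s,1)$ fixes $\tilde C_{i-1}^{\pm}$ and $(1,t)$ fixes $\tilde C_i^{\pm}$, matching up the labels gives $\tilde C_{i-1}^{\pm} \cap \tilde V_i^{\pm} = \{u = w = 0\}$ and $\tilde C_i^{\pm} \cap \tilde V_i^{\pm} = \{v = w = 0\}$. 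This simultaneously fixes the identification of the weights with $(s,t)$, giving the stated formula at $\tilde z_i^{+}$.

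The third step handles $\tilde z_i^{-}$. The real structure $\gamma$ interchanges $z_i^{+}$ and $z_i^{-}$ (Lemma~\ref{Lemma: Defining C}) and satisfies the anti-equivariance $\gamma(g.z) = \bar g^{-1}.\gamma(z)$ of~(\ref{Equation: gamma anticommutes}). Applying $\gamma$ to the chart at $\tilde z_i^{+}$ --- equivalently, reading off the $G_i$-action at the other $G_i$-fixed point $[0:0:0:1]$ on the same twistor line, which in the model of Example~\ref{Example: Twistor space} carries the weights $(s^{-1},t^{-1})$ for $(x_1,y_1)$ and hence $(s^{-1},t^{-1})$ on the two transverse curves and $(st)^{-1} = s^{-1}t^{-1}$ on $\tilde L_i$ --- produces coordinates $(u,v,w)$ near $\tilde z_i^{-}$ with the normalization~(\ref{Equation: Local model equation}) and action $(s,t)\cdot(u,v,w) = (s^{-1}u, t^{-1}v, s^{-1}t^{-1}w)$; the domains of partial complexification $|s| \le 1$, $|t| \le 1$ at $\tilde z_i^{+}$ become $|s| \ge 1$, $|t| \ge 1$ at $\tilde z_i^{-}$ under $g \mapsto \bar g^{-1}$. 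I expect the main obstacle to be bookkeeping rather than anything deep: carefully checking that the sign/inversion conventions for the $(s,t)$-coordinates on $G_i$ are consistent between the two fixed points $\tilde z_i^{\pm}$ on $\tilde L_i$, and that the biholomorphism linearizing the $F_i$-action near each fixed point (invoked via Subsection~5.1 of~\cite{F00}) can be chosen to carry the three invariant curves to the three coordinate axes simultaneously --- this last point uses that each curve is itself $F_i$-invariant, so the linearizing map restricts to each, and a further $F_i$-equivariant coordinate change straightens all three at once.
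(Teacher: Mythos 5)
Your overall strategy — reduce to Lemma \ref{Lemma: First fixed point} on the orbifold cover, identify the three invariant curves with the coordinate axes, and obtain the model at $\tilde z_i^-$ from the second fixed point $[0:0:0:1]$ of $N_{\tilde L_i}$ — is the same as the paper's, and those parts are fine. The gap is in the sentence ``this simultaneously fixes the identification of the weights with $(s,t)$''. Matching stabilizers only resolves which factor of $G_i$ goes with which axis; it does not resolve the inversion ambiguity, namely whether, with respect to the coordinates $(s,t)$ on $G_i$ that are fixed in advance (and which must correspond to $s^{u_{i-1}}$ and $t^{u_i}$ in $G$ for the later computation of the poles of $\psi$), the point $\tilde z_i^+$ carries the model $(s,t)\colon(u,v,w)\mapsto(su,tv,stw)$ with $|s|,|t|\le 1$, or the inverse model. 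Both possibilities are compatible with ``$(s,1)$ fixes $\tilde C_{i-1}^\pm$ and $(1,t)$ fixes $\tilde C_i^\pm$''; they correspond exactly to $\tilde z_i^{+}$ playing the role of $[0:0:1:0]$ or of $[0:0:0:1]$ in $N_{\tilde L_i}$. For $i=1$ this is a harmless convention, since which lift of $B_1$ is called $C_1^+$ is a free choice. But for $i\ge 2$ the label $z_i^+$ is already forced: it is the other endpoint of $C_{i-1}^+$. So one must \emph{prove} that the contracting semigroup at $z_i^+$ is the stated one, and nothing in your proposal does this: each fixed point is treated in isolation, and the local models at $z_{i-1}^+$ and $z_i^+$ are never compared along the curve $C_{i-1}^+$ that joins them.

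This comparison is exactly where the paper's proof does its work. After settling $z_1^\pm$, it transports a point of $\tilde V_{i-1}^+$ into $V_i^+$ along ${C_{i-1}^+}'$ using the semigroup action of $t^{u_{i-1}}$, $|t|\le 1$, and observes that $\lim_{|s|\rightarrow 0}(s,1).\tilde w$ lands in $\tilde {C_{i-1}^+}'$ only in the local model where $\tilde z_i^{+}$ corresponds to $[0:0:1:0]$; an induction on $i$ then yields the global consistency. (Equivalently, some global input along $C_{i-1}^+$ is needed, e.g.\ constancy of the weights of the $G^{i-1}$-action on the normal directions along this connected curve of $G^{i-1}$-fixed points; your argument contains no such input.) Without this step the lemma would not determine which of $z_i^{\pm}$ carries which semigroup, and that is precisely the information the Remark following the lemma flags as essential for locating the poles and zeros of $\psi$ in Proposition \ref{Proposition: meromorphic psi}.
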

\begin{proof}
Lemma \ref{Lemma: First fixed point} gives a local model for a neighbourhood of $z_1^+$, from which it is easy to deduce the equations labeled (\ref{Equation: Local model equation}) for $z_1^{+}$. It was also noted in Lemma \ref{Lemma: First fixed point} that a model for $z_1^-$ can be constructed by identifying $z_1^-$ with $[0:0:0:1] \in N_{\tilde L_{1}}$ and then following the same procedure. This gives a local model for a neighbourhood of $z_1^-$ satisfying the statement of this lemma. Thus, we have proven the lemma for $z_1^+$ and $z_1^-$.

If we repeat the argument in Lemma \ref{Lemma: First fixed point} for $z_{2}^+$ we are faced with the possibility that $\tilde z_2^{+}$ corresponds to either $[0:0:0:1]$ or $[0:0:1:0]$ in $N_{\tilde L_{2}}$. In the model about $\tilde z_1^+$ we have
$$\tn{lim}_{|t|\rightarrow 0 }\; \varphi_1^+((1,t).\tilde z) = z \in {C_1^+}',$$ for $\tilde z \in \tilde V_1^+$. If we choose $g \in G$ so that $g.z \in V_2^+\cap {C_1^+}'$, then, for $t_0$ sufficiently small,
$$w := g.\varphi_1^+((1,t_0).\tilde z) \in V_2^+.$$
We will write $t^{u_1} := \tn{exp}(u_1 \tn{log}(t))$ and so, the action of $t^{u_1}$ stabilizes $C_1^\pm\cap V_1$ and this lifts to an action of $(1,t)$, which stabilizes $\tilde C_1^\pm\cap\tilde V_1$. Using this notation
$$\tn{lim}_{|t|\rightarrow 0 }\; t^{u_1}.w = \tn{lim}_{|t|\rightarrow 0 }\; g.\varphi_1^+((1,t.t_0).\tilde z) = g.z \in {C_1^+}'.$$
Thus, if we choose $\tilde w \in \tilde V_2^+$ with $\varphi_2^+(\tilde w) = w$, then
$$\tn{lim}_{|s|\rightarrow 0 }\; \varphi_2^+((s,1).\tilde w) = \tn{lim}_{|s|\rightarrow 0 }\; s^{u_1}.\varphi_2^+(\tilde w) = \tn{lim}_{|s|\rightarrow 0 }\; s^{u_1}.w \in {C_1^+}'.$$
However, $$\tn{lim}_{|s|\rightarrow 0 }\; (s,1).\tilde w \in \tilde {C_1^+}',$$ only in the local model where $\tilde z_2^{+}$ corresponds to $[0:0:1:0]\in N_{\tilde L_{2}}$. This proves the lemma for $z_2^+$ and consequently, $z_2^-$. Then, an inductive argument completes the proof.
\end{proof}

\begin{TR}
For each $i=1,\ldots,k$ we have been careful to use the same lift of the $G$-action to the local models about both $z_i^+$ and $z_i^-$. Specifying which lifts we use on the local models is not necessary for establishing that the closure of a $G$-orbit is a complex orbifold; however, it is important for determining the poles and zeros of the meromorphic function that we define in Section \ref{Section: The meromorphic data}.
\end{TR}

\subsection{The local model about non-fixed points in $\Sigma$}\label{subsection: local model non-fixed}
\begin{TL}\label{Lemma: C local model}
If $z\in {C_i^{\pm}}'$, then one can choose coordinates $(u,v,w)$ on an orbifold chart $\varphi: \tilde V \rightarrow V$, satisfying $|u-1|< \epsilon, |v|<1$ and $|w|<1$, such that $\varphi(1,0,0) = z$ and $u=w=0$ defines $\tilde C_i^{\pm}\cap\tilde V$. Furthermore, if $\tilde G$ denotes the lift of $G$ to $\tilde V$, then the action of $\tilde G$ is given by $$(s,t):(u,v,w)\mapsto(su,tv,stw),$$ where $|su-1|< \epsilon$, $|t| \leq 1$, $|st| \leq 1$.
\end{TL}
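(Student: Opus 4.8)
The plan is to obtain the local model at a non-fixed point $z \in {C_i^\pm}'$ by starting from the description of $C_i^\pm$ given in Lemma \ref{Lemma: Defining C}, namely that non-fixed points on $C_i^\pm$ have stabilizer subgroup $G^i$ and orbifold structure group $\Gamma^i$, and then combining the $\Gamma^i$-invariant Euclidean twistor model with the already-established model near the endpoints $\tilde z_i^\pm$. First I would pass to an orbifold chart $\tilde V$ around $z$ on which $\Gamma^i$ acts and lift the $G$-action to an action of $\tilde G$; since $z$ is fixed by $G^i$, there is an induced $\tilde G$-action on the tangent space $T_z \tilde V$, which splits into three weight lines for the one-parameter subgroup $G^i$ together with the free $\C^*$ direction along $C_i^\pm$. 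Arguing as in Lemma \ref{Lemma: First fixed point} (whose proof only used the Euclidean twistor space $\mc{O}(1)\oplus\mc{O}(1)$ and the $F$-equivariant biholomorphism from $T_z V$ to a neighbourhood of $z$, from Subsection 5.1 of \cite{F00}), one can linearize: choose coordinates $(u,v,w)$ on $\tilde V$ in which the $\tilde G$-action is diagonal, $(s,t):(u,v,w)\mapsto (s^a t^b u, s^c t^d v, s^e t^f w)$ for integer weights. The point $z$ should correspond to $u=1$, $v=w=0$, so $|u-1|<\epsilon$, $|v|<1$, $|w|<1$, and the curve $\tilde C_i^\pm$ near $z$ is the $\C^*$-orbit through $z$ inside the weight-zero locus for $G^i$, which should be cut out by $u=w=0$ — wait, more precisely by the vanishing of the two ``transverse'' coordinates; the bookkeeping here is where care is needed.

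The key step is then to pin down the weights so that the action reads exactly $(s,t):(u,v,w)\mapsto (su,tv,stw)$. For this I would propagate the coordinate conventions from the fixed-point models: by Lemma \ref{Lemma: Fixed point neighbourhood} the action of $G_i$ near $\tilde z_i^+$ is $(s,t):(u,v,w)\mapsto(su,tv,stw)$, with $(1,t)$ stabilizing $\tilde C_i^\pm$ and $(s,1)$ stabilizing $\tilde C_{i-1}^\pm$. Following the same flow limit argument as in Lemma \ref{Lemma: Fixed point neighbourhood} — letting $|t|\to 0$ to flow a generic nearby point onto ${C_i^\pm}'$ and transporting the coordinate chart along the orbit — forces the weight of the free direction to be $(1,0)$ (in the $(s,t)$ coordinates on $G$), while the two transverse directions pick up the residual $G^i$-weights; comparison with the Euclidean model (Example \ref{Example: Twistor space}, where the three edges meeting at a vertex carry weights $(1,0),(0,1),(1,1)$ in suitable coordinates) fixes these to be $v\mapsto tv$ and $w\mapsto stw$. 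One then reads off the constraints $|su-1|<\epsilon$, $|t|\le 1$, $|st|\le 1$ directly from the corresponding constraints $|s|\le 1$, $|t|\le 1$ at $\tilde z_i^+$ after the reparametrization $u \leftrightarrow su$ along the orbit direction. The orbifold structure group claim — that the generic point of $\tilde C_i^\pm$ has structure group $\Gamma^i$ and the chart above already resolves it — is inherited from Lemma \ref{Lemma: Constructing C}.

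The main obstacle I expect is the weight bookkeeping: ruling out the a priori possibility that the linearized $\tilde G$-action at $z$ has weights differing from $(1,0),(0,1),(1,1)$ by a shear (e.g. $w\mapsto s^2 t w$ or $v \mapsto s^{-1}tv$), i.e. showing that the chart at $z$ is genuinely compatible with the already-chosen charts at the two endpoints $\tilde z_{i}^\pm$ and $\tilde z_{i+1}^\pm$ of the curve. This is exactly the kind of global-versus-local consistency issue flagged in the Remark after Lemma \ref{Lemma: Defining C} (the Hopf-surface pathology) and is the reason the hypothesis $\chi_{orb}(M)>0$, hence the presence of the fixed points $x_i$, is used: the curve $C_i^\pm$ runs between two $G$-fixed points whose models are already normalized in Lemma \ref{Lemma: Fixed point neighbourhood}, and continuity of the weight lattice (a discrete invariant) along the connected curve ${C_i^\pm}'$ forces the weights at $z$ to agree with those at the endpoints. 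I would make this precise by noting that the weight decomposition of the normal bundle of $C_i^\pm$ under $G^i$ is locally constant, so it is determined by its value at $\tilde z_i^+$, which Lemma \ref{Lemma: Fixed point neighbourhood} computed. The remaining verifications — that $(u,v,w)$ with $|u-1|<\epsilon$ gives a polydisc-type chart and that $\tilde C_i^\pm \cap \tilde V = \{u=w=0\}$ — are then routine consequences of the linearization.
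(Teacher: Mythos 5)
There is a genuine gap at the heart of your plan: you propose to ``linearize'' and choose coordinates at $z$ in which the full two-parameter action of $\tilde G$ is diagonal, but $z$ is not a fixed point of $\tilde G$ --- its stabilizer is only the lift of the one-parameter subgroup $G^i$ (together with $\Gamma^i$) --- so there is no induced $\tilde G$-action on $T_z\tilde V$ and no linearization theorem that applies; producing such coordinates is precisely the content of the lemma, not something you may assume. Your proposed patch, the local constancy of the $G^i$-weights on the normal bundle of $C_i^\pm$, only controls how the stabilizer $G^i$ acts transversally to the curve; it says nothing about how the complementary one-parameter direction of $G$ acts, i.e.\ why the orbit direction can be taken to be literal multiplication in the coordinate $u$ with the shear term appearing only as $w\mapsto stw$. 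The step you leave vague --- ``transporting the coordinate chart along the orbit'' --- is exactly the missing substance, and once made precise it \emph{is} the paper's proof, which is much shorter and needs none of the weight bookkeeping: by Lemma \ref{Lemma: Defining C}, ${C_i^\pm}'\cong G/G^i\cong\C^*$ is a single $G$-orbit, so one chooses $g\in G$ with $g.z=(1/2,0,0)$ in the chart of Lemma \ref{Lemma: Fixed point neighbourhood} and pulls that chart back by the $G$-equivariant biholomorphism $g^{-1}$; after rescaling this is the desired chart, and the orbifold case follows by running the same argument in the cover. In other words, the normal form at $z$ is not re-derived and then checked for consistency with the endpoint charts --- it is transported from the endpoint chart, and equivariance of the translation makes the consistency automatic.

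Two smaller points. First, your appeal to the Hopf-surface pathology and to $\chi_{orb}(M)>0$ is misplaced here: that issue (a single $G$-orbit meeting a neighbourhood in several leaves) is a global one and is dealt with later, in Section \ref{Section: Analyticity of orbit closures}; the present lemma is purely local and uses only the description of ${C_i^\pm}'$ from Lemma \ref{Lemma: Defining C} (where the fixed points, hence $\chi_{orb}>0$, have already been used). Second, your hesitation about whether the curve is cut out by $u=w=0$ or by the two transverse coordinates is well founded: since $\varphi(1,0,0)=z\in C_i^\pm$ and the model at $\tilde z_i^\pm$ has $\tilde C_i^\pm=\{v=w=0\}$, the correct equation is $v=w=0$ (as in the paper's own proof); the ``$u=w=0$'' in the statement is a typo.
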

\begin{proof}
First suppose that $x \in B_i$ is a smooth point. The action of $G$ is transitive on ${C_i^{+}}'$. Therefore, we can chose $g \in G$ so that $g.z\in V_i^+$. Moreover, $g$ can be chosen so that $g.z = (1/2,0,0)$ in the coordinates on $V_i^+$ given by Lemma \ref{Lemma: Fixed point neighbourhood}. There is a neighbourhood of $g.z \in V_i^+$ with coordinates $(u,v,w)$ satisfying $|u-1/2| < \epsilon < 1/2$, $|v|<1$ and $|w|<1$, where the action of $G$ is as described in Lemma \ref{Lemma: Fixed point neighbourhood}. Applying $g^{-1}$ to this coordinate neighbourhood induces a $G$-equivariant biholomorphism to an open neighbourhood of $z$. Thus, with suitable rescaling, we obtain coordinates of the desired form on a neighbourhood of $z$. In this case, it is clear that $v=w=0$ defines $C_i^{\pm}$. This smooth model can then be used as a local model about $\varphi^{-1}(z)$ in an orbifold cover, which completes the proof.

%If we want to change to Fujiki's coordinates: We can then choose a new set of co-ordinates $(u',v',w') = (u,v,w/u)$ with $|u'|<1/2$, $|v'| < 1$ and $|w'| < 2/(1-2\epsilon)$. In these coordinates the action of $G$ is given by $$(s,t): (u',v',w') \mapsto (su', tv', tw').$$
\end{proof}

\begin{TL}\label{Lemma: Twistor line neighbourhood}
If $z \in L_i\backslash z_i^{\pm}$, then one can choose coordinates $(u,v,w)$ on an orbifold chart $\varphi: \tilde V \rightarrow V$, satisfying $|u|< 1, |v|<1$ and $|w-1|<\epsilon$, such that $\varphi(0,0,1) = z$ and $u=v=0$ defines $\tilde L_i \cap \tilde V$. Furthermore, if $\tilde G$ denotes the lift of $G$ to $\tilde V$, then the action of $\tilde G$ is given by $$(s,t): (u,v,w)\mapsto(su,tv,stw),$$ for $|s| \leq 1$, $|t| \leq 1$ and $|stw-1| < \epsilon$.
\end{TL}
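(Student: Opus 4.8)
The plan is to mimic the proof of Lemma \ref{Lemma: C local model}, replacing the curve $C_i^\pm$ by the twistor line $L_i$ and the base point $(1/2,0,0)$ by $(0,0,1/2)$; all of the analytic input is then supplied by Lemmas \ref{Lemma: First fixed point} and \ref{Lemma: Fixed point neighbourhood}. The one genuinely new point is the following observation. The lift $\tilde G\cong\C^*\times\C^*$ of $G$ acts on the rational curve $\tilde L_i\cong\cp^1$, and this action is nontrivial: in the local model of Lemma \ref{Lemma: First fixed point} about $\tilde z_i^+$ the line $\tilde L_i$ is $\{u=v=0\}$ and $\tilde G$ acts there by $w\mapsto stw$, which is not the identity. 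A nontrivial holomorphic torus action on $\cp^1$ factors through a one-dimensional subtorus of $\tn{Aut}(\cp^1)$, so it has exactly two fixed points; these are $\tilde z_i^\pm$ by Lemma \ref{Lemma: Defining C}, and the action is transitive on the complement. Hence $\tilde L_i\setminus\{\tilde z_i^+,\tilde z_i^-\}$ is a single $\tilde G$-orbit.

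Granting this, suppose first that $x_i$ is a smooth fixed point. In the coordinates on $\tilde V_i^+$ furnished by Lemma \ref{Lemma: Fixed point neighbourhood} the point $(0,0,1/2)$ lies in $\tilde L_i$, and by the transitivity just noted there is $g\in\tilde G$ with $g.z=(0,0,1/2)$. Choose a small polydisc $W$ about $(0,0,1/2)$, with coordinates $(u,v,w)$ satisfying $|u|,|v|<\epsilon'$ and $|w-1/2|<\epsilon'$ for $\epsilon'$ small, on which $\tilde G$ still acts by $(s,t):(u,v,w)\mapsto(su,tv,stw)$ and $\tilde L_i\cap W=\{u=v=0\}$. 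Transport these coordinates to a neighbourhood $\tilde V=g^{-1}(W)$ of $z$ by setting $(\bar u,\bar v,\bar w)=(u,v,w)\circ g$; because $\tilde G$ is abelian the map $g$ commutes with the $\tilde G$-action, so in the new coordinates the action still has the form $(s,t):(\bar u,\bar v,\bar w)\mapsto(s\bar u,t\bar v,st\bar w)$, and, since $\tilde L_i$ is $\tilde G$-invariant, $\tilde L_i\cap\tilde V=g^{-1}(\{u=v=0\})=\{\bar u=\bar v=0\}$. Rescaling $\bar w\mapsto2\bar w$ (an equivariant change of coordinate) and writing $\epsilon=2\epsilon'$ gives a chart with $|\bar u|,|\bar v|<1$, $|\bar w-1|<\epsilon$ and $\varphi(0,0,1)=z$. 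For the range of validity of the action: when $|s|\le1$ and $|t|\le1$ one automatically has $|s\bar u|\le|\bar u|<1$ and $|t\bar v|\le|\bar v|<1$, so the formula holds precisely when $st\bar w$ remains in the chart, that is for $|st\bar w-1|<\epsilon$, as claimed.

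If $x_i$ is an orbifold point, one runs the argument above inside the orbifold cover $\tilde U_i$, where $\tilde x_i$ is a smooth fixed point and Lemmas \ref{Lemma: First fixed point} and \ref{Lemma: Fixed point neighbourhood} apply with $F_i$ in place of $F$; the smooth model produced about a preimage of $z$ then serves as the orbifold chart $\varphi:\tilde V\rightarrow V$, exactly as in the last sentence of the proof of Lemma \ref{Lemma: C local model}. I do not anticipate a real obstacle here: the existence of the partially complexified action and of an equivariant biholomorphism onto a linear model is already contained in Lemmas \ref{Lemma: First fixed point} and \ref{Lemma: Fixed point neighbourhood}, so what is left is the transitivity statement above together with routine bookkeeping. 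The only slightly delicate point is keeping track of the domain of the partially defined action, which is handled by taking $\epsilon'$ small enough that $W$ lies inside the chart of Lemma \ref{Lemma: Fixed point neighbourhood} and then reading off, for $|s|,|t|\le1$, that the action formula is valid exactly on $\{\,|st\bar w-1|<\epsilon\,\}$.
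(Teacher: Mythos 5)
Your proposal is correct and takes essentially the same route as the paper, whose proof simply states that the lemma is proved in much the same way as Lemma \ref{Lemma: C local model}: you supply precisely the needed analogue of that argument, namely transitivity of the $G$-action on $L_i\setminus\{z_i^{\pm}\}$ (read off from the nontrivial action $w\mapsto stw$ in the local model of Lemma \ref{Lemma: Fixed point neighbourhood}), then transport the fixed-point coordinates by a group element and pass to the orbifold cover when $x_i$ is an orbifold point.
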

\begin{proof}
This lemma can be proven in much the same way as lemma \ref{Lemma: C local model}.
\end{proof}

\section{The orbit closures}\label{Section: Analyticity of orbit closures}
\subsection{The local foliation}\label{Subsection: Local analyticity}
Recall that $C := \cup \, C_i^{\pm}$, $\mc{L} := \cup\, L_i$ and $\Sigma = C \cup \mc{L}$. We will continue to use the same notation as Section \ref{Section: Local model} and, in addition, if $V$ is a neighbourhood in $Z$ of a point in $\Sigma$, then we will write $V' := V - \Sigma$ and $\tilde V' = \varphi^{-1}(V')$. Throughout this section we will refer to the coordinate neighbourhoods of points in $\Sigma$, defined in Subsections \ref{Subsection: Local Model} and \ref{subsection: local model non-fixed}, as \emph{admissible neighbourhoods}.

For an admissible neighbourhood $\varphi: \tilde V \rightarrow V$ about  $z \in {C_i^{\pm}}'$ we define
\begin{equation}\label{Equation: Parameterizing orbits} \tilde W_{ab}':= \{(u,v,w)\in \tilde V' : auv = bw \},\;\textnormal{ for }\;(a,b)\neq(0,0).\end{equation}
We will denote the image of $\tilde W_{ab}$ under $\varphi$ by $W_{ab}$. Since $\Gamma^i \subset \tilde F$, the $W_{ab}$ are connected closed submanifolds in $V'$, effectively parameterized by $[a:b]\in \cp^1$, each of which is contained in a single $G$-orbit. The closure of $W_{ab}'$ in $V$ is a suborbifold containing $C_i^{\pm} \cap V$.

Let $\varphi: \tilde V \rightarrow V$ be an admissible neighbourhood of $z_i^{\pm}$. The decomposition of $V$ into $G$-orbits is much the same, except $W_{10}$ splits into the closure of two orbits, which intersect along $L_i$. We define $\tilde W_{10}' = \tilde W'_{i-1} \bigsqcup \tilde W'_i$, where $$\tilde W_{i-1} = \{(u,v,w) \in \tilde V': u = 0\}\quad\textnormal{and}\quad \tilde W_{i} = \{(u,v,w) \in \tilde V': v = 0\}.$$ Otherwise, we define $\tilde W_{ab}'$ as before. We will denote the image of $\tilde W_{i-1}$ and $\tilde W_{i}$ under $\varphi$ by $W_{i-1}$ and $W_{i}$, respectively, and we will refer to $W_{i-1}$ and $W_{i}$ as \emph{special leaves}. We have chosen this notation because, according to lemma \ref{Lemma: Fixed point neighbourhood}, the closure of $W_{i-1}$ in the admissible neighbourhood $V_i^\pm$ contains $V_i^\pm\cap C^{\pm}_{i-1}$, while the closure of $W_{i}$ contains $V_i^\pm\cap C^{\pm}_{i}$.

The following lemma was proven in Section 5 of \cite{F00} in the smooth case.

\begin{TL}\label{Lemma: Foliation}
If $W$ is an open neighbourhood of $C$ covered by admissible neighbourhoods, then every point in $W' : = W - \Sigma$ has trivial $G$-stabilizer.
\end{TL}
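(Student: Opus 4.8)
The plan is to argue locally: cover $W$ by admissible neighbourhoods and show that on each such neighbourhood $V$, any point of $V' = V - \Sigma$ has trivial $G$-stabiliser. Once this is established on every chart, it holds on all of $W'$.

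First I would treat an admissible neighbourhood $\varphi: \tilde V \to V$ about a non-fixed point $z \in {C_i^\pm}'$. By Lemma \ref{Lemma: C local model}, on the orbifold cover $\tilde V$ the action of $\tilde G$ is the linear diagonal action $(s,t):(u,v,w)\mapsto(su,tv,stw)$. A point $(u,v,w)$ with $(u,v,w)\notin\Sigma$ has, in particular, all three coordinates away from the locus defining $C_i^\pm$ and $L_i$; chasing through the description in Subsection \ref{Subsection: Local analyticity}, a point of $\tilde V'$ has $uv \neq 0$ (it lies on some $\tilde W_{ab}'$ with $b\neq 0$, forced because the $u=0$ and $v=0$ leaves are in $\Sigma$ near the fixed points — and for a generic non-fixed point the relevant coordinates are all nonzero). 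If $(s,t)$ fixes such a point then $su = u$ and $tv = v$, hence $s = t = 1$; the stabiliser in $\tilde G$ is trivial, and since $\Gamma^i$ acts freely on $\tilde V'$ (the orbifold points of $Z$ all lie in $\Sigma$, by Lemma \ref{Lemma: Constructing C}), the stabiliser of the image point in $G$ is trivial as well. The same computation handles admissible neighbourhoods about $z_i^\pm$ (Lemma \ref{Lemma: Fixed point neighbourhood}, noting the special leaves $W_{i-1}, W_i$ lie in $\Sigma$ so are excluded from $V'$) and about points of $L_i \setminus z_i^\pm$ (Lemma \ref{Lemma: Twistor line neighbourhood}, where a point of $V'$ has $u, v \neq 0$, again forcing $s = t = 1$).

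The subtle point — the one flagged in the Remark preceding this subsection and illustrated in Figure \ref{Figure: Torus Foliation} — is not the local computation but making sure the local statements patch: a priori a point could have trivial stabiliser in the chart $\tilde G$ yet nontrivial stabiliser in $G$ if the lift $\tilde G \to G$ were not injective on the relevant subgroup, or if the orbit were to re-enter the chart. However, injectivity of the local $G$-action (i.e.\ that $\tilde G$ really is a copy of $G$, not a proper quotient) was already built into Lemmas \ref{Lemma: First fixed point}, \ref{Lemma: Fixed point neighbourhood}, \ref{Lemma: C local model} and \ref{Lemma: Twistor line neighbourhood} — this is exactly where $\chi_{orb}(M) > 0$ and the argument of Lemma \ref{Lemma: Defining C} (ruling out the Hopf-surface behaviour) were used. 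So the stabiliser of $z \in W'$ in $G$ is the stabiliser of $\tilde z$ in $\tilde G$ for any admissible chart containing $z$, and the local computation finishes the proof.

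I expect the main obstacle to be purely bookkeeping: carefully identifying, in each of the three types of admissible neighbourhood, precisely which coordinate loci constitute $\Sigma \cap V$ (for the fixed-point charts this includes the special leaves $W_{i-1}$ and $W_i$, not just $C$ and $\mc{L}$), so that one correctly concludes a point of $V'$ has the two relevant coordinates nonvanishing and hence trivial stabiliser. Everything else reduces to the one-line observation that a diagonal torus acting by $(u,v,w)\mapsto(su,tv,stw)$ fixes a point with $uv\neq 0$ only when $(s,t)=(1,1)$.
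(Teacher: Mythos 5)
There is a genuine gap, and it is exactly at the step you wave through as ``patching''. The local models you invoke do \emph{not} give an action of the full group $G$ on an admissible neighbourhood by the diagonal formula: Lemma \ref{Lemma: First fixed point} only provides a \emph{partial} complexification, and Lemmas \ref{Lemma: Fixed point neighbourhood}, \ref{Lemma: C local model} and \ref{Lemma: Twistor line neighbourhood} assert the formula $(s,t)\colon (u,v,w)\mapsto (su,tv,stw)$ only for $(s,t)$ in a sub-semigroup (e.g.\ $|s|\leq 1$, $|t|\leq 1$, or $|su-1|<\epsilon$, $|t|\leq 1$, $|st|\leq 1$). An arbitrary $g\in G$ with $g.z=z$ need not lie in this semigroup, and for such $g$ the chart tells you nothing about how $g$ acts even though $g.z$ happens to land back in the chart --- the orbit of $z$ under a path from $1$ to $g$ can leave the neighbourhood, and this is precisely the Hopf-surface phenomenon the preceding Remark warns about. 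Your assertion that ``the stabiliser of $z\in W'$ in $G$ is the stabiliser of $\tilde z$ in $\tilde G$ for any admissible chart'' is therefore not something already ``built into'' the earlier lemmas; it is essentially the content of the lemma being proved, and citing $\chi_{orb}(M)>0$ and Lemma \ref{Lemma: Defining C} for it does not close the gap. Note also that the fallback ``consider $g^{-1}$'' does not rescue the one-line computation in general: for $g=(s,t)$ with $|s|<1<|t|$ neither $g$ nor $g^{-1}$ lies in the region where the linear formula is valid.

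The paper's proof supplies the missing reduction. Given $g$ stabilizing $z\in V'$ with $V$ an admissible neighbourhood of a point of ${C_i^\pm}'$, one uses commutativity, $g(1,t)z=(1,t)gz=(1,t)z$, and lets $t\to 0$ \emph{inside} the semigroup where the local model does apply, so that $(1,t)z$ converges to some $z'\in {C_i^\pm}'$; by continuity $g.z'=z'$. Lemma \ref{Lemma: Defining C} (a global statement, proved earlier) says the exact $G$-stabilizer of points of ${C_i^\pm}'$ is $G^i$, so $g=(1,t_0)\in G^i$. Only now, with $g$ confined to a one-parameter subgroup, does the diagonal computation apply: either $|t_0|\leq 1$ or $|t_0^{-1}|\leq 1$, so $g$ or $g^{-1}$ acts by the model formula on $z$, whose relevant coordinate is nonzero, forcing $t_0=1$. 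Your local stabilizer computation is fine as far as it goes, but without this degeneration-to-$C$ step (or some substitute argument controlling elements of $G$ far from the identity) the proof does not go through.
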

\begin{proof}
Let $V$ be an admissible neighbourhood and suppose that $g\in G$ stabilizes $z\in V'$. Note that we can always choose $V$ to be an admissible neighbourhood of a  \emph{non-fixed} point in $C$ containing $z$. Therefore, without loss of generality, we can assume that $V'$ is a neighbourhood of a point in ${C_i^\pm}'$. Thus, $x\in W_{ab}$ for some $[a:b]$ and we have $$g(1,t)z = (1,t)gz = (1,t)z,$$ where $(1,t)$ is the generator of $G^i$. We can use the local model given in Lemma \ref{Lemma: C local model} to show that, as $t\rightarrow 0$, $$gz' = z'$$ for some $z'\in {C_i^\pm}'$. Only elements of $G^i \subset G$ fix $z$ (by Lemma \ref{Lemma: Defining C}) and so, $g = (1,t_0)\in G^i$. By Lemma \ref{Lemma: C local model}, if $g = (1,t_0)\in G^i$ with $|t_0| \leq 1$, then $g = (1,1)$. If $|t_0|>1$, then consider $g^{-1}$.
\end{proof}

A smooth and effective group action on an orbifold defines a foliation provided it has only finite stabilizers \cite{L74}. Therefore, Lemma \ref{Lemma: Foliation} proves that the action of $G$ on $Z$ defines a foliation of $W'$ by complex 2-orbifolds. In the next subsection we prove that the closure of a $G$-orbit intersecting $W$ contains only a single leaf of this foliation, from which it follows that the orbit closure is a complex orbifold.

\begin{TD}\label{Definition: Special orbits}
Let $W$ be the neighbourhood of $C$ defined in Lemma \ref{Lemma: Foliation}. Recall that in an admissible neighbourhood of $z_i^{\pm}$ we have two special leaves $W_{i-1}$ and $W_{i}$. We will refer to a $G$-orbit containing a special leaf for some $z_i^{\pm}$ as \emph{special}, otherwise we will refer to a $G$-orbit intersecting $W'$ as \emph{non-special}. \end{TD}
\begin{TD}\label{Definition: B-orbits}
By Lemma \ref{Lemma: Constructing C}, we can choose a non-fixed point in the twistor line $L_x$ over the non-fixed point $x \in B_i$. The closure of the $G^i$-orbit of a non-fixed point in $L_x$ is $L_x$. Therefore, $L_x$ is contained in the closure of a $G$-orbit, which we will denote by $O_x$. We will refer to such orbits as \emph{$B$-orbits}, since they correspond to boundary orbits in $M$.
\end{TD}

\begin{TL}\label{Lemma: Infinite B-orbits}
There exists $x\in B$ such that the $B$-orbit $O_x$ is non-special.
\end{TL}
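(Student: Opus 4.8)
The plan is to argue by contradiction: suppose every $B$-orbit $O_x$ is special. The idea is to count. Each special $G$-orbit, by Definition \ref{Definition: Special orbits}, contains a special leaf $W_{i-1}$ or $W_i$ for some fixed point $z_i^\pm$, and the closure of such a leaf meets $\mc{L}$ in one of the lines $L_i$. First I would observe that there are only finitely many special leaves — precisely the $W_{i-1}, W_i$ attached to the $2k$ fixed points $z_i^\pm$ (in fact the lines $L_i$ for $i = 1, \dots, k$, each appearing from two sides) — so there are only finitely many special $G$-orbits. On the other hand, the set of $B$-orbits is parameterized by the non-fixed points $x$ in the interior of the boundary arcs $B_i'$, which is a continuum: for each fixed $i$, as $x$ ranges over $B_i'$ the twistor lines $L_x$ are pairwise disjoint (distinct points of $M$ have disjoint twistor lines), so the $B$-orbits $O_x$ are pairwise distinct. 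Hence there are uncountably many distinct $B$-orbits, and they cannot all be special.

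To make the contradiction rigorous rather than merely a cardinality count, I would localize near one of the fixed points, say $z_i^+$, using the admissible neighbourhood $\tilde V_i^+$ of Lemma \ref{Lemma: Fixed point neighbourhood}. In those coordinates $(u,v,w)$ the curve $\tilde C_i^+$ is $\{v = w = 0\}$ and $\tilde C_{i-1}^+$ is $\{u = w = 0\}$, and the $B$-orbits whose $G^i$-orbit limits onto $C_i^+ \cap V_i^+$ correspond, via equation (\ref{Equation: Parameterizing orbits}) applied at a non-fixed point of $C_i^+$, to the leaves $\tilde W_{ab}$ with $(a,b)$ varying in $\cp^1 \setminus \{[1:0]\}$; the special leaves are exactly the two extra pieces $\tilde W_{i-1}, \tilde W_i$ sitting over $[1:0]$. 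So a whole $\cp^1$-family of leaves, hence of $G$-orbits, passes through $V_i^+$, and all but the special ones are non-special $B$-orbits. Concretely, picking any $[a:b]$ with $a, b$ both nonzero gives a leaf $W_{ab}$ that is the local picture of some $O_x$ with $x \in B_i'$, and it is non-special by construction.

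The main obstacle is checking that these $W_{ab}$-leaves genuinely arise as $B$-orbits $O_x$ — i.e. that the generic leaf of the local foliation near $C_i^+$, when continued globally, has closure containing a full twistor line $L_x$ rather than degenerating. This is where I would lean on Lemma \ref{Lemma: Constructing C} and Lemma \ref{Lemma: C local model}: near a non-fixed point $z \in {C_i^+}'$ the local model says the $\tilde G$-orbit of a point off $\Sigma$ limits, under $(1,t)$ as $t \to 0$, onto a point of ${C_i^+}'$, and the $G^i$-orbit of a non-fixed point of $L_x$ sweeps out all of $L_x$; matching these identifies the leaf with (a neighbourhood in) $O_x$ for the appropriate $x$. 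One should also confirm the parameter $x$ actually moves — that different $[a:b]$ give different $x$ — which follows since the $G$-action on the arc $B_i'$ of boundary orbits is free after complexification in the transverse direction, so the slice coordinate $[a:b]$ injects into $B_i'$. Once that correspondence between generic local leaves and $B$-orbits is in hand, the finiteness of special leaves versus the continuum of such $x$ closes the argument.
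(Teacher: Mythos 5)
There is a genuine gap at the first step, and it is exactly the point the paper's proof is designed to handle. You assert that since distinct points of $M$ have disjoint twistor lines, the $B$-orbits $O_x$, $x \in B_i'$, are pairwise distinct, giving a continuum of $B$-orbits. But $O_x$ is not the line $L_x$: by Definition \ref{Definition: B-orbits} it is a $2$-complex-dimensional $G$-orbit whose \emph{closure} contains $L_x$, and nothing available at this stage prevents the closure of a single $G$-orbit from containing many (even infinitely many) pairwise disjoint twistor lines over $B$. This is precisely the Hopf-surface pathology discussed in the remark accompanying Figure \ref{Figure: Torus Foliation}: there a single $G$-orbit meets the local foliation in infinitely many leaves, and its closure sweeps up a large part of $\pi^{-1}(B)$, so "disjoint lines $\Rightarrow$ distinct $B$-orbits" is false in general; ruling this out is the content of this part of the paper (the hypothesis $\chi_{orb}(M)>0$ enters only through the global structure being built), not something one may assume. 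The distinctness of the orbits $O_x$ for distinct $x$ only becomes available \emph{after} Lemma \ref{Lemma: C connected}, which itself relies on the present lemma, so your argument is circular as stated.

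Your second and third paragraphs correctly identify the missing step ("checking that these $W_{ab}$-leaves genuinely arise as $B$-orbits"), but the appeal to Lemmas \ref{Lemma: Constructing C} and \ref{Lemma: C local model} only reproduces the local foliation picture; it does not show that the $G$-saturation of $\pi^{-1}(B)\setminus\Sigma$ meets an admissible neighbourhood in infinitely many leaves, nor that "different $[a:b]$ give different $x$" (a claim you justify only by a vague freeness remark, and which in any case is not what is needed --- what matters is distinctness of $G$-orbits, not of the labels $x$). The paper closes this gap by a dimension argument: if there were only finitely many $B$-orbits (equivalently, if all of them were among the finitely many special orbits), then the $4$-real-dimensional set $\pi^{-1}(B)\setminus\Sigma$ would lie inside a finite union of $2$-complex-dimensional $G$-orbits on which $G$ acts locally freely, forcing it to be $G$-invariant and hence a complex submanifold of $Z$; but then every lift of $B$ to $Z$ would be holomorphic, whereas only the twistor lifts $C_i^\pm$ are. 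Some argument of this kind --- using that $\pi^{-1}(B)\setminus\Sigma$ cannot be complex --- is indispensable, and it is absent from your proposal.
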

\begin{proof}
Suppose that there are only finitely many $B$-orbits, then the union of all $B$-orbits has 2 complex dimensions. From the definition of the $B$-orbits, the union of all $B$-orbits coincides with the $G$-orbit of the set $\pi^{-1}(B)\backslash \Sigma$, where $\Sigma$ was defined in Subsection \ref{Subsection: Describing a singular set}. Since $\pi^{-1}(B)\backslash \Sigma$ is a 4 real dimensional set and the union of $B$-orbits is 2 complex dimensional, $\pi^{-1}(B)\backslash \Sigma$ must be $G$-invariant. Moreover, the action of $G$ on $\pi^{-1}(B)\backslash\Sigma$ is locally free (Lemma \ref{Lemma: Constructing C}) and so, it must be 2 complex dimensional. However, $\pi^{-1}(B)\backslash\Sigma$ is not a complex submanifold of $Z$: if it were every lift of $B$ to $Z$ would be holomorphic. This gives a contradiction.
%This is because we have non-holomorphic double covers of $B$ parameterized by the (holomorphic) twistor lines. Alternatively, if it were holomorphic, then projecting down would mean that the curve is holomorphic with respect to every complex structure.

While there are infinitely many $B$-orbits, it follows from definition \ref{Definition: Special orbits} that there are finitely many special orbits. Thus, we can always choose $x \in B$ so that $O_x$ is non-special.
\end{proof}

\begin{TE}
\textnormal{In Figure \ref{Figure: twistor space} we exhibited $\cp^3$ as the twistor space of the toric manifold $S^4$. The special orbits fit into this example as the faces of the tetrahedron representing $\cp^3$. More explicitly, the two special orbits intersecting along $L_1$ correspond to  $$\{[0:s^{-1}t:s^{-1}:t]\} \quad\tn{and}\quad \{[st^{-1}:0:t^{-1}:s]\},$$ while the two special orbits intersecting along $L_2$ correspond to $$\{[s:t:0:st]\} \quad\tn{and}\quad\{[t^{-1}:s^{-1}:s^{-1}t^{-1}:0]\}.$$ An example of a $B$-orbit is
$$\{[as:-\bar{a}t:1:st]\},$$ which contains the twistor line $$\{[az: -\bar{a}:1:z]\}$$ for $a \in \C^*$.
}
\end{TE}

\subsection{The orbit closures as complex orbifolds}\label{Subsection: analyticity of orbit closures}
In this subsection we prove several lemmas, which are summarized in the next proposition.
\begin{TP}\label{Proposition: Analytic orbits}
Let $O$ be a $G$-orbit intersecting $W'$. Then the closure of $O$ is a complex orbifold and $G$ acts freely on $O$. Furthermore, $C$ is connected and, if $O$ is non-special, $$\overline O = O \cup C.$$
\end{TP}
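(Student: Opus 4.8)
The plan is to establish Proposition \ref{Proposition: Analytic orbits} by combining the local foliation structure from Lemma \ref{Lemma: Foliation} with a monodromy argument that controls how a single $G$-orbit can re-enter an admissible neighbourhood. First I would observe that $G$ acts freely on any orbit $O$ meeting $W'$: by Lemma \ref{Lemma: Foliation} every point of $W'$ has trivial $G$-stabilizer, so the orbit map $G \to O$ is injective. The substantive content is that $\overline{O}$ is a complex orbifold, and the key obstacle — illustrated by the Hopf surface cautionary remark and Figure \ref{Figure: Torus Foliation} — is precisely ruling out that $O$ wraps around $C$ and meets some admissible neighbourhood in infinitely many leaves of the local foliation.

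To rule this out I would argue along the cycle $C = \bigcup C_i^{\pm}$ as follows. Work first with a non-special $B$-orbit $O_x$, which exists by Lemma \ref{Lemma: Infinite B-orbits}. In an admissible neighbourhood $V$ of a non-fixed point of $C_i^{\pm}$, Lemma \ref{Lemma: C local model} gives coordinates in which the leaves are the $\tilde W_{ab}' = \{auv = bw\}$; following the $(1,t)$-action as $|t|\to 0$ shows each leaf limits onto $C_i^{\pm}$, so $C_i^{\pm}$ lies in the closure of any $B$-orbit that meets that neighbourhood. Passing through the fixed point $z_{i+1}^{\pm}$ using the local model of Lemma \ref{Lemma: Fixed point neighbourhood}, a leaf over $C_i^{\pm}$ is carried — via the explicit $(s,t)\mapsto(su,tv,stw)$ action — to a well-defined leaf over $C_{i+1}^{\pm}$; crucially, because the coordinates $(u,v,w)$ identify $C_{i-1}^{\pm}$, $C_i^{\pm}$ and $L_i$ with the three coordinate axes, there is \emph{no} monodromy: the leaf $\{auv=bw\}$ matches onto a unique leaf on the far side, with $[a:b]$ transforming by a fixed element of $PGL(2)$. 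Iterating around the whole cycle $C_1^{\pm}, \dots, C_k^{\pm}$ and back, the composite return map on the $\cp^1$ of leaves is a single projective transformation; but since $O_x$ is a \emph{single} orbit whose closure contains each $C_i^{\pm}$, the leaf it determines over $C_1^{\pm}$ must be a fixed point of this return map, and — this is where positivity of $\chi_{orb}$, hence the presence of fixed points and the concrete toric combinatorics $\s$, enters — the composite transformation is not the identity (it has the expected eigenvalue structure forcing a unique attracting/repelling fixed leaf), so $O_x$ contains exactly one leaf in each admissible neighbourhood. This simultaneously shows $C$ is connected: if $C$ had two components swapped by $\gamma$ the cycle would not close up into a single $PGL(2)$ return map on one component, contradicting the existence of the non-special $B$-orbit limiting onto all of its boundary.

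Once one non-special orbit $O_x$ is known to have $\overline{O_x} = O_x \cup C$ a complex orbifold, I would upgrade to an arbitrary $G$-orbit $O$ meeting $W'$. Every such $O$ lies in some leaf $W_{ab}$ of the local foliation in each admissible neighbourhood it meets, and the gluing maps between overlapping admissible neighbourhoods — which are the biholomorphisms of the orbifold $Z$ — carry leaves to leaves, so the leaves patch into a global immersed complex $2$-orbifold; the argument of the previous paragraph, applied to the leaf of $O$ rather than of $O_x$, shows $O$ meets each admissible neighbourhood in a single leaf, so this patching is in fact an embedding near $C$, and $\overline{O}$ is obtained by adding the corresponding (closed) leaves through the $z_i^{\pm}$ and the curves $C_i^{\pm}$. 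For a non-special $O$ the special leaves never enter, so only the $C_i^{\pm}$ themselves are added and $\overline O = O \cup C$; for a special $O$ one also adds an $L_i$. Finiteness of the number of special orbits (Definition \ref{Definition: Special orbits}) ensures the generic orbit is non-special, so the complex-orbifold conclusion holds in all cases. I expect the genuinely delicate point to be verifying that the composite return map around the cycle $C$ is not the identity — equivalently, that a $G$-orbit cannot spiral through infinitely many leaves — and this is exactly where the hypotheses $\chi_{orb}(M) > 0$ (so that $B$ is a disc-like boundary arc with genuine fixed points at its ends, not a torus as in the Hopf surface) and the transversality of consecutive $u_i$ must be used.
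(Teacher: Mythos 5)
Your overall skeleton matches the paper's (free action via Lemma \ref{Lemma: Foliation}; propagation of leaves around the cycle $C$ using the local models of Lemmas \ref{Lemma: Fixed point neighbourhood} and \ref{Lemma: C local model}; use of a non-special $B$-orbit from Lemma \ref{Lemma: Infinite B-orbits}), but the mechanism you put at the heart of the proof --- the return map on the $\cp^1$ of leaves being a \emph{non-identity} M\"obius transformation with an attracting/repelling fixed leaf --- is both logically inverted and unsubstantiated. Continuation of a plaque around the cycle stays inside the same $G$-orbit, so if the return map $T$ were not the identity, then a \emph{generic} orbit would meet an admissible neighbourhood in the distinct leaves $P$ and $T(P)$: a non-trivial $T$ is precisely the Hopf-surface pathology you are trying to exclude, not the cure for it. In fact $T$ is the identity; in the model $\cp^3$ the non-special orbit closures are the fibres of $x_1y_1/x_2y_2$, all of which close up around the cycle of four lines. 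Moreover, even a corrected monodromy argument would not suffice: two plaques of one orbit in the same admissible neighbourhood need not be related by holonomy at all, since they may lie in different connected components of $O\cap W'$, related only by some $g\in G$ that does not preserve $W$. This is exactly the case the paper's argument handles: given $g$ carrying one leaf to another, one applies the generator $(1,t)$ of $G^i$ and lets $t\to 0$ in the local model, forcing $g$ to fix (or map within) ${C_i^\pm}'$, hence $g\in G^i$ by Lemma \ref{Lemma: Defining C}, and $G^i$ preserves leaves --- so the two leaves coincide. Note also that $\chi_{orb}(M)>0$ enters much earlier than you suggest: it produces the fixed points, hence the cycle structure of $B$ and $C$ and ${C_i^\pm}'\cong\C^*$, which is what makes the stabilizer computation of Lemma \ref{Lemma: Defining C} (and with it the no-two-leaves argument) work; it is not invoked to force some ``eigenvalue structure'' of a return map, and you offer no computation that could support that claim.

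Two further gaps: your connectedness argument for $C$ does not work as stated --- if $C$ had two components interchanged by $\gamma$, each component would still be a closed cycle $C_1^+\cup\cdots\cup C_k^+$ with a perfectly well-defined return map, so nothing fails to ``close up''; the paper instead observes that a non-special $B$-orbit would have closure containing both components while its boundary is connected. And your final sentence dismisses the special orbits by genericity, but the complex-orbifold statement must be proved for them too: one has to show a special orbit meets an admissible neighbourhood of a point of $L_i$ in a single leaf, which the paper does by a separate limiting argument (if $h\in G$ carried a point of $W_i$ to $W_{i-1}$, acting by $(s,1)$ and letting $s\to 0$ would force $h$ to map a non-fixed point of $C_{i-1}^\pm$ to $z_i^\pm$, a contradiction). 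As written, your proposal neither establishes the single-leaf property in general nor covers the special case, so the main assertion that $\overline{O}$ is a complex orbifold is not proved.
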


\begin{TL}\label{Lemma: No stabilizer}
If $O\cap W' \neq \emptyset$, then $G$ acts freely on $O$. In particular, $O\cong \C^*\times\C^*$.
\end{TL}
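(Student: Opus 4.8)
The plan is to show that any $g \in G$ fixing a point of $O$ must be the identity, by propagating the fixed-point condition into one of the admissible neighbourhoods covering $C$, where the $G$-action is essentially linear and transparent. First I would observe that it suffices to find \emph{one} point of $\overline{O}$ lying in some admissible neighbourhood and controlled by the local models of Subsection \ref{subsection: constructing B}; the stabilizer of $O$ is a subgroup of $G$ (since $G$ is abelian, the stabilizer is constant along the orbit), so exhibiting triviality at a single well-chosen point suffices. The natural candidate is a point of $\overline{O}$ lying in $C$: since $O$ intersects $W'$, its closure meets $W$, and I would argue — using the limiting arguments in the style of Lemma \ref{Lemma: Fixed point neighbourhood} and Lemma \ref{Lemma: Foliation} — that $\overline{O}$ accumulates on $C$, hence $\overline{O} \cap {C_i^\pm}' \neq \emptyset$ for some $i$.

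Next, suppose $g \in G$ stabilizes a point $z \in O \cap W'$, where by the previous remark I may take $z$ inside an admissible neighbourhood $\varphi\colon \tilde V \to V$ of a non-fixed point of $C_i^\pm$; in the coordinates of Lemma \ref{Lemma: C local model}, $z$ lies on some leaf $W_{ab}$ with $ab \neq 0$ (the values $a=0$ or $b=0$ would put $z$ in $\Sigma$). Using the explicit action $(s,t)\colon(u,v,w)\mapsto(su,tv,stw)$, and letting $t \to 0$ along the one-parameter subgroup $(1,t) \in G^i$ — which commutes with $g$ — I get $g z' = z'$ for a limit point $z' \in {C_i^\pm}'$. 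But by Lemma \ref{Lemma: Defining C} only elements of $G^i$ fix points of ${C_i^\pm}'$, so $g = (1,t_0) \in G^i$. Feeding this back into the local model of Lemma \ref{Lemma: C local model}: if $|t_0| \le 1$ the orbit map forces $t_0 = 1$ (the coordinate $u$ near $1$ is moved unless $s=1$, and then the $v,w$ coordinates pin down $t_0$), and if $|t_0| > 1$ we apply the same argument to $g^{-1}$. Hence $g = (1,1)$, so the stabilizer of $O$ is trivial, giving the free action, and since $G \cong \C^* \times \C^*$ the orbit is $O \cong \C^* \times \C^*$.

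The main obstacle I anticipate is the first step: showing that $\overline O$ must meet $C$ (rather than, say, escaping $W$ entirely or limiting only onto $\mathcal{L}$). This requires using compactness of $Z$ together with the structure of the admissible cover — tracking a point of $O \cap W'$ under a degenerating one-parameter subgroup and verifying, via Lemmas \ref{Lemma: Fixed point neighbourhood} and \ref{Lemma: C local model}, that the limit stays in $W$ and lands on a leaf of the foliation whose closure contains part of $C$. Once the limit point is known to lie on ${C_i^\pm}'$, the rest is a direct computation with the linear normal forms, and the role of the orbifold structure group $\Gamma^i$ is harmless since $\Gamma^i \subset \tilde F$ acts through the torus and so does not enlarge the $G$-stabilizer.
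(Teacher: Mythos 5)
Your argument is correct, and its core is in essence the paper's own mathematics, but packaged differently: the limiting trick along the one-parameter subgroup $(1,t)\in G^i$, the appeal to Lemma \ref{Lemma: Defining C} to force $g\in G^i$, and the final use of the local model of Lemma \ref{Lemma: C local model} (passing to $g^{-1}$ when $|t_0|>1$) are precisely the proof of Lemma \ref{Lemma: Foliation}, which the paper has already established. The paper's proof of the present lemma is therefore just two lines: translate any $z\in O$ by some $g\in G$ into $O\cap W'$, invoke Lemma \ref{Lemma: Foliation} to see the translated point has trivial stabilizer, and conclude (as you note, stabilizers are constant along an orbit of the abelian group $G$). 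In particular, the step you single out as the main obstacle --- showing that $\overline{O}$ accumulates on $C$ --- is not needed at all: the hypothesis $O\cap W'\neq\emptyset$ already places a point of the orbit inside an admissible neighbourhood where the linear model applies, and the fact that $\overline{G.z}$ meets ${C_i^\pm}'$ comes out of your own limit computation as a byproduct rather than being a prerequisite. One minor inaccuracy, harmless to the argument: in an admissible neighbourhood of a point of ${C_i^\pm}'$ the set $\Sigma\cap V$ is only $C_i^\pm\cap V$, so the leaves with $[a\!:\!b]=[1\!:\!0]$ or $[0\!:\!1]$ still lie in $V'$ and your parenthetical claim that $a=0$ or $b=0$ would force $z\in\Sigma$ is not right; but nothing depends on it, since the limit along $(1,t)$ lands on the lift of ${C_i^\pm}'$ for every point of $V'$, whether or not one of the two coordinates cutting out $C_i^\pm$ vanishes. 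The final deduction $O\cong\C^*\times\C^*$ from freeness is as in the paper.
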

\begin{proof}
For $z\in O$ there exists $g\in G$ such that $g.z \in O\cap W'$. We know from Lemma \ref{Lemma: Foliation} that $g.z$ has trivial $G$-stabilizer. Therefore, $z$ has trivial stabilizer.
\end{proof}

\begin{TL}\label{Lemma: C connected}
The set $C$ is connected and any non-special orbit $O$ satisfies $$\overline{O} = O\cup C.$$
\end{TL}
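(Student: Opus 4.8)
The plan is to identify $\overline{O}$ with a complete toric surface and read off its boundary cycle. A non-special orbit $O$ meets $W'$ by definition, so Lemma~\ref{Lemma: No stabilizer} applies: $G$ acts freely on $O$, hence $O\cong\C^*\times\C^*$ and $O\cap\Sigma=\emptyset$ (every point of $\Sigma$ has positive-dimensional stabilizer). Fixing a basepoint of $O$ identifies it $G$-equivariantly with the group $\C^*\times\C^*$, so $\overline{O}$ is a $G$-equivariant compactification of that group. Granting that $\overline{O}\setminus O$ is a proper analytic subset (which the local models below make transparent near $\Sigma$, and which is anyway part of Proposition~\ref{Proposition: Analytic orbits}), the two coordinate functions on $O$ extend to algebraically independent meromorphic functions on the normalization $\widehat{\overline{O}}$; hence $\widehat{\overline{O}}$ is Moishezon, so a projective surface, so a complete toric surface. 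Its toric boundary is a single cycle of rational curves, and pushing forward by the finite surjective normalization map, $\overline{O}\setminus O$ is a connected cycle of rational curves.

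Next I would run the propagation step with the local models of Section~\ref{Section: Local model}. If $\overline{O}$ meets an admissible neighbourhood $V$ of a point $z\in\Sigma$, then since $O\cap\Sigma=\emptyset$ the orbit $O$ meets $V-\Sigma$, so $O\cap(V-\Sigma)$ is a union of leaves $\tilde W'_{ab}$. By the description in Subsection~\ref{Subsection: Local analyticity}, the closure of such a leaf in $V$ contains a piece of some $C_i^{\pm}$; and if $z=z_i^{\pm}$ — where, $O$ being non-special, only the leaves with $b\neq 0$ occur — its closure contains $(C_{i-1}^{\pm}\cup C_i^{\pm})\cap V$ while meeting $L_i$ only at $z_i^{\pm}$. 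Since each $C_i^{\pm}$ is a single $\C^*$-orbit together with its two fixed points $z_i^{\pm},z_{i+1}^{\pm}$, once $\overline{O}$ contains a piece of $C_i^{\pm}$ it contains all of $C_i^{\pm}$, hence those two fixed points; iterating through the points $z_j^{\pm}$, at each of which exactly two curves of $C$ meet by Lemma~\ref{Lemma: Fixed point neighbourhood}, shows that $\overline{O}$ contains the whole connected component of $C$ through $C_i^{\pm}$. As $O$ meets $W'$, it meets some admissible neighbourhood of a point of $\Sigma$, so $\overline{O}$ contains at least one connected component of $C$.

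For connectedness of $C$, suppose to the contrary $C=C^+\sqcup C^-$ with $\gamma(C^+)=C^-$, each component a cycle of $k$ rational curves. By Lemma~\ref{Lemma: Infinite B-orbits} choose a non-special $B$-orbit $O_0=O_x$; it contains the real twistor line $L_x$, and since $\gamma$ preserves $L_x$ and acts transitively on the orbits of $G^i$ in $L_x\setminus\{x^+,x^-\}$ we get $\gamma(O_0)=O_0$, hence $\gamma(\overline{O_0})=\overline{O_0}$. By the previous paragraph $\overline{O_0}$ contains one component, say $C^+$; but $\overline{O_0}\setminus O_0$ is a single cycle of rational curves by the first paragraph, and $C^+$ is already a full such cycle, so $\overline{O_0}\setminus O_0=C^+$. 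Applying $\gamma$ gives $\overline{O_0}\setminus O_0=\gamma(C^+)=C^-$, so $C^+=C^-$, contradicting $C^+\cap C^-=\emptyset$. Hence $C$ is connected, a single cycle of $2k$ rational curves.

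Finally, for an arbitrary non-special $O$: by the first paragraph $\overline{O}\setminus O$ is a single cycle of rational curves, and by the propagation step it contains the connected cycle $C$; a cycle of curves contains no smaller closed cycle properly, so $\overline{O}\setminus O=C$ and $\overline{O}=O\cup C$. The step I expect to be most delicate is the propagation together with the ``no escaping'' claim — in particular verifying that near each $z_i^{\pm}$ only the leaves $\tilde W'_{ab}$ with $b\neq 0$ occur for a non-special orbit, and that their closures stay away from $L_i\setminus\{z_i^{\pm}\}$ and from the rest of $Z$, so that no spurious pieces of $\mathcal{L}$ are picked up; the identification of $\overline{O}$ with a toric surface is precisely what excludes the pathology of Figure~\ref{Figure: Torus Foliation}.
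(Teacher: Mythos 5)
Your propagation step (the second paragraph) is sound and coincides with the first half of the paper's argument: non-specialness guarantees that near each $z_i^{\pm}$ the relevant leaves are the $W_{ab}'$ with $b\neq 0$, whose closures contain both adjacent curves $C_{i-1}^{\pm}\cap V$ and $C_i^{\pm}\cap V$, so $\overline{O}$ swallows a whole connected component of $C$. The gap is everything that rests on your first paragraph. The claim that $\overline{O}\setminus O$ is a proper analytic subset is exactly what is \emph{not} yet available: Proposition \ref{Proposition: Analytic orbits} is only the summary of Lemmas \ref{Lemma: No stabilizer}, \ref{Lemma: C connected} and \ref{Lemma: analytic closure}, and the paper's proof of Lemma \ref{Lemma: analytic closure} explicitly invokes Lemma \ref{Lemma: C connected}, so citing it here is circular. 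Nor do the local models ``make it transparent'': they only exhibit the foliation of a neighbourhood of $C$, and the pathology to be excluded (Figure \ref{Figure: Torus Foliation}, the Hopf-surface case) is precisely that a single $G$-orbit meets that neighbourhood in infinitely many leaves, in which case $\overline{O}\setminus O$ is not analytic at all. Moreover, even granting analyticity, the inference ``equivariant compactification of $\C^*\times\C^*$ with analytic boundary $\Rightarrow$ characters extend meromorphically $\Rightarrow$ Moishezon $\Rightarrow$ complete toric surface with boundary a single cycle of rational curves'' is false in general: a Hopf surface is an equivariant compactification of $\C^*\times\C^*$ whose boundary is a disjoint pair of elliptic curves, and the coordinate functions do not extend meromorphically. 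Since your connectedness argument and the final identification $\overline{O}\setminus O=C$ both lean on the ``single cycle of rational curves'' description, the proof as written does not close; your own closing sentence concedes the point, but in the wrong logical order -- the toric identification cannot be used to exclude the pathology, because the pathology is what obstructs the identification.

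What the paper does instead, after the same propagation step, is prove the ``no escaping'' claim directly: no two leaves of the local foliation in an admissible neighbourhood lie in the same $G$-orbit when at least one of them is non-special, by the same limiting/stabilizer argument as in Lemma \ref{Lemma: Foliation} (push a putative $g\in G$ identifying two leaves to the boundary and conclude $g\in G^i$, hence $g=1$). This shows $\overline{O}$ meets a neighbourhood $W_0$ of the component $C_0$ only in $C_0$ and the single leaf of $O$. Combined with the connectedness of $\overline{O}\setminus O$, which the paper extracts from $O\cong\C^*\times\C^*$ via Lemma \ref{Lemma: No stabilizer}, this pins down $\overline{O}\setminus O=C_0$ with no appeal to any classification of compactifications. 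Connectedness of $C$ then follows by taking a non-special $B$-orbit $O_x$ (Lemma \ref{Lemma: Infinite B-orbits}); as you observe, $\gamma(O_x)=O_x$ and $L_x$ meets both putative components, so $\overline{O}_x$ would contain both, contradicting the connected-boundary statement. If you want to rescue your route, the leaf-separation lemma is the ingredient you must supply first -- and once you have it, the toric-surface detour becomes unnecessary.
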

\begin{proof}
By Definition \ref{Definition: Special orbits}, a non-special orbit intersects $W'$. First we prove that if $\overline{O}$ intersects a connected component of $C$, then it must contain that component. Let $C_0$ be a connected component of $C$ chosen so that $O$ intersects an admissible neighbourhood of a point in $C_0$. Let $V$ be one such admissible neighbourhood. Without loss of generality we can assume that $V$ is an admissible neighbourhood of a point in ${C_i^{\pm}}'$, for some $i=1,\ldots,k$. By the local model about a non-fixed point, given in Lemma \ref{Lemma: C local model}, each leaf in $V'$ has closure containing $V\cap C_0$. Thus, the closure of $O$ contains $V \cap C_i^{\pm}$ and therefore, $\overline{O}$ contains $C_i^{\pm}$.

Suppose $i\neq k$. As $O$ contains $C_i^\pm$, it intersects a neighbourhood of $z_{i+1}^\pm$. Since $O$ is non-special, Lemma \ref{Lemma: Fixed point neighbourhood} shows that $V_i^\pm \cap C_0 \subset \overline{O}$. Consequently, $O$ has non-empty intersection with an admissible neighbourhood of a point in $C_{i+1}^{\pm}$ and we can proceed inductively until $i=k$. When $i=k$, $O$ intersects an admissible neighbourhood of $z_1^+$ or $z_1^-$. In either case the inductive step given above will hold, and we can conclude that $C_0\subset \overline{O}$.

%However, in the former case $C_0$ is one of two connected components of $C$, while in the latter $C_0 = C$, as noted in Subsection \ref{Subsection: Describing a singular set}.

Let $W_0$ be an open neighbourhood of $C_0$ covered by admissible neighbourhoods. We want to see that the boundary of $\overline{O}$, which is connected, is precisely $C_0$. Since $C_0 \subset \overline{O}$, it suffices to show that $W_0 - C_0$ contains no points in the closure of $O$. This can be done by showing that no two leaves in an admissible neighbourhood are contained in the same $G$-orbit. Provided at least one of the leaves is non-special, the proof of this is more or less the same as the proof of Lemma \ref{Lemma: Foliation}, which shows that each leaf has trivial stabilizers, (or one could refer to Lemma 5.5 of \cite{F00}). In this case we can assume that both leaves are non-special, since $O$ is non-special.
%This rules out accumulation points, like the special orbits in the Hopf surface.
%Fujiki has a more complicated argument, but he doesn't use the one-end property of $O$.

Now suppose that $C$ is not connected. Then, by the description in Lemma \ref{Lemma: Defining C}, $C$ consists of two components $C_0$ and $C_1$, which are interchanged by $\gamma$. If we choose a non-fixed point $x\in B$, then the corresponding twistor line $L_x$ intersects both $C_0$ and $C_1$. By Lemma \ref{Lemma: Infinite B-orbits}, we can choose $x$ so that the $B$-orbit $O_x$ is non-special. Therefore, $\overline{O}_x$ intersects both $C_0$ and $C_1$ and it follows that $\overline{O}_x$ must contain both $C_0$ and $C_1$. However, Lemma \ref{Lemma: No stabilizer} proves that $\overline{O}_x$ has a connected boundary.
\end{proof}

\begin{TL}\label{Lemma: analytic closure}
If $O$ is a $G$-orbit with $O\cap W' \neq \emptyset$, then the closure of $O$ is a complex orbifold.
\end{TL}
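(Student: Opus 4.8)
The plan is to prove that $\overline{O}$ is, in every orbifold chart, a smooth complex hypersurface, so that it inherits a complex orbifold structure from $Z$. Away from $\Sigma$ there is nothing to check: by Lemma \ref{Lemma: No stabilizer} the group $G$ acts freely on $O$, so $O\cong\C^*\times\C^*$ is an honest $2$-dimensional complex submanifold, and once we know that $\overline{O}\setminus O$ is contained in the $1$-dimensional set $\Sigma$ the orbit $O$ is dense and open in $\overline{O}$, hence locally closed and embedded. So the real content is a local analysis at the points of $\overline{O}\cap\Sigma$, carried out in the admissible neighbourhoods of Subsections \ref{Subsection: Local Model}--\ref{subsection: local model non-fixed}.

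At such a point $p$ I would use one of those admissible neighbourhoods $\varphi\colon\tilde V\to V$. In all of them the lifted action is $(s,t)\cdot(u,v,w)=(su,tv,stw)$, so $u$ and $v$ are semi-invariant under $\tilde G$ (scaled by the characters $(s,t)\mapsto s$ and $(s,t)\mapsto t$) while the ratio $w/(uv)$ is $\tilde G$-invariant; this is visible in Lemmas \ref{Lemma: Fixed point neighbourhood}, \ref{Lemma: C local model} and \ref{Lemma: Twistor line neighbourhood}. Consequently each hypersurface $\{u=0\}$, $\{v=0\}$, $\{w=0\}$ and each level set $\{w=c\,uv\}$ is $\tilde G$-invariant. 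It follows that if $O$ meets one leaf of the local foliation of $V'$ then it cannot leave the closure of that leaf, so $\overline{O}\cap V$ is exactly one of $\{u=0\}$, $\{v=0\}$, $\{w=0\}$ or $\{w=c\,uv\}$ --- each a smooth complex hypersurface in $\tilde V$, invariant under the structure group, and already identified as a suborbifold in Subsection \ref{Subsection: Local analyticity}. Thus wherever $\overline{O}$ meets $\Sigma$ it is a complex suborbifold of $Z$, which (together with the first paragraph) gives the lemma.

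It remains to verify that $\overline{O}\setminus O\subseteq\Sigma$, so that the charts above actually cover $\overline{O}\setminus O$. For a non-special orbit this is immediate: Lemma \ref{Lemma: C connected} gives $\overline{O}=O\cup C$ and $C\subseteq\Sigma$. For a special orbit I would argue as in the proof of Lemma \ref{Lemma: C connected}: $O$ contains a special leaf, so $\overline{O}$ contains one of the curves $C_j^{\pm}$ together with a segment of the adjacent line $L_j$; propagating through consecutive admissible neighbourhoods of the $z_j^{\pm}$ (using Lemma \ref{Lemma: Fixed point neighbourhood}) one sees that $\overline{O}\setminus O$ is a union of finitely many of the curves $C_j^{\pm}$ and $L_j$, hence lies in $\Sigma$.

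The hardest step will be exactly this last point for special orbits: one must rule out the possibility that a single $G$-orbit ``spirals'' through a neighbourhood of $\Sigma$ and accumulates on extra, non-adjacent leaves --- the Hopf-surface pathology depicted in Figure \ref{Figure: Torus Foliation} --- which would leave points of $\overline{O}\setminus O$ outside $\Sigma$. As elsewhere in Sections \ref{Section: Local model}--\ref{Section: Analyticity of orbit closures}, this is excluded by the hypothesis $\chi_{orb}(M)>0$ through the structure of $C$ and $\mc{L}$ established there, and it is the analogue, for orbits meeting a special leaf, of the non-special case already settled in Lemma \ref{Lemma: C connected}.
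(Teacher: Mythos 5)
There is a genuine gap, and it sits exactly where the paper's proof does its real work. In your second paragraph you argue that because $\{u=0\}$, $\{v=0\}$, $\{w=0\}$ and $\{w=c\,uv\}$ are invariant under the lifted action $\tilde G$, an orbit $O$ meeting one leaf of the local foliation ``cannot leave the closure of that leaf'', so $\overline{O}\cap V$ is a single such hypersurface. This is a non sequitur: the local invariance of the leaves only says that the \emph{partial} action available in the chart (with $|s|\leq 1$, $|t|\leq 1$, etc.) preserves each leaf. The danger is that a global element of $G$ --- one whose action takes you out of $V$ and back in --- maps one leaf into a different leaf of the same admissible neighbourhood; this is precisely the Hopf-surface phenomenon of Figure \ref{Figure: Torus Foliation}, and it is not excluded by any invariance statement inside the chart. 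For orbits meeting a non-special leaf this multi-leaf possibility is ruled out in Lemma \ref{Lemma: C connected} (by the trivial-stabilizer argument of Lemma \ref{Lemma: Foliation}), but for a \emph{special} orbit the new case --- that the two special leaves $W_{i-1}$ and $W_i$ in $V_i^{\pm}$ might lie in one and the same $G$-orbit --- must be excluded separately, and your proposal never does this. Your closing paragraph concedes it is ``the hardest step'' but only appeals to $\chi_{orb}(M)>0$ and ``the structure of $C$ and $\mc{L}$'', which is a restatement of what needs proving, not a proof.

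The paper closes this gap with a concrete limit argument: given a point of $L_i$ with admissible neighbourhood $V$, choose $g\in G$ with $g.V\subset V_i^{\pm}$, so it suffices to show $O$ contains a single local leaf of $V_i^{\pm}$; if $h\in G$ satisfied $h.x=y$ with $x\in W_i$ and $y\in W_{i-1}$, then acting by the generator $(s,1)$ of $G^{i-1}$ and letting $s\to 0$, Lemma \ref{Lemma: Fixed point neighbourhood} forces $h.x'=z_i^{\pm}$ for some non-fixed point $x'\in C_{i-1}^{\pm}$, which is impossible since $z_i^{\pm}$ is $G$-fixed and $x'$ is not. Some argument of this kind (or an appeal to an equivalent statement) is indispensable; without it your chart-by-chart description of $\overline{O}$ near $\Sigma$, and hence the conclusion that $\overline{O}$ is a complex orbifold, does not follow. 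The rest of your outline --- the non-special case via Lemma \ref{Lemma: C connected}, and the propagation argument showing $L_i\subset\overline{O}$ and $\overline{O}\setminus O\subset\Sigma$ for special orbits --- agrees with the paper and is fine.
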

\begin{proof}
As shown in Lemma \ref{Lemma: C connected} the closure of a non-special orbit intersects $W$ in a suborbifold that comprises a single leaf of the foliation and $C$. The union of this suborbifold in $W$ and the remainder of the $G$-orbit in $Z$ is thus a complex orbifold. Therefore, it remains to show that a special orbit $O$ is a complex orbifold. By Definition \ref{Definition: Special orbits}, $O$ contains at least one of the special leaves $W_{i-1}$ or $W_{i}$ in the admissible neighbourhood $V_i^\pm$. The closure of each of these leaves contains $L_i\cap V_i^{\pm}$ and so, $L_i\cap V_i^{\pm}\subset \overline{O}\cap V_i^{\pm}$. Therefore, $L_i \subset \overline{O}$. Consequently, we can use an argument similar to the non-special case (in Lemma \ref{Lemma: C connected}) to show that the boundary of $O$ is connected and it must contain a cycle of curves in $\Sigma$.
%By the same overlapping open set argument.

To prove that $O$ is a complex orbifold it suffices to show that, where $O$ intersects an admissible neighbourhood of a point in $\Sigma$, the intersection is a single leaf and the closure of this leaf is a complex orbifold. Then we can proceed as with the non-special orbits. This is dealt with in Lemma \ref{Lemma: C connected} for points in admissible neighbourhoods of ${C_i^\pm}'$. Therefore, it remains to show that $O$ contains a single leaf in an admissible neighbourhood of a point in $L_i$. So, let $\varphi: \tilde V \rightarrow V$ be an admissible neighbourhood of a point on $L_i$. In the coordinates $(u,v,w)$ on $\tilde V$, which were defined in Lemmas \ref{Lemma: Fixed point neighbourhood} and \ref{Lemma: Twistor line neighbourhood}, $\varphi^{-1}(\overline{O}\cap V)$ contains at least one of the sets $\{u=0\}$ or $\{v=0\}$. We will assume that $\varphi^{-1}(\overline{O}\cap V)$ contains $\{u=0\}$, as the argument proceeds similarly for the other choice.

Now note that $\varphi(\{u=0\})$ is a complex suborbifold in $V$. Therefore, to complete the proof we need to show that $\{u=0\}\cap V'$ is the only $G$-orbit in $\tilde V'$ contained in $O$. We can choose $g\in G$ so that $$g.V \subset V_i^{\pm}.$$ Therefore, it suffices to show that $V_i^{\pm} \cap O$ contains a single local leaf. We have already noted in Lemma \ref{Lemma: C connected} that no two local leaves in $V_i^\pm$ are contained in the same $G$-orbit when at least one of the leaves is non-special. Now suppose two special leaves are contained in the same $G$-orbit or equivalently, there exist $h \in G$ such that $$hx = y,$$ where $x \in W_i$ and $y \in W_{i-1}$. If we multiply by the generator of the $G^{i-1}$-action $(s,1)$, then we obtain
$$h(s,1)x = (s,1)y.$$
If we take the limit as $s\rightarrow 0$, Lemma \ref{Lemma: Fixed point neighbourhood} shows that $$h.x' = z_i^{\pm},$$
where $x'$ is a non-fixed point in $C_{i-1}^{\pm}$. This gives a contradiction.
\end{proof}

\section{The involution}\label{Section: The Involution}
\subsection{Principal lines}\label{Subsection: Intersecting orbits analytic}
We will denote the four complex dimensional space of twistor lines in $Z$ by $\mC$, and the subspace of real twistor lines in $\mC$ by $\m$ (cf. Subsection \ref{Subsection: Twistor spaces}). Points in $\m$ are fibres of the twistor projection $\pi: Z \rightarrow M$ and so, there is a natural $F$-equivariant diffeomorphism between $\m$ and $M$. We define a \emph{principal line} to be a member of the set
$$\mCo := \{L \in \mC : \tn{The action of $G$ at every point in }L\tn{ has a finite stabilizer}\}$$
and a \emph{real principal line} to be member of the set
$$\mo := \mCo \cap \m.$$ So, $\mo$ consists of the twistor lines over points in either principal orbits or exceptional orbits in $M$.
The aim of this subsection is to determine the intersection number of the $G$-orbits with a principal line. To do this we must first establish that the closure of each of the $G$-orbits intersecting a principal line is a complex orbifold.

\begin{TL}\label{Lemma: Intersecting orbits}
If $L \in \mCo$, every $G$-orbit intersecting $L$ has closure intersecting $C$.
\end{TL}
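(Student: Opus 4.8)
The plan is to show that a principal line $L$ must meet the curve $C$ (the twistor lift of $B$), and then that every $G$-orbit through a point of $L$ has closure meeting $C$. Since $M$ is compact with $\chi_{orb}(M)>0$, the argument in Subsection \ref{subsection: constructing B} produced a nonempty boundary component $\partial N_0$ with fixed points, hence a nonempty cycle $C = \bigcup C_i^\pm$ and the lines $\mathcal{L} = \bigcup L_i$. The strategy I would pursue is a dimension/closure argument: a $G$-orbit $O$ meeting a principal line $L$ has full dimension $2$ by Lemma \ref{Lemma: No stabilizer} (the action is locally free near principal lines), so $\overline{O}$ is a compact complex surface; its boundary $\overline{O}\setminus O$ is a union of lower-dimensional $G$-invariant analytic sets, i.e. $G$-invariant curves, and the only such curves available in $Z$ are the components of $C$ and the fixed lines $L_i$. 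The key point is then to rule out the possibility that $\overline{O}$ avoids $C$ entirely and only limits onto the $L_i$ — but each $L_i$ lies in $\overline{C_{i-1}^\pm}$ and $\overline{C_i^\pm}$, so if $\overline O \supset L_i$ then, working in the admissible neighbourhood of a point of $L_i$ given by Lemmas \ref{Lemma: Fixed point neighbourhood} and \ref{Lemma: Twistor line neighbourhood}, the leaf of $O$ through that neighbourhood has closure containing either $C_{i-1}^\pm$ or $C_i^\pm$, and hence $\overline O$ meets $C$.

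The concrete steps, in order, would be: \textbf{(1)} Given $L\in\mCo$ and $z\in L$, let $O$ be the $G$-orbit of $z$; by definition of $\mCo$ the stabilizer of $z$ is finite, so $O$ is $2$-complex-dimensional and $\overline O$ is a closed (compact, as $Z$ is compact) analytic subset of $Z$ of dimension $2$, whose boundary $\partial O := \overline O\setminus O$ is $G$-invariant and of complex dimension $\leq 1$. \textbf{(2)} Identify the $G$-invariant analytic curves in $Z$: these are exactly the components $C_i^\pm$ of $C$ together with the fixed twistor lines $L_i$ — this uses Lemmas \ref{Lemma: Defining C} and \ref{Lemma: Constructing C}, together with the fact (from Lemma \ref{Lemma: First fixed point} and the Remark preceding Subsection \ref{Subsection: Local Model}) that the only $G$-invariant curves through a fixed point $z_i^\pm$ in its admissible chart $\{(u,v,w)\}$ are the three coordinate axes, and similarly the local models of Lemmas \ref{Lemma: C local model}, \ref{Lemma: Twistor line neighbourhood} pin down the $G$-invariant curves near non-fixed points of $\Sigma$. \textbf{(3)} Conclude $\partial O \subseteq \Sigma = C\cup\mathcal{L}$, and $\partial O \neq \emptyset$ since a compact complex surface in a compact threefold cannot be a $\C^*\times\C^*$ (it would be noncompact without boundary, contradicting compactness of $\overline O$). \textbf{(4)} If $\partial O$ contains a point of some $C_i^\pm$, we are done. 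Otherwise $\partial O\subseteq\mathcal L$, so $\partial O$ contains some $L_i$ (being closed and $G$-invariant); pick an admissible neighbourhood $\varphi:\tilde V\to V$ of a non-fixed point of $L_i$ as in Lemma \ref{Lemma: Twistor line neighbourhood}, in which $\tilde L_i = \{u=v=0\}$ and the leaves of the $G$-foliation are $\{auv = bw\}$; since $\overline O\supseteq L_i$, the preimage $\varphi^{-1}(\overline O\cap V)$ contains $\{u=v=0\}$, forcing it to contain one of the special leaves $\{u=0\}$ or $\{v=0\}$, whose closure (by Lemma \ref{Lemma: Fixed point neighbourhood}, pushing to the adjacent fixed point $z_i^\pm$ via a suitable $g\in G$) contains $C_{i-1}^\pm$ or $C_i^\pm$. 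Hence $\overline O\cap C\neq\emptyset$, a contradiction with the "otherwise" case. \textbf{(5)} Therefore every $G$-orbit meeting $L$ has closure meeting $C$.

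\textbf{The main obstacle} I anticipate is step (4): making rigorous the passage from "$\overline O$ contains the fixed line $L_i$" to "$\overline O$ contains an adjacent curve $C_i^\pm$" without circularity, since at this stage of the paper we do not yet know $\overline O$ is an orbifold — that is precisely what Subsection \ref{Subsection: analyticity of orbit closures} and the present lemma are building towards. The clean way around this is to argue purely locally with the $G$-foliation of $W'$ (Lemma \ref{Lemma: Foliation}) and the explicit local models: $\overline O\cap V'$ is a union of leaves $W_{ab}$, each of which individually is a closed submanifold of $V'$ whose closure in $V$ contains $C_i^\pm\cap V$ (for $z\in {C_i^\pm}'$) or, near $L_i$, contains $C_{i-1}^\pm$ or $C_i^\pm$; so one never needs $\overline O$ itself to be smooth or analytic as a whole, only the individual leaf closures, which the local models in Section \ref{Section: Local model} supply directly. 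A secondary subtlety is confirming that a $2$-dimensional $G$-orbit with finite stabilizers really cannot be closed in $Z$; this follows because $\C^*\times\C^*$ modulo a finite group is noncompact while $Z$ is compact, so $\overline O \setminus O \neq \emptyset$.
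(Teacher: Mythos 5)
There is a genuine gap, and it sits at the very start of your argument rather than at the place you flag. In step (1) you assert that $\overline{O}$ is a compact analytic surface whose boundary $\overline{O}\setminus O$ is a union of lower-dimensional $G$-invariant \emph{analytic} sets, and in step (2) you assert that the only $G$-invariant curves in $Z$ are the $C_i^\pm$ and the $L_i$. Neither assertion is available at this stage. The twistor space is a general compact complex $3$-orbifold (not algebraic), so the closure of a $G$-orbit need not be an analytic set at all, and $\overline{O}\setminus O$ need not be a curve: it can contain further $2$-dimensional orbits or non-analytic limit sets. This is exactly the pathology illustrated by the Hopf surface remark after Lemma \ref{Lemma: Defining C}, and it is precisely what Sections \ref{Section: Local model} and \ref{Section: Analyticity of orbit closures} are designed to rule out --- but only for orbits already known to meet the neighbourhood $W$ of $C$. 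The present lemma is the bridge that shows orbits through a principal line do meet $W$; assuming analyticity of $\overline{O}$ and that its boundary lies in $\Sigma$ is therefore circular in an essential way, and your proposed local repair does not help, because the foliation and leaf-closure statements (Lemma \ref{Lemma: Foliation}, the leaves $W_{ab}$) are only established inside admissible neighbourhoods of $C$; if $\overline{O}$ avoided $W$ entirely there would be no local model to invoke. A second, independent problem with step (2)--(3) is that at this point of the paper $\partial N$ is not yet known to be connected ($N$ being a disc is part of Theorem A(i), proved later via the microtwistor correspondence), so there may be other boundary components of $N$ with their own preimages in $Z$ carrying $G$-invariant curves and positive-dimensional stabilizers; $\partial O\subseteq\Sigma=C\cup\mathcal{L}$ does not follow.

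The paper's proof avoids all of this by running a connectedness argument on $L$ itself: it sets $L_C:=\{z\in L:\overline{G.z}\cap C\neq\emptyset\}$ and shows $L_C$ is non-empty, open and closed. Non-emptiness uses only Proposition \ref{Proposition: Analytic orbits} (orbits meeting $W'$ have orbifold closures, hence a well-defined positive intersection number with every twistor line, since they meet the lines over $B$ along $C$); openness is the elementary observation that if $g.z\in W'$ then $g^{-1}$ of a small ball meets $L$ in an open set of points whose orbits also meet $W'$; closedness is a sequential compactness argument using the $\cp^1$-parameterization of the leaves $W_{ab}'$ in a fixed admissible neighbourhood together with intersection numbers and uniqueness of limits. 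If you want to salvage your approach you would have to prove, independently, that an arbitrary $G$-orbit closure in this twistor space is analytic with $1$-dimensional boundary contained in the non-free locus over $\partial N_0$ --- which is essentially the content of the results you would be trying to use, so the open--closed argument on $L$ is the right route.
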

\begin{proof}
To prove this lemma we show that $$L_C := \{z\in L : \overline{G.z} \cap C \neq \emptyset\},$$ is non-empty, open and closed in $L$.

In the notation of Section \ref{Section: Analyticity of orbit closures}, let $W$ be an open neighbourhood of $C$ covered by admissible neighbourhoods and let $W' = W - \Sigma$. In Proposition \ref{Proposition: Analytic orbits} we showed that the $G$-orbits intersecting $W'$ are complex orbifolds; thus, their closures have a well-defined intersection number with every twistor line. This number is non-zero, since such orbits have non-empty intersection (along $C$) with the twistor lines in $\pi^{-1}(B)$. Consequently, $L_C$ is non-empty.

To show that $L_C$ is open, let $z\in L_C$ and choose $g\in G$ such that $g.z$ is contained in $W'$. We can choose an open ball $U$ about $g.z$ in $W'$ and define a diffeomorphism $$g^{-1}: U \rightarrow g^{-1} U;\,\, u \mapsto g^{-1} u.$$ Therefore, $g^{-1} U$ is an open ball in $Z$ about $z$, and $(g^{-1} U) \cap L$ is an open set in $L$ containing $z$. Moreover, the $G$-orbit of each point in $(g^{-1} U) \cap L$ intersects the admissible neighbourhood $W'$ and so, the closure of these $G$-orbits intersect $C$. Hence, $L_C$ is open.

To show that $L_C$ is closed, let $(z_i)_{i\geq1}$ be a sequence in $L_C$ converging to $z_\infty \in L$. We can assume, without loss of generality, that the $z_i$ and $z_\infty$ are not contained in special orbits, since we know such orbits have closure intersecting $C$. If we choose an admissible neighbourhood $V$ of a non-fixed point in $C$, then each $z_i$ corresponds to a $G$-orbit intersecting $V'$. Recall from Subsection \ref{Subsection: Local analyticity} that the $G$-orbits intersect $V'$ in leaves $W_{ab}'$, which are parameterized by $\cp^1$. Therefore, away from the special orbits there is a well-defined map from $L_C$ to $\cp^1$. The sequence $(z_i)_{i\geq 1}$ is mapped to $([a_i\!:\!b_i])_{i \geq 1}$ in $\cp^1$, and by choosing a convergent subsequence we can assume that $[a_i\!:\!b_i] \rightarrow [a_\infty\!:b_\infty]$.

Now choose $v_\infty \in W_{a_\infty b_\infty}'$. From the definition of $W_{ab}'$ in (\ref{Equation: Parameterizing orbits}), it is clear that there exists a sequence $(v_i)_{i \geq 1} \subset V'$ such that $v_i \in W_{a_i b_i}$ and $v_i \rightarrow v_\infty$ as $i \rightarrow \infty$. So, if $U \subset V'$ is an open ball about $v_\infty$, then $U$ contains infinitely many $v_i$.

The closure of the $G$-orbit containing $v_\infty$ is a complex orbifold and so, it has a positive intersection number with $L$. Therefore, we can choose $g_1,\ldots,g_n \in G$, so that $$g_j.v_\infty\in L, \textnormal{ for } j=1,\ldots,n.$$ By the argument used to prove openness, $g_j.U$ intersects $L$ in an open neighbourhood containing $g_j.v_\infty$. As $g_1, \ldots, g_n$ are fixed, we can shrink $U$ to obtain an arbitrarily small open neighbourhood about each $g_j.v_\infty$ in $L$. Moreover, for some $m \in \{1,\ldots, n\}$, this neighbourhood contains infinitely many $z_i$, since $U$ contains infinitely many $v_i$. It follows that a subsequence of $z_i$ converges to $g_m.v_\infty$. So, by uniqueness of limits, $g_m.v_\infty = z_\infty$ and therefore, $z_\infty \in L_C$.
\end{proof}

\begin{TR}
By Lemma \ref{Lemma: No stabilizer} the action of $G$ on every $G$-orbit intersecting $C$ is free. Therefore, the action of $G$ is free about $L$, by the previous lemma. Although we have not explicitly mentioned the exceptional orbits, defined in Subsection \ref{Subsection: Compact 4-orbifolds}, their existence has remained a possibility, which we can now rule out. An exceptional orbit is an $F$-orbit in $M$ with a finite stabilizer subgroup. In the twistor space this stabilizer subgroup acts as biholomorphisms preserving the principal lines corresponding to the exceptional orbit. Each of these biholomorphisms has fixed points and so, we can find points in these ``exceptional" principal lines that are fixed by an element of the $F$-action. This gives a contradiction. Moreover, this rules out orbifold points on principal lines.
\end{TR}

\subsection{The intersection number}\label{Subsection: Intersction number}
We can now prove that ``a generic $G$-orbit intersects a generic twistor line twice".
\begin{TL}\label{Lemma: Intersection number}
Every non-special $G$-orbit has intersection number two with every principal line.
\end{TL}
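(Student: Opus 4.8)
The plan is to show that for a non-special $G$-orbit $O$ the restriction of the twistor projection $\pi|_{\overline O}\colon \overline O\to M$ is a branched covering of degree two, and then to read off the intersection number with a principal line from this degree.

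\textbf{Step 1: reduce to a branched cover of $M$.} By Proposition~\ref{Proposition: Analytic orbits} the closure $\overline O=O\cup C$ of a non-special orbit is an irreducible complex $2$-orbifold, and since $G$ acts freely on $O$ we have $O\cong\C^*\times\C^*$, which is smooth. No twistor line is contained in $\overline O$: a compact curve cannot lie in $O\cong\C^*\times\C^*$; the curves $C_i^{\pm}$ composing $C$ project onto the surfaces $B_i$ and so are not twistor lines; and by Lemma~\ref{Lemma: Defining C} the set $\mc{L}=\cup L_i$ meets $C$ only in the points $z_i^{\pm}$, so no $L_i$ lies in $O\cup C$. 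Hence $\pi|_{\overline O}$ is a proper map with discrete fibres between (orbifold) $4$-manifolds which is generically a local diffeomorphism, i.e.\ a branched covering of some degree $d\geq 1$, and this degree equals the homological intersection number $\overline O\cdot L$ of $\overline O$ with any twistor line $L$.

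\textbf{Step 2: the degree is two.} We evaluate $d$ over a point of $B$. First, $\overline O$ cannot contain the twistor line $L_x$ over every non-fixed $x\in B_i'$: if $y\in L_x\cap O$ then $y$ lies in a free $G$-orbit, so it is not fixed by the one-parameter group $G^i$, whence $\overline{G^i.y}=L_x$ and therefore $L_x\subset\overline{G.y}=\overline O$; were this to happen for all $x$ in a component $B_i'$, the lines $L_x$ would sweep out all of $\overline O$, forcing $\overline O=\overline{\pi^{-1}(B_i')}$ and hence $\pi^{-1}(B_i')$ to be complex, contrary to the fact (used in the proof of Lemma~\ref{Lemma: Infinite B-orbits}) that not every lift of $B_i$ to $Z$ is holomorphic. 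So we may choose $x_1\in B_{i_1}'$ with $L_{x_1}\cap O=\emptyset$. Then, working in an orbifold chart about $x_1$ if $x_1$ is an orbifold point, the fibre of $\pi|_{\overline O}$ over $x_1$ is $L_{x_1}\cap C=\{x_1^+,x_1^-\}$, the two points where the twistor lift of $B_{i_1}$ meets $L_{x_1}$ (Lemma~\ref{Lemma: Constructing C}). Over $B_{i_1}'$ the map $\pi|_C$ is the inverse of the holomorphic twistor lift of $B_{i_1}$, so it is a local biholomorphism at $x_1^+$ and $x_1^-$ (\S\ref{Subsection: Twistor lifts}); thus $x_1$ is a regular value of $\pi|_{\overline O}$ with a two-element fibre, and $d=2$.

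\textbf{Step 3: conclude for principal lines.} A principal line $L$ lies over a principal orbit, which is contained in $N^{\circ}$ and hence disjoint from $\partial N$; since $C$ lies over the boundary component $\partial N_0$, it follows that $L\cap C=\emptyset$. Therefore $\overline O\cap L=O\cap L$ lies in the smooth locus of $\overline O$, and by the remark following Lemma~\ref{Lemma: Intersecting orbits} the line $L$ contains no orbifold point; so $\overline O\cdot L$ is an honest non-negative integer, the number of intersection points counted with local multiplicities. By homological invariance this integer is the same for all twistor lines in the connected family $\mCo$ (which is the complement in the connected space $\mC$ of the proper analytic set of lines meeting $\Sigma$), and for the real principal lines $L=\pi^{-1}(p)$, $p\in N^{\circ}$, it is the fibre count of the branched covering $\pi|_{\overline O}$, namely $d=2$. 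Hence every non-special $G$-orbit meets every principal line in exactly two points, counted with multiplicity.

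\textbf{The main obstacle.} The substantive input is Proposition~\ref{Proposition: Analytic orbits}: once one knows that $\overline O$ is a genuine complex orbifold — rather than merely a $G$-invariant real set — the intersection number is well defined and $\pi|_{\overline O}$ is a branched covering. Granting that, the only points demanding care are the orbifold singularities, which are circumvented by working on principal lines (they avoid the singular locus by the remark after Lemma~\ref{Lemma: Intersecting orbits}) and, for the degree computation, by passing to an orbifold chart along $B$; and the upgrade from "a generic twistor line" to "every principal line", which is the routine deformation-invariance argument once $\mCo$ is seen to be connected.
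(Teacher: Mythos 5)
Your overall strategy coincides with the paper's: use Proposition~\ref{Proposition: Analytic orbits} to make $\overline O$ a compact complex suborbifold, evaluate its intersection with a twistor line over a suitably chosen point of $B$, and transfer the answer to principal lines by homological invariance and the fact that principal lines avoid $C\subset\Sigma$. However, two steps are not correct as written. First, the assertion in Step 1 that \emph{no} twistor line is contained in $\overline O$ is false: your case analysis only excludes lines lying entirely in $O$, or equal to some $C_i^{\pm}$ or $L_i$, and misses lines meeting $C$ in finitely many points with the rest in $O$. This happens precisely for the $B$-orbits, which are non-special (Lemma~\ref{Lemma: B-orbits}) and whose closures contain the twistor lines $L_x$ over $x\in B$ by Definition~\ref{Definition: B-orbits} (indeed the whole $F$-orbit of such lines). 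So for these orbits $\pi|_{\overline O}$ does \emph{not} have discrete fibres, and the branched-covering framing breaks down in exactly the case needing most care; your Step 2 quietly contradicts Step 1 by allowing some $L_x\subset\overline O$. This is repairable, since the homological intersection number $\overline O\cdot L$ (equivalently the degree) is defined for the cycle $\overline O$ regardless of the fibre structure, and the paper's cleaner route is simply that each $L_x$ lies in the closure of a single $B$-orbit, of which there are infinitely many (Lemma~\ref{Lemma: Infinite B-orbits}), so one can choose $x_1$ with $\overline O\cap L_{x_1}=\{x_1^+,x_1^-\}$.

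Second, and more seriously, the computation $d=2$ has a genuine gap. From the fact that $\pi|_C$ is a local (conformal) diffeomorphism onto $B_{i_1}'$ you conclude that $x_1$ is a regular value of $\pi|_{\overline O}$. That is a non sequitur: regularity requires the full differential $d(\pi|_{\overline O})$ to be an isomorphism at $x_1^{\pm}$, i.e.\ that the \emph{surface} $\overline O$, not merely the curve $C\subset\overline O$, be transverse to the twistor line $L_{x_1}$ at $x_1^{\pm}$. A priori $T_{x_1^{\pm}}\overline O$ (which is $4$-real-dimensional and strictly larger than $T_{x_1^{\pm}}C$) could contain $T_{x_1^{\pm}}L_{x_1}$; in that case the two-point fibre only yields $\overline O\cdot L_{x_1}\geq 2$, since local intersection multiplicities of complex subvarieties are positive but need not equal one, and your conclusion $d=2$ does not follow. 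This transversality is exactly the point the paper supplies from the admissible-neighbourhood model of Lemma~\ref{Lemma: C local model}, where the closure of the leaf through $x_1^{\pm}$ is seen explicitly and the intersections with $L_{x_1}$ at $z$ and $\gamma(z)$ are checked to be transverse. With that input added, the remainder of your argument (positivity of complex intersections, invariance of the intersection number over the connected family of twistor lines, and $L\cap C=\emptyset$ for $L\in\mCo$) goes through essentially as in the paper.
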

\begin{proof}
Let $O$ be a non-special $G$-orbit. We can choose a non-fixed point $x\in B$ so that $L := \pi^{-1}(x)$ is not contained in $O$, since $L$ is contained in the closure of a single $B$-orbit. Recall from Lemma \ref{Lemma: Constructing C} that $L$ intersects $C$ in precisely two distinct points: $z$ and $\gamma(z)$. It follows from Lemma \ref{Lemma: C connected} that $z$ and $\gamma(z)$ are contained in the closure of $O$. What is more, these intersections are transverse, as can be seen from the model for an admissible neighbourhood in Lemma \ref{Lemma: C local model}. So, $\overline{O}$ has intersection number at least 2 with $L$.

By our choice of $L$, $\overline{O}$ does not intersect $L$ away from $z$ and $\gamma(z)$. Therefore, $\overline{O}$ intersects $L$ at precisely two points, each with multiplicity one. Since $\overline{O}$ is a complex orbifold, it has a well-defined intersection number with any twistor line. Moreover, the twistor lines are homologous  and so, for any $L \in \mC$, $\overline{O} \cdot L = 2.$ Thus, we can conclude that $$O \cdot L = 2,$$ when $L \in \mCo$, since a principal line does not intersect $C$.
\end{proof}

\begin{TL}\label{Lemma: Special orbit intersection}
Every special $G$-orbit has intersection number one with every principal line. Furthermore, the two special orbits with closures intersecting along $L_i$ are interchanged by the real structure.
\end{TL}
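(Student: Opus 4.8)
The plan is to mimic the intersection-number computation of Lemma~\ref{Lemma: Intersection number}, but taking advantage of the fact that a special orbit has a lower-dimensional "slice" visible along a twistor line $L_i$ over a \emph{fixed} point. First I would fix a special orbit $O$ whose closure contains $L_i$; by Lemma~\ref{Lemma: analytic closure} the closure $\overline{O}$ is a complex orbifold, so it has a well-defined homological intersection number with any twistor line, and since the twistor lines are mutually homologous it suffices to compute $\overline{O}\cdot L$ for one conveniently chosen line. The natural choice is $L = L_i$ itself: by Lemma~\ref{Lemma: analytic closure} we have $L_i\subset\overline{O}$, so I would instead pick a nearby principal line, or better, compute the intersection in the admissible neighbourhood $V_i^{\pm}$ of $z_i^{\pm}$ using the explicit local model of Lemma~\ref{Lemma: Fixed point neighbourhood}.

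Concretely, in the coordinates $(u,v,w)$ on $\tilde V_i^{\pm}$ with $G_i$-action $(s,t)\cdot(u,v,w)=(su,tv,stw)$, the special leaf $W_{i-1}$ is $\{u=0\}$ and $W_i$ is $\{v=0\}$, each a smooth complex hypersurface slice of $\overline{O}$, while a twistor line over a nearby \emph{non-fixed} point of $B_i$ meets $V_i^{\pm}$ in a curve transverse to $\Sigma$; tracking this through the model of Lemma~\ref{Lemma: C local model} (as in the proof of Lemma~\ref{Lemma: Intersection number}) shows $\overline{O}$ meets such a line once, transversally, at a single point of $C_i^{\pm}$ or $C_{i-1}^{\pm}$. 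Hence $\overline{O}\cdot L = 1$ for every twistor line, and since a principal line does not meet $C$, the actual set-theoretic intersection $O\cap L$ is a single point of multiplicity one. A subtle point to check here — and the step I expect to be the main obstacle — is that a special orbit could in principle wrap around and meet a principal line again away from $\Sigma$; to rule this out I would argue exactly as in Lemma~\ref{Lemma: Intersection number}, choosing $L$ so that it is not contained in $\overline O$ and using that $\overline O\setminus O\subset\Sigma$ together with Lemma~\ref{Lemma: No stabilizer} (free $G$-action on $O$, so $O\cong\C^*\times\C^*$ and each $G$-orbit has the expected intersection pattern).

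For the final clause, that the two special orbits $O$, $O'$ with $L_i\subset\overline{O}\cap\overline{O'}$ are interchanged by the real structure $\gamma$: recall from~(\ref{Equation: gamma anticommutes}) that $\gamma(g.z)=\bar g^{-1}.\gamma(z)$, so $\gamma$ carries $G$-orbits to $G$-orbits, and from Lemma~\ref{Lemma: Defining C} that $\gamma$ fixes each $L_i$ setwise (since $z_i^{\pm}$ are interchanged, but $L_i$ is the twistor line over the $\gamma$-fixed... rather, over the real point $x_i$, so $\gamma(L_i)=L_i$). In the local model of Lemma~\ref{Lemma: Fixed point neighbourhood}, the two special leaves through $z_i^{\pm}$ are $\{u=0\}$ and $\{v=0\}$, and $\gamma$ — being an antiholomorphic involution compatible with~(\ref{Equation: gamma anticommutes}) that interchanges $z_i^+$ with $z_i^-$ (hence $C_{i-1}^+\leftrightarrow C_{i-1}^-$ and $C_i^+\leftrightarrow C_i^-$) — must send the leaf whose closure contains $C_{i-1}^{\pm}$ to the leaf whose closure contains $C_{i-1}^{\mp}$, and likewise for $C_i$. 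Thus $\gamma$ cannot preserve either special orbit individually (that would force $\gamma$ to fix a point of the free orbit $O\cong\C^*\times\C^*$, impossible for a real structure), so it must swap $O$ and $O'$. I would also double-check using Lemma~\ref{Lemma: Intersecting orbits} that $O$ and $O'$ are the \emph{only} two $G$-orbits whose closures contain $L_i$, so that $\gamma$ has no choice but to interchange them.
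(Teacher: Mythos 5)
Your overall strategy for the first claim is the paper's: pick an auxiliary real twistor line over a non-fixed boundary point, use the local models and transversality, and transfer the count to principal lines by homology of twistor lines. But the heart of the statement -- why the count is \emph{one} rather than two -- is exactly the step you assert without proof. The auxiliary line over a non-fixed point of $B_j$ meets $\Sigma$ in two points, one on $C_j^+$ and one on $C_j^-$, and for a non-special orbit the closure contains both (that is how Lemma \ref{Lemma: Intersection number} gets $2$). For a special orbit you must show its closure contains exactly \emph{one} of the pair: in the paper this follows because (by the proof of Lemma \ref{Lemma: analytic closure}) $O$ contains a \emph{single} leaf of the foliation of $V_i^+$, so $\overline O$ cannot contain $C_{i-1}^+$, while the propagation argument of Lemma \ref{Lemma: C connected} forces $\overline O$ to wrap around through $C_i^+,\dots,C_k^+,C_1^-,\dots,C_{i-1}^-$, hence to contain $\gamma(z)\in C_{i-1}^-$ but not $z\in C_{i-1}^+$. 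Your sentence describing both special leaves $\{u=0\}$ and $\{v=0\}$ as ``slices of $\overline O$'' is in fact false (they lie in the two \emph{different} special orbits), and taken literally it would give intersection number two; so this gap is not cosmetic -- it is the content of the lemma. (Your handling of possible extra intersections away from $\Sigma$, by choosing $L\not\subset\overline O$, is fine.)

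For the second claim your key step is invalid as stated: a fixed-point-free real structure can perfectly well preserve a free $G$-orbit -- indeed $\gamma$ preserves every $B$-orbit (Lemma \ref{Lemma: B-orbits}), and e.g. $z\mapsto -1/\bar z$ is a fixed-point-free antiholomorphic involution of $\C^*$ -- so ``$\gamma(O)=O$ would force a fixed point on $O$'' does not follow from $O\cong\C^*\times\C^*$ alone. The argument can be repaired, but only by using something extra: either the first part (if $\gamma(O)=O$, the \emph{unique} intersection point of $O$ with a real principal line would be $\gamma$-fixed, contradicting fixed-point freeness), or, as the paper does, by noting that $\overline O$ contains $\gamma(z)\in C_{i-1}^-$ but not $z\in C_{i-1}^+$, so $\gamma(O)$ contains $z$ and is therefore the \emph{other} special orbit through $L_i$. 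Similarly, your leaf-swapping observation at $z_i^\pm$ does not by itself decide which orbit owns which leaf in $V_i^-$; that pairing is precisely what the paper extracts from this proof (see the definition of $O_i^\pm$ that follows it). So both halves of your proposal need the ``single leaf per admissible neighbourhood plus wrap-around'' input made explicit before they close.
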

\begin{proof}
Let $O$ be a special orbit, so by definition, $O$ contains a special leaf in the admissible neighbourhood $V_i^\pm$. Since $\overline{O}$ contains $L_i$, it must also intersect $V_i^\mp$. Therefore, we will suppose that $O$ intersects $V_i^+$ without loss of generality. In Lemma \ref{Lemma: Intersecting orbits} we proved that $O$ can only contain one leaf in the foliation of $V_i^+$ by $G$. So, suppose that $O$ contains the special leaf $W'_i$, which has closure intersecting $C_i^{+}$. (The argument proceeds similarly if $O$ contains $W'_{i-1}$.) Since $O$ contains no other leaves in $V_i^+$, the closure of $O$ does not contain $C_{i-1}^+$.

Similarly to the non-special case in Lemma \ref{Lemma: Intersection number}, we can choose $L \in \m$ containing non-fixed points $z \in {C_{i-1}^+}$ and $\gamma(z) \in {C_{i-1}^-}$. Since $\overline{O}$ does not contain $C_{i-1}^+$, it only intersects $L$ at $\gamma(z)$. As noted in Lemma \ref{Lemma: Intersection number}, this intersection is transverse. Then, proceeding by the same argument as in Lemma \ref{Lemma: Intersection number}, we can conclude that special orbits have intersection number one with principal lines.

The $G$-orbit $\gamma(O)$ has closure intersecting $L_i$ and therefore, $\gamma(O)$ is a special orbit containing one of the local leaves in $V_i^+$. We have established that $O$ contains $\gamma(z) \in C_{i-1}^-$ and so, $\gamma(O)$ contains $z\in C_{i-1}^+$. Consequently, $\gamma(O)$ is the special orbit containing $W_{i-1}'$. Thus, $\gamma$ interchanges the two special orbits intersecting along $L_i$.
\end{proof}

\begin{TD}
We will denote the special orbit that contains the special leaf $W'_{i}\subset V_i^+$ by $O_i^+$, and the special orbit that contains the special leaf $W'_{i-1}\subset V_i^+$  by $O_i^-$. By the proof of the preceding lemma, $O_i^+$ contains $W'_{i-1}$ in $V_i^-$ and $O_i^-$ contains $W'_{i}$ in $V_i^-$. Thus, there are at most $2k$ special orbits: $$O_1^+, \ldots, O_k^+,O_1^-, \ldots, O_k^-.$$
\end{TD}

In the next lemma we assume that $\overline{O}_i^+$ only contains a single twistor line. This lemma will be used to prove Lemma \ref{Lemma: Special point ordering}, once we have established this assumption in Lemma \ref{Lemma: Special points}.

\begin{TL}\label{Lemma: Special orbit decomposition}
If $L_i$ is the only twistor line contained in $\overline{O}_i^+$, then $$\overline{O}_i^+ = \bigcup_{l = i}^k C_{l}^+ \cup \bigcup_{l = 1}^{i-1} C_{l}^- \cup L_i \cup O_i^+.$$
\end{TL}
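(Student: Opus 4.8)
The plan is to prove the two inclusions separately, in each case by tracing $\overline{O}_i^+$ through the admissible neighbourhoods $V_j^{\pm}$ of the fixed points $z_j^{\pm}$, using repeatedly the fact (from the proofs of Lemmas \ref{Lemma: Foliation} and \ref{Lemma: analytic closure}) that a $G$-orbit meets any admissible neighbourhood in the closure of a single leaf, and that, in the coordinates of Lemma \ref{Lemma: Fixed point neighbourhood}, the possible leaf-closures are $\{u=0\}$, $\{v=0\}$, or a non-special leaf-closure of the form $\{w=c\,uv\}$. By definition $O_i^+$ contains the special leaf $W_i'\subset V_i^+$ and the special leaf $W_{i-1}'\subset V_i^-$; since $W_i'=\{v=0\}$ in $V_i^+$ and $W_{i-1}'=\{u=0\}$ in $V_i^-$, taking closures gives at once $C_i^+\cup L_i\subset\overline{O}_i^+$ and $C_{i-1}^-\cup L_i\subset\overline{O}_i^+$ (for $i=1$ the second reads $C_k^+\cup L_1\subset\overline{O}_1^+$, with $C_k^+$ the component of $C$ meeting $L_1$ at $z_1^-$). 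Write $\Theta:=\bigcup_{l=i}^{k}C_l^+\cup\bigcup_{l=1}^{i-1}C_l^-\cup L_i$ for the union of curves on the right-hand side; from the incidence relations of Lemma \ref{Lemma: Defining C} one checks that $\Theta$ is a cycle of $k+1$ rational curves.

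For ``$\supseteq$'' I would propagate around this cycle. Suppose $C_l^+\subset\overline{O}_i^+$ with $i\le l\le k-1$. Then $z_{l+1}^+\in\overline{O}_i^+$, and since $\overline{O}_i^+$ is an irreducible complex orbifold of complex dimension two, $\overline{O}_i^+\cap V_{l+1}^+$ is the closure of a single leaf, which must contain $C_l^+\cap V_{l+1}^+=\{u=w=0\}$. Of the possible leaf-closures only $\{v=0\}$ fails to contain $\{u=w=0\}$, so this leaf is either the special leaf $W_l'$ (closure $\{u=0\}\supset L_{l+1}$), which is impossible since $L_{l+1}\ne L_i$, or a non-special leaf, whose closure then contains $C_{l+1}^+$. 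Iterating gives $C_i^+,\dots,C_k^+\subset\overline{O}_i^+$, and the symmetric argument along the $C^-$-chain from $C_{i-1}^-$ downward (using $V_m^-$ in place of $V_{l+1}^+$) gives $C_1^-,\dots,C_{i-1}^-\subset\overline{O}_i^+$ when $i>1$, while for $i=1$ the $C^-$-part is empty. Hence $\overline{O}_i^+\supseteq O_i^+\cup\Theta$.

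For ``$\subseteq$'' I would first show no curve of $\Sigma$ outside $\Theta$ lies in $\overline{O}_i^+$. A twistor line $L_j$, $j\ne i$, is excluded by hypothesis. If $C_l^+\subset\overline{O}_i^+$ for some $l<i$, propagating up the $C^+$-chain exactly as above — each intermediate leaf being forced non-special, since the relevant $L_m$ has $m\le i-1\ne i$ — would give $C_{i-1}^+\subset\overline{O}_i^+$; but $\overline{O}_i^+\cap V_i^+=\overline{W_i'}=\{v=0\}$ does not contain $C_{i-1}^+\cap V_i^+=\{u=w=0\}$, a contradiction. The case of a curve $C_l^-$ with $l\ge i$ is symmetric, propagating down the $C^-$-chain to $C_i^-$ and using $\overline{O}_i^+\cap V_i^-=\{u=0\}$. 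So the only curves of $\Sigma$ in $\overline{O}_i^+$ are those of $\Theta$. Now $\overline{O}_i^+=O_i^+\sqcup\partial$ with $\partial$ a connected curve (Lemma \ref{Lemma: analytic closure}). If some irreducible component $D$ of $\partial$ were not in $\Theta$, connectedness would force $D$ to meet $\Theta$ at a point $p$ where $\partial$ has at least two local branches; but away from the $G$-fixed points the local models of Lemmas \ref{Lemma: C local model} and \ref{Lemma: Twistor line neighbourhood} show $\partial$ is locally a single smooth curve, so $p$ is one of the $z_j^{\pm}$ lying on $\Theta$, where $\overline{O}_i^+$ is the closure of a single leaf and hence contains only the two coordinate curves through $z_j^{\pm}$, both in $\Theta$ — contradicting the extra branch. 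Therefore $\partial=\Theta$ and $\overline{O}_i^+=O_i^+\cup\Theta$.

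The main obstacle is the reverse inclusion, and within it the ``no extra $\Sigma$-curves'' step: the key point is that the propagation started from a spurious curve terminates only by running into one of the special leaves $W_i'\subset V_i^+$, $W_{i-1}'\subset V_i^-$ supplied by the definition of $O_i^+$, and that it does so from the ``wrong side'', which is where the contradiction comes from. A convenient substitute for the connectedness argument in the last paragraph is to note that $\overline{O}_i^+$ is a compact toric orbifold surface, so $\partial$ is a cycle of rational curves, and a cycle cannot properly contain the sub-cycle $\Theta$.
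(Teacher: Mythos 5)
Your proposal is correct and takes essentially the same route as the paper: you start from the special leaves in $V_i^{\pm}$ supplied by the definition of $O_i^+$, propagate around the cycle through admissible neighbourhoods using the single-leaf local models and the hypothesis that $L_i$ is the only twistor line in $\overline{O}_i^+$ (exactly the argument of Lemmas \ref{Lemma: C connected} and \ref{Lemma: analytic closure} that the paper cites), and then exclude any further boundary components via connectedness of the boundary and the same single-leaf analysis. The only cosmetic point is that the fact that $\overline{O}_i^+\cap V_j^{\pm}$ is a single leaf-closure comes from the proof of Lemma \ref{Lemma: analytic closure} (which you do cite at the outset), not from irreducibility of $\overline{O}_i^+$.
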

\begin{proof}
From the definition of $O_i^+$ given above and the definition of a special leaf in Subsection \ref{Subsection: Local analyticity}, it follows that $\overline{O}_i^+$ contains $C_i^+$. The only twistor line contained in $\overline{O}_i^+$ is $L_i$. Thus, $\overline{O}_i^+$ does not contain a special leaf in the admissible neighbourhoods $V_j^\pm$, for $j \neq i$. Using the same argument as in the proof of Lemma \ref{Lemma: C connected}, we can conclude that $$\bigcup_{l = i}^k C_{l}^+ \cup \bigcup_{l = 1}^{i-1} C_{l}^- \cup L_i \subset \overline{O}_i^+.$$ Then, by the proof of Lemma \ref{Lemma: analytic closure}, no other points are contained in the closure of $O_i^+$.
\end{proof}

\begin{TR}
The results of this subsection show that the closure of the $G$-orbit of a complex structure can be thought of as a double cover of $M$. The covers corresponding to $B$-orbits are branched at $F$-orbits of twistor lines in $\pi^{-1}(B)$ (by Definition \ref{Definition: B-orbits}), while the remaining non-special orbits correspond to covers that are branched over principal orbits in $M$, as we see in the next subsection. The closure of each pair of special orbits $\overline{O}_i^+\cup\overline{O}_i^+$ is also a double cover branched at $L_i$.
\end{TR}

\begin{figure}[htbp]
  \psfrag{a}{$\infty$}
  \psfrag{b}{0}
  \psfrag{c}{$C$}
  \psfrag{d}{$C$}
  \psfrag{z}{$z$}
  \psfrag{w}{$-z$}
  \psfrag{L}{$L_i$}
  \psfrag{O}{$O_i^+$}
  \psfrag{P}{$O_i^-$}
  \centering \scalebox{0.8}{\includegraphics{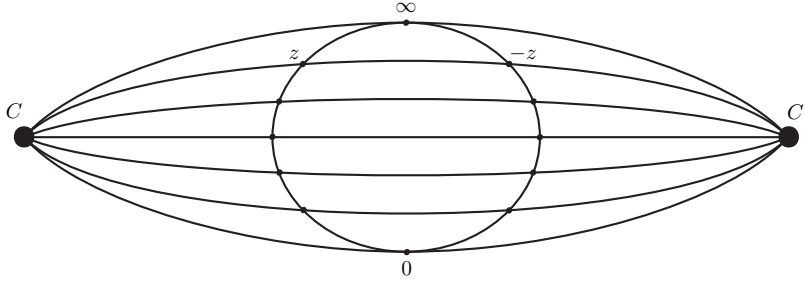}}
  \caption{$G$-orbits intersecting a principal line}\label{Figure: Involution}
\end{figure}

\subsection{An involution on a principal line}\label{Subsection: involution}
Figure \ref{Figure: Involution} summarizes the results from the previous subsection: the figure depicts non-special $G$-orbits intersecting a principal line $L$ twice and the pair of special orbits $O_i^\pm$ intersecting once. The results of Lemmas \ref{Lemma: Intersecting orbits} and \ref{Lemma: Special orbit intersection} are also included in the figure: the closure of every $G$-orbit through $L$ intersects $C$, while the closure of the special orbits also intersect along $L_i$.

We now describe how the information presented in this picture determines an involution. We will refer to a point in $L$ on a special orbit as a \emph{special point} and we define $L'$ to be the complement of the special points in $L$. It follows from Lemma \ref{Lemma: Intersection number} that there is a non-trivial holomorphic involution on $L'$, which we will denote by $\tau$, defined so that $z$ and $\tau(z)$ belong to the same $G$-orbit. In a punctured neighbourhood of a special point $\tau$ is a biholomorphism; thus, $\tau$ extends to a holomorphic involution on $L$ and (with an appropriate choice of coordinate on $L$) can be written as $$\tau: z \mapsto -z.$$ At the fixed points of $\tau$ the action of $G$ is tangential to $L$. Therefore, $0$ and $\infty \in L'$, since the special orbits are not tangential to $L$.

When $L \in \mo$, $0$ and $\infty$ correspond to the two complex structures on the tangent plane of $\pi(L)$ that preserve the tangent plane to the $F$-orbits and so, $0 = \gamma(\infty)$. It follows that the real structure is given by $z \mapsto c.\bar{z}^{-1},$ where $c$ is a negative real number. If the coordinate is multiplied by an appropriate factor, then the restriction of the real structure to $L$ can be written as
\begin{equation}\label{Equation: Real Structure}\gamma: z \mapsto -\frac{1}{\bar{z}}.\end{equation}

\begin{TL}\label{Lemma: Involution}
If $L \in \mCo$, then $L$ is smooth and the action of $G$ is free at every point. Furthermore, an affine coordinate $z$ can be chosen on $L$ such that:
\begin{enumerate}
\item{the action of $G$ is tangential to $L$ at $0, \infty$;}
\item{the involution $\tau$ induced by the action of $G$ is given by $z \mapsto -z$;}
\item{when $L \in \mo$, the real structure $\gamma$ is given by $z \mapsto -\bar{z}^{-1}$.}
\end{enumerate}
\end{TL}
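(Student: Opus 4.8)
The plan is to assemble Lemma \ref{Lemma: Involution} from the pieces already developed in the discussion preceding it, upgrading the informal remarks in Subsection \ref{Subsection: involution} into a clean statement and proof. The smoothness of $L$ and freeness of $G$ were already observed in the remark after Lemma \ref{Lemma: Intersecting orbits}: since every $G$-orbit through $L$ has closure meeting $C$ (Lemma \ref{Lemma: Intersecting orbits}) and $G$ acts freely on every orbit meeting $C$ (Lemma \ref{Lemma: No stabilizer}), the $G$-action is free near $L$; freeness in turn rules out orbifold points on $L$, since an orbifold point on a twistor line would force a non-trivial isotropy element of the $F$-action fixing a point of $L$. So the first sentence of the lemma is essentially a restatement, and I would dispatch it by citing these two lemmas.

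\textbf{Constructing the coordinate.} The substantive content is parts (i)--(iii), and the key is to produce a single affine coordinate $z$ on $L$ that simultaneously normalizes $\tau$ and (when $L\in\mo$) $\gamma$. I would proceed in the order: first show $\tau$ is a well-defined non-trivial holomorphic involution on $L$, then locate its fixed points, then normalize. By Lemma \ref{Lemma: Intersection number} every non-special $G$-orbit meets $L$ in exactly two points, so on the complement $L'$ of the (finitely many) special points there is a well-defined map $\tau$ swapping the two intersection points of each orbit; it is holomorphic because the orbit closures are complex orbifolds (Proposition \ref{Proposition: Analytic orbits}) intersecting $L$ transversally, so $\tau$ is locally the deck transformation of the branched double cover $\overline O\to M$ restricted to $L$. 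Near a special point $\tau$ is a local biholomorphism of a punctured disc, hence extends across it, giving a holomorphic involution of all of $L\cong\cp^1$. A non-trivial holomorphic involution of $\cp^1$ is conjugate to $z\mapsto -z$, which has exactly two fixed points; choosing an affine coordinate carrying them to $0$ and $\infty$ gives (ii). For (i), a fixed point of $\tau$ is a point where the two ``sheets'' of a generic orbit through $L$ come together, which happens precisely when the orbit is tangent to $L$ there; since the special orbits meet $L$ transversally (Lemma \ref{Lemma: Special orbit intersection}, using the local model of Lemma \ref{Lemma: C local model}), the special points are not fixed by $\tau$, so $0,\infty\in L'$ and the $G$-action is tangential to $L$ exactly at $0,\infty$.

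\textbf{The real structure.} For (iii), assume $L\in\mo$, so $L=\pi^{-1}(x)$ for $x$ in a principal orbit and $\gamma$ restricts to a fixed-point-free antiholomorphic involution of $L$. Because $\pi$ and $\gamma$ commute with $F$ and $\gamma(g.z)=\bar g^{-1}.\gamma(z)$ by \eqref{Equation: gamma anticommutes}, $\gamma$ sends each $G$-orbit to its conjugate orbit but commutes with the $F$-action stabilizing $L$; hence $\gamma$ normalizes the pencil of $G$-orbits cutting $L$ and therefore commutes with $\tau$. An antiholomorphic involution of $\cp^1$ commuting with $z\mapsto -z$ and without fixed points must be $z\mapsto c\bar z^{-1}$ with $c$ real; fixed-point-freeness forces $c<0$ (otherwise $z=\sqrt c\,e^{i\alpha/2}$ type solutions appear), so after scaling $z$ by a positive real factor we get $\gamma:z\mapsto -\bar z^{-1}$, which also fixes the normal forms of $\tau$ and the placement of $0,\infty$. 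I should check this rescaling does not disturb (i) and (ii): scaling by a positive real preserves $0,\infty$ and commutes with $z\mapsto -z$, so it is harmless.

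\textbf{Main obstacle.} I expect the delicate point to be proving that $\tau$ extends holomorphically across the special points rather than merely meromorphically or with an essential singularity — i.e. that in a punctured neighbourhood of a special point $\tau$ really is a bounded biholomorphism. This requires understanding the geometry of the special orbits $O_i^\pm$ near the twistor lines $L_i$: a sequence of non-special orbits degenerating to a special one causes the two $L$-intersection points to collide at the special point, and one must see that $\tau$ of a point near a special point stays near that special point, which is where the explicit local models of Lemmas \ref{Lemma: Fixed point neighbourhood} and \ref{Lemma: Twistor line neighbourhood} (and the leaf-parametrization by $\cp^1$ from Subsection \ref{Subsection: Local analyticity}) do the real work; Riemann's removable singularity theorem then finishes the extension. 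A secondary subtlety is confirming $\tau$ is genuinely non-trivial, but this is immediate since a principal line meets some non-special orbit in two \emph{distinct} points.
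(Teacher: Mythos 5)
Your argument is correct and, for the bulk of the statement, follows the paper's own route: the paper gives no separate proof environment for this lemma, which merely records the remark after Lemma \ref{Lemma: Intersecting orbits} (freeness of $G$ along $L$ via Lemmas \ref{Lemma: Intersecting orbits} and \ref{Lemma: No stabilizer}, and the exclusion of orbifold points) together with the discussion of Subsection \ref{Subsection: involution}: $\tau$ is defined on $L'$ using Lemma \ref{Lemma: Intersection number}, extended over the special points because it is a biholomorphism of a punctured neighbourhood, normalized to $z\mapsto -z$, and its fixed points lie in $L'$ because the special orbits meet $L$ transversally. Where you genuinely diverge is part (iii). The paper observes that $0$ and $\infty$ are the two complex structures on $T_{\pi(L)}M$ preserving the tangent plane to the $F$-orbit, so they are interchanged by $\gamma$; this forces $\gamma(z)=c\bar z^{-1}$ with $c$ real, and fixed-point-freeness gives $c<0$. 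You instead derive $\gamma\circ\tau=\tau\circ\gamma$ from (\ref{Equation: gamma anticommutes}) and $\gamma(L)=L$, and then classify fixed-point-free antiholomorphic involutions commuting with $z\mapsto -z$. Both are valid; the paper's version additionally records the geometric meaning of $0,\infty$, which is reused later, while yours is more formal and self-contained.

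Two small corrections to your commentary, neither of which is a gap in the proof proper. First, there is no ``$F$-action stabilizing $L$'': $F$ moves a principal line, since $\pi(L)$ lies on a principal orbit; what your argument actually uses is only that $\gamma(L)=L$ and that $\gamma$ carries $G$-orbits to $G$-orbits, and that does suffice for $\gamma\tau=\tau\gamma$ on a dense subset of $L$, hence everywhere. Second, near a special point $z_i$ the two intersection points of a degenerating non-special orbit do \emph{not} collide at $z_i$: one tends to $z_i$ and the other to the intersection of the paired special orbit $O_i^-$ with $L$ (identified as $-z_i$ only later, in Proposition \ref{Proposition: meromorphic psi}), these being distinct because distinct $G$-orbits are disjoint and the boundary pieces $C$, $L_i$ of the orbit closures do not meet a principal line. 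Accordingly, the extension of $\tau$ across a special point does not require showing $\tau(z)$ stays near that point; it only requires that $\tau$ is an injective holomorphic map of a punctured disc into $\cp^1$, which rules out an essential singularity and is exactly what the paper invokes.
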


\begin{TR}\label{Remark: Coordinates}
From now on we will use a coordinate on $L$ satisfying the previous lemma. However, there is not a unique coordinate preserving $\tau$ and the pair $0, \infty$. There are two types of coordinate transformation that are permissible: firstly the map $$z \mapsto z^{-1},$$ which interchanges $0$ and $\infty$; and secondly, the map $z \mapsto \lambda.z$ for $\lambda \in \C^*$, which fixes $0$ and $\infty$. When $L \in \mo$, we require $|\lambda| = 1$ to preserve equation (\ref{Equation: Real Structure}).
\end{TR}

\begin{TL}\label{Lemma: B-orbits}
The $B$-orbits are non-special and intersect $L \in \mo$ at points in $$\{z \in L: |z| = 1\}.$$
\end{TL}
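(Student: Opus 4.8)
The plan is to separate the two assertions. First, to see that the $B$-orbits are non-special: a $B$-orbit $O_x$ is, by Definition \ref{Definition: B-orbits}, the $G$-orbit whose closure contains the twistor line $L_x$ over a non-fixed point $x \in B_i$. If $O_x$ were special, then by definition it would contain a special leaf in some $V_j^\pm$, and hence by Lemma \ref{Lemma: analytic closure} its closure would contain one of the lines $L_j$ (a twistor line over an $F$-fixed point $x_j$). But $O_x$ also contains $L_x$, a twistor line over the non-fixed point $x$, so $\overline{O_x}$ would contain two distinct twistor lines; the argument of Lemma \ref{Lemma: Special orbit intersection} (via Lemma \ref{Lemma: Intersection number}) shows that a special orbit has intersection number one with a principal line, whereas containing two twistor lines forces intersection number at least two with a nearby principal line — a contradiction. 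More directly: $O_x$ is non-special because it meets $W'$ at points of $\pi^{-1}(B_i')$, and the closure $\overline{O_x}$ is then $O_x \cup C$ by Lemma \ref{Lemma: C connected}, which contains only the curves in $C$ and no $L_j$; so $O_x$ cannot be special.

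Next, to identify where $O_x$ meets a real principal line $L \in \mo$. The key geometric input is that $O_x$ is a \emph{$\gamma$-invariant} orbit: indeed $L_x = \pi^{-1}(x)$ is $\gamma$-invariant (it is a real twistor line), and $\gamma$ carries $G$-orbits to $G$-orbits by (\ref{Equation: gamma anticommutes}), so $\gamma(O_x)$ is the $G$-orbit whose closure contains $\gamma(L_x) = L_x$, hence $\gamma(O_x) = O_x$. Now let $L \in \mo$ be a real principal line not contained in $O_x$, with affine coordinate normalized as in Lemma \ref{Lemma: Involution}, so that $\gamma(z) = -\bar z^{-1}$ and the involution $\tau: z \mapsto -z$ identifies the two points of each non-special orbit on $L$. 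Since $O_x$ is non-special, it meets $L$ in a $\tau$-orbit $\{z_0, -z_0\}$. Since $O_x$ is $\gamma$-invariant, $\gamma(\{z_0,-z_0\}) = \{z_0,-z_0\}$, i.e. $\{-\bar z_0^{-1}, \bar z_0^{-1}\} = \{z_0, -z_0\}$. Either case of this set equality gives $z_0 = \pm \bar z_0^{-1}$, hence $|z_0|^2 = 1$, so $z_0$ and $-z_0$ both lie on $\{|z|=1\}$. This proves every $B$-orbit meets every $L \in \mo$ inside the unit circle.

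The main obstacle is the logical interplay with the fact that $L$ is \emph{not} contained in $O_x$: I must first argue that such a real principal line exists — but this is immediate, since $L_x$ lies in the closure of the single $B$-orbit $O_x$ (Lemma \ref{Lemma: Constructing C} and Definition \ref{Definition: B-orbits}), and generic real twistor lines lie over principal orbits, so a real principal line in a neighbouring orbit is available. A secondary point to be careful about is well-definedness of the intersection: we need $\overline{O_x}$ to be a complex orbifold with a genuine intersection number, which is exactly the content of Proposition \ref{Proposition: Analytic orbits}; and we need the two intersection points on $L$ to be simple, which follows from transversality of $\overline{O_x}$ with twistor lines along $C$ (the local model of Lemma \ref{Lemma: C local model}), so that the pair $\{z_0,-z_0\}$ really is a $\tau$-orbit with $z_0 \neq -z_0$ and the set-equality argument above applies. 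Finally one should remark that the conclusion is independent of the residual coordinate ambiguity of Remark \ref{Remark: Coordinates}, since both $z \mapsto z^{-1}$ and $z \mapsto \lambda z$ with $|\lambda| = 1$ preserve $\{|z| = 1\}$.
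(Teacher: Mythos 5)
Your second paragraph is essentially the paper's proof and is fine: you establish $\gamma(O_x)=O_x$ from the fact that $L_x$ is a real twistor line contained in the closure of a unique $G$-orbit, and then combine Lemma \ref{Lemma: Intersection number} with the reality condition $\gamma(z)=-\bar z^{-1}$ to force the two intersection points onto the unit circle. (Your case $z_0=-\bar z_0^{-1}$ is in fact impossible, consistent with $\gamma$ having no fixed points, but the conclusion $|z_0|=1$ stands either way.)

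The gap is in your proof of non-specialness, and neither of the two arguments you offer works as written. The first rests on the principle that ``containing two twistor lines forces intersection number at least two with a nearby principal line''; this is not justified and is false in general -- a divisor can contain many curves of a given homology class and still meet a nearby member of that class in a single point (a hyperplane in $\cp^3$ containing lines is the standard example), and indeed the homological intersection number of $\overline{O_x}$ with the twistor-line class is exactly what you are trying to pin down, so containment of $L_x$ and $L_j$ gives no contradiction by itself. The second, ``more direct'' argument is circular: the identity $\overline{O}=O\cup C$ of Lemma \ref{Lemma: C connected} is proved only for \emph{non-special} orbits, so you cannot invoke it to conclude that $O_x$ is non-special; and merely meeting $W'$ over points of $B_i'$ does not exclude specialness, since special orbits also meet $W'$ (their special leaves lie in $W'$) and their closures contain curves of $C$. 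The repair is immediate and is what the paper does: you have already shown $\gamma(O_x)=O_x$, while Lemma \ref{Lemma: Special orbit intersection} shows that the special orbits occur in pairs $O_i^{\pm}$ interchanged by $\gamma$, so no special orbit is $\gamma$-invariant; hence $O_x$ is non-special. With that substitution your argument coincides with the paper's.
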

\begin{proof}
Recall from Definition \ref{Definition: B-orbits} that a $B$-orbit $O$ has closure containing the twistor line $L := \pi^{-1}(x)$, for some $x \in B$. Since $\overline{O}$ is the only $G$-orbit containing $L$ and $\gamma(L) = L$, it follows that $\gamma(O) = O$. Therefore, $O$ is non-special, as special orbits are interchanged by the real structure. Consequently, $O$ must intersect $L$ at the two points $z$ and $-z$. Then, since $\gamma(O) = O$, it follows that $-z = -\bar{z}^{-1}$.
\end{proof}

\begin{figure}[htbp]
  \psfrag{A}{$\psi(z)$}
  \psfrag{B}{$0$}
  \psfrag{C}{$\infty$}
  \psfrag{D}{$z$}
  \psfrag{E}{$-z$}
  \psfrag{F}{$-z_3$}
  \psfrag{G}{$-z_2$}
  \psfrag{H}{$-z_1$}
  \psfrag{I}{$z_1$}
  \psfrag{J}{$z_2$}
  \psfrag{K}{$z_3$}
  \centering \scalebox{1.0}{\includegraphics{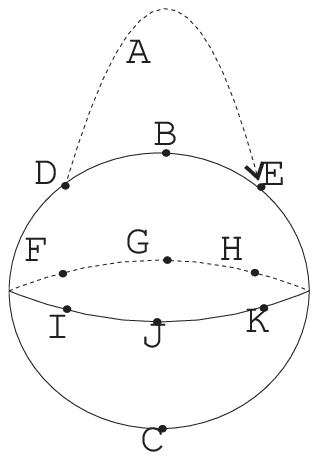}}
  \caption{A real principal line}\label{Figure: A principal line}
\end{figure}

\section{The meromorphic function}\label{Section: The meromorphic data}
\subsection{A meromorphic function on principal lines} \label{Subsection: Meromorphic data}
Figure \ref{Figure: A principal line} depicts a real principal line (when $k = 3$). The tangency points are marked as $0$ and $\infty$, the intersection points with $B$-orbits are located along the equator, as are the special points. In the figure we also represent $\psi(z)$: the action of $G$ required to map $z$ to $-z$. In this subsection we see that $\psi$ can be extended to a meromorphic function. The importance of $\psi$ becomes particularly clear in Section \ref{Section: The conformal structure}: we show that it allows us to reconstruct explicitly the twistor space of the principal lines and consequently, recover the conformal structure over the principal orbits in $M$.

Let $L \in \mCo$ and recall that $L'$ denotes the complement of the special points. There is a well-defined \emph{holomorphic map} $\psi:L' \rightarrow G$ satisfying $$\psi(z).z = -z,$$ which cannot be extended to a $G$-valued holomorphic function over special points, since special orbits only intersect $L$ with multiplicity one. On $L'$ we can write $\psi$ as $(\psi_1, \psi_2)$ with the respect to the coordinates we use to identify $G$ with $\C^*\times\C^*$. In Proposition \ref{Proposition: meromorphic psi} we extend $\psi$ to a \emph{meromorphic} function over $L$; more precisely, $\psi$ extends to a holomorphic map from $L$ to $\cp^1\times \cp^1$. We will say that $\psi$ has a pole of order $v = (a,b)$ at a special point if $\psi_1$ extends with a pole of order $a$ and $\psi_2$ extends with a pole of order $b$. Note that by a pole of order $-a$ for $a > 0$, we mean a zero of order $a$.

\begin{TD}\label{Definition: Combinatorial data}
Recall from Section \ref{Section: Compact Toric 4-Orbifolds} that $B_i$ is labeled by $u_i \in \s \subset \Lambda$, where $\Lambda$ is the lattice in $\mf{f}$ defining $F$. We set $u_0 = -u_k$ and $v_i = u_i - u_{i-1}$ for $i=1,\ldots, k$. This sets up a bijective correspondence between $\s$ and the ordered set $$\mc{T} : = \{v_1,\ldots,v_k\}\subset \Lambda.$$
\end{TD}

In the notation of Section \ref{Section: Local model}, let $V_i^+\subset Z$ be an admissible neighbourhood of $z_i^+ \in C$. Recall from Lemma \ref{Lemma: Fixed point neighbourhood} that there is an orbifold chart $$\varphi_i^+:\tilde V_i^+ \rightarrow V_i^+ \cong \tilde V_i^+/\Gamma_i$$ such that: $\tilde V_i^+$ has coordinates $(u,v,w)$; the action of $G$ lifts to the action of $G_i$ \begin{equation}\label{Equation: G-action}(s,t):(u,v,w)\mapsto (s u, t v,s t w);\end{equation} and the respective generators $s^{(1,0)}$ and $t^{(0,1)}$ of $G_i$ are mapped to $s^{u_{i-1}}$ and $t^{u_{i}}$ in $G$. (Here we use the same notation as Lemma \ref{Lemma: Fixed point neighbourhood}, where $t^{u} := \tn{exp}(u \tn{log}(t))$.)

In these coordinates, $v=0$ and $u=0$ are the preimages of the special leaves $W'_{i}$ and $W'_{i-1}$, respectively, and $\varphi^{-1}(z_i^\pm) = (0,0,0)$. Recall that $O_i^+$ denotes the special orbit that contains the special leaf $W'_{i}\subset V_i^+$, while $O_i^-$ denotes the special orbit that contains the special leaf $W'_{i-1}\subset V_i^+$. By Lemma \ref{Lemma: Intersection number}, $O_i^+$ intersects $L$ at a single point, which we label $z_i$.

\begin{figure}[htbp]
$$\quad \tilde\psi(v) = \tilde f(v)v^{(1,-1)} \quad\quad\quad\quad\quad\quad\quad\quad\quad\quad\quad\quad\quad \quad\psi(v) = f(v).v^{u_{i-1}-u_i}$$
\begin{diagram}
\tilde z_i \in \tilde U_+ \subset \tilde V_i^+          &  \rTo^{\quad\varphi_i^+\quad}  &  g.z_i \in g.U_+ \subset V_i^+  &  \rTo^{\quad g^{-1}\quad}  &  z_i \in U_+ \subset L     \\
\dTo^{v \mapsto \tilde\psi(v).v}                        &                      &                                 &                 &  \dTo_{v \mapsto \psi(v).v}  \\
-\tilde z_i \in \tilde U_- \subset \tilde V_i^+  &                      &  \rTo^{g^{-1}\circ\varphi_i^+}  &                 &  -z_i \in U_- \subset L_0    \\
\end{diagram}
\caption{Commutative diagram illustrating the proof of Proposition \ref{Proposition: meromorphic psi}}\label{Figure: Meromorphic psi}
\end{figure}

\begin{TP}\label{Proposition: meromorphic psi}
There is a pair of meromorphic functions on $L$ extending $\psi:L'\rightarrow G$, which has a pole of order $\pm v_i$ at $\pm z_i$. Furthermore, $O_i^-$ intersects $L$ at $-z_i$.
\end{TP}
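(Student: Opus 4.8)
The plan is to analyze the behavior of $\psi$ near a special point by passing to the orbifold chart $\varphi_i^+: \tilde V_i^+ \to V_i^+$ from Lemma~\ref{Lemma: Fixed point neighbourhood} and computing the group element needed to realize the involution $\tau$ in the coordinates $(u,v,w)$ of that chart. The key geometric input is the commutative diagram in Figure~\ref{Figure: Meromorphic psi}: near $z_i$ we translate $L$ by some fixed $g \in G$ so that $g\cdot z_i$ lands in $V_i^+$ (this is possible since $\overline{O}_i^+$ meets $C_i^+$ in a chart of this type), lift to $\tilde V_i^+$, and then $\psi$ is conjugate, up to the fixed element $g$, to the map $v \mapsto \tilde\psi(v)$ on the lifted curve. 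Since conjugation by a fixed $g$ changes $\psi$ only by multiplication by a holomorphic nonvanishing function (in the affine coordinate), it does not affect the order of the pole, so it suffices to identify $\tilde\psi$ in the local model.

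First I would set up the local picture. In the chart $\tilde V_i^+$ the special leaf $W'_i$ is $\{v = w = 0\}$ and the curve $\tilde C_i^+$ is contained in it; the lift of $L$ near $\tilde z_i$ is a holomorphic disc transverse to $\tilde C_i^+$ meeting the special leaf only at $\tilde z_i = (0,0,0)$ with multiplicity one (by Lemma~\ref{Lemma: Intersection number} and the transversality noted there). Parametrize this disc by $v$, so it has the form $v \mapsto (u(v), v, w(v))$ with $u(0) = w(0) = 0$; because the intersection with $\{v = 0\}$ is simple, $u(v)$ and $w(v)$ vanish to first order, say $u(v) = v\,\alpha(v)$, $w(v) = v\,\beta(v)$ with $\alpha(0), \beta(0) \neq 0$. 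Next I would solve for the group element $(s,t) \in G_i$ with $(s,t)\cdot(u(v),v,w(v)) = (u(-v), -v, w(-v))$, which by the action \eqref{Equation: G-action} forces $s\,u(v) = u(-v)$, $t\,v = -v$, $s t\, w(v) = w(-v)$. The middle equation gives $t = -1$; the first gives $s = u(-v)/u(v) = -\alpha(-v)/\alpha(v)$, a holomorphic nonvanishing function of $v$ near $0$; and the third is then automatically consistent (it expresses the same orbit condition). Thus $\tilde\psi(v) = \big(-\alpha(-v)/\alpha(v),\, -1\big)$ in the $(s,t)$-coordinates on $G_i$ — so as a $G_i$-valued map it extends holomorphically across $v = 0$ with no pole.

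Then I would push this back down to $G$. The generators $s^{(1,0)}$ and $t^{(0,1)}$ of $G_i$ map to $s^{u_{i-1}}$ and $t^{u_i}$ in $G$, so the $G_i$-element $(s, t) = (\sigma, -1)$ maps to $\sigma^{u_{i-1}}\cdot(-1)^{u_i}$, i.e.\ to a $G$-element of the form $\sigma^{u_{i-1} - u_i}$ times a bounded unit factor (here I use $v_i = u_i - u_{i-1}$ from Definition~\ref{Definition: Combinatorial data}, so $u_{i-1} - u_i = -v_i$). Tracking how the local coordinate $v$ on $\tilde V_i^+$ relates to the affine coordinate on $L$: the quotient by $\Gamma_i$ and the identification with $L$ send $v$ to a local coordinate vanishing simply at $z_i$, so $\sigma$, which has a simple zero in $v$, translates into a function with a simple zero in the coordinate on $L$ at $z_i$; raising to the multi-exponent $-v_i$ then gives $\psi$ a pole of order exactly $v_i$ at $z_i$ (in the sign convention of the Proposition, "pole of order $v_i$" — recall a negative entry means a zero). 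Finally, the statement that $O_i^-$ meets $L$ at $-z_i$ follows because $\tau(z_i) = -z_i$ lies in the same $G$-orbit as $z_i$ only through $\overline{O}_i^+$'s complement; more precisely, by Lemma~\ref{Lemma: Special orbit intersection} and the preceding definition, $O_i^-$ is the special orbit containing $W'_{i-1} \subset V_i^+$, whose closure meets $C_{i-1}^+$, and the same local computation at the other special leaf $\{u = 0\}$ shows $O_i^-$ meets $L$ at the $\tau$-image of the relevant point; combined with $\gamma(O_i^+) = O_i^-$ (Lemma~\ref{Lemma: Special orbit intersection}) and $\gamma(z_i) = -\bar z_i^{-1}$ one pins down that this point is $-z_i$, and the pole order there is $-v_i$ by the matching computation with $u$ in place of $v$ (which flips the sign of the exponent since $u=0$ is the leaf of $O_i^-$, carrying the exponent $u_i - u_{i-1} = v_i$... wait, $+v_i$; the sign $\mp$ in the statement comes from the opposite roles of $u$ and $v$ in \eqref{Equation: G-action}).

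The main obstacle I expect is the bookkeeping of signs and exponents: keeping straight (a) which special leaf ($\{u=0\}$ vs $\{v=0\}$) corresponds to $O_i^+$ vs $O_i^-$, (b) the direction of the identification $G_i \to G$ via $u_{i-1}, u_i$, and (c) the sign conventions in "pole of order $v$" for multi-indices with mixed signs, so that the final answer reads $+v_i$ at $z_i$ and $-v_i$ at $-z_i$ rather than some permutation. The genuinely geometric content — that $\psi$ extends meromorphically at all — is essentially immediate once one observes that $\tilde\psi$ extends holomorphically in the local model and that the obstruction to a *holomorphic* $G$-valued extension is exactly the nontrivial exponent $v_i \neq 0$ coming from the descent $G_i \to G$ (this is why the multiplicity-one intersection of a *special* orbit, as opposed to the multiplicity-two of a non-special one, forces a genuine pole rather than a removable singularity).
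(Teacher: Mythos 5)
There is a genuine gap, and it starts with the local picture in the chart $\tilde V_i^+$. You place the lift of $g.z_i$ at the origin $(0,0,0)$ and parameterize the lifted disc as $v \mapsto (u(v),v,w(v))$ with $u(0)=w(0)=0$. But $(0,0,0)$ is the preimage of the $G$-fixed point $z_i^+$, which lies on $L_i$; since $z_i^+$ is fixed by $G$, a translate $g.L$ of a principal line can never pass through it. The correct picture is that $g.z_i$ lies on the special leaf $\{v=0\}$ \emph{away} from $\Sigma$, so the lifted disc is $(\alpha(v),v,\beta(v))$ with $\alpha(0),\beta(0)\neq 0$. A second, independent error is modelling the involution inside one disc via $(s,t)\cdot(u(v),v,w(v))=(u(-v),-v,w(-v))$: since $\tau$ fixes only $0,\infty$ and $z_i\neq 0,\infty$, $\tau$ carries a small disc $U_+$ about $z_i$ to a disc $U_-$ about the distant point $-z_i$, which must be moved into $V_i^+$ by a \emph{second} group element $h$ and lifted to a second disc, parameterized separately (this is exactly the $a,b,c$ parameterization in the paper's proof). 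With your setup the computation yields $t=-1$ and $s$ holomorphic non-vanishing, i.e.\ a $G_i$-valued map with no pole; and the attempted rescue — that the pole appears upon descent $G_i\to G$ because $v_i\neq 0$ — cannot work, since integer powers of non-vanishing holomorphic functions are again non-vanishing and holomorphic. Indeed your text is internally inconsistent: $\sigma=-\alpha(-v)/\alpha(v)$ is first (correctly, given your setup) declared non-vanishing and later asserted to have a simple zero in order to produce the pole. In the paper the pole is forced by matching the two lifted discs: the second components give $\tilde\psi_2(v)=b(u)/v$ and the third then give $\tilde\psi_1(v)\sim v$, so that under $(s,t)\mapsto s^{u_{i-1}}t^{u_i}$ one gets a pole of order $u_i-u_{i-1}=v_i$ at $z_i$; the multiplicity-one intersection with the special orbit enters through this matching, not through the exponents of the covering $G_i\to G$.

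The ``Furthermore'' clause is also not established by your argument. In the paper it drops out of the same matching computation: the first components force the coordinate function $a(u)$ of the second lifted disc to vanish, which says precisely that $h.U_-$ meets the special leaf $W'_{i-1}$, i.e.\ that $-z_i\in O_i^-$. Your route instead invokes $\gamma(O_i^+)=O_i^-$ together with $\gamma(z_i)=-\bar z_i^{-1}$; but the proposition is stated for arbitrary $L\in\mCo$ (no real structure on a non-real line), and the identity $\gamma(z_i)=-z_i$ requires $|z_i|=1$, which is Lemma \ref{Lemma: Special points} and is proved \emph{after} (and using) this proposition, so the appeal is circular even for $L\in\mo$.
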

\begin{proof}
To prove this lemma we use the action of $G$ to move a neighbourhood of $z_i \in L$ to a neighbourhood of the $G$-fixed point $z_i^{+}$. Then we are able to examine the behaviour of $\psi$ in the local coordinates found in Subsection \ref{Subsection: Local Model}. First we choose $g, h\in G$ so that $g.z_i$ and $h.(-z_i)\in V_i^+$. Then, we can choose a disc $U_+ \subset L$ about $z_i$ that contains no other special points and satisfies $g.U_+ \subset V_i^+$. We will denote $\tau(U_+)$ by $U_-$ and, shrinking $U_+$ is necessary, we will assume that $h.U_- \subset V_i^+$. If we denote $U_\pm\cap L'$ by $U_\pm'$, then $\psi(z).z\in U_-'$ for each $z\in U_+'$.

In $\tilde V_i^+$ the preimage of $g.U_+$ consist of $|\Gamma_i|$ disjoint holomorphic discs, each of which is mapped biholomorphically onto $g.U_+$ by $\varphi_i^+$. Let $\tilde U_+\subset \tilde V_i^+$ be one such disc and define $\tilde U_-$ similarly. Also, let $\pm \tilde z_i \in \tilde U_\pm$ be the unique preimage of $g.(\pm z_i)$ in $\tilde U_\pm$. A map $\tilde \psi : \tilde U_+' \rightarrow G_i$ is uniquely defined, in analogy to $\psi$, by
$$\tilde \psi(z).z \in \tilde U_-'.$$ Since the respective generators $s^{(1,0)}$ and $t^{(0,1)}$ for $G_i$ are mapped to $s^{u_{i-1}}$ and $t^{u_i}$ for $G$, $\tilde \psi = (\tilde \psi_1, \tilde \psi_2)$ satisfies \begin{equation}\label{Equation: tilde psi}(\tilde\psi_1(z))^{u_{i-1}}.(\tilde\psi_2(z))^{u_{i}}  = h\circ\psi\circ g^{-1}\circ\varphi_i^+(z) = h.g^{-1}\psi\circ\varphi_i^+(z) ,\end{equation}
for $z \in \tilde U_+$.

The intersection of $U_+$ with $O_i^+$ is transverse and therefore, $g.U_+$ intersects transversally with the special leaf $W'_{i}$. Consequently, a coordinate $v$ can be chosen on the disc $\tilde U_+$ with respect to which
$$\tilde U_+ = \{(\alpha(v), v, \beta(v)) \in \tilde V_i^+: |v|<\epsilon\},$$ where $\alpha, \beta$ are non-zero holomorphic functions and $v=0$ corresponds to $\tilde{z_i}$.
We also know that $U_-$ intersects with a special orbit transversally. If this special orbit is $O_i^-$, then $h.U_-$ intersect transversally with $W'_{i-1}$; otherwise, $h.U_-$ does not intersect with a special leaf in $V_i^+$. Thus, we can write
$$\tilde U_- = \{(a(u), b(u), c(u)) \in \tilde V_i^+: |u|<\epsilon\},$$ where $a, b, c$ are holomorphic functions and $u=0$ corresponds to $\tilde{z_i}$. The holomorphic functions $b$ and $c$ are non-zero, and $a$ has a zero of order one at $u=0$ if and only if $-z_i \in O_i^-$.

For each $v \in \tilde U_+'$ there is a unique $u \in \tilde U_-'$ such that $\tilde \psi = (\tilde \psi_1, \tilde \psi_2)$ satisfies $$(\tilde \psi_1(v).\alpha(v),\; \tilde \psi_2(v).v,\; \tilde \psi_1(v)\tilde \psi_2(v).\beta(v)) = (a(u), b(u), c(u)),$$ where the action of $G_i$ is given in (\ref{Equation: G-action}). It follows that \begin{equation}\label{Equation: psi lift}\tilde \psi(v) = (f(v)v,g(v)/v),\end{equation} where $f$ and $g$ are holomorphic and non-zero. Therefore, we can extend $\tilde\psi$ to a meromorphic function on $\tilde U_+$. If we take the quotient by $\varphi_i^+$, the pre-images of $g.U_+$ and $h.U_-$ are mapped biholomorphically onto $g.U_+$ and $h.U_-$, respectively. This gives a coordinate $v$ on $g.U_+$, with respect to which $v=0$ corresponds to $g.z_i$. Therefore, by equation (\ref{Equation: tilde psi}), $\psi$ has a pole of order $v_i$ at $0$. Note that we have also shown that $a$ vanishes at $v=0$; thus, the special orbit intersecting $L$ at $-z_i$ is $O_i^-$.

Using the same model, it follows that $\psi$ has a pole of order $-v_i$ at $-z_i$, which is in agreement with $\psi(-z) = \psi(z)^{-1}$. Since there are only a finite number of special orbits, $\psi$ has a finite number of poles and therefore, it is meromorphic.
\end{proof}

When $L \in \mo$, the real structure also commutes with $\psi$: $$(\psi\circ\gamma(z)).\gamma(z) = -\gamma(z) = \gamma(-z) = \gamma(\psi(z).z).$$ Therefore, $\psi$ additionally satisfies the reality condition \begin{equation}\label{Equation: Reality condition} \psi\circ\gamma = 1/\overline{\psi}.\end{equation}

\subsection{Constructing a Riemann surface in $Z$}\label{Subsection: Riemann surface}
In this subsection we define the ``square root" of $\psi$ and use it to construct a Riemann surface in $Z$. Most effort goes into establishing that this Riemann surface extends smoothly over points corresponding to special points. In particular the points on this surface corresponding to $z_i$ intersects with $L_i$ \emph{transversally}. This observation is important for constructing the microtwistor correspondence in Section \ref{Section: The Riemann surface R}.

We will continue to use the notation from the previous subsections. Therefore, $L \in \mCo$ is equipped with an involution $\tau$ and a meromorphic function $\psi$.
On a disc in $L'$ there is a well-defined square root of each of the components of $\psi = (\psi_1, \psi_2)$. So there is a well-defined square root of $\psi$, which we will denote by $\chi := (\psi_1^{1/2}, \psi_2^{1/2})$. We denote the abstract Riemann surface associated with the meromorphic continuation of $\chi$ by $R_L$. This is a branched covering $\rho: R_L \rightarrow L$ and the four possible values of $\chi$ over each point in $L$ determines to the deck transformation group \begin{equation}\label{Equation: H definition}H := \{(1,1), (-1,1), (1,-1), (-1,-1)\} \in G.\end{equation} In particular, $\rho$ has a double branch point over $z \in L$ if and only if $z = z_i$ or $- z_i$ and $$h_i := (-1)^{v_i} \neq (1,1).$$ The subset $R_L' : = \rho^{-1}(L') \subset R_L$ can be identified with the Riemann surface $$\{(z,w) \in L' \times G: w^2 = \psi(z)\}$$ and so, we can define a holomorphic map $$\mu: R_L' \rightarrow Z; (z, w) \mapsto w.z.$$

The next lemma can be thought of as a ``multi-valued version" of Proposition \ref{Proposition: meromorphic psi} because it extends $\mu$ to a holomorphic map over $R_L$. Consequently, the method of proof is similar and we will use some of the same notation.

\begin{TL}\label{Lemma: Riemann surface from R_L}
$\mu:R_L' \rightarrow Z'$ extends to a holomorphic map on $R_L$. In particular, $\mu$ is biholomorphic on some neighbourhood of $r \in  \rho^{-1}(\pm z_i)$ and $\mu(r) \in L_i$.
\end{TL}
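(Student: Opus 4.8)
The plan is to mirror the structure of the proof of Proposition \ref{Proposition: meromorphic psi}, working in an orbifold chart $\varphi_i^+ \colon \tilde V_i^+ \to V_i^+$ about $z_i^+$, where the $G$-action linearizes as in Lemma \ref{Lemma: Fixed point neighbourhood}, namely $(s,t)\cdot(u,v,w) = (su,tv,stw)$. Away from $\rho^{-1}(L')$ the map $\mu$ is already holomorphic, since it is the composite of the holomorphic covering $\rho$, the meromorphic (hence holomorphic into $\cp^1\times\cp^1$, but in fact $G$-valued on $L'$) square root $\chi$, and the $G$-action $G\times Z\to Z$; so the only issue is extension over the finitely many points $r\in\rho^{-1}(\pm z_i)$ lying above the special points, and I will treat $+z_i$ (the case $-z_i$ being identical after replacing $\psi$ by $\psi^{-1}$, equivalently swapping the roles of the special leaves).

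First I would fix, as in Proposition \ref{Proposition: meromorphic psi}, elements $g\in G$ with $g\cdot z_i \in V_i^+$, a small disc $U_+\subset L$ about $z_i$ with $g\cdot U_+\subset V_i^+$, and a lift $\tilde U_+\subset\tilde V_i^+$. Recall from that proof that, in a suitable coordinate $v$ on $\tilde U_+$ (with $v=0\leftrightarrow\tilde z_i$), we have $\tilde U_+ = \{(\alpha(v),v,\beta(v)) : |v|<\epsilon\}$ with $\alpha,\beta$ holomorphic and nonvanishing, the special leaf $W_i'$ being $\{v=0\}=\tilde C_i^+\cap\tilde V_i^+\subset\{u=w=0\}$... wait, I should be careful: $W_i'=\{v=0\}$ and its closure contains $C_i^+$, while $L_i=\{u=v=0\}$. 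The square root $\tilde\chi$ of $\tilde\psi=(f(v)v,\,g(v)/v)$ (equation (\ref{Equation: psi lift})) is $\tilde\chi(v)=(f(v)^{1/2}v^{1/2},\,g(v)^{1/2}v^{-1/2})$, which on the double cover becomes single-valued in a coordinate $\sigma$ with $\sigma^2=v$. Then the composite $\tilde\mu(\sigma) = \tilde\chi\cdot(\alpha(\sigma^2),\sigma^2,\beta(\sigma^2))$ computes, using the linear action, to
\begin{equation*}
\tilde\mu(\sigma) = \bigl(f^{1/2}\sigma\cdot\alpha(\sigma^2),\ g^{1/2}\sigma^{-1}\cdot\sigma^2,\ f^{1/2}g^{1/2}\cdot\beta(\sigma^2)\bigr) = \bigl(\sigma\,F(\sigma^2),\ \sigma\,G(\sigma^2),\ H(\sigma^2)\bigr),
\end{equation*}
where $F,G,H$ are holomorphic in $\sigma^2$ and nonvanishing at $\sigma=0$. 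This is manifestly holomorphic in $\sigma$ at $\sigma=0$, so $\mu$ extends holomorphically over $r$; moreover $\tilde\mu(0)=(0,0,H(0))\in\{u=v=0\}=\tilde L_i$, giving $\mu(r)\in L_i$ after applying $g^{-1}\circ\varphi_i^+$ (which carries $\tilde L_i$ to $L_i$). Finally, the Jacobian of $\sigma\mapsto(\sigma F(\sigma^2),\sigma G(\sigma^2),H(\sigma^2))$ together with the two transverse coordinate directions along $L_i$ inside $\tilde V_i^+$ is nonsingular at $\sigma=0$ precisely because $(F(0),G(0))\neq(0,0)$ and $H'(0)\cdot(\text{something})$... more cleanly: near $\sigma=0$ the image curve is $\{u/F = v/G,\ \text{two further analytic equations}\}$ — I would instead just observe that $d\tilde\mu$ at $\sigma=0$ sends $\partial_\sigma$ to the vector $(F(0),G(0),0)\neq 0$ which is transverse to $T\tilde L_i = \langle\partial_u,\partial_v\rangle$... no, $(F(0),G(0),0)$ lies in $\langle\partial_u,\partial_v\rangle$. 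The transversality of the image of $\mu$ with $L_i$ is the content I should extract: the curve $\mu(R_L)$ meets $L_i$ at the single parameter value $\sigma=0$ with the $w$-coordinate moving to first order only if $H$ is non-constant — which it is, since $\beta$ takes the value along $\tilde U_+$ and $\tilde U_+$ meets $\tilde L_i$ transversally in $\tilde V_i^+$ by the transversality of $U_+$ with $O_i^+$ established in Proposition \ref{Proposition: meromorphic psi}. So I would phrase the transversality claim as: the intersection of $\mu(R_L)$ with $L_i$ is transverse because $d\mu_r$ is injective and its image is not contained in $TL_i$, both of which follow from the explicit form $(\sigma F, \sigma G, H)$ once one records that $H'(0)\neq 0$.

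The main obstacle I anticipate is bookkeeping the identification of the various coordinate systems — the coordinate $v$ on $\tilde U_+$ from Proposition \ref{Proposition: meromorphic psi}, its square-root coordinate $\sigma$ on the cover $R_L$, the ambient coordinates $(u,v,w)$ on $\tilde V_i^+$, and the descent through $\varphi_i^+$ and $g^{-1}$ — and in particular verifying that the deck group $H$ of \eqref{Equation: H definition} acts on $\sigma$ by the sign changes one expects, so that $\mu$ is genuinely well-defined on $R_L$ rather than on a further cover. Another subtle point is that $\tilde\psi$ is only determined up to the choice of lift of the disc $g\cdot U_+$ to $\tilde V_i^+$; since different lifts differ by the $\Gamma_i$-action, which commutes with the linearized $G$-action, the resulting $\tilde\mu$ differ by $\Gamma_i$ and descend to the same $\mu$, so this causes no difficulty but should be remarked. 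Everything else — holomorphicity away from the special fibres, the value $\mu(r)\in L_i$, and the local biholomorphism statement — follows directly from the normal form above, exactly as the analogous assertions in Proposition \ref{Proposition: meromorphic psi} followed from \eqref{Equation: psi lift}.
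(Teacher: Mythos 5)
Your local computation is the same as the paper's: you work in the chart $\varphi_i^+:\tilde V_i^+\rightarrow V_i^+$ of Lemma \ref{Lemma: Fixed point neighbourhood}, pass to the square-root coordinate $\sigma^2=v$, and arrive at the normal form $\tilde\mu(\sigma)=(\sigma F(\sigma^2),\sigma G(\sigma^2),H(\sigma^2))$, which is exactly the paper's $(a(u)u,\,b(u)u,\,c(u))$; this gives holomorphicity of $\tilde\mu$ and $\tilde\mu(0)\in\tilde L_i$. The genuine gap is precisely the point you flag as an ``anticipated obstacle'' and then do not resolve: the descent from the auxiliary branched double cover $\tilde U$ of $\tilde U_+$ to $R_L$ itself. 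What you have constructed is a holomorphic map on a double cover of the disc $U_+\subset L$ branched at $z_i$; the lemma concerns the component $U$ of $\rho^{-1}(U_+)\subset R_L$, which is branched over $z_i$ only when $h_i=(-1)^{v_i}\neq(1,1)$ and is an unbranched copy of $U_+$ when $h_i=(1,1)$. The paper's proof is essentially this bookkeeping: $(-1,-1)\in G_i$ maps to $(-1)^{u_{i-1}+u_i}=h_i\in G$, so when $h_i=(1,1)$ the corresponding element lies in $\Gamma_i$ and $\varphi_i^+$ restricted to $\tilde\mu(\tilde U)$ is itself a double cover branched over $(0,0,c(0))$, compensating exactly for the fact that $\tilde U\rightarrow U$ is then branched; when $h_i\neq(1,1)$ both maps are biholomorphisms. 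Without this case analysis you have shown neither that $\mu$ is well defined on a neighbourhood of $r$ in $R_L$ (rather than on a further cover) nor that the descended map is unbranched there, and that is the content of the assertion that $\mu$ is biholomorphic near $r$.

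A secondary slip occurs in your transversality digression: you take $T\tilde L_i=\langle\partial_u,\partial_v\rangle$, but $\tilde L_i=\{u=v=0\}$, so its tangent is spanned by $\partial_w$. Consequently $d\tilde\mu(\partial_\sigma)|_{\sigma=0}=(F(0),G(0),0)$, being nonzero and not proportional to $\partial_w$, already gives transversality of the image curve to $L_i$; the condition $H'(0)\neq 0$ is neither needed nor justified, and your proposed reason for it is false, since $\alpha$ is nonvanishing and so $\tilde U_+$ does not meet $\tilde L_i$ at all. (Transversality is in any case not part of the present statement; it is extracted from the local model later, in Lemma \ref{Lemma: D transverse}.)
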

\begin{proof}
Let $U$ be a connected component of $\rho^{-1}(U_+)$ in $R_L$. The map $$\rho|_{U}: U \rightarrow U_+,$$ which is labeled by $(i)$ in Figure \ref{Figure: Riemann surface proof}, is a biholomorphism if $h_i = (1,1)$, otherwise it is a double cover branched over $z_i$. Thus, $U$ is an open disc about $\rho^{-1}(z_i)\cap U$ in $R_L$. As in the proof of Proposition \ref{Proposition: meromorphic psi}, we choose $g\in G$ such that $g.U_+ \subset V_i^+$, where $\varphi_i^+: \tilde V_i^+ \rightarrow V_i^+$ is an admissible neighbourhood of $z_i^+$. Also, $\tilde U_+$ denotes a lift of $g.U_+$ to $\tilde V_i^+$ and $\tilde z_i$ denotes the lift of $g.z_i$ to $\tilde U_+$.

Let $\tilde \rho :\tilde U\rightarrow \tilde U_+$ be the double cover of $\tilde U_+$ branched over $\tilde z_i$. The induced map between $\tilde U$ and $U$, which is labeled $(ii)$ in Figure \ref{Figure: Riemann surface proof}, is a double cover branched over $\rho^{-1}(z_i)\cap U$ if $h_i = (1,1)$ and otherwise, it is biholomorphic.

\begin{figure}[htbp]
\begin{diagram}
 \tilde U_+ \subset \tilde V_i^+ & \rTo^{\varphi_i^+ \quad\quad}  &  g.U_+ \subset V_i^+  &  \rTo^{\quad\quad g^{-1}}  &  U_+ \subset L      \\
\uTo^{\tilde\rho}                        &                      &                                 &                 &  \uTo_{\rho}^{(i)}  \\
\tilde U                              &          &             \rDashto^{(ii)}                  &                 &  U  \subset R_L  \\
\dTo^{\tilde \mu: u \mapsto \tilde\chi(u).\tilde\rho(u)}  &&& & \dTo_{\mu}   \\
\tilde \mu(\tilde U) \subset \tilde V_i^+  &&\rTo^{g^{-1}\circ\varphi_i^+}_{(iii)}  &&  \mu(U) \subset Z  \\
\end{diagram}
\caption{Commutative diagram illustrating the proof of Lemma \ref{Lemma: Riemann surface from R_L}}\label{Figure: Riemann surface proof}
\end{figure}

Recall from Lemma \ref{Proposition: meromorphic psi} that $\tilde U_+$ can be parameterized in the coordinates on $V_i^+$ as
$$\{(\alpha(v), v, \beta(v)) \in \tilde V_i^+: |v|<\epsilon\},$$ where $\alpha, \beta$ are non-zero holomorphic functions and $v=0$ corresponds to $\tilde z_i$.
We can define a coordinate $u$ on $\tilde U$ by setting $$u^2 = v,$$ and with this coordinate $u=0$ corresponds to $\tilde\rho^{-1}(\tilde z_i)$.  Then, by taking the square root of equation (\ref{Equation: psi lift}), $\tilde \chi := {\tilde \psi}^{1/2}$ can be written as $$\tilde\chi: \tilde U' \rightarrow G_i; \;\;u \mapsto (f(u).u, g(u)/u),$$ for some holomorphic and non-vanishing functions $f$ and $g$.

With the action of $G_i$ described in (\ref{Equation: G-action}), the map $\tilde \mu$ is given by
$$\tilde \mu: \tilde U \rightarrow \tilde V_i^+;$$ $$u \mapsto \tilde \chi(u).(\alpha(u^2), u^2, \beta(u^2)) = (a(u).u, b(u).u, c(u)),$$
where $a,b$ and $c$ are non-zero holomorphic functions. Thus, $\tilde \mu$ is a holomorphic map on $\tilde U$. Since $a$ and $b$ are non-vanishing at $u=0$, $\tilde \mu$ is biholomorphic in a neighbourhood of $0$. Therefore, by shrinking $U_+$ if necessary, we can assume that $\tilde \mu$ is a biholomorphism on $\tilde U$.

By the definition of $G_i$, $(-1,-1) \in G_i$ is mapped to $(-1)^{u_i + u_{i-1}} \in G$. Therefore, $(-1,-1) \in G_i$ is mapped to $(1,1) \in G$ if and only if $h_i = (1,1)$. In this case the restriction of $\varphi_i^+$ to $\tilde \mu(\tilde U) \subset \tilde V_i^+$ is a double cover of its image branched over $(0,0,c(0))$, since $$\varphi_i^+(a(u).u, b(u).u, c(u)) = \varphi_i^+(-a(u).u, -b(u).u, c(u));$$ otherwise, $\varphi_i^+|_{\tilde \mu (\tilde U)}$ is a biholomorphism.

Therefore, when $h_i \neq (1,1)$, arrows $(ii)$ and $(iii)$ correspond to biholomorphisms and it follows that $u \mapsto \chi(u).\rho(u)$ is a biholomorphism. When $h_i = (1,1)$, $g^{-1}\varphi_i^+ \circ \tilde\mu$  gives a holomorphic double cover of $\mu(U)$ branched at $\mu(0)$ and the induced map from $\tilde U \rightarrow U$ (labeled (ii)) is a double cover branched over $0 \in U$; thus, $\mu$ extends to a biholomorphism on $U$. In either case $\mu$ is not branched over $\mu(0)$, and $\mu(0) \in L_i$. For another connected component of $\rho^{-1}(U_+)$ in $R_L$, we can apply the same argument to extend $\mu$ to $\rho^{-1}(U_+)$. Then, we can repeat this procedure for each $\pm z_i$ to complete the proof.
\end{proof}

Thus, $R_L$ is mapped holomorphically by $\mu$ onto a curve in $Z$. This curve will be examined in more detail in Section \ref{Section: The Riemann surface R}. The results in this section enable us to conclude with following lemma about special points.

\begin{TL}\label{Lemma: Special points}
The special orbits $O_i^\pm$ intersect $L \in \mo$ at distinct points $\pm z_i$ with $|z_i| = 1$.
\end{TL}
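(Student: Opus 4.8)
The plan is to establish two things: that the $2k$ points $z_1,-z_1,\dots,z_k,-z_k$ on $L$ are pairwise distinct, and that $|z_i|=1$ for each $i$. The first fact is the one that matters downstream: since every special orbit meets a principal line in exactly one point (Lemma \ref{Lemma: Special orbit intersection}), distinctness of the $2k$ points is equivalent to distinctness of the $2k$ special orbits $O_1^\pm,\dots,O_k^\pm$, and from this the running hypothesis of Lemma \ref{Lemma: Special orbit decomposition} follows, since if all the special orbits are distinct then only $O_i^\pm$ can have closure containing $L_i$, so $\overline{O}_i^+$ contains the single twistor line $L_i$.

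To see that $z_i\neq -z_i$, I would note that by Lemma \ref{Lemma: Involution}(i) the points $0,\infty$ are the fixed points of $\tau$, hence the only points of $L$ at which the $G$-action is tangential; since the special orbits meet $L$ transversally (Lemma \ref{Lemma: Special orbit intersection}), no special point can be $0$ or $\infty$, so $z_i\neq -z_i$. For distinctness across different indices, suppose $z_i=\pm z_j$ as points of $L$ for some $i\neq j$, and pick any $r\in\rho^{-1}(z_i)\subset R_L$. Applying Lemma \ref{Lemma: Riemann surface from R_L} with index $i$ gives $\mu(r)\in L_i$, and applying it with index $j$ (legitimate because $r\in\rho^{-1}(\pm z_j)$) gives $\mu(r)\in L_j$. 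But $L_i$ and $L_j$ are the fibres of the twistor fibration $\pi$ over the distinct fixed points $x_i,x_j$, hence disjoint, a contradiction. Thus all $2k$ special points, and all $2k$ special orbits, are distinct.

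For $|z_i|=1$ I would use the real structure. By Proposition \ref{Proposition: meromorphic psi}, $O_i^+\cap L=\{z_i\}$ and $O_i^-\cap L=\{-z_i\}$; both $\overline{O}_i^+$ and $\overline{O}_i^-$ contain $L_i$, and by Lemma \ref{Lemma: Special orbit intersection} (together with the definition of $O_i^\pm$) the real structure interchanges these two special orbits, so $\gamma(O_i^+)=O_i^-$. Since $L\in\mo$ is a real twistor line, $\gamma(L)=L$, and hence $\gamma(z_i)\in\gamma(O_i^+)\cap L=O_i^-\cap L=\{-z_i\}$, i.e.\ $\gamma(z_i)=-z_i$. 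Substituting $\gamma(z)=-1/\overline{z}$ from (\ref{Equation: Real Structure}) gives $1/\overline{z_i}=z_i$, so $z_i\overline{z_i}=1$ and $|z_i|=1$.

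I expect the cross-index distinctness in the second paragraph to be the crux of the argument: knowing only that each special orbit meets a principal line once and has closure through some twistor line does not, by itself, separate the orbits, and this is precisely why the lemma is deferred until the Riemann surface $R_L$ and the holomorphic extension of $\mu$ over its branch points are available. The biholomorphy of $\mu$ near each point of $\rho^{-1}(\pm z_i)$, together with the fact that these points map into $L_i$, provides exactly the rigidity needed to force the impossible coincidence $L_i=L_j$ when $z_i=\pm z_j$. The remaining steps are short deductions from the structural results of Sections \ref{Section: The Involution} and \ref{Section: The meromorphic data}.
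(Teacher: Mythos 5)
Your proof is correct and follows essentially the same route as the paper: $|z_i|=1$ comes from $\gamma(O_i^+)=O_i^-$ together with Proposition \ref{Proposition: meromorphic psi} and the formula $\gamma(z)=-\bar z^{-1}$, and distinctness comes from Lemma \ref{Lemma: Riemann surface from R_L} mapping $\rho^{-1}(\pm z_i)$ into the pairwise disjoint lines $L_i$. The only difference is that you spell out explicitly the minor points the paper leaves implicit (that $z_i\neq -z_i$ because special points avoid the $\tau$-fixed points $0,\infty$, and how distinctness feeds the hypothesis of Lemma \ref{Lemma: Special orbit decomposition}), which is fine.
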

\begin{proof}
By Lemma \ref{Lemma: Special orbit intersection} we have $O_i^+ = \gamma(O_i^-)$. Then, by Proposition \ref{Proposition: meromorphic psi}, we have $$-z_i = \gamma(z_i) = -\frac{1}{\bar{z}_i}$$ and so, $|z_i|  = 1$ for $i = 1, \ldots, k$.

By Lemma \ref{Lemma: Riemann surface from R_L}, $\mu$ maps a point in $\rho^{-1}(\pm z_i)$ to a point in $L_i$; thus, $z_1, \ldots, z_k$ are distinct, since $L_1, \ldots, L_k$ are distinct.
\end{proof}

\section{The conformal structure}\label{Section: The conformal structure}
\subsection{Coordinates on principal lines}\label{Subsection: Coordinates}
In this subsection we single out a principal line $L_0$ and then parameterize principal lines in $Z$ using two ($\tau$ invariant) pairs of points on this line. The parameterization is depicted in Figure \ref{Figure: Parameterizing the principal lines}. Then in Subsection \ref{Subsection: principal lines as graphs} we are able to write down a meromorphic function representing the action of $G$ required to travel from $L_0$ to $L$ as shown in Figure \ref{Figure: Principal lines as graphs}. Finally in Subsection \ref{Subsection: Parameterizing principal lines} we are able to recover to conformal structure on these twistor lines by explicitly calculating null sections in the normal bundles of principal lines.

As noted in Remark \ref{Remark: Coordinates} the coordinate on $L \in \mCo$ is not unique. There are two permissible types of coordinate transformation: one is $z \mapsto \lambda.z$ for $\lambda \in \C^*$, and the other is \begin{equation}\label{Equation: swap tangency map}z \mapsto z^{-1}.\end{equation} We can eliminate the rotational freedom about $0$ and $\infty$ by fixing $z_1 = 1$. Then, since $z_2 \neq 1,-1, 0$ or $\infty$, we can assume that $z_2$ is contained in \begin{equation}\label{Equation: unique coordinate set}\{z \in \C : \tn{arg}(z) \in (0, \pi)\}\cup (-1,0) \cup (1, \infty),\end{equation} by performing the coordinate change (\ref{Equation: swap tangency map}) if necessary. When $L \in \mo$, $|z_i| = 1$, by Lemma \ref{Lemma: Special points}. Therefore, a coordinate change setting $z_1 = 1$ preserves the real structure on $L$, as noted in Remark \ref{Remark: Coordinates}. These observations are summarized in the next lemma.

\begin{figure}[htbp]
  \psfrag{A}{$0$}
  \psfrag{B}{$\infty$}
  \psfrag{C}{$a^{1/2}$}
  \psfrag{D}{$-a^{1/2}$}
  \psfrag{E}{$b^{1/2}$}
  \psfrag{F}{$-b^{1/2}$}
  \psfrag{G}{$L$}
  \psfrag{H}{$L_0$}
  \centering \scalebox{1.0}{\includegraphics{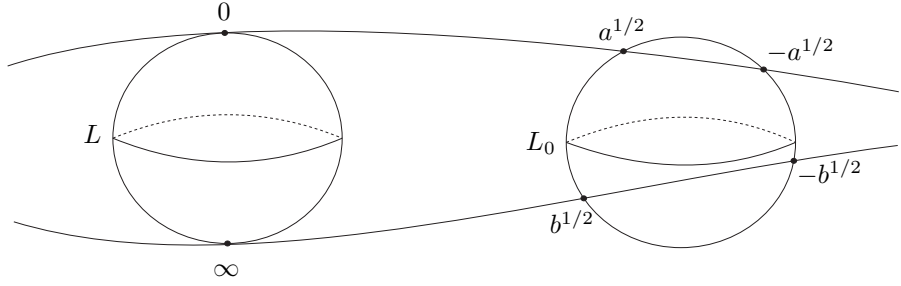}}
  \caption{Parameterizing the principal lines with points on $L_0$}\label{Figure: Parameterizing the principal lines}
\end{figure}

\begin{TL}\label{Lemma: Unique coordinate}
There is a unique affine coordinate $z$ on $L \in \mCo$ such that: the holomorphic involution defined in Subsection \ref{Subsection: involution} can be written as $\tau: z \mapsto -z$, $z_1 = 1$, and $z_2$ belongs to (\ref{Equation: unique coordinate set}). Furthermore, when $L \in \mo$ the real structure is given by $z \mapsto -\bar{z}^{-1}$.
\end{TL}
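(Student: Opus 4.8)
The plan is to take any coordinate provided by Lemma \ref{Lemma: Involution}, enumerate the residual coordinate freedom via Remark \ref{Remark: Coordinates}, and then use the two special points $z_1$ and $z_2$ to cut this freedom down to nothing. First I would fix a coordinate $z$ on $L$ as in Lemma \ref{Lemma: Involution}, so that $\tau(z) = -z$ has fixed points $0, \infty$ and, when $L \in \mo$, the real structure is $z \mapsto -\bar z^{-1}$. By Remark \ref{Remark: Coordinates}, every other coordinate with these properties differs from $z$ by an element of the group generated by the swap $z \mapsto z^{-1}$ and the rotations $z \mapsto \lambda z$ with $\lambda \in \C^*$; when $L \in \mo$ we must further restrict to $|\lambda| = 1$ to preserve (\ref{Equation: Real Structure}).

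Next I would record the facts we need about the special points and use $z_1$ to kill the rotational freedom. By Lemma \ref{Lemma: Special points} the special orbits meet $L$ in the $2k$ distinct points $\pm z_1, \dots, \pm z_k$, all of which lie in $L'$ (the special orbits are not tangential to $L$, cf. Subsection \ref{Subsection: involution}); in particular $z_1 \neq 0, \infty$, and, as noted in Subsection \ref{Subsection: Coordinates}, $z_2 \notin \{0, \infty, 1, -1\}$ once $z_1$ has been normalized. There is a unique rotation, namely $\lambda = z_1^{-1}$, taking $z_1$ to $1$, and in the real case $|z_1| = 1$ by Lemma \ref{Lemma: Special points}, so this rotation preserves the real structure. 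After this normalization the only surviving freedom is the swap $z \mapsto z^{-1}$, which fixes both $0, \infty$ and the point $z_1 = 1$. A direct computation shows that both unit-modulus rotations and the swap conjugate $z \mapsto -\bar z^{-1}$ to itself, so nothing we do disturbs the form of $\gamma$.

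Finally I would use $z_2$ to eliminate the swap. The point $z_2$ lies in $\cp^1 \setminus \{0, \infty, 1, -1\}$, on which $w \mapsto w^{-1}$ acts freely, and the set (\ref{Equation: unique coordinate set}) is a fundamental domain for this action; hence exactly one of $z_2$ and $z_2^{-1}$ belongs to (\ref{Equation: unique coordinate set}), and this determines uniquely whether the transformation (\ref{Equation: swap tangency map}) must be applied. This establishes both the existence and the uniqueness of the asserted coordinate, and the real-structure claim follows from the conjugation check above.

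The one step with any content is the fundamental-domain assertion, which will be the main (albeit mild) obstacle. I would dispatch it by decomposing $\cp^1 \setminus \{0, \infty, 1, -1\}$ into the two open half-planes together with the four open real intervals $(0,1)$, $(1,\infty)$, $(-1,0)$, $(-\infty,-1)$, and observing that $w \mapsto w^{-1}$ interchanges the upper and lower half-planes, interchanges $(0,1)$ with $(1,\infty)$, and interchanges $(-1,0)$ with $(-\infty,-1)$; since (\ref{Equation: unique coordinate set}) is the union of the upper half-plane with $(-1,0)$ and $(1,\infty)$, it contains exactly one member of each swapped pair. In the real case this is even simpler: $|z_2| = 1$ with $z_2 \neq \pm 1$ forces $z_2$ into one of the two open half-planes, and the swap, which there is $z_2 \mapsto \bar z_2$, moves it to the other. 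Everything else is routine bookkeeping with M\"obius transformations.
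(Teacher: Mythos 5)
Your proposal is correct and follows essentially the same route as the paper, which proves this lemma in the paragraph preceding its statement: take the coordinate from Lemma \ref{Lemma: Involution}, use the rotational freedom of Remark \ref{Remark: Coordinates} to set $z_1 = 1$ (permissible for the real structure since $|z_1|=1$), and apply the swap $z \mapsto z^{-1}$ if needed to place $z_2$ in (\ref{Equation: unique coordinate set}). Your explicit verification that (\ref{Equation: unique coordinate set}) is a fundamental domain for the swap, and that the swap and unit-modulus rotations preserve the form of $\gamma$, only spells out details the paper leaves implicit.
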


In the remainder of this article we will single out $L_0 \in \mo$ and we will denote the coordinate on $L_0$ satisfying Lemma \ref{Lemma: Unique coordinate} by $z$. On any other $L \in \mCo$ we will denote the coordinate satisfying Lemma \ref{Lemma: Unique coordinate} by $w$. We will respectively denote the involutions on $L_0$ and $L$ by $\tau_0$ and $\tau$; the meromorphic functions on $L_0$ and $L$, which were defined in Subsection \ref{Subsection: Meromorphic data}, by $\psi_0$ and $\psi$; and the intersection point of $O_i^+$ with $L_0$ and $L$ by $z_i$ and $w_i$. There is a pair of (non-special) $G$-orbits that intersect $L$ tangentially at $0$ and $\infty$, and we will denote their intersection points with $L_0$ by $\pm z_0$ and $\pm z_\infty$, respectively.

We can write the quotient map from $L_0$ to $L_0/\tau_0$ as $z \rightarrow z^2$ and similarly for $L \rightarrow L/\tau$. There is a well-defined holomorphic map $$F: L_0/\tau_0 \rightarrow L/\tau,$$ sending $z^2$ to $w^2$, where $w$ is in the same $G$-orbit as either $z$ or $-z$ (or both). This map is biholomorphic with $F(z_0^2) = 0$, $F(z_\infty^2) = \infty$ and therefore, $$F(z^2) = c.\frac{z^2-a}{z^2 - b},$$ where $a = z_0^2$, $b = z_\infty^2$ and $c\in \C^*$. Since $F(1) = 1$, we require $$c := \frac{1 - b}{1 - a}.$$ By interchanging $a$ and $b$ if necessary, we can ensure that $w_2$ belongs to (\ref{Equation: unique coordinate set}).

\subsection{The principal lines as graphs}\label{Subsection: principal lines as graphs}
Using $F: L_0/\tau \rightarrow L/\tau$, which was defined in the previous subsection, we can define a complex curve by $$E := \{(z,w) : w^2 = F(z^2)\} \subset L_0 \times L.$$ Recalling that $L_0'$ denotes the subset of $L_0$ where $\psi_0$ is holomorphic (or equivalently the points of $L_0$ on non-special orbits), we define $E' = \{(z,w) \in E : z \in L_0'\}.$ We can define a $G$-valued holomorphic function $f$ on $E'$ satisfying \begin{equation}\label{Equation: f definition}f(z,w).z = w.\end{equation} It follows from this definition that \begin{equation}\label{equation: f under tau}f(z,w) = f(-z, w).\psi_0(z)\end{equation} and \begin{equation}\label{equation: f under tilde tau}f(z,w) = f(z,-w).\psi(-w).\end{equation}

\begin{figure}[htbp]
  \psfrag{L0}{$L_0$}
  \psfrag{L}{$L$}
  \psfrag{A}{$\psi(z)$}
  \psfrag{B}{$\psi(w)$}
  \psfrag{C}{$f(z,-w)$}
  \psfrag{D}{$f(z,w)$}
  \psfrag{E}{$f(-z,w)$}
  \psfrag{F}{$f(-z,-w)$}
  \psfrag{G}{$z$}
  \psfrag{H}{$-z$}
  \psfrag{I}{$w$}
  \psfrag{J}{$-w$}
  \centering \scalebox{1.0}{\includegraphics{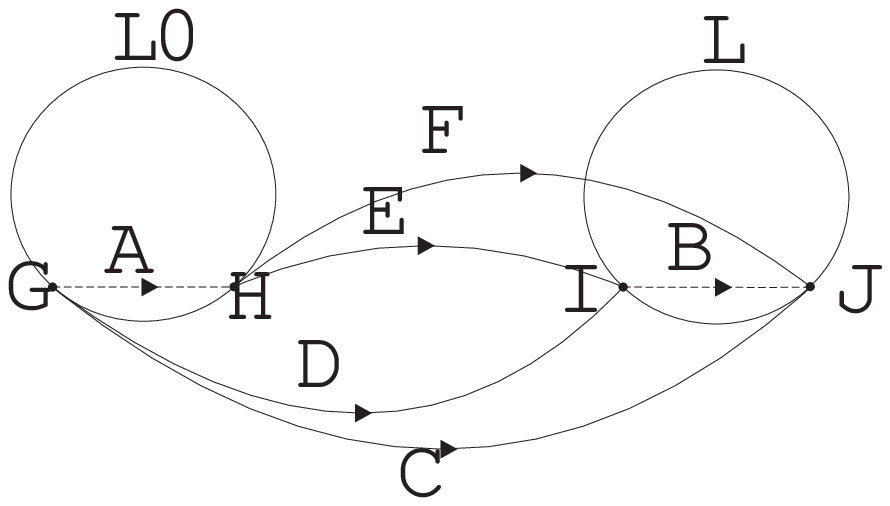}}
  \caption{Principal lines as graphs}\label{Figure: Principal lines as graphs}
\end{figure}

%\begin{diagram}
%z \in L_0 && \rTo^{f(z,w)} && w \in L\\
%\dTo_{\psi_0(z)} && && \dTo_{\psi(w)}\\
%-z \in L_0 && \rTo_{f(-z,-w)} && -w \in L\\
%\end{diagram}

For the next lemma we use the notation $\chi := \psi^{1/2}$ and $\chi_0 := \psi_0^{1/2}$ from the previous section.
\begin{TL}\label{Lemma: Principal line graph}
If $f$ is defined by equation (\ref{Equation: f definition}), then $f$ extends to a meromorphic function on $E$ satisfying \begin{equation}\label{Equation: f in terms of psi}f(z,w) = c.\chi_0(z).\chi(-w),\end{equation} for some constant $c \in G$.
\end{TL}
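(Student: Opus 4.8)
The plan is to build the meromorphic extension of $f$ directly from the already-established meromorphic continuation of the square-root maps, rather than re-running a local-model argument from scratch. On $E'$ we have the three relations (\ref{Equation: f definition}), (\ref{equation: f under tau}) and (\ref{equation: f under tilde tau}); together with $\psi_0 = \chi_0^2$ and $\psi = \chi^2$, these say exactly that the candidate $c.\chi_0(z).\chi(-w)$ transforms in the same way under the two deck transformations $z\mapsto -z$ and $w\mapsto -w$ as $f$ does. So first I would show that the ratio
$$
g(z,w) := f(z,w).\bigl(\chi_0(z).\chi(-w)\bigr)^{-1}
$$
is well-defined and holomorphic on a neighbourhood of a generic point of $E'$, then check using (\ref{equation: f under tau}), (\ref{equation: f under tilde tau}) and the corresponding identities $\chi_0(-z)^2 = \psi_0(z)$, $\chi(-w)^2 = \psi(w)$ that $g$ is invariant under the covering group $H$ of (\ref{Equation: H definition}) acting on the branched cover $R_{L_0}\times_{F} R_L$ of $E$, hence descends to a $G$-valued function on $E'$ itself that is independent of the choice of square-root branch. (The sign ambiguities in $\chi_0$ and $\chi$ cancel precisely because $f$ picks up $\psi_0$ and $\psi$, not $\chi_0$ and $\chi$, under the involutions.)

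Next I would pin down that $g$ is constant. By construction $g$ is holomorphic and nowhere zero on the open dense set $E'$, and $E$ is a compact Riemann surface (a smooth conic in $L_0\times L\cong \cp^1\times\cp^1$, hence again a $\cp^1$). So it suffices to show $g$ extends holomorphically and non-vanishingly across the finitely many points of $E\setminus E'$, i.e. the points lying over the special points $\pm z_i$ of $L_0$ (and, via $F$, over the $\pm w_i$ of $L$, plus the tangency points $0,\infty$). This is where the work from Section~\ref{Section: The meromorphic data} is used: Proposition \ref{Proposition: meromorphic psi} gives that $\psi_0$ has a pole of order exactly $\pm v_i$ at $\pm z_i$, and Lemma \ref{Lemma: Riemann surface from R_L} (with its local-coordinate computation $\tilde\mu(u) = (a(u)u, b(u)u, c(u))$ with $a,b,c$ non-vanishing) gives that $\chi_0 = \psi_0^{1/2}$ has, in the branched-cover coordinate, a simple pole/zero of exactly the expected half-order, with holomorphic non-vanishing leading coefficient — and the same on the $L$ side. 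Matching these orders against the defining relation $f(z,w).z = w$ in the local model of Lemma \ref{Lemma: Fixed point neighbourhood} (the same $(su,tv,stw)$ model used in Proposition \ref{Proposition: meromorphic psi}) shows that the pole orders of $f$ and of $\chi_0(z)\chi(-w)$ along $E$ agree componentwise, so $g$ is a holomorphic nowhere-zero $G$-valued function on all of the compact curve $E$, hence constant. Call that constant $c$; this is (\ref{Equation: f in terms of psi}), and it shows in particular that $f$ itself is meromorphic on $E$.

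The main obstacle is the bookkeeping at the special/tangency points: one has to be careful that the local coordinate $v$ used to parameterise $\tilde U_+$ in Proposition \ref{Proposition: meromorphic psi}, the coordinate $u$ with $u^2=v$ from Lemma \ref{Lemma: Riemann surface from R_L}, and the coordinate on $E$ coming from the biholomorphism $F:L_0/\tau_0\to L/\tau$ are all compatibly related, and that the branching of $E\to L_0/\tau_0$ at $z_0^2=a$ and $z_\infty^2=b$ (the tangency points, where $f$ is actually holomorphic and non-zero, since the orbits through $0,\infty$ are non-special) does not introduce spurious zeros or poles in $g$. I would handle this by working one point at a time in exactly the local models already set up, so that no new geometry is needed — only the orders recorded in Proposition \ref{Proposition: meromorphic psi} and Lemma \ref{Lemma: Riemann surface from R_L}. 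Once the order-matching is done at every point of $E\setminus E'$, constancy of $g$ and hence the formula (\ref{Equation: f in terms of psi}) follow immediately from compactness of $E$.
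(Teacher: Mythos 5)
Your route is viable but genuinely different from the paper's, and it trades the paper's one algebraic trick for local bookkeeping that you have understated. The paper never needs the orders of $f$ at the points of $E\setminus E'$: it first gets meromorphy of $f$ from (\ref{equation: f under tau})--(\ref{equation: f under tilde tau}) together with the fact that $f$ is holomorphic at two of the four points of $E$ over each special value, and then, instead of dividing by $\chi_0(z)\chi(-w)$, forms the symmetric combination $A(z,w)=f(z,w)f(-z,-w)$, checks $A(z,w)=A(z,-w)$ so that $A$ descends to a meromorphic function of $z$ alone, sees that $A$ is $G$-valued because $A^2$ equals the product (\ref{Equation: ffff holomorphic}), concludes $A$ is constant, and reads off $f(z,w)^2=c^2\psi_0(z)\psi(-w)$ from (\ref{Equation: f first eq}); no order computation at the punctures is ever made. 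Your ratio-is-constant argument gives the same conclusion and identifies $c$ directly, but two points in your outline need repair. First, $g$ is \emph{not} invariant under $H$ of (\ref{Equation: H definition}) acting independently on the branches of $\chi_0$ and $\chi$ (it transforms by $h^{-1}$), and the identity you quote should read $\chi_0(-z)^2=\psi_0(-z)=\psi_0(z)^{-1}$; what you actually need is that the monodromy of the product $\chi_0(z)\chi(-w)$ around each puncture of $E'$ is trivial, because a small loop about $(\pm z_i,\pm w_i)$ encircles a branch point of $\chi_0$ and of $\chi(-\,\cdot\,)$ simultaneously and each contributes the same $h_i$, with $h_i^2=(1,1)$; hence the product has single-valued branches on $E'$ differing only by a global constant in $H$, which is absorbed into $c$. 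Second, the orders of $f$ at the punctures are not ``recorded'' in Proposition \ref{Proposition: meromorphic psi} or Lemma \ref{Lemma: Riemann surface from R_L}, which control only $\psi_0,\psi,\chi_0,\chi$; to match orders you must rerun the computation of Proposition \ref{Proposition: meromorphic psi} with the disc $U_-\subset L_0$ replaced by a disc in $L$ about $\pm w_i$, moved into $V_i^+$ and written in the coordinates of Lemma \ref{Lemma: Fixed point neighbourhood}. Transversality of $L$ to the special orbits makes this routine, and it does give order $0$ at the matched points and orders $\pm v_i$ at the mismatched ones, agreeing with those of $\chi_0(z)\chi(-w)$ along $E$, so with these two repairs your argument closes; the paper's version buys brevity, yours buys an explicit local picture at the cost of redoing that computation.
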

\begin{proof}
We first show that $f$ extends meromorphically to $E$. If $z_0\in L_0$ is in a special orbit, then $f$ is holomorphic at either $(z_0, w_0)$ and $(-z_0, -w_0)$ or $(-z_0, w_0)$ and $(z_0, -w_0)$ in $E$. Suppose $f$ is holomorphic at $(z_0, w_0) \in E$ (the argument proceeds similarly if we make the other choice). Then, by equation (\ref{equation: f under tau}), $f(z,w).\psi_0(-z)$ is holomorphic at $(-z_0, w_0)$ and so, $f$ has a meromorphic singularity at $(-z_0, w_0)$. Since $f$ is not holomorphic at $(-z_0, w_0)$, it follows that $f$ must be holomorphic at $(-z_0, - w_0)$. Then, by equation (\ref{equation: f under tau}) again, $f(z, w).\psi_0(-z)$ is holomorphic at $(z_0, -w_0)$ and so, $f$ has a meromorphic singularity at $(z_0, -w_0)$. Thus, $f$ is meromorphic on $E$ and in particular, the orders of the singularities of $f$ are such that \begin{equation}\label{Equation: ffff holomorphic}f(z,w).f(-z,w).f(z,-w).f(-z,-w),\end{equation} is $G$-valued.

From equations (\ref{equation: f under tau}) and (\ref{equation: f under tilde tau}) we obtain \begin{equation}\label{Equation: f first eq}f(z,w)^2 = f(z,w).f(-z,-w).\psi_0(z).\psi(-w).\end{equation} If we define $A(z,w) := f(z,w)f(-z,-w)$, then $A(z,w) = A(z,-w)$, since $$A(z,w) = f(-z,-w).f(z,w)\psi_0(z).\psi_0(-z) = f(z,-w).f(-z,w).$$ Therefore, $A$ defines a meromorphic function on $L_0$. This function on $L_0$ is $G$-valued, since $$A(z,w)^2 = A(z,w).A(z,-w)$$ is equivalent to the expression in equation (\ref{Equation: ffff holomorphic}). However, any $G$-valued holomorphic function on $L_0$ is constant; thus, by equation (\ref{Equation: f first eq}), $$f(z,w)^2 = c^2.\psi_0(z).\psi(-w),$$ for some non-zero constant.
\end{proof}

\subsection{Parameterizing the principal lines}\label{Subsection: Parameterizing principal lines}

If there exists $L \in \mo$ corresponding to $a, b \in L_0/\tau_0$ in the way described, then it is determined relative to $L_0$ by equation (\ref{Equation: f in terms of psi}), for some $c \in G$. Distinct choices of $c$ correspond to distinct twistor lines. We will denote the principal line corresponding to $a,b$ and $c$ by $c.L_{ab}$. Note that $c.L_{ab}$ and $c.L_{ba}$ are the same twistor line. Thus, we have well-defined injective map $$\Phi:\mCo \rightarrow \Delta_\C \times G;$$$$c.L_{ab} \mapsto (a,b,c),$$ where $$\Delta_\C := \{(a,b) \in \Sigma^2 (L_0/\tau_0): a \neq b\}$$ and $\Sigma^2$ denotes the symmetric product. We will denote the inverse of $\Phi$ by $$\Psi: \Phi(\mCo) \subset  \Delta_\C \times G \rightarrow \mCo.$$ The map $\Psi$ sends $(a,b,c)\in\Phi(\mCo)$ to the twistor line
$$c.L_{ab} = \{f(z,w).z : (z,w) \in E\}.$$ This map can be written down explicitly, since $f(z,w) = c.\chi(-w).\chi_0(z)$ by Lemma \ref{Lemma: Principal line graph}, and Proposition \ref{Proposition: meromorphic psi} implies \begin{equation}\label{Equation: psi formula}\chi^2(w) := \psi(w) = \prod_{i=1}^{k}\Big(\frac{w+w_i}{w-w_i}\Big)^{v_i},\end{equation}
while Subsection \ref{Subsection: Coordinates} implies $$w^2 = \frac{1 - b}{1 - a}.\frac{z^2-a}{z^2 - b} \quad \tn{ and }\quad w_i^2 = \frac{1 - b}{1 - a}.\frac{z_i^2-a}{z_i^2 - b}.$$ Consequently $\Psi$ is smooth. In the remainder of this subsection we differentiate $\Psi$ in order to show that $\Phi$ is a diffeomorphism.
%The same argument as smooth proves continuity. Therefore, $\Phi(\mCo)$ is an open subset and so we can consider differentiability.

Let $(a_0,b_0,c_0) \in \Phi(\mCo)$ correspond to the principal line $L$ with coordinate $w$. On $c.L_{ab}$ we will denote the coordinate by $\tilde{w}$, the meromorphic function by $\psi_{ab}$ and the square root of $\psi_{ab}$ by $\chi_{ab}$. By Lemma \ref{Lemma: Principal line graph}, the ``graph" of $c.L_{ab}$ in $L\times G$ is given by
$$\big(w,\, c.\chi_{ab}(-\tilde{w}).\chi(w)\big).$$
%Note that in order to construct a complex manifold biholomorphic to an open subset of $Z$ using $L\times G$, one must identify $(w, \chi(w).g)$ with $(-w, \chi(-w).g)$ when $w$ is not a special point. Or does f factor this in?
By differentiating this expression in terms of $(a,b,c)$ we find that the derivative of $\Psi$ at $L$
$$\tn{d}\Psi: T_{(a_0,b_0,c_0)}\Phi(\mCo) \rightarrow T_{L}\mCo \cong H^{0}(N_{L})$$
maps $$a'\frac{\partial}{\partial a} + b'\frac{\partial}{\partial b} + c_1'\frac{\partial}{\partial c_1} + c_2'\frac{\partial}{\partial c_2}$$
to the section of $H^{0}(N_{L})$ given by
\begin{equation}\label{Equation: Phi derivative}\Big(c.\big(\partial_a(\phi_{ab}(\tilde{w})).a' + \partial_b(\phi_{ab}(\tilde{w})).b'\big) + c'\Big)_{(a_0,b_0,c_0)}\partial_{g},\end{equation}
where $\phi_{ab} := \tn{log}\chi_{ab}$. Substituting the formula for $\psi$ given in equation (\ref{Equation: psi formula}) into equation (\ref{Equation: Phi derivative}) gives
\begin{equation}\label{Equation: Phi derivative two}\big(2\pi c.(A.a'.\tilde w^{-1} + B.b'.\tilde w) + c'\big)_{(a_0,b_0,c_0)}\partial_{g},\end{equation} where $$A(a,b) := \frac{1}{4\pi (b-a)}\sum_{i=1}^k v_i.w_i^{-1},\quad B(a,b) := \frac{1}{4\pi (b-a)}\sum_{i=1}^k v_i.w_i.$$

\begin{TL}
There is a smooth parameterization of the principal lines given by
$$\Phi:\mCo \rightarrow \Phi(\mCo) \subset \Delta_\C \times G; \; c.L_{ab} \mapsto (a,b,c).$$
\end{TL}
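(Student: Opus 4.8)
The plan is to show that the smooth bijection $\Psi$ constructed above is in fact a diffeomorphism onto $\mCo$; since $\Phi$ is its inverse, the lemma follows immediately. We already know $\Psi$ is smooth (it is given, on an open subset of $\Delta_\C\times G$, by the explicit formulae involving (\ref{Equation: psi formula}) and the relations of Subsection \ref{Subsection: Coordinates}) and that it is a bijection (injectivity of $\Phi$, together with the fact that every $L\in\mCo$ equals some $c.L_{ab}$). Moreover $\Delta_\C\times G$ and $\mCo$ are both complex $4$-manifolds: $\Delta_\C$ is an open subset of a symmetric square of a curve and $G\cong\C^*\times\C^*$, while $T_L\mCo\cong H^0(L,N_L)\cong H^0(\cp^1,\mc{O}(1)\oplus\mc{O}(1))$ is $4$-dimensional. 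By the inverse function theorem it therefore suffices to prove that $\tn{d}\Psi$, computed in (\ref{Equation: Phi derivative}) and (\ref{Equation: Phi derivative two}), is an isomorphism at every point; by the dimension count this is the same as showing it is injective.

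So suppose $\tn{d}\Psi$ annihilates $a'\partial_a+b'\partial_b+c_1'\partial_{c_1}+c_2'\partial_{c_2}$, i.e.\ the section (\ref{Equation: Phi derivative two}) of $N_L$ vanishes identically. Away from the points $0$ and $\infty$ the action of $G$ is transverse to $L$: it is free near $L$ (the remark following Lemma \ref{Lemma: Intersecting orbits}) and tangential to $L$ only at $0,\infty$ (Lemma \ref{Lemma: Involution}(i)). Hence the fundamental vector fields of the $G$-action trivialise $N_L$ over $L\smallsetminus\{0,\infty\}$ — this is exactly the trivialisation underlying the symbol $\partial_g$ in (\ref{Equation: Phi derivative two}) — so the $\C^2$-valued Laurent polynomial $2\pi c\big(A\,a'\,\tilde w^{-1}+B\,b'\,\tilde w\big)+c'$ must vanish on $\C^*\subset L$. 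Comparing coefficients of the linearly independent functions $\tilde w^{-1},1,\tilde w$, and using that $c$ acts invertibly, gives $A\,a'=B\,b'=0$ and $c'=0$; provided $A$ and $B$ are non-zero, this forces $a'=b'=c'=0$, so $\tn{d}\Psi$ is injective.

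The one genuine point to check is the non-vanishing of $A(a,b)=\tfrac{1}{4\pi(b-a)}\sum_i v_i w_i^{-1}$ and $B(a,b)=\tfrac{1}{4\pi(b-a)}\sum_i v_i w_i$, and this is the step I expect to be the main obstacle. By (\ref{Equation: psi formula}), $\sum_i v_i w_i^{-1}$ and $\sum_i v_i w_i$ are nonzero multiples of the logarithmic derivative of $\psi$ at the tangency points $0$ and $\infty$ of $L$; vanishing of either would force $\psi$ to be stationary at a tangency point, which I would rule out from the local models of Section \ref{Section: Local model} (the $G$-orbit meets $L$ tangentially only to first order), or, for $L\in\mo$, directly from $|w_i|=1$ (Lemma \ref{Lemma: Special points}) and the telescoping $\sum_i v_i=2u_k\neq0$ together with the configuration of the $w_i$, and then propagated to all of $\Phi(\mCo)$ by holomorphicity of $A,B$ in $(a,b)$. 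Granting $A,B\neq0$, $\tn{d}\Psi$ is an isomorphism everywhere, so $\Psi$ — being the restriction to $\Phi(\mCo)$ of the smooth map defined by the same formulae on an open subset of $\Delta_\C\times G$ — is a local diffeomorphism onto an open subset of $\mC$; since $\mCo$ is open in $\mC$ and the recovery of $(a,b,c)$ from $L$ inverts the construction of $c.L_{ab}$, the set $\Phi(\mCo)$ is open in $\Delta_\C\times G$, and a local diffeomorphism that is also a continuous bijection onto $\mCo$ is a diffeomorphism. Hence $\Phi=\Psi^{-1}$ is the asserted smooth parameterisation of $\mCo$.
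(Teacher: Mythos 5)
Your proposal follows essentially the same route as the paper: linearize $\Psi$, read off from (\ref{Equation: Phi derivative two}) that $\tn{d}\Psi$ has trivial kernel as soon as $A$ and $B$ are non-zero, use $\tn{dim}_\C H^0(N_L)=4$ and the inverse function theorem, and the point you flag as the main obstacle is exactly the one step the paper has to argue. The paper settles it along the lines of your first suggestion: $A=\lambda.\psi'(0)$ for some $\lambda\in\C^*$, and since the $G$-action is tangential to $L$ at $0$ the tangent line $T_0L$ is spanned by $\psi'(0)\partial_g$, so $\psi'(0)\neq 0$ and hence $A\neq 0$ (similarly $B\neq 0$ at $\infty$); this is precisely your ``the $G$-orbit meets $L$ tangentially only to first order''. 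Be warned, however, that your fallback argument for real lines does not work as stated: non-vanishing on the real slice $\Delta^\circ$ cannot be ``propagated to all of $\Phi(\mCo)$ by holomorphicity'' (the identity theorem propagates vanishing, not non-vanishing), and the telescoping $\sum_i v_i = 2u_k\neq 0$ by itself does not preclude $\sum_i v_i w_i^{\pm 1}=0$ for unit-modulus $w_i$ (e.g.\ equal $v_i$'s at antipodal points). So the tangency argument is not an optional route but the one you actually need; with it supplied, the rest of your proof (coefficient comparison in the $\partial_g$-trivialization away from $0,\infty$, and the passage from local diffeomorphism plus bijectivity to a global diffeomorphism) is sound and matches the paper.
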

\begin{proof}
It follows from (\ref{Equation: Phi derivative two}) that the kernel of $\tn{d}\!\Psi$ at $(a_0,b_0,c_0)$ is trivial provided $A$ and $B$ are both non-zero. One can easily check that $$A = \lambda. \psi'(0),$$ for some $\lambda \in \C^*$. Then note that the action of $G$ is tangential to $L$ at $0$ and so, the tangent plane to $L$ at $0$ is spanned by $\psi'(0) \partial_g$. Therefore, $A \neq 0$ and similarly, $B \neq 0$. We noted in Subsection \ref{Subsection: Twistor spaces} that $\tn{dim}_{\C}H^{0}(N_{L}) = 4$ and so, $\tn{d}\!\Psi$ is an isomorphism. Thus, the inverse function theorem completes the proof.
\end{proof}

\subsection{The conformal structure}\label{subsection: Conformal structure}
Recall that the infinitesimal deformations of $L \in \mCo$ that intersect with $L$ define the \emph{null cone} of the conformal structure on $T_L \mCo$. These are precisely the sections of $N_{L}$ that vanish at a single point; by setting both components of (\ref{Equation: Phi derivative two}) to zero, we can determine sections in the null cone.

\begin{TL}\label{Lemma: non-degeneracy}
If $L \in \mCo$, then $(A_1B_2 - B_1A_2) \neq 0.$
\end{TL}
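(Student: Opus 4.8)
The plan is to pull back the conformal structure on $\mCo$ along the parameterization $\Phi$, recognise its defining quadratic form explicitly, and observe that the non-degeneracy of that form is equivalent to $A_1B_2 - A_2B_1 \neq 0$.

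First I would recall from Subsection \ref{Subsection: Twistor spaces} that a null vector at $L \in \mCo$ is a section of $N_L \cong \mc{O}(1)\oplus\mc{O}(1)$ with a zero, and that in any splitting $N_L = \mc{O}(1)\oplus\mc{O}(1)$ these null sections are cut out by the classical quadric $ad-bc=0$ on $H^0(N_L)\cong\C^4$, which is irreducible and non-degenerate. By the previous lemma $\tn{d}\Psi$ is an isomorphism, so in the coordinates $(a,b,c_1,c_2)$ on $\Phi(\mCo)$ the null cone is precisely the locus of $(a',b',c_1',c_2')$ for which the section (\ref{Equation: Phi derivative two}) vanishes somewhere; being the image of $ad-bc$ under the isomorphism $(\tn{d}\Psi)^{-1}$, it is cut out by an irreducible non-degenerate quadratic form.

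Next I would compute this locus. Setting both components of (\ref{Equation: Phi derivative two}) to zero, writing $d_j := c_j'/(2\pi c_j)$ and clearing the pole at $\tilde w = 0$, a zero of the section is a common root of the two quadratics $Q_j(t) := B_j b'\,t^2 + d_j\,t + A_j a'$ for $j=1,2$. Their resultant factors as $a'b'\,\mc{Q}$, where
$$\mc{Q} := (A_1B_2 - A_2B_1)^2\, a'b' + (B_1 d_2 - B_2 d_1)(A_1 d_2 - A_2 d_1);$$
a short case analysis — the spurious factors $a'$ and $b'$ correspond to roots of the $Q_j$ at $\tilde w = 0$ and $\tilde w = \infty$, which I would handle separately — shows that the null cone is exactly $\{\mc{Q} = 0\}$. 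Comparing with the previous paragraph, $\mc{Q}$ agrees up to a non-zero scalar with the quadratic form defining the conformal structure; in particular $\mc{Q}$ is non-degenerate.

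Finally I would read off when $\mc{Q}$ is non-degenerate. Since $d_j$ depends only on $c_j'$, the form $\mc{Q}$ is block diagonal for the splitting of the tangent space into the $(a',b')$--plane and the $(c_1',c_2')$--plane: the first block is $(A_1B_2-A_2B_1)^2 a'b'$ and the second is the product of the linear forms $B_1 d_2 - B_2 d_1$ and $A_1 d_2 - A_2 d_1$. The first block has rank $2$ exactly when $A_1B_2 - A_2B_1 \neq 0$; since $A \neq 0$ and $B \neq 0$ (established in the previous lemma) the two linear forms in the second block are non-zero, and they are independent — so the second block has rank $2$ — precisely when $A_1B_2 - A_2B_1 \neq 0$. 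Hence $\mc{Q}$ is non-degenerate if and only if $A_1B_2 - A_2B_1 \neq 0$, and since $\mc{Q}$ is non-degenerate the lemma follows. The step I expect to be the main obstacle is the resultant bookkeeping: correctly separating the genuine null locus $\{\mc{Q} = 0\}$ from the spurious linear factors $a'$ and $b'$, together with verifying that the two pole terms $A a'\tilde w^{-1}$ and $B b'\tilde w$ in (\ref{Equation: Phi derivative two}) really do assemble into a holomorphic section of $N_L$ — this is what licenses identifying $\mc{Q}$ with the twistor conformal structure.
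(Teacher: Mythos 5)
Your argument is correct, but it takes a genuinely different route from the paper. The paper's proof is a two-line direct construction: assuming $A_1B_2 - B_1A_2 = 0$ and using $A, B \neq 0$ to write $A = \kappa B$, it chooses the particular tangent vector with $c^{-1}c' = \lambda A$, so that the section (\ref{Equation: Phi derivative two}) becomes a common scalar function times a fixed vector and hence vanishes at \emph{two} points of $L$ — impossible for a non-zero section of $N_L \cong \mc{O}(1)\oplus\mc{O}(1)$. You instead compute the entire null cone: pulling back the irreducible non-degenerate quadric $ad - bc = 0$ on $H^0(N_L)$ through the isomorphism $\tn{d}\Psi$, identifying it via the resultant computation with $\{\mc{Q}=0\}$ (your $\mc{Q}$ does agree, up to scale, with the form in (\ref{Equation: conformal structure})), and observing that $\mc{Q}$ has rank $4$ precisely when $A_1B_2 - A_2B_1 \neq 0$, while in the degenerate case it collapses to $\kappa(B_1d_2 - B_2d_1)^2$, a doubled hyperplane, which cannot be the zero set of an irreducible quadric. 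What your approach buys is that it simultaneously re-derives the explicit conformal structure (the content of (\ref{Equation: conformal structure})) without any circularity, since you use only the abstract twistor quadric and the previous lemma; what it costs is the bookkeeping you flag — the spurious factors $a'$, $b'$ and the behaviour of the frame $\partial_g$ at the tangency points $\tilde w = 0, \infty$, where vanishing of the components does not immediately detect vanishing of the section.

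One simplification worth noting: the full set-equality ``null cone $= \{\mc{Q}=0\}$'' (and hence the delicate analysis at $\tilde w = 0,\infty$) is not needed for the lemma. Only the easy inclusion is: if $a'b' \neq 0$ and $\mc{Q} = 0$ then the common root of $Q_1, Q_2$ lies in $\C^*$, where the frame is genuine, so the vector is null. If $A_1B_2 - A_2B_1 = 0$ this places a dense subset of the hyperplane $\{B_1d_2 = B_2d_1\}$, hence (the null cone being closed) the whole hyperplane, inside the zero set of the irreducible quadratic $q$ pulled back from $ad-bc$, forcing a linear factor of $q$ — a contradiction. This streamlined version is essentially the paper's argument in disguise: restricting to that hyperplane with $a'b'\neq 0$ is exactly choosing $c^{-1}c'$ proportional to $A$, and the ``two roots in $\C^*$'' of the scalar quadratic are the two zeros of the paper's section (\ref{Equation: degenerate normal bundle}).
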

\begin{proof}
Since $A$ and $B$ are non-zero, $(A_1B_2 - B_1A_2) = 0$ implies $A = \kappa.B$, for some $\kappa \in \C^*$. Therefore, the section given in (\ref{Equation: Phi derivative two}) can be written as
\begin{equation}\label{Equation: degenerate normal bundle}\big(2\pi c.A.(a'.w^{-1} + \kappa.b'.w) + c'\big)\partial_{g}.\end{equation}
If $c^{-1}.c' = \lambda.A$ for some $\lambda \in \C$, then (\ref{Equation: degenerate normal bundle}) has two zeros on $L$. It follows that $N_L \ncong \mc{O}(1) \oplus \mc{O}(1)$ and therefore, $L \notin \mCo$.
\end{proof}

Vectors in the null cone at $L$ satisfy the simultaneous equations
$$2\pi c_1(A_1a' w^{-1} + B_1 b' w) + c_1' = 0$$ $$2\pi c_2(A_2 a' w^{-1} + B_2 b' w) + c_2'  = 0.$$
$$$$
Since $(A_1B_2 - B_1A_2) \neq 0,$ $w$ can be eliminated to show that the conformal structure on $T_L \mCo$ is determined by the representative
\begin{equation}\label{Equation: conformal structure}\tn{d}a\tn{d}b - \frac{(A_2c_1^{-1}\tn{d}c_1 - A_1c_2^{-1}\tn{d}c_2)(B_1c_2^{-1}\tn{d}c_2 - B_2c_1^{-1}\tn{d}c_1)}{(2\pi)^2(A_1B_2 - B_1A_2)^2}.\end{equation}

\begin{TL}\label{Lemma: Real principal lines}
The real principal lines correspond to the submanifold
$$\Phi(\mo) = \Delta^\circ \times F,$$ where $\Delta^\circ : = \{(a,b) \in \Delta_\C: b = \bar{a}^{-1}\}$.
\end{TL}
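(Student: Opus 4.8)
The plan is to identify, among the parameters $(a,b,c)\in\Phi(\mo^c)\subset\Delta_\C\times G$, exactly those triples that correspond to $\gamma$-invariant twistor lines, and then to read off the stated description. Recall that $L_0\in\mo$, so $\gamma$ restricts to $L_0$ as $z\mapsto-\bar z^{-1}$ (equation (\ref{Equation: Real Structure})), and that $\gamma$ anticommutes with the $G$-action via $\gamma(g.z)=\bar g^{-1}.\gamma(z)$ (equation (\ref{Equation: gamma anticommutes})). So the first step is to compute how $\gamma$ acts on the parametrization $c.L_{ab}=\{f(z,w).z:(z,w)\in E\}$, using the explicit formula $f(z,w)=c.\chi_0(z).\chi(-w)$ from Lemma \ref{Lemma: Principal line graph} together with the reality condition (\ref{Equation: Reality condition}) satisfied by $\psi_0$ and $\psi$.

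First I would show that $\gamma(c.L_{ab})$ is again a principal line of the same form, with parameters $(\bar a^{-1},\bar b^{-1},\bar c^{-1})$ up to the residual coordinate ambiguities of Remark \ref{Remark: Coordinates} (the swap $z\mapsto z^{-1}$ and the unit rescalings used to normalize $z_1=1$ and the argument of $z_2$). Concretely, the points $\pm z_0$ and $\pm z_\infty$ on $L_0$ which cut out $a=z_0^2$, $b=z_\infty^2$ are the intersections with the $G$-orbits tangent to $L$ at $0$ and $\infty$; since $\gamma$ interchanges $0$ and $\infty$ on $L$ and sends the orbit through $z_0$ to the orbit through $\gamma(z_0)=-\bar z_0^{-1}$, the new tangency parameters are $\bar a^{-1}$ and $\bar b^{-1}$ (with the roles of $a,b$ possibly swapped, which is harmless since $c.L_{ab}=c.L_{ba}$). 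Then $L$ is $\gamma$-invariant iff $\{\bar a^{-1},\bar b^{-1}\}=\{a,b\}$, i.e. iff either $b=\bar a^{-1}$ (the generic case, giving $\Delta^\circ$) or $a=\bar a^{-1}$ and $b=\bar b^{-1}$ (both on the circle). The latter degenerate case should be excluded or shown to reduce to the former — $a,b$ both fixed by $w\mapsto\bar w^{-1}$ means both lie on the unit circle in $L_0/\tau_0$; I expect one checks that such a line would have a real tangency pattern forcing it out of $\mo$, or that after the allowed coordinate changes it is already covered by the $b=\bar a^{-1}$ description — so that $\Phi(\mo)\cap(\Delta_\C\times G)$ has $(a,b)$-part exactly $\Delta^\circ$.

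Next, with $b=\bar a^{-1}$ fixed, I would pin down the constraint on $c$. From the transformation $c\mapsto\bar c^{-1}$ derived above, $\gamma$-invariance of $c.L_{ab}$ forces $\bar c^{-1}=c$, i.e. $c\bar c=1$ componentwise, which is precisely the condition $c\in F\subset G=\C^*\times\C^*$. Here I would be careful that the correspondence between $\gamma$-invariance of the line and the equation $\bar c^{-1}=c$ is genuinely an equivalence, not just an implication: one direction is that a real line must be $\gamma$-fixed; the other is that any $(a,\bar a^{-1},c)$ with $|c|=1$ produces a $\gamma$-invariant embedded twistor line, which is a genuine member of $\m$ because $Z$ carries the real structure and a $\gamma$-fixed twistor line is, by definition, real. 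Combining, $\Phi(\mo)=\Delta^\circ\times F$.

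The main obstacle I anticipate is the bookkeeping of the coordinate ambiguities from Remark \ref{Remark: Coordinates}: the normalization $z_1=1$ and $\arg z_2\in(0,\pi)$ used to make $\Phi$ well-defined interacts with how $\gamma$ acts, and one must check that the action of $\gamma$ on $(a,b,c)$ is compatible with these normalizations — in particular that applying $\gamma$ does not force a relabeling $z\mapsto z^{-1}$ that would alter the formula in a way I have not accounted for. Once the bookkeeping is done, verifying $\bar c^{-1}=c\iff c\in F$ and $b=\bar a^{-1}\iff(a,b)\in\Delta^\circ$ is immediate from the definitions of $F$, $\Delta^\circ$, and $\Delta_\C$. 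I would also double-check, using the explicit formula (\ref{Equation: psi formula}) and $|w_i|=1$ (Lemma \ref{Lemma: Special points}), that the reality condition (\ref{Equation: Reality condition}) is indeed satisfied on such lines, so that the parametrized object really lands in $\mo$ and not merely in $\mCo$.
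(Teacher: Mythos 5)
Your plan is essentially the paper's proof: necessity of $b=\bar a^{-1}$ comes from how $\gamma$ acts on the two tangency orbits, necessity of $c\in F$ from the anticommutation rule (\ref{Equation: gamma anticommutes}) combined with the reality condition (\ref{Equation: Reality condition}) applied to the formula of Lemma \ref{Lemma: Principal line graph}, and sufficiency by noting that for $(a,\bar a^{-1},c)\in\Delta^\circ\times F$ the same formula produces an embedded $\gamma$-invariant rational curve, hence a real twistor line about which $G$ acts freely, so it lies in $\mo$. The one substantive difference is that by matching the unordered pair $\{a,b\}$ with $\{\bar a^{-1},\bar b^{-1}\}$ you create a spurious second case, $|a|=|b|=1$, which you leave open; of your two suggested fixes, the coordinate-ambiguity one does not work (no change of coordinate allowed by Remark \ref{Remark: Coordinates} converts that case into $b=\bar a^{-1}$), while the ``tangency pattern'' one is correct and is exactly how the paper argues. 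Namely, on $L\in\mo$ the real structure interchanges $0$ and $\infty$, hence interchanges the two tangency orbits; these are distinct and non-special, since a special orbit meets $L$ with multiplicity one (Lemma \ref{Lemma: Special orbit intersection}) and a non-special orbit meets $L$ with total multiplicity two (Lemma \ref{Lemma: Intersection number}), so one orbit cannot be tangent at both points. Therefore the orbit through $\pm z_0$ is carried by $\gamma$ to the orbit through $\pm z_\infty$, which yields $b=z_\infty^2=\bar z_0^{-2}=\bar a^{-1}$ directly; and since neither tangency orbit is $\gamma$-invariant, the case $|a|=1$ is impossible (there $\gamma(z_0)=-z_0$ would lie in the same orbit as $z_0$, forcing that orbit to be $\gamma$-invariant). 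With this observation replacing your case split, and with the branch/normalization bookkeeping you flag handled as in the paper by comparing the two tangency points of $L$ via $\gamma$, your argument coincides with the paper's.
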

\begin{proof}
Let $L \in \mo$ correspond to $(a,b,c) \in \Delta_\C \times G$. The $G$-orbits that intersect with $L$ tangentially are interchanged by the real structure. Therefore, $b = \bar{a}^{-1}$.

The two points on $L$ where the action of $G$ is tangential are given by
$$f(a,0).z = c.\chi_0(a).a,$$
and
$$f(\gamma(a),0).z = c.\chi_0(\gamma(a)).\gamma(a),$$
for the appropriate of branch of $\chi_0$.
Since $\gamma$ satisfies equation (\ref{Equation: gamma anticommutes}):
$$c.\chi_0(a).a  = \gamma(c.\chi_0(\gamma(a)).\gamma(a)) = \big(\overline{c.\chi_0(\gamma(a))}\big)^{-1}.a.$$
By applying the reality condition on $\psi$, which was given in (\ref{Equation: Reality condition}), it follows that $c \in F$.

Now we need to show that every $(a, \bar{a}^{-1}, c) \in \Delta^\circ\times F$ corresponds to some $L \in \mo$. Using the function $f$ determined in Lemma \ref{Lemma: Principal line graph} we obtain a rational curve $L$ embedded in $Z$ that is preserved by $\gamma$ and therefore, $L\in \m$. By construction the action of $G$ is free about $L$ and so, $L \in \mo$.
\end{proof}%Really need to show that fibres of $\pi$ converge to fibres of $\pi$ in order to show that $L$ is a real twistor line.

If we restrict equation (\ref{Equation: conformal structure}) to $\Delta^\circ\times F$ we obtain the conformal structure on $M^\circ \cong \mo$. We will use the coordinate $(\theta_1, \theta_2)$ on $F$. These coordinates can be expressed in terms of $c \in F$ as $c_j = e^{2\pi i\theta_j}$, for $j = 1,2$. The functions $A$ and $B$ are complex valued on $\Delta^\circ$ and satisfy $$\overline{(b-a).B} = (b-a).A.$$ We will express $A$ and $B$ in terms of real functions by defining setting
$$Q + iP = (b-a).A.$$ Then, by restricting (\ref{Equation: conformal structure}) to $\Delta^\circ\times F$, we obtain the next lemma.

\begin{TL}\label{Lemma: conformal structure}
The conformal structure on $M^\circ \cong \Delta^\circ \times F$ is determined by the representative
\begin{equation}\label{Equation: real conformal structure}\frac{\tn{d}a\tn{d}\bar{a}}{(|a|^2-1)^2} + \frac{(P_2\tn{d}\theta_1 - P_1\tn{d}\theta_2)^2 + (Q_2\tn{d}\theta_1 - Q_1\tn{d}\theta_2)^2}{4(P_1Q_2 - P_2Q_1)^2},\end{equation}
where $P$ and $Q$ are real valued and satisfy $$(P_1Q_2 - P_1Q_2) \neq 0.$$
\end{TL}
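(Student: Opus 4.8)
The plan is to obtain Lemma~\ref{Lemma: conformal structure} as a direct specialization of Lemma~\ref{Lemma: non-degeneracy} and equation~(\ref{Equation: conformal structure}) under the restriction to $\Delta^\circ \times F$ furnished by Lemma~\ref{Lemma: Real principal lines}. First I would substitute $b = \bar a^{-1}$ and $c_j = e^{2\pi i \theta_j}$ into (\ref{Equation: conformal structure}). The logarithmic derivatives become $c_j^{-1}\tn{d}c_j = 2\pi i\,\tn{d}\theta_j$, so the factor $(2\pi)^2$ in the denominator of (\ref{Equation: conformal structure}) cancels against the $(2\pi i)^2 = -(2\pi)^2$ coming from the product of the two numerator brackets, up to sign; I would track this sign carefully to see that it reproduces the $+$ sign and the factor $4$ in the denominator of (\ref{Equation: real conformal structure}) once we also account for $\tn{d}a\,\tn{d}b$ becoming $\tn{d}a\,\tn{d}(\bar a^{-1}) = -\bar a^{-2}\tn{d}a\,\tn{d}\bar a$, which differs from $\tn{d}a\,\tn{d}\bar a/(|a|^2-1)^2$ only by a positive conformal factor $|a|^2/(|a|^2-1)^2 \cdot (\text{something})$ — here I would use that $|a|=1$ is excluded so $|a|^2 - 1 \neq 0$, and that an overall positive rescaling does not change the conformal class.

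The second ingredient is the reality statement $\overline{(b-a)B} = (b-a)A$, which I would verify from the explicit formulas $A(a,b) = \frac{1}{4\pi(b-a)}\sum v_i w_i^{-1}$ and $B(a,b) = \frac{1}{4\pi(b-a)}\sum v_i w_i$ together with the reality relation on the $w_i$. Recall from Subsection~\ref{Subsection: Coordinates} that $w_i^2 = \frac{1-b}{1-a}\cdot\frac{z_i^2 - a}{z_i^2 - b}$ with $|z_i| = 1$ (Lemma~\ref{Lemma: Special points}); setting $b = \bar a^{-1}$ one checks that $\overline{w_i^2} = w_i^{-2}$, i.e. $|w_i| = 1$, so $\bar w_i = w_i^{-1}$. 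Then $\overline{\sum v_i w_i} = \sum v_i w_i^{-1}$ (the $v_i \in \Lambda$ are real), which upon multiplying through by the conjugate of $(b-a)$ gives exactly $\overline{(b-a)B} = (b-a)A$. This is what makes the substitution $Q + iP = (b-a)A$ consistent: it forces $(b-a)B = Q - iP$, so that the two numerator brackets in (\ref{Equation: conformal structure}) become complex conjugates of one another once expressed in $P,Q,\theta_1,\theta_2$, and a product $z\bar z = |z|^2$ of conjugate linear forms is precisely the sum of squares of their real and imaginary parts, namely $(P_2\tn{d}\theta_1 - P_1\tn{d}\theta_2)^2 + (Q_2\tn{d}\theta_1 - Q_1\tn{d}\theta_2)^2$. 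The denominator $(A_1B_2 - B_1A_2)^2$ likewise becomes $(P_1Q_2 - P_2Q_1)^2$ (possibly up to the same positive constant that I will absorb), and Lemma~\ref{Lemma: non-degeneracy} guarantees this is nonzero, which is the asserted condition $P_1Q_2 - P_2Q_1 \neq 0$.

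The main obstacle I anticipate is bookkeeping rather than conceptual: getting every constant and sign to land so that the final expression is literally (\ref{Equation: real conformal structure}) and not merely conformal to it — in particular confirming that the combination of the $-\bar a^{-2}$ from $\tn{d}b$, the $(2\pi)^2$'s, the factor $4$, and the sign flip from $i^2$ all conspire correctly, and that whatever leftover positive factor appears is genuinely positive (using $a \in \Delta^\circ$, so $|a| \neq 1$) and hence discardable from the conformal class. I would organize this as: (1) restrict (\ref{Equation: conformal structure}), (2) rewrite logarithmic differentials of $c$ in terms of $\tn{d}\theta_j$, (3) establish $|w_i| = 1$ and hence the conjugation relation between $A$ and $B$, (4) introduce $P, Q$ and recognize the numerator as $|Q+iP \text{ applied to } \tn{d}\theta|^2$ type expression, (5) simplify the $\tn{d}a\,\tn{d}b$ term to $\tn{d}a\,\tn{d}\bar a/(|a|^2-1)^2$ up to positive scale, and (6) invoke Lemma~\ref{Lemma: non-degeneracy} for the nondegeneracy clause. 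Each step is a short computation; the only place care is genuinely required is step~(5) combined with the constant-chasing in steps~(1)--(2).
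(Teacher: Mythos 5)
Your proposal follows essentially the same route as the paper, whose own justification of this lemma is precisely the restriction of (\ref{Equation: conformal structure}) to $\Delta^\circ\times F$ via $b=\bar a^{-1}$, $c_j=e^{2\pi i\theta_j}$, the reality relation $\overline{(b-a)B}=(b-a)A$ and the substitution $Q+iP=(b-a)A$, with Lemma \ref{Lemma: non-degeneracy} supplying the nondegeneracy clause; your extra verification of $|w_i|=1$ (which the paper leaves implicit, and which also follows directly from Lemma \ref{Lemma: Special points} applied to $L\in\mo$) is a welcome addition. The only caveat is in your step (5): the leftover overall factor after restriction is a complex scalar (a power of $\bar a$ times $(1-|a|^2)^{\pm 2}$) rather than a positive real one, but this is harmless since the representative of the complexified conformal structure is defined only up to scale, so discarding it is exactly the bookkeeping you already flag.
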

\begin{TR}\label{Remark: psi determines conformal}When $L = \Psi(a,b,c) \in \mo$ we can assume $|a| < 1$ and so, $|b| = |\bar{a}^{-1}|> 1$. In this case, the unique coordinate determined in Lemma \ref{Lemma: Unique coordinate} can be written in terms of $z$ as $$w^2 = \frac{1 - b}{1 - a}.\frac{z^2-a}{z^2 - b}.$$ So, $w_1, \ldots, w_k$ are uniquely determined by $z_1, \ldots, z_k$. Thus, the conformal structure on $M^\circ$ is determined by the combinatorial data $\mc{T}$ and the special points $z_1, \ldots, z_k$ on $L_0$.\end{TR}

\section{The microtwistor correspondence}\label{Section: The Riemann surface R}
\subsection{A Riemann surface}\label{Subsection: Twistor lines relative to R}
In this section we give a correspondence between the microtwistor space, which is a subset of $\tau$ invariant pairs of points on $L_0$ and $N$, the $F$-orbit space of $M$; this is what we refer to as the microtwistor correspondence. The map between the microtwistor space and $N$ is composed of a map to the Riemann surface in $Z$, which was constructed in subsection \ref{Subsection: Riemann surface}, followed by a map to $N$. These maps are delineated in Figure \ref{Figure: Microtwistor}.

\begin{figure}[htbp]
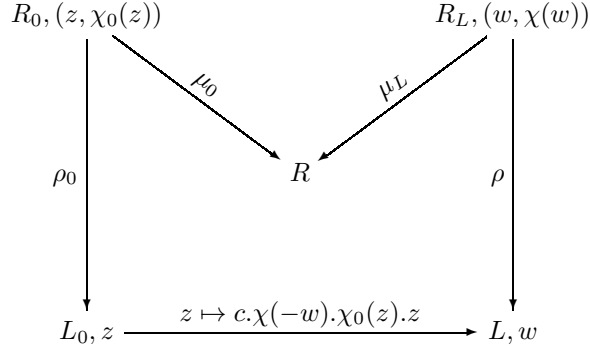

\begin{diagram}
R_0, (z, \chi_0(z)) &&&& R_L, (w, \chi(w))\\
&\rdTo^{\mu_0}&&\ldTo^{\mu_L}&\\
\dTo^{\rho_0}&&R&& \dTo^{\rho}\\
&&&&\\
L_0, z&&\rTo^{z \mapsto c.\chi(-w).\chi_0(z).z}&& L, w\\
\end{diagram}
\caption{Principal lines in relation to the curve $R$}\label{Figure: Twistor lines relative to R.}
\end{figure}

\begin{TD}Let $L \in \mCo$ correspond to $(a,b,c) \in \Delta_\C \times G$. In Subsection \ref{Subsection: Riemann surface} we defined a Riemann surface $R_L$ associated with the multivalued function $\chi$ on $L$. Away from the special points, a point on $R_L$ can be denoted by $(w,\chi(w))$. The deck transformation group of the covering $\rho: R_L \rightarrow L$ is given by $H\subset F$, where $H$ was defined in (\ref{Equation: H definition}). In Lemma \ref{Lemma: Riemann surface from R_L} we showed that the map $$\mu_L: (w,\chi(w)) \mapsto c^{-1}.\chi(w).w$$ extends to a holomorphic map over $R_L$, and we will denote the image of $\mu_L$ by $R$. We will also define a closed disc in $R$ by $$D := \mu_L\big(\{(w,\chi(w)) \in R_L: |w| \leq 1\}\big),$$ on the branch where $\chi(0) = (1,1)$. It follows from equation (\ref{Equation: f in terms of psi}) that these definitions do not depend on the choice of $L$.\end{TD}

For $L = \Psi(a,\bar{a}^{-1}, (1,1)) \in \mo$ (with $|a| < 1$), we will denote $0$ and $\infty \in L$ by $0_a$ and $\infty_a$, respectively. Hence, $0_a$ and $\infty_a$ are the two points in $R \cap L$ and so, $$R \cap (F.L) = H.\{0_a, \infty_a\}.$$ For the proof of the next lemma it is necessary to observe that these are the branch points of $\mu_L$. From this perspective, $R$ can be thought as a ``Riemann surface of tangency points" belonging to a family of such surfaces parameterized by $F/H$. This perspective is illustrated in Figure \ref{Figure: Riemann surface of tangency points}.

\begin{figure}[htbp]
  \psfrag{A}{$L_0$}
  \psfrag{B}{$L$}
  \psfrag{C}{$a^{1/2}$}
  \psfrag{D}{$-a^{1/2}$}
  \psfrag{E}{$0$}
  \psfrag{F}{$\chi_0(a^{1/2}).a^{1/2}$}
  \psfrag{G}{$R$}
  \psfrag{H}{$G.a^{1/2}$}
  \centering \scalebox{1.0}{\includegraphics{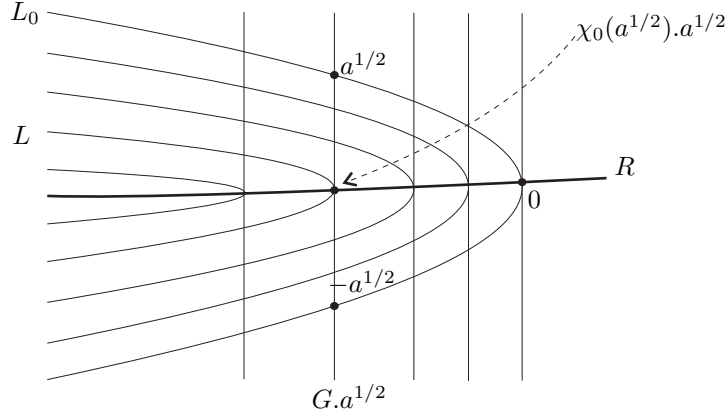}}
  \caption{$R$ as a Riemann surface of tangency points}\label{Figure: Riemann surface of tangency points}
\end{figure}

\begin{TL}\label{Lemma: nu definition}
The conformal isometry $$\Delta^\circ \rightarrow D^\circ\subset R;\; (a,\bar{a}^{-1}) \mapsto 0_a$$ extends to a conformal isometry between $\Delta$ and $D$, which we will denote by $\nu$.
\end{TL}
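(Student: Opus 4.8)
The plan is to give $\nu$ by an explicit formula on $\Delta^\circ$ and then extend it holomorphically across the boundary circle, the only subtlety being at the $k$ boundary points corresponding to the fixed points $x_i$.

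First I would record the closed form of $\nu$. Since $L_0$ corresponds to $(0,\infty,(1,1))$ under $\Psi$, the relation $f(z,w).z=w$ and Lemma~\ref{Lemma: Principal line graph} give, for $|a|<1$ and $L=\Psi(a,\bar a^{-1},(1,1))$, that the tangency point $0_a\in L$ is $\chi_0(a^{1/2}).a^{1/2}=\mu_0\big((a^{1/2},\chi_0(a^{1/2}))\big)$, computed on the branch with $\chi_0(0)=(1,1)$; this branch is single valued on the simply connected disc $\{|z|<1\}$. On that disc $\chi_0(-z)\chi_0(z)$ is a holomorphic $H$-valued function, hence constant, and equals $\chi_0(0)^2=\psi_0(0)=\prod_i(-1)^{v_i}=(-1)^{2u_k}=(1,1)$ by Proposition~\ref{Proposition: meromorphic psi}; thus $\chi_0(-z)=\chi_0(z)^{-1}$, so $\chi_0(z).z$ is invariant under $z\mapsto -z$, $\nu$ is a well defined function of $a=z^2$, and it is holomorphic into $R$ because $\mu_0$ is (Subsection~\ref{Subsection: Riemann surface}). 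Since the conformal structure on $\Delta^\circ$ is standard in the coordinate $a$ (Lemma~\ref{Lemma: conformal structure}) and $\nu$ is injective there, this exhibits $\nu$ as a conformal isometry from $\Delta^\circ$ onto $D^\circ$.

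Next I would extend $\nu$ across $\partial\Delta=\{|a|=1\}$. Away from the $z_i^2$ the function $\psi_0$ is holomorphic and non-vanishing, so $z\mapsto\mu_0((z,\chi_0(z)))$ extends holomorphically to a full neighbourhood of the corresponding boundary point of $\Delta$. At $z_i$ the function $\psi_0$ has a pole of order $v_i$, but Lemma~\ref{Lemma: Riemann surface from R_L} asserts that $\mu_0$ extends holomorphically over $R_0$ and is a biholomorphism near $\rho_0^{-1}(z_i)$ onto a neighbourhood of a point of $L_i$; pushing this down through $\rho_0$ and the local biholomorphism $z\mapsto z^2$ (valid at $z_i\neq 0$) shows $\nu$ extends holomorphically near $z_i^2$, carrying $z_i^2$ into $L_i$. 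Here the datum $h_i=(-1)^{v_i}$ enters: when $h_i\neq(1,1)$ the cover $\rho_0$ is branched over $z_i$, so the extension factors through the square root $u$ with $u^2\sim a-z_i^2$ of Lemma~\ref{Lemma: Riemann surface from R_L}, and it is this branching that carries the orbifold group $\Gamma_i$ over to $\nu(z_i^2)$; the resulting half-disc-to-sector map is still conformal. Finally $\nu(\partial\Delta)=\partial D$ by the definition of $D$, and $\nu$ is injective on $\partial\Delta$ because $\mu_0((z,\chi_0(z)))=\mu_0((z',\chi_0(z')))$ puts $z,z'$ in a common $G$-orbit, which meets $L_0$ only in $\{z,\tau_0(z)\}$ by Lemma~\ref{Lemma: Intersection number}, forcing $z^2=z'^2$; and images of distinct $z_i$ lie on the distinct twistor lines $L_i$ by Lemma~\ref{Lemma: Special points}.

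Assembling these, $\nu$ is a continuous bijection from the compact Hausdorff space $\Delta$ onto $D$, hence a homeomorphism, holomorphic on $\Delta^\circ$ and holomorphically extendable across $\partial\Delta$ as above, which is exactly the asserted conformal isometry. The main obstacle is the behaviour at the $z_i^2$: there one must invoke the local model for the $G$-action near the twistor space fixed points $z_i^\pm$ and track the branched-cover structure of $\rho_0\colon R_0\to L_0$, which is precisely what Lemma~\ref{Lemma: Riemann surface from R_L} was arranged to supply — with it in hand the rest is bookkeeping, but it is the point at which the orbifold data $\Gamma_i$ appears and where one must confirm that $\nu$ is genuinely conformal and not merely a homeomorphism.
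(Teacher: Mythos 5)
Your overall route is close to the paper's: you describe $\nu$ explicitly as $a\mapsto \chi_0(a^{1/2}).a^{1/2}=\mu_0\big((a^{1/2},\chi_0(a^{1/2}))\big)$, get injectivity from the fact that a non-special $G$-orbit meets $L_0$ only in $\{z,-z\}$ (Lemma \ref{Lemma: Intersection number}), and reduce the behaviour at the points $z_i^2$ to Lemma \ref{Lemma: Riemann surface from R_L}; your verification that $\chi_0(-z)=\chi_0(z)^{-1}$ (via $\psi_0(0)=(-1)^{2u_k}=(1,1)$), so that the formula genuinely descends to $a=z^2$, is a nice piece of bookkeeping the paper leaves implicit. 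Where you differ is at the non-special boundary points: the paper does not continue $\chi_0$ across the equator of $L_0$; instead it chooses an auxiliary line $L\in\mo$ with $r\notin H.L$, uses the product coordinates $(w,g)\in U\times G$ near $r$ to exhibit $R$ locally as the graph $(w,\chi(w))$, and composes $z^2\mapsto w^2\mapsto (w,\chi(w))$ to get $\nu$ locally as the restriction of a bona fide biholomorphism. The advantage of that construction is that non-degeneracy comes for free.

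That is exactly where your write-up has a gap. In the final assembly you pass from ``continuous bijection $\Delta\to D$, holomorphic on $\Delta^\circ$, holomorphically extendable across $\partial\Delta$'' to ``conformal isometry''. This implication is false in general: $a\mapsto (a-1)^2$ is injective on the closed unit disc and holomorphic on a neighbourhood of $1$, yet its derivative vanishes at $1$ and it is not conformal there. So you still owe an argument that your extension has non-vanishing derivative at the non-special equator points. You have the tools: either note that your injectivity argument via Lemma \ref{Lemma: Intersection number} applies on a full two-sided neighbourhood of such a point (not just on $\partial\Delta$), so the extension is injective, hence a local biholomorphism onto its image in $R$; or compute directly that $\tfrac{d}{dz}\big(\chi_0(z).z\big)\neq 0$ away from the tangency points $0,\infty$ (the transverse component of the derivative is $(\chi_0(z))_*\partial_z$, which is non-zero off the tangency points — this is essentially the computation in Lemma \ref{Lemma: D transverse}). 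Two smaller points: when $h_i\neq(1,1)$ the map does \emph{not} ``extend holomorphically near $z_i^2$'' — as your own half-disc-to-sector description shows, it factors through the square-root coordinate $u$ and is conformal only in the sense of being a homeomorphism that is conformal on the interior, which is the correct reading of the lemma at these corner points; and the branching there is governed by $h_i=(-1)^{v_i}$, not by the orbifold group $\Gamma_i$ (for $S^4$ all $\Gamma_i$ are trivial while $h_i=(-1,-1)$), so the parenthetical attribution of the branching to $\Gamma_i$ should be dropped.
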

\begin{proof}
For $z \in L_0$, $(z^2, \bar{z}^{-2})$ is mapped to $\chi_0(z).z$ by $\nu$. So, by the definition of $D$, $\nu$ is well-defined and surjective. Also $\nu$ is injective, since $\nu(z^2, \bar{z}^{-2})$ is the only point in $D$ contained in $G.z$.

Suppose that $r \in R$ is not in $L_1, \ldots, L_k$, and choose $L = \Psi(a,b, c)\in \mo$ with $r \notin H.L$. By our choice of $L$, we can choose an open disc $U$ about $w_0 \in L \cap G.r$ that does not contain $0_a$ or $\infty_a$. Therefore, $$U\times G \rightarrow Z; (w,g) \mapsto g.w$$ is a biholomorphism onto its image and so, we can use the coordinates $(w,g) \in U \times G$ on a neighbourhood of $r$ in $Z$. Hence, we can use the coordinates $(w, \chi(w))$ on a neighbourhood of $r$ in $R$.

So, $w \rightarrow (w, \chi(w))$ is a biholomorphism on $U$ and, by our choice of $U$, $w^2 \rightarrow (w,\chi(w))$ is also a biholomorphism. If we compose this map with the biholomorphism $z^2 \rightarrow w^2$ (defined in Subsection \ref{Subsection: Coordinates}), then we obtain a local biholomorphism from $L_0/\tau_0$ to $R$, away from $z_1^2, \ldots, z_k^2$.

We have shown that, away from $(z_i^2, z_i^2)$, $\nu$ is locally the restriction of a biholomorphism to a closed disc. In Lemma \ref{Lemma: Riemann surface from R_L} it is shown that the same is true in a neighbourhood of $(z_i^2, z_i^2)$. This completes the proof, since $\nu$ is bijection.
\end{proof}

\begin{TL}\label{Lemma: Delta properties}
The boundary of $D$ is contained in $\pi^{-1}(B)$. In particular, $$r_i := \nu(z^2_i, z_i^2)\in L_i,$$ for $i = 1, \ldots, k$.
\end{TL}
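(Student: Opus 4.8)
The plan is to read everything off from the parameterization of Section~\ref{Section: The conformal structure} and the conformal isometry $\nu$ of Lemma~\ref{Lemma: nu definition}. Recall that $L_0/\tau_0\cong\cp^1$ via $z\mapsto z^2$, that the real structure $\gamma$ on $L_0$ descends to the antiholomorphic map $a\mapsto\bar a^{-1}$ on $L_0/\tau_0$, and that by Lemma~\ref{Lemma: Real principal lines} (together with the natural identification $\mo\cong M^\circ$ and Lemma~\ref{Lemma: conformal structure}) the set $\Delta^\circ$ is identified with $N^\circ:=M^\circ/F$ so that a point $a\in\Delta^\circ$ corresponds to the $F$-orbit $q(\pi(L_a))$, where $L_a:=\Psi(a,\bar a^{-1},(1,1))$ and $q\colon M\to N$ is the orbit map. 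Since $a\mapsto\bar a^{-1}$ fixes exactly the circle $\{|a|=1\}$, we have $\Delta^\circ\cong\{a\in\C:|a|<1\}$, and $\Delta=\overline{\Delta^\circ}$ is a closed disc whose boundary $\partial\Delta$ consists of the diagonal pairs $(a,a)$ with $|a|=1$. By Lemma~\ref{Lemma: nu definition}, $\nu$ is a homeomorphism of $\Delta$ onto $D$, so $\partial D=\nu(\partial\Delta)$.

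Next I would record the topological consequence: since $N^\circ\cong\Delta^\circ$ is an open disc, the compact $2$-orbifold $N$ is a closed disc, so $\partial N$ is connected. The component $\partial N_0\subset\partial N$ is nonempty (because $\chi_{orb}(M)>0$ forces a fixed point, as in Subsection~\ref{subsection: constructing B}), hence $\partial N_0=\partial N$ and therefore $B=q^{-1}(\partial N_0)=q^{-1}(\partial N)$.

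For the main assertion, fix $r\in\partial D$ and write $r=\nu(a_0,a_0)$ with $|a_0|=1$; choose $a_n\to a_0$ with $|a_n|<1$. Then $(a_n,\bar a_n^{-1})\to(a_0,a_0)$ in $\Delta$, so $0_{a_n}:=\nu(a_n,\bar a_n^{-1})\in L_{a_n}$ satisfies $0_{a_n}\to\nu(a_0,a_0)=r$ in $Z$ by continuity of $\nu$. Applying the continuous maps $\pi\colon Z\to M$ and $q\colon M\to N$ gives $q(\pi(0_{a_n}))\to q(\pi(r))$ in $N$. But $\pi(0_{a_n})=\pi(L_{a_n})$ and, by the identification of the first paragraph, $q(\pi(L_{a_n}))$ corresponds to $a_n$ under $N^\circ\cong\{|a|<1\}$; as $a_n\to a_0\in\partial N$ we conclude $q(\pi(r))\in\partial N$, that is, $\pi(r)\in q^{-1}(\partial N)=B$. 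This proves $\partial D\subset\pi^{-1}(B)$.

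Finally, for the ``in particular'' clause: by Lemma~\ref{Lemma: Special points} we have $|z_i|=1$, so $(z_i^2,z_i^2)\in\partial\Delta$ and hence $r_i=\nu(z_i^2,z_i^2)\in\partial D\subset\pi^{-1}(B)$. To pin down that $r_i$ actually lies in $L_i$, I would invoke the construction of $\nu$ near the diagonal point $(z_i^2,z_i^2)$ used in the proof of Lemma~\ref{Lemma: nu definition}: there $\nu$ is built from the map $\mu_0\colon R_0\to Z$ of Lemma~\ref{Lemma: Riemann surface from R_L}, and $r_i$ is the image under $\mu_0$ of the point of $R_0$ over $z_i$ on the branch determining $D$, which Lemma~\ref{Lemma: Riemann surface from R_L} places in $L_i$. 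The step needing most care is the topological reduction in the second paragraph --- one must be sure the parameterization genuinely exhibits $N^\circ$ as an open disc, so that $N$ has a single boundary circle and $B=q^{-1}(\partial N)$; granting that, the remainder is a continuity-and-limits argument resting on Lemmas~\ref{Lemma: Real principal lines}, \ref{Lemma: conformal structure}, \ref{Lemma: nu definition} and \ref{Lemma: Riemann surface from R_L}.
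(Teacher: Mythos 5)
Your argument is correct, but it follows a genuinely different route from the paper's. The paper proves the containment pointwise and explicitly: a boundary point of $D$ other than some $r_i$ has the form $r=\chi_0(z).z$ with $|z|=1$ and $z$ non-special, and assuming $r\notin\pi^{-1}(B)$ places $r$ on a real principal line $L=\Psi(a,b,c)\in\mo$; the coordinate relation of Subsection \ref{Subsection: Coordinates} then forces $|r|=1$ on $L$, while Lemma \ref{Lemma: Principal line graph} (with $c\in F$ by Lemma \ref{Lemma: Real principal lines}) combined with $r=\chi_0(z).z$ forces $\psi(r)\in F$, hence $\psi(r)=(1,1)$, hence $r\in\{0,\infty\}$, contradicting $|r|=1$. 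You instead argue softly: the parameterization $\mo\cong\Delta^\circ\times F$ identifies $N^\circ$ with an open disc, so by the classification of compact surfaces with boundary $N$ is a closed disc, $\partial N=\partial N_0$ and $B=q^{-1}(\partial N)$; then continuity of $\nu$ and of $q\circ\pi$, plus a limiting argument, pushes $\partial D=\nu(\partial\Delta)$ into $q^{-1}(\partial N)=B$. Your route has the merit of making explicit a point the paper's proof leaves implicit (that a real twistor line over $M\setminus B$ is necessarily principal, i.e.\ that there are no exceptional orbits and no other boundary components), at the cost of the surface-classification step; the paper's computation avoids that global topological input and stays inside the meromorphic-function machinery. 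One step you should tighten: the inference ``$a_n\to a_0$, hence $q(\pi(r))\in\partial N$'' is not literal, since $a_0\in\partial\Delta$ has not yet been identified with any point of $\partial N$ (that identification is exactly Proposition \ref{Proposition: Conformal map}); the correct statement is that if $q(\pi(r))$ were an interior point $n_*$, then continuity of the homeomorphism $N^\circ\to\Delta^\circ$ at $n_*$ would give $a_n\to h(n_*)$ with $|h(n_*)|<1$, contradicting $|a_n|\to 1$ --- with that one line your limit argument is complete. The ``in particular'' clause is handled identically in both proofs, via Lemma \ref{Lemma: Riemann surface from R_L}.
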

\begin{proof}
We know from Lemma \ref{Lemma: Riemann surface from R_L} that $r_i \in L_i$, for $i = 1,\ldots, k$. So suppose that $z \in L_0$ is not contained in a special orbit and $|z| = 1$; thus, $z$ is contained in a $B$-orbit. We will assume that $r := \chi_0(z).z \notin \pi^{-1}(B)$ in order to obtain a contradiction.

By our assumption $r$ is contained in some $L := \Psi(a,b,c) \in \mo$. Therefore, $$r^2 =  \frac{1 - b}{1 - a}.\frac{z^2-a}{z^2 - b}$$ and consequently, $|r| = 1$. Then, by Lemma \ref{Lemma: Principal line graph}, $$r = c.\chi_0(z).\chi(-r).z,$$ for some $c \in F$. Since $r = \chi_0(z).z$ and $\psi = \chi^2$, $$\psi(r) \in F.$$ It follows that $\psi(r) = (1,1)$, as otherwise $-r = \psi(r).r \notin L$. However, $\psi(r) = (1,1)$ only when $r = 0$ or $\infty$. This gives a contradiction since, $|r| = 1$.
\end{proof}

We will denote the $F$-orbit space of $\m$ by $\n$, so there is a conformal isometry between $\n$ and $N$ induced from the conformal isometry between $\m$ and $M$.  There is an induced map between $D$ and $\n$, which we denote by $\sigma$, that maps a point in $D$ to the $F$-orbit of principal lines containing it.
\begin{TL}\label{Lemma: D intersection}
The map $\sigma: D \rightarrow \n$ is injective.
\end{TL}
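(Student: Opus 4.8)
The plan is to recognise $\sigma$ as the map $r\mapsto[\pi(r)]\in N\cong\n$ sending $r\in D$ to the $F$-orbit of $\pi(r)$: this is legitimate since $r$ lies on the unique real twistor line $\pi^{-1}(\pi(r))$, whose $F$-orbit in $\n$ is exactly $[\pi(r)]$, and for $r\in D^{\circ}=\nu(\Delta^{\circ})$ that real line is the principal line $\Psi(a,\bar{a}^{-1},(1,1))$ through $r=0_{a}$. With this description, suppose $r,r'\in D$ and $\sigma(r)=\sigma(r')$; then $\pi(r')=\theta.\pi(r)=\pi(\theta.r)$ for some $\theta\in F$, so $\theta.r$ and $r'$ lie on a common real twistor line $L^{*}:=\pi^{-1}(\pi(r'))$. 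Since $R=\mu_{L}(R_{L})$ does not depend on the principal line $L$ (equation \eqref{Equation: f in terms of psi}) and is $G$-invariant — a $G$-translate of a point where a twistor line is tangent to a $G$-orbit is again such a point — we have $\theta.r\in\theta.D\subseteq R$ and $r'\in D\subseteq R$, hence both $\theta.r$ and $r'$ lie in the finite set $R\cap L^{*}$. The argument then splits according to the orbit type of $\pi(r')$, the point in each case being to pin down $R\cap L^{*}$ and to show that $D$ meets the $F$-orbit of $L^{*}$ in a single point.

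\emph{The principal case.} Here $r'\in D^{\circ}$, so $r'=0_{a}$ and $L^{*}=\Psi(a,\bar{a}^{-1},(1,1))$. By the discussion preceding the lemma, $R\cap(F.L^{*})=H.\{0_{a},\infty_{a}\}$. Now $D\cap(F.L^{*})=\{0_{a}\}$: a point of $D$ lying on some line of $F.L^{*}$ is not on a non-principal line, hence lies in $D^{\circ}$, so it is $0_{a''}$ for some $a''$ with unique real line $\Psi(a'',\bar{a''}^{-1},(1,1))$; since every line of $F.L^{*}$ is real and $\Psi(a'',\bar{a''}^{-1},(1,1))$ is the unique real line through that point, it must be the member of $F.L^{*}$ with $G$-parameter $(1,1)$, forcing $a''=a$ and the point to be $0_{a}$. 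Thus $r'=0_{a}$; and writing $\theta.r=h.0_{a}$ or $h.\infty_{a}$ with $h\in H$, the point $r=\theta^{-1}h.0_{a}$ (resp.\ $\theta^{-1}h.\infty_{a}$) lies in $D$ on a line of $F.L^{*}$, so the same argument gives $r\in D\cap(F.L^{*})=\{0_{a}\}$; comparing with $\theta^{-1}h\cdot\infty_{a}=0_{a}$ being impossible, we get $\theta=h$ and $r=0_{a}=r'$. (Injectivity on $D^{\circ}$ is the case $\theta=(1,1)$; it amounts to the bijectivity of $\nu$ (Lemma \ref{Lemma: nu definition}) and the identification $\no\cong\Delta^{\circ}$ from Lemma \ref{Lemma: Real principal lines}.)

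\emph{The boundary case.} If $L^{*}\notin\mCo$ then $\pi(r')\in B$, so $r'\in\pi^{-1}(B)$, and since $D^{\circ}$ maps into $\mo$ and exceptional orbits have been excluded, $r,r'\in\partial D$. If $\pi(r')$ is a fixed point $x_{i}$ then $L^{*}=L_{i}$; as $x_{i}$ is $F$-fixed, $\theta.L_{i}=L_{i}$, so $r\in L_{i}$ too, and it remains to note $D\cap L_{i}=\{r_{i}\}$: this follows from Lemma \ref{Lemma: Delta properties}, the bijectivity of $\nu$, the disjointness of $L_{i}$ from principal lines and from the $L_{j}$ ($j\neq i$) (distinct $\pi$-fibres), and the fact that the part of $\partial D$ over boundary orbits maps to $B$-orbit points rather than to $x_{i}$. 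If instead $\pi(r')=:y$ lies on a boundary orbit then $L^{*}=L_{y}$, and by Lemma \ref{Lemma: Constructing C} the set $C\cap L_{y}$ consists of $G^{i}$-fixed points $y^{+}\in C_{i}^{+}$, $y^{-}\in C_{i}^{-}$ interchanged by $\gamma$. I expect this sub-case to be the main obstacle. One must show that $R\cap L_{y}\subseteq\{y^{+},y^{-}\}$ — the tangency locus meets a $B$-orbit line only at the two points it has on the twistor lift $C$ — which I would prove by degenerating a principal line $L$ to $L_{y}$ and tracking the limits of its two tangency points $0_{L},\infty_{L}$, along the lines of the proof of Lemma \ref{Lemma: Riemann surface from R_L}; and one must show $D$ meets $L_{y}$ in exactly one of $y^{\pm}$, say $y^{+}$ consistently over $B_{i}$, which is forced because $\partial D$ is one-real-dimensional and so meets the two-real-dimensional curve $C_{i}^{\pm}$ in an arc, not all of it, and because $D$ is cut out by the branch $\chi(0)=(1,1)$ and $|w|\le1$. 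Granting this, $r'=y^{+}$; and since $\theta.r\in R\cap L_{y}\subseteq\{y^{+},y^{-}\}$ while $D$ meets the $F$-orbit of $y^{+}$ in the single point $y^{+}$, we conclude $r=y^{+}=r'$.

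Finally I would check that these cases are exhaustive — every real twistor line lies over a principal, boundary, or fixed orbit, there being no exceptional orbits (cf.\ Subsection \ref{Subsection: Intersecting orbits analytic}) — and that $\sigma$ is continuous, so that $\sigma(D^{\circ})\subseteq\no$ and $\sigma(\partial D)\subseteq\partial\n$ and the interior and boundary parts fit together; injectivity of $\sigma$ on all of $D$ then follows from the three cases above.
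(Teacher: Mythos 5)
Your interior case and fixed-point case are fine and essentially match the paper (one point of $D^\circ$ per $F$-orbit of principal lines; $D\cap L_i=\{r_i\}$ via Lemma \ref{Lemma: Riemann surface from R_L}). The genuine gap is in the boundary-orbit case, which you yourself flag as ``the main obstacle'' and leave as a sketch --- and the sketch is built on a false claim. You assert that $R\cap L_y\subseteq\{y^+,y^-\}$ and that $D$ meets $L_y$ at one of the two points of the twistor lift $C$. In fact the opposite holds: every point of $R$ lies in a $G$-orbit through a principal line, and the action of $G$ is free on such orbits (Lemmas \ref{Lemma: Involution} and \ref{Lemma: No stabilizer}), whereas $y^{\pm}=C\cap L_y$ have one-dimensional stabilizer $G^i$ (Lemma \ref{Lemma: Defining C}); so $R\cap L_y$ is \emph{disjoint} from $\{y^+,y^-\}$, and concretely a boundary point $\nu(z^2,\bar z^{-2})=\chi_0(z).z$ with $|z|=1$ non-special lies in the free $B$-orbit through $z$, i.e.\ in $\pi^{-1}(B)\setminus\Sigma$, not on $C$. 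Your proposed degeneration of the tangency points $0_L,\infty_L$ as $L\to L_y$ would produce exactly these free points, so the mechanism you hope to exploit (pinning $R\cap L_y$ to the two $C$-points, then choosing ``the'' point $y^+$) cannot work, and the hand-waving about $\partial D$ being one-dimensional does not repair it.

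The paper closes this case differently, and this is the idea missing from your proposal: if $r,r'\in\partial D$ are distinct from $r_1,\ldots,r_k$ and map to the same $n\in\partial\n$, then both are free points of $\pi^{-1}(B)\setminus\Sigma$, hence each lies on a twistor line over a boundary orbit minus its two $C$-points, hence inside the unique $B$-orbit whose closure contains that line (Definition \ref{Definition: B-orbits}); since the two lines differ by an element of $F\subset G$, this is the \emph{same} $B$-orbit, so $r$ and $r'$ lie in one $G$-orbit. But distinct points of $D$ never lie in the same $G$-orbit (the injectivity argument for $\nu$ in Lemma \ref{Lemma: nu definition}: $\nu(z^2,\bar z^{-2})$ is the only point of $D$ in $G.z$), so $r=r'$. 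If you replace your tangency-limit sub-case with this orbit-counting argument, the rest of your structure goes through.
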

\begin{proof}
There is a single point $0_a$ in an $F$-orbit of principal lines intersecting $D^\circ$. Also, Lemma \ref{Lemma: Riemann surface from R_L} shows that $D$ intersects $L_i = F.L_i$ once at $r_i$. So suppose that $r,r' \in \partial D$ are distinct from  $r_1, \ldots, r_k$ and are both contained in $n \in \partial\n$. From the definition of $D$, it is clear that the action of $G$ about $r$ and $r'$ is free and so, they are contained in $\pi^{-1}(B)\backslash \Sigma$. Hence, the closure of the $G$-orbit of $r$ contains $n$. Consequently, $r$ and $r'$ are contained in the same $G$-orbit. Since distinct points in $D$ cannot be contained in the same $G$-orbit, it follows that $r = r'$.
\end{proof}

\subsection{The microtwistor correspondence}\label{Subsection: Defining S}
The $G$-orbit through $0_a \in D$ intersects $L := \Psi(a,b,c) \in \mo$ tangentially. Also, the twistor line containing a point in $\partial D$ is contained in $\pi^{-1}(B)$ and therefore, the closure of some $G$-orbit. Thus, the $G$-orbit through each point in $D$ is tangential to the corresponding twistor line at that point. The same is true about any other point in $R$. Therefore, to prove that the intersection of $R$ with twistor lines is transverse, it suffices to show that $R$ intersects $G$-orbits transversally. As shown in Subsection \ref{Subsection: Twistor lifts}, a complex curve in $Z$ that intersects with twistor lines transversally is projected conformally by $\pi$ onto a surface in $M$. The next lemma proves that $R$ is such a curve.

\begin{figure}[htbp]
  \psfrag{A}{$\Delta$}
  \psfrag{B}{$L_0$}
  \psfrag{C}{$D \subset R$}
  \psfrag{D}{$N$}
  \psfrag{E}{$\mu_0$}
  \psfrag{F}{$\sigma$}
  \psfrag{G}{$\nu$}
  \centering \scalebox{1.0}{\includegraphics{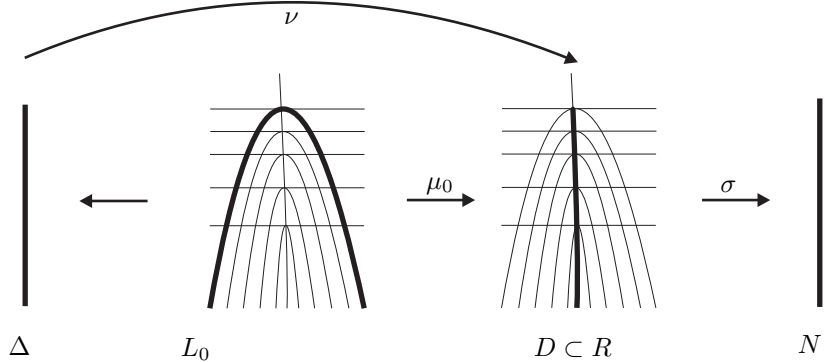}}
  \caption{The microtwistor correspondence}\label{Figure: Microtwistor}
\end{figure}

\begin{TL}\label{Lemma: D transverse}
The $G$-orbit through each point in $R$ is transverse to $R$.
\end{TL}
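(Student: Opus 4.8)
The statement is to be read in the sense of non-tangency: for every $r\in R$ the tangent line $T_rR$ is not contained in $T_r(G.r)$, where $G.r$ is the $G$-orbit through $r$. This is exactly what the application needs, since (as recorded in the paragraph above) $G.r$ is tangent to the twistor line $\pi^{-1}(\pi(r))$ at $r$, so $T_r\big(\pi^{-1}(\pi(r))\big)\subset T_r(G.r)$; hence $T_rR\not\subset T_r(G.r)$ forces $T_rR\neq T_r\big(\pi^{-1}(\pi(r))\big)$, i.e.\ $R$ is nowhere tangent to a twistor line. The plan is to verify $T_rR\not\subset T_r(G.r)$ pointwise by transporting $r$ into one of the local models of the previous sections. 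Writing $R=\mu_{L_0}(R_{L_0})$, every $r\in R$ is the $\mu_{L_0}$-image of a point of $R_{L_0}$ lying above some $w'\in L_0$; equivalently $G.r=G.w'$ meets $L_0$ at $w'$ and at $-w'$. I split into three cases according to $w'$: generic, special, or a tangency point $0,\infty$. Throughout I use that $R$ is $H$-invariant — the four sheets of $R_{L_0}$ over a point map under $\mu_{L_0}$ onto a single $H$-orbit — together with the symmetry of $R$ under the real structure, to reduce to representative points.

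\emph{Generic $w'$.} Suppose $w'\notin\{0,\infty\}$ and $w'$ is not a special point. Then $G$ acts freely near $L_0$ (Lemma \ref{Lemma: Involution}) and the two-dimensional orbit $G.w'$ meets $L_0$ transversally at $w'$. Choosing a branch $\eta$ of $\psi_0^{1/2}$ near $w'$ with $\eta(w').w'=r$, the map $(w'',g'')\mapsto\eta(w')g''.w''$ is a biholomorphism from a neighbourhood of $(w',e)$ in $L_0\times G$ onto a neighbourhood of $r$ in $Z$. In these coordinates $G.r$ is the slice $\{w''=w'\}$, while $R$ is the graph $w''\mapsto\big(w'',\,\eta(w')^{-1}\eta(w'')\big)$ over the $L_0$-factor; a graph over $L_0$ is transverse to these slices, so $T_rR\not\subset T_r(G.r)$. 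By the parameterisation of Subsection \ref{Subsection: Coordinates} this covers every $r\in R$ except those over $0,\infty$ and the points $r_i\in L_i$. \emph{Special $w'$.} Then $r$ lies on some $L_i$ (Lemma \ref{Lemma: Riemann surface from R_L}), and by the symmetry above we may take $r=r_i$; its orbit $G.r_i$ is $L_i$ with its two $G$-fixed points removed, so $T_{r_i}(G.r_i)=T_{r_i}L_i$. In the coordinates $(u,v,w)$ on $\tilde V_i^{+}$ from the proof of Lemma \ref{Lemma: Riemann surface from R_L} one has $\tilde L_i=\{u=v=0\}$ and $R$ is the image of $u\mapsto\big(a(u)u,\,b(u)u,\,c(u)\big)$ with $a,b,c$ holomorphic and $a(0),b(0)\neq0$; hence the tangent of $R$ at $r_i$ has nonzero $(u,v)$-component and is not contained in $T_{r_i}\tilde L_i$.

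\emph{Tangency point.} It remains to treat $r$ over $w'\in\{0,\infty\}$; up to $H$ and the real structure we may take $r=0\in L_0$. The orbit $O_0:=G.0$ is the non-special orbit tangent to $L_0$ at $0$, so the product chart above degenerates. Instead I descend to the local orbit space: since $G$ acts freely near $0$, a neighbourhood of $0$ in $Z$ fibres over the space $Q$ of $G$-orbits near $O_0$, and $T_0R\not\subset T_0O_0$ is equivalent to the projection $R\to Q$ being a local biholomorphism at $0$. An orbit near $O_0$ meets $L_0$ in an unordered pair $\{z,-z\}$ near $0$ (the intersection number being $2$), which identifies $Q$ with a neighbourhood of $0$ in $L_0/\tau_0$, coordinatised by $z^2$. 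On the other hand $R$ near $0$ consists of the points $0_a=\nu(a,\bar a^{-1})$, and the defining property of $F$ in Subsection \ref{Subsection: Coordinates} — that the orbit tangent to $\Psi(a,\bar a^{-1},(1,1))$ at $0_a$ meets $L_0$ at $z_0$ with $z_0^2=a$ — shows that the composite $R\to Q$, in the holomorphic coordinate $a$ on $R$ supplied by $\nu$ (Lemma \ref{Lemma: nu definition}), is simply $a\mapsto a$; in particular it is a local biholomorphism.

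The hard part will be this last case: at $0\in L_0$ the orbit is tangent to $L_0$, the slice coordinates of the generic case are unavailable, and transversality must be produced indirectly — the device that makes it work is to read the projection of $R$ onto the local orbit space through the explicit parameterisation $\nu$, where it becomes the identity. A secondary point that needs care is the bookkeeping: that the generic, special and tangency-point cases, together with the $H$-action and the real structure, genuinely exhaust $R$. This holds because the $G$-orbit through any $r\in R$ meets $L_0$, and its meeting point is of exactly one of the three kinds; that the points of $R$ over a special point all lie on the relevant $L_i$ (and the points over $0$ are $H$-translates of $0\in L_0$) follows since such a point of $R$ lies on the orbit of that point of $L_0$.
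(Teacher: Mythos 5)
Your case analysis coincides with the paper's proof on two of its three branches: at points of $R\cap L_i$ you invoke the parameterisation $u\mapsto(a(u)u,\,b(u)u,\,c(u))$ with $a(0),b(0)\neq 0$ from Lemma \ref{Lemma: Riemann surface from R_L}, exactly as the paper does, and at points of $R$ lying over a generic $w'\in L_0$ your graph argument in a chart of the form $(w,g)\mapsto g.w$ is the paper's second case, specialised to $L=L_0$. The genuine divergence is at the points of $R$ over the tangency points $0,\infty\in L_0$. The paper has no separate case there: for any $r\in R\setminus\bigcup L_i$ it chooses \emph{another} real principal line $L\in\mo$ with $r\notin H.L$; then the intersection point $w_0\in G.r\cap L$ is automatically neither a tangency point of $L$ (if $w_0=0_a$ then $r\in R\cap G.0_a=H.0_a\subset H.L$) nor a special point, and the graph argument applies verbatim, just as in Lemma \ref{Lemma: nu definition}. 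Being a tangency point is a property of the pair $(r,L)$, not of $r$, so the degenerate case you labour over can simply be made to disappear.

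Your replacement argument at the tangency point is in the right spirit but contains a step that is not justified as written: ``since $G$ acts freely near $0$, a neighbourhood of $0$ in $Z$ fibres over the space $Q$ of $G$-orbits'' does not follow from freeness alone. Freeness gives that the two generating holomorphic vector fields are pointwise independent, hence a \emph{local} foliation with a local leaf space; the \emph{global} orbit space need not be reasonable, and a single orbit could a priori meet the neighbourhood in several leaves -- precisely the phenomenon the paper takes pains to exclude near $C$ (see the Hopf-surface remark and Figure \ref{Figure: Torus Foliation}). Your identification of $Q$ with a neighbourhood of $0$ in $L_0/\tau_0$, and the claim that the composite $R\to Q$ is $a\mapsto a$, silently conflate local leaves with global orbits. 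This is repairable: the leaf projection restricted to $L_0$ has a branch point of order exactly two at $0$ (because $\overline{G.0}\cdot L_0=2$ and $\overline{G.0}\cap L_0=\{0\}$), and $0_a=\chi_0(a^{1/2}).a^{1/2}$ lies on the same \emph{local} leaf as $a^{1/2}$ for small $a$ since $\chi_0(a^{1/2})$ is close to the identity and $\psi_0(0)=(1,1)$ by freeness; with these two observations your derivative computation at $a=0$ does give $T_0R\not\subset T_0(G.0)$. Finally, your closing bookkeeping claim that ``the $G$-orbit through any $r\in R$ meets $L_0$'' is false for $r\in R\cap L_i$, whose orbit is contained in $L_i$ and is disjoint from $L_0$; the trichotomy should be phrased, as your actual case analysis implicitly does, in terms of the point of $L_0$ over which $r$ lies via $\rho_0$ and $\mu_{L_0}$, not in terms of $G.r\cap L_0$. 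With those repairs the proof is correct, but the paper's choice-of-line device is both shorter and avoids the delicate local-versus-global orbit issue entirely.
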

\begin{proof}
At a point in $R \cap L_i$, for some $i = 1,\ldots, k$, the local model constructed in Lemma \ref{Lemma: Riemann surface from R_L} shows that $R$ intersects $L_i$ transversely. The closure of the $G$-orbit of such a point is precisely $L_i$ and therefore, the action of $G$ is transverse to $R$.

Suppose that $r \in R$ is not in $L_1, \ldots, L_k$, and choose $L \in \mo$ with $r \notin H.L$. If we choose $w_0 \in G.r \cap L$, then $w \mapsto (w, \chi(w))$ is a biholomorphism from a neighbourhood of $w_0$ in $L$ to a neighbourhood of $r$ in $R$, where we are using the local coordinates about $r$ in $Z$ from Lemma \ref{Lemma: nu definition}. Thus, the tangent plane to $R$ at $r$ is spanned by $$(1, \frac{\partial\chi}{\partial w}(w_0)),$$ which is evidently transverse to the tangent plane to the $G$-orbit through $r$.
\end{proof}

In the next proposition we establish the microtwistor correspondence referred to in the introduction.

\begin{TP}\label{Proposition: Conformal map}
For each $(a, \bar{a}^{-1}) \in \Delta$ there exists a unique $n \in \n$, such that the $G$-orbit through $a^{1/2} \in L_0$ intersects with $n$ tangentially. The induced map from $\Delta$ to $N$ is a conformal isometry.
\end{TP}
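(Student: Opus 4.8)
The plan is to recognise the map in the statement as the composite $\sigma\circ\nu$, where $\nu\colon\Delta\to D$ is the conformal isometry of Lemma~\ref{Lemma: nu definition} and $\sigma\colon D\to\n$ is the orbit-space map introduced before Lemma~\ref{Lemma: D intersection}, and then to deduce from Lemmas~\ref{Lemma: Delta properties}, \ref{Lemma: D intersection} and~\ref{Lemma: D transverse} that $\sigma\circ\nu$ is a conformal isometry onto $\n\cong N$.

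First I would identify the target $F$-orbit. By the proof of Lemma~\ref{Lemma: nu definition}, $\nu$ sends $(a,\bar a^{-1})$ to $0_a=\chi_0(a^{1/2}).a^{1/2}$, and by the parameterisation of Subsection~\ref{Subsection: Coordinates} this is precisely the point where the $G$-action is tangent to the real principal line $L:=\Psi(a,\bar a^{-1},(1,1))$; in particular $0_a$ lies both on $L$ and on the $G$-orbit $O$ through $a^{1/2}\in L_0$. By the discussion preceding Lemma~\ref{Lemma: D transverse}, $O$ is tangent to the twistor line through $0_a$, namely $L$, and hence---$O$ being $G$-invariant---to every line of the $F$-orbit $n:=F.L=\sigma(0_a)$; for the boundary value $a=z_i^2$ one instead takes $n=L_i$, which is the unique twistor line in $\overline{O}_i^+$ by Lemmas~\ref{Lemma: Special orbit decomposition} and~\ref{Lemma: Special points}, consistently with $\nu(z_i^2,z_i^2)=r_i\in L_i$ from Lemma~\ref{Lemma: Delta properties}. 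This yields existence. For uniqueness, suppose $O$ is tangent to a twistor line $L'$ with $F$-orbit $n'$. Since $|a|\neq1$, Lemmas~\ref{Lemma: B-orbits} and~\ref{Lemma: Special points} show $O$ is neither a $B$-orbit nor special; hence it can only be tangent to $L'$ at one of the two points of $L'$ where $\tau$ is fixed (Lemma~\ref{Lemma: Involution}), and those points lie on the $G$-orbits meeting $L_0$ at $\pm(a')^{1/2}$ and $\pm(b')^{1/2}$ if $L'=\Psi(a',b',c')$. As $O$ meets $L_0$ exactly in $\{a^{1/2},-a^{1/2}\}$ (Lemma~\ref{Lemma: Intersection number}), we get $a\in\{a',b'\}$, and then the reality constraint $b'=\overline{a'}^{-1}$ (Lemma~\ref{Lemma: Real principal lines}) together with the fact that $\Delta_\C\subset\Sigma^2(L_0/\tau_0)$ forces $\{a',b'\}=\{a,\bar a^{-1}\}$, so $L'\in F.L$ and $n'=n$; the case where $L'$ is a twistor line over $B$ is ruled out using $\overline{O}=O\cup C$ (Lemma~\ref{Lemma: C connected}) and the transversality of the intersections of the $C_i^{\pm}$ with twistor lines in the local models of Section~\ref{Section: Local model}.

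It then remains to see that $\sigma\circ\nu$ is a conformal isometry. Lemma~\ref{Lemma: nu definition} gives this for $\nu$, so I would concentrate on $\sigma\colon D\to\n$. By Lemma~\ref{Lemma: D transverse} and the remarks preceding it, $R$---hence $D\subset R$---meets the twistor lines transversally, so by the description of the twistor lift in Subsection~\ref{Subsection: Twistor lifts} the map $\pi|_D$ is conformal onto a surface in $M$ which, by Lemma~\ref{Lemma: D intersection}, meets each $F$-orbit in a single point; passing to the orbit space therefore exhibits $\sigma$ as conformal. Lemma~\ref{Lemma: D intersection} gives injectivity; surjectivity onto $\n^\circ$ is exactly the existence statement above (every $n\in\n^\circ$ equals $F.\Psi(a,\bar a^{-1},(1,1))=\sigma(0_a)$ for a suitable $a$ with $|a|<1$), and surjectivity onto $\partial\n$ follows from $\partial D\subset\pi^{-1}(B)$ together with the fact that each $B$-orbit and each $L_i$ meets $D$ (Lemmas~\ref{Lemma: Delta properties} and~\ref{Lemma: D intersection}, and Lemma~\ref{Lemma: Riemann surface from R_L}). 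Thus $\sigma$ is a conformal bijection and $\sigma\circ\nu$ is the asserted conformal isometry from $\Delta$ to $N$.

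The step I expect to be the main obstacle is the uniqueness clause: showing that the $G$-orbit through $a^{1/2}$ is tangent to \emph{only one} orbit of twistor lines, which requires controlling this orbit's intersections with \emph{all} twistor lines---not merely the principal ones near which we have worked---and in particular excluding tangency to the twistor lines over $B$. This draws on the global picture of the $G$-orbits built up in Sections~\ref{Section: Local model}--\ref{Section: The Involution} (intersection numbers, the closure $\overline{O}=O\cup C$, and the local transversality models) rather than on anything in the immediately preceding subsections.
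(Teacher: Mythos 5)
Your skeleton is the paper's: the map is recognised as $\sigma\circ\nu$, and the proof rests on Lemmas \ref{Lemma: nu definition}, \ref{Lemma: Delta properties}, \ref{Lemma: D intersection} and \ref{Lemma: D transverse}. The extra care you devote to the existence/uniqueness clause is welcome (the paper leaves it largely to the remarks preceding Lemma \ref{Lemma: D transverse}), and your tangency analysis via $\Phi$, Lemma \ref{Lemma: Real principal lines} and the structure of $\overline{O}$ is sound. Two steps, however, diverge from the paper and as written are not complete.

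First, boundary surjectivity. You reduce it to the assertion that ``each $B$-orbit and each $L_i$ meets $D$'', citing Lemmas \ref{Lemma: Delta properties}, \ref{Lemma: D intersection} and \ref{Lemma: Riemann surface from R_L}; but these give only $\sigma(\partial D)\subset\partial\mathcal{N}$, the points $r_i\in L_i$, and injectivity --- none of them says that \emph{every} boundary orbit is hit, which is exactly what is at stake. The paper closes this by a topological argument: $\sigma|_{\partial D}$ is an injective local diffeomorphism between circles, so its image is open and closed in $\partial\mathcal{N}$, hence all of it (equivalently: $\sigma(D)$ is compact and contains the dense set $\mathcal{N}^\circ$, so it equals $\mathcal{N}$). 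Some such compactness/connectedness step is needed; as stated your claim is circular. Second, conformality of $\sigma$. You argue that transversality makes $\pi|_D$ conformal onto a surface in $M$ and that ``passing to the orbit space'' then exhibits $\sigma$ as conformal; but the quotient map $M\to N$ is conformal on a surface only if that surface is orthogonal to the $F$-orbits. This is true here --- $R$ consists of the points where the $G$-action is tangent to the twistor lines, cf.\ Remark \ref{Remark: S Definition} --- but it must be invoked, and Remark \ref{Remark: S Definition} appears after the proposition, so you should extract the underlying observation rather than cite it. The paper takes a different route on the interior: $\sigma\circ\nu|_{\Delta^\circ}$ is identified with the conformal isometry $\Delta^\circ\to N^\circ$ already obtained from the explicit conformal structure of Section \ref{Section: The conformal structure} (Lemma \ref{Lemma: conformal structure}), and transversality plus injectivity are used only to extend this over the boundary; that route avoids the orthogonality discussion altogether. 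With these two repairs your argument goes through.
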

\begin{proof}
The map from $\Delta$ to $\n$ in the statement of the proposition is $\sigma\circ\nu$. By making the identification $\n\cong N$, we may consider $\sigma\circ\nu$ as a map to $N$. In the previous section we constructed a conformal map from $\Delta^\circ\times F$ to $M^\circ$, which induces a conformal isometry between $\Delta^\circ$ and $N^\circ$. This map is equivalent to $\sigma\circ\nu$ restricted to $\Delta^\circ$. We know that $\nu$ is a conformal isometry, by Lemma \ref{Lemma: nu definition}. Therefore, $\sigma$ restricts to a conformal isometry between $D^\circ$ and $N^\circ$.

In the previous lemma we established that $D$ is transverse to each $F$-orbit of twistor lines. Therefore, since $\sigma: D \rightarrow \n$ is injective (by Lemma \ref{Lemma: D intersection}), $\sigma$ is a conformal isometry onto its image. Moreover, by the remarks above we know this image contains $\n^\circ$. Thus, to complete the proof, it suffices to show that $\sigma$ restricts to a diffeomorphism between boundaries. By Lemma \ref{Lemma: Delta properties}, $\sigma(\partial D) \subset \partial \n$ and so, there is an induced map between boundaries. Both boundaries are diffeomorphic to $S^1$ and the map between the boundaries is a diffeomorphism onto its image. Therefore, $\sigma$ restricts to a diffeomorphism between boundaries.
\end{proof}

%\begin{figure}[htbp]
%\begin{diagram}
%R_L & &\\
%\dTo_{\rho} & \rdTo^{\mu} &\\
%L \in \mo & & R \subset Z\\
%&  & \dTo_{\pi}\\
%\dTo && S \subset M\\
%&  &\dTo \\
%\Delta & \rDashto & N \\
%\end{diagram}
%\caption{The microtwistor correspondence}\label{Figure: The microtwistor correspondence}
%\end{figure}

%Same diagram on its side:
%\begin{figure}[htbp]
%\begin{diagram}
%R_L  & \rTo_{\rho} & L \subset Z  & & \rTo &  & \Delta\\
% & \rdTo^{\mu} & & & & & \dDashto\\
% & & R \subset Z & \rTo & S \subset M & \rTo & N \\
%\end{diagram}
%\caption{The microtwistor correspondence}\label{Figure: The microtwistor correspondence}
%\end{figure}

\begin{TR}\label{Remark: S Definition}A conformal structure with torus symmetry is referred to as \emph{surface-orthogonal} if the orthogonal distribution to the torus orbits is integrable. In \cite{J95} Joyce gives a local classification of anti-self-dual conformal structures with torus symmetry that are surface-orthogonal. Using equation (\ref{Equation: real conformal structure}), we can observe that the conformal structure on $M^\circ$ must fit into this classification. However, the existence of $S : = \pi(R)$ is also equivalent to the surface-orthogonality. To see this, note that $R$ intersects a twistor line at the points where the action of $G$ is tangential to that line; therefore, the orthogonal distribution to the $F$-orbits in $M$ restricts to the tangent space of $S$. Thus, $S$ can be obtained by integrating this distribution.\end{TR}

\section{Uniqueness of the conformal structure}\label{Section: Uniqueness}
\subsection{The conformal data}\label{subsection: Determining psi}
Recall that the quotient map from $M$ to $N$ maps the fixed points $x_i$ to $\zeta_i \in \partial N$. We will denote the ordered set $$\{\zeta_1,\ldots, \zeta_k\} \subset \partial N$$ by $\mc{R}$. The conformal isometry between $\Delta$ and $N$, defined in Proposition \ref{Proposition: Conformal map}, maps the ordered set $$\{(z_1^2, z_1^2), \ldots, (z_k^2, z_k^2)\} \subset \partial \Delta$$ to $\mc{R}$. So, we have shown that $\mc{R}$ determines the special points $\pm z_1, \ldots, \pm z_k$ up to sign. In Lemma \ref{Lemma: Special point ordering} we remove the ambiguity in the choice of sign. Therefore, by Remark \ref{Remark: psi determines conformal}, the conformal structure on $M^\circ$ is determined by $\mc{R}$ and the combinatorial data $\mc{T}$, which was defined in \ref{Definition: Combinatorial data}.

We will refer to $\{w \in L: |w| = 1\}$ as the \emph{equator} of $L \in \mo$. So the equator is the set of points in $L$ on special orbits or $B$-orbits. The double-valued map from $L_0$ to $L$ defined in Subsection \ref{Subsection: Coordinates} restricts to a diffeomorphism between the equators on either branch. One of these branches maps $z_i \in L_0$ to $w_i \in L$: the special points $\pm w_i$ have the same ordering along the equator irrespective of the choice of $L$.

\begin{TL}\label{Lemma: Special point ordering}
If $L \in \mo$, the special points $w_1, \ldots, w_k$ satisfy
$$0 = \tn{arg}(w_1) < \tn{arg}(w_2) < \ldots < \tn{arg}(w_k) < \pi.$$
\end{TL}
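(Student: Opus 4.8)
The plan is to reduce the ordering statement to a statement about a single principal line, and then to compute the argument of the special point $w_i$ directly using the correspondence between special orbits and the local models of Section \ref{Section: Local model}. First I would observe that, by the normalizing choice made in Subsection \ref{Subsection: Coordinates} (namely $w_1 = 1$ and $w_2$ lying in the set (\ref{Equation: unique coordinate set})), we already have $\tn{arg}(w_1) = 0$ and $\tn{arg}(w_2) \in (0,\pi)$; moreover, by Lemma \ref{Lemma: Special points} all the $w_i$ lie on the equator $\{|w| = 1\}$ of $L$. The double-valued map from $L_0$ to $L$ from Subsection \ref{Subsection: Coordinates} restricts, on each branch, to a diffeomorphism between the equators, and it sends $z_i \in L_0$ to $w_i \in L$; hence the cyclic ordering of $\{w_1,\dots,w_k\}$ along the equator is independent of the choice of $L \in \mo$, and it suffices to verify the claimed ordering for any single convenient principal line, or even to verify it intrinsically on $L_0$.

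The key step is to pin down the cyclic order in which the special orbits $O_1^+, \ldots, O_k^+$ meet the upper half of the equator. Here I would use the structure established in Lemma \ref{Lemma: Defining C} and Lemma \ref{Lemma: Special orbit decomposition}: the curves $C_1^+, \ldots, C_k^+$ form a chain meeting the twistor lines $L_1, \ldots, L_k$ consecutively (with $C_i^+ \cap C_{i+1}^+ = \{z_{i+1}^+\}$), and $\overline{O}_i^+$ contains exactly the tail $\bigcup_{l\ge i} C_l^+ \cup \bigcup_{l<i} C_l^- \cup L_i$. By Lemma \ref{Lemma: Riemann surface from R_L} and Lemma \ref{Lemma: Delta properties}, the point $r_i = \nu(z_i^2, z_i^2)$ lies on $L_i$, and by the microtwistor correspondence (Proposition \ref{Proposition: Conformal map}) the map $\sigma \circ \nu$ carries the ordered set $\{(z_i^2,z_i^2)\}$ on $\partial\Delta$ to the ordered fixed orbits $\zeta_i$ along $\partial N$. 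Since the orientation on $N^\circ$ induces the orientation on $\partial N$ used to label the $\zeta_i$ in Subsection \ref{subsection: constructing B}, and since $\nu$ is a conformal isometry onto $D$ (so orientation-respecting), the points $z_1^2, \ldots, z_k^2$ occur in this cyclic order along the boundary circle $\partial(L_0/\tau_0) = \{z^2 : |z|=1\}$. Pulling back under $z \mapsto z^2$, and using the normalization $z_1 = 1$ together with the placement of $z_2$ dictated by (\ref{Equation: unique coordinate set}), forces $z_1, \ldots, z_k$ to occur with strictly increasing argument in $(0,\pi)$ starting from $\tn{arg}(z_1)=0$; transporting along the equator to $L$ gives the same for $w_1, \ldots, w_k$.

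The main obstacle I anticipate is the bookkeeping of \emph{signs and orientations}: one must check that the branch of the square-root map $L_0/\tau_0 \to \{|z|=1\}$, the orientation of $\partial D$ inherited from $D$ (hence from $\Delta$, hence from $\n^\circ \cong N^\circ$), and the orientation convention fixing the labels $\zeta_1, \ldots, \zeta_k$ in Subsection \ref{subsection: constructing B} are all mutually compatible, so that "consecutive along $\partial N$" really does translate into "strictly increasing argument" rather than the reverse order or a cyclic shift. The anchoring is provided by the two explicit normalizations $w_1 = 1$ (eliminating the rotational ambiguity) and the constraint on $w_2$ from (\ref{Equation: unique coordinate set}) together with the freedom (\ref{Equation: swap tangency map}) $w \mapsto w^{-1}$ (which is what lets us insist the arguments increase rather than decrease); once these are in place the ambiguity in the sign of each $z_i$ is removed exactly as asserted in Subsection \ref{subsection: Determining psi}. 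I would also double-check that none of the $w_i$ collide with $\pm 1$ other than $w_1 = 1$, which follows from Lemma \ref{Lemma: Special points} ($z_i$ distinct) together with the fact that $z = \pm 1$ corresponds to the fixed orbit $\zeta_1$, already accounted for by $w_1$.
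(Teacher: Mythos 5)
There is a genuine gap, and it sits exactly at the point your proposal treats as bookkeeping. Knowing the cyclic order of the squares $w_1^2,\ldots,w_k^2$ along $\partial(L/\tau)$ (equivalently, of $(z_i^2,z_i^2)$ along $\partial\Delta$, via the microtwistor correspondence) only determines each special point up to the sign ambiguity $w_i \leftrightarrow -w_i$: the double cover $w \mapsto w^2$ of the equator cannot distinguish whether the intersection of $O_i^+$ (as opposed to $O_i^-$) with $L$ lies in the upper or lower semicircle. The normalizations you invoke fix only $w_1 = 1$ and $\tn{arg}(w_2) \in (0,\pi)$; for $j \geq 3$ it is entirely consistent with the ordering of the squares that $\tn{arg}(w_j) \in (\pi, 2\pi)$, i.e.\ that $-w_j$ rather than $w_j$ sits in the upper semicircle, and neither the rotational freedom nor $w \mapsto w^{-1}$ can repair this, since those act on the coordinate and not on which of $\pm w_j$ lies on $O_j^+$. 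Your closing appeal to Subsection \ref{subsection: Determining psi} is circular: that subsection states explicitly that $\mc{R}$ determines the special points only \emph{up to sign} and that it is precisely Lemma \ref{Lemma: Special point ordering} which removes the ambiguity. So the step ``pulling back under $z \mapsto z^2$ \ldots forces $z_1,\ldots,z_k$ to occur with strictly increasing argument in $(0,\pi)$'' is exactly the assertion to be proved, not a consequence of the ordering of the squares.

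The paper closes this gap by a global degeneration argument rather than by orientation bookkeeping. Assuming $\pi < \tn{arg}(w_j) < 2\pi$ for some $j \neq 1,2$, one lets $L = \Psi(a,b,c)$ degenerate to the twistor line $L_\infty$ over a non-fixed point of $B_1$, corresponding to $(a_\infty,a_\infty) \in \partial\Delta$. By Lemma \ref{Lemma: Special orbit decomposition}, all the special orbits whose closures contain $C_1^+$ pass through the single point $x^+ = L_\infty \cap C_1^+$, and the remaining ones pass through $x^- = L_\infty \cap C_1^-$; hence as $a \to a_\infty$ the points $w_2,\ldots,w_k$ converge to $-1$ while $-w_2,\ldots,-w_k$ converge to $+1$. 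Since the relative position of $w_2$ and $-w_j$ in a sector between $\pm 1$ is the same for every $L$ (this is the part of your argument that is correct: the ordering along the equator is independent of $L$), the assumed position of $w_j$ contradicts this limiting behaviour. Some such non-local input is needed; the purely local data you use (the order of the $\zeta_i$ along $\partial N$ plus the normalization of $w_1, w_2$) is insufficient to force all the $+$ special points into the same semicircle.
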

\begin{proof}
The coordinate on $L \in \mo$, which was determined in Lemma \ref{Lemma: Unique coordinate}, satisfies $w_1 = 1$ and $\tn{arg}(w_2) \in (0, \pi)$. So, we know that $0 =  \tn{arg}(w_1) < \tn{arg}(w_2) < \pi$. If $k = 2$ the proof is complete, so we will assume $k > 2$ and $\pi < \tn{arg}(w_j) < 2\pi$ for some $j \neq 1,2$, in order to obtain a contradiction.

If $L_\infty \in \m$ is the twistor line of some non-fixed point in $B_1 \subset M$, then, by Proposition \ref{Proposition: Conformal map}, $L_\infty$ corresponds to some $(a_\infty, a_\infty) \in \partial\Delta$. Recall from Section \ref{Section: Local model} that $L_\infty$ intersects $C_1^+$, which is a twistor lift of $B_1$, at a single point. We will denote this point by $x^+$. By Lemma \ref{Lemma: Special orbit decomposition}, the curve $C_1^+$ is contained in the closure of the special orbits $O_2^+, \ldots, O_k^+, O_1^-$. Therefore, these special orbits intersect $L_{\infty}$ at $x^+$. Similarly, $L_{\infty}$ intersects the special orbits $O_2^-, \ldots, O_k^-, O_1^+$ at a single point $x^- \in C_1^-$.

It follows from these observations, that $-w_{2}, \ldots, -w_k$ on $L := \Psi(a, b, c) \in \mo$ converge to $w_1 = 1$ as $a \rightarrow a_\infty$, while $w_{2}, \ldots, w_k \rightarrow -1$. By our assumption, one of the sectors between $\pm 1$ on the equator of $L$ contains both $w_{2}$ and $-w_{j}$. Moreover, the ordering of $w_1^2, \ldots, w_k^2$ implies that $w_{2}$ lies in the sub-sector between 1 and $-w_{j}$. This is true for each $L$ irrespective of the choice of $a$. Therefore, we obtain a contradiction since, $w_{2} \rightarrow -1$ and $-w_{j} \rightarrow 1$ as $a\rightarrow a_\infty$ yet $|w_2| = |w_j| = 1$.
\end{proof}

\subsection{Uniqueness of the conformal structure}\label{subsection: uniqueness}
We remarked in the previous subsection that the conformal structure on $M^\circ$ is uniquely determined by $\mc{R}$ and $\mc{T}$. Recall from Definition \ref{Definition: Combinatorial data}, that there is a bijective correspondence between $\s$, which is the combinatorial data associated with $B \subset M$, and $\mc{T}$. In the next lemma we show that $\mc{R}$ and $\mc{S}$ uniquely determine the conformal structure on $M$.

\begin{TL}\label{Lemma: Uniqueness}
Suppose that $(\tilde M, [\tilde g])$ is a compact toric anti-self-dual orbifold containing a surface $\tilde B$ associated with the combinatorial data $\s$ in the way described in Subsection \ref{subsection: constructing B}. Let $\tilde N$ be the $F$-orbit space of $\tilde M$ and $\tilde{\mc{R}} = \{\tilde \zeta_1, \ldots, \tilde \zeta_k \}$ be the ordered set of fixed orbits in $\partial \tilde N$. If there exists a conformal isometry between $\alpha: N \rightarrow   \tilde N$ mapping $\zeta_i$ to $\tilde \zeta_i$ (for $i = 1, \ldots k$), then there is a conformal isometry between $(M, [g])$ and $(\tilde M, [\tilde g])$.
\end{TL}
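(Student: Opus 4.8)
The plan is to show that the combinatorial data $\s$ together with the conformal isometry class of the marked orbit space $(N,\mc{R})$ already determines the conformal structure on the dense open set $M^{\circ}$, and then to extend the resulting isometry across $\partial N$. First, since $\tilde B$ is associated with $\s$, Definition \ref{Definition: Combinatorial data} gives $\widetilde{\mc{T}}=\mc{T}$; in particular $\tilde M$ and $M$ are $F$-equivariantly diffeomorphic by \cite{HS91}, and $\alpha$ is a weighted diffeomorphism of orbit spaces --- it respects orbit types because it matches $\mc{R}$ and the boundary labels --- so it lifts to an $F$-equivariant diffeomorphism $j\colon M\to\tilde M$ covering $\alpha$. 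The choice of principal line $L_{0}$ made in Subsection \ref{Subsection: Coordinates} is free, so I would take $\tilde L_{0}$ in the twistor space of $\tilde M$ to be a principal line whose image in $\tilde N$ is $\alpha$ applied to the image of $L_{0}$ in $N$.

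With these choices the coordinates on $L_{0}$ and on $\tilde L_{0}$ are uniquely normalised by Lemma \ref{Lemma: Unique coordinate}, so $\Delta$ and $\tilde\Delta$ are canonically identified. Under the microtwistor conformal isometry $\Delta\to N$ of Proposition \ref{Proposition: Conformal map} the point $(0,\infty)\in\Delta^{\circ}$, the centre of the disc, corresponds to the point of $N$ represented by $L_{0}$, while the boundary point $(1,1)$ corresponds to $\zeta_{1}$; likewise for $\tilde M$. Since $\alpha$ was arranged to carry the point represented by $L_{0}$ to the one represented by $\tilde L_{0}$ and to carry $\zeta_{1}$ to $\tilde\zeta_{1}$, and since it preserves the cyclic order of $\mc{R}$, the conformal automorphism of the disc obtained by composing $\Delta\cong N$, $\alpha\colon N\to\tilde N$ and $\tilde N\cong\tilde\Delta$ fixes the centre and a boundary point and preserves orientation, hence is the identity. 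Therefore $z_{i}^{2}=\tilde z_{i}^{2}$ for every $i$, and by the ordering in Lemma \ref{Lemma: Special point ordering} this upgrades to $z_{i}=\tilde z_{i}$.

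By Remark \ref{Remark: psi determines conformal} --- concretely, by the representative (\ref{Equation: real conformal structure}) together with the formula (\ref{Equation: psi formula}) for $\psi$ and the coordinate change $w^{2}=\frac{1-b}{1-a}\cdot\frac{z^{2}-a}{z^{2}-b}$ --- the conformal structure on $M^{\circ}\cong\Delta^{\circ}\times F$, written in the trivialisation of Lemma \ref{Lemma: Real principal lines} attached to $L_{0}$, is given by the same expression as the conformal structure on $\tilde M^{\circ}\cong\Delta^{\circ}\times F$ written using $\tilde L_{0}$, because $\mc{T}=\widetilde{\mc{T}}$ and $z_{i}=\tilde z_{i}$. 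Hence the identity map of $\Delta^{\circ}\times F$ is an $F$-equivariant conformal isometry $\Phi^{\circ}\colon M^{\circ}\to\tilde M^{\circ}$ covering $\alpha|_{N^{\circ}}$.

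It remains to extend $\Phi^{\circ}$ to a conformal isometry $\Phi\colon M\to\tilde M$. Near each fixed orbit $\zeta_{i}$ the orbifold chart of Subsection \ref{Subsection: Boundary orbits and fixed points} depends only on $u_{i-1},u_{i}$ and is therefore the same for $M$ and $\tilde M$; lifting $\Phi^{\circ}$ to this chart, and similarly along each $B_{i}$, exhibits it as an equivariant conformal isometry between the restrictions of two anti-self-dual conformal structures to the complement of the coordinate axes in a ball. Since anti-self-dual conformal structures are real-analytic and $M^{\circ}$ is dense, this forces $\Phi^{\circ}$ to extend across $\partial N$ with the extension still a conformal isometry; equivalently, $\Phi$ is obtained from $j$ by composing with the $F$-valued gauge transformation $j^{-1}\circ\Phi^{\circ}$ of $M^{\circ}$. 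I expect this last step to be the principal obstacle: one must prove that $j^{-1}\circ\Phi^{\circ}$, a priori only defined on $M^{\circ}$ by an $F$-valued function on $N^{\circ}$, extends continuously over $\partial N$ compatibly with the orbifold charts, and controlling the behaviour of $\Phi^{\circ}$ transverse to the boundary orbits is exactly what the local models of Section \ref{Section: Local model} provide.
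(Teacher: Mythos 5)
Your first three paragraphs essentially reproduce the paper's route: the special points and the combinatorial data $\mc{T}=\s$ determine the conformal structure on $M^\circ \cong \Delta^\circ\times F$ (Remark \ref{Remark: psi determines conformal}), and the microtwistor correspondence of Proposition \ref{Proposition: Conformal map} composed with $\alpha$ identifies the two discs; your disc-automorphism argument pinning down $z_i=\tilde z_i$ is a fleshed-out version of what the paper says at the start of Section \ref{Section: Uniqueness}, and is fine. The genuine gap is in your last step, and it is exactly the one you flag yourself: you try to extend the conformal isometry $\Phi^\circ$ across $\partial N$ by invoking real-analyticity of anti-self-dual structures together with density of $M^\circ$. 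That is not a valid mechanism as stated: real-analyticity of the two conformal structures does not force a conformal map defined only on a dense open subset to extend continuously (let alone smoothly and compatibly with the orbifold charts) to the boundary strata, and you do not supply the boundary control that would be needed; the proposal stops at ``I expect this to be the principal obstacle.''

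The paper closes this gap by reversing the order of operations, and this is the ingredient you are missing. Because $\alpha$ matches the boundary labels $u_i\in\s$ for $B$ and $\tilde B$, it is a weighted diffeomorphism in the sense of Subsection \ref{Subsection: M simply connected}, so by the Haefliger--Salem equivalence the map $\beta$ (your $\Phi^\circ$, read through the microtwistor identifications of $N$ with $\Delta$ and $\tilde N$ with $\tilde\Delta$) extends first to an $F$-equivariant \emph{diffeomorphism} $\beta\colon M\to\tilde M$, with no conformal content required at this stage. Then $[h]:=\beta^*[\tilde g]$ and $[g]$ are two smooth orbifold conformal structures on $M$ agreeing on the dense open set $M^\circ$, hence they agree everywhere simply by continuity of tensors, and $\beta$ is a conformal isometry. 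So the missing idea is not analytic continuation of the map, but the elementary observation that once the map has been extended smoothly (an equivariant-topological fact about weighted diffeomorphisms), conformality on a dense subset propagates automatically. If you want to complete your own version, you should replace the real-analyticity argument by this two-step scheme rather than trying to control $j^{-1}\circ\Phi^\circ$ near $\partial N$ directly.
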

\begin{proof}
The microtwistor correspondence, defined in Proposition \ref{Proposition: Conformal map}, together with $\alpha$ define a conformal isometry between $\Delta$ and $\tilde \Delta$. As noted above, the conformal structure on $\Delta^{\circ}\times F$, which we constructed in Section \ref{Section: The conformal structure}, is determined by $\s$ and $\mc{R}$. Similarly, the conformal structure on $\tilde \Delta^{\circ}\times F$ is determined by $\s$ and $\tilde{\mc{R}}$, by the conditions of the lemma.  Therefore, the conformal isometry between $\Delta$ and $\tilde \Delta$ induces a conformal isometry $$\beta: \Delta^{\circ}\times F \rightarrow \tilde \Delta^{\circ}\times F.$$

Now note that $\alpha$ preserves orbit type: orbits in $\partial N$ corresponding to $u_i \in \s$ are mapped to orbits in $\partial \tilde  N$ corresponding to $u_i \in \s$. Consequently, $\alpha$ is a weighted diffeomorphism in the terminology of Subsection \ref{Subsection: M simply connected} and, by using the microtwistor correspondence to identify $N$ and $\tilde N$ with $\Delta$ and $\tilde \Delta$ respectively, $\beta$ extends to an $F$-equivariant diffeomorphism between $M$ and $\tilde M$. The conformal structure $[h] := \beta^*[\tilde g]$ on $M$ satisfies $$[h]|_{M^\circ} \equiv [g]|_{M^\circ}$$ and so, $[h]$ and $[g]$ agree on a dense open subset of $M$. Therefore, they define the same conformal structure and $\beta$ is a conformal isometry.
\end{proof}

The previous lemma shows that there is a family of anti-self-dual structures on $M$ parameterized by the $k$ distinct points in $\mc{R} \subset \partial N$. Two sets of conformal data $\mc{R}$ and $\tilde{\mc{R}}$ define the same conformal structure if and only if they are related by a conformal isometry. So, we will denote the anti-self-dual orbifold corresponding to $\mc{R}$ and $\s$ by $\mrs$. The next proposition summarizes our results (and includes part (i) of Theorem A).

\begin{TP}\label{Proposition: Main proposition}
Let $M$ be a compact toric 4-orbifold with positive orbifold Euler characteristic. If $M$ admits an $F$-invariant anti-self-dual conformal structure, then there is a conformal isometry between $M$ and $\mrs$, for some $\s \subset \Lambda$ and $\mc{R} \subset \partial N$.
\end{TP}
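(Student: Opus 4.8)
The plan is to assemble the results of Sections \ref{Section: Local model}--\ref{Section: Uniqueness}; this proposition is essentially the bookkeeping step that packages the topological classification (part (i) of Theorem A) together with the uniqueness of the conformal structure. First I would record the topological input. The hypothesis $\chi_{orb}(M)>0$ has already been used, via the Poincar\'e--Hopf theorem, to guarantee that $F$ has fixed points and hence that the set $B$ of Subsection \ref{subsection: constructing B} exists; moreover the remark following Lemma \ref{Lemma: Intersecting orbits} rules out exceptional orbits in $M$, and the microtwistor correspondence of Proposition \ref{Proposition: Conformal map} supplies a conformal isometry between the closed disc $\Delta$ and the $F$-orbit space $N$. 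In particular $N$ is topologically a closed disc, so $\partial N$ is connected, the component $\partial N_0$ used to build $B$ exhausts $\partial N$, and $B$ is the full preimage of $\partial N$ in $M$. Consequently $M$ satisfies conditions (i) and (ii) of Subsection \ref{Subsection: M simply connected} and is $F$-equivariantly diffeomorphic to $M_\s$, where $\s = \{u_1,\ldots,u_k\}\subset\Lambda$ is the combinatorial data attached to $B$. Since $M_\s$ is, by the description in that subsection, the quotient of a simply connected 4-orbifold (its universal cover) by the finite group $\Gamma_\s = \Lambda/\Lambda_\s$ acting as a subgroup of the torus action, this is precisely part (i) of Theorem A.

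Next I would isolate the conformal parameters. Let $\mc{R} = \{\zeta_1,\ldots,\zeta_k\}\subset\partial N$ be the ordered set of fixed orbits, i.e. the images under the orbit map of the torus fixed points $x_1,\ldots,x_k$. Lemma \ref{Lemma: Uniqueness} shows that a compact toric anti-self-dual orbifold with combinatorial data $\s$ is determined, up to conformal isometry, by the positions of these $k$ fixed orbits on $\partial N$ once $\partial N$ is identified with $\partial\Delta\cong S^1$ through the microtwistor correspondence and the residual coordinate freedom is fixed as in Lemma \ref{Lemma: Unique coordinate} and Remark \ref{Remark: Coordinates}. This is exactly what makes the notation $\mrs$ well defined, and by construction $M$ realizes the pair $(\mc{R},\s)$; applying Lemma \ref{Lemma: Uniqueness} with $\tilde M = \mrs$ (and $\alpha$ the identity) therefore produces a conformal isometry between $M$ and $\mrs$.

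I expect no new calculation to be required here; the anticipated difficulty is only to check that the pieces fit together. The delicate point is the passage from ``$\Delta\cong N$'' to ``$N$ is a disc with connected boundary'', since it is this that lets $B$ capture all of the boundary data and makes $\s$ the full invariant of Subsection \ref{Subsection: M simply connected}, and hence lets Lemma \ref{Lemma: Uniqueness} be applied to $M$ itself rather than to some proper piece of it. This step rests entirely on Proposition \ref{Proposition: Conformal map}, whose proof in turn depends on the analyticity of the orbit closures (Proposition \ref{Proposition: Analytic orbits}) and on the explicit recovery of the conformal structure from the meromorphic function $\psi$ carried out in Section \ref{Section: The conformal structure}; once this is in hand the remainder of the argument is immediate.
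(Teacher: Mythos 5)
Your proposal is correct and matches the paper's treatment: the proposition is stated there without a separate proof, as a summary obtained by combining the microtwistor correspondence (Proposition \ref{Proposition: Conformal map}), the observation that the conformal structure on $M^\circ$ is determined by $\mc{R}$ and $\s$, and Lemma \ref{Lemma: Uniqueness}, which is exactly the assembly you carry out. Your extra care about $N$ being a closed disc with connected boundary, the exclusion of exceptional orbits, and the well-definedness of $\mrs$ only makes explicit what the paper leaves implicit.
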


\section{The proof of Theorems A}\label{Section: Theorems A}
\subsection{Joyce's construction}\label{Subsection: Joyce's construction}
The conformal coordinate change $$a\mapsto \zeta := -i\frac{a+1}{a-1},$$ identifies $N$ with the closure of $\mc{H}^2$ in $\cp^1$, where $\mc{H}^2 := \{\zeta \in \C: \tn{Im}(\zeta) > 0\}$. In particular the fixed orbit $\zeta_1 \in \mc{R}$, which is identified with $z_1^2 = 1$ in the unit disc model, is identified with $\infty\in\cp^1$. In these coordinates, $P$ and $Q$, which were defined in Subsection \ref{subsection: Conformal structure}, can be written as
\begin{equation}\label{Equation: Joyce solution}Q + iP = \frac{1}{4\pi}\sum_{j=1}^{k} \Big(\frac{\zeta - \zeta_j}{\bar{\zeta} - \zeta_j}\Big)^{1/2}v_{j}.\end{equation}
When we set $\zeta = x + iy$, this can be written in the form found in \cite{J95} as \[ \frac{1}{4\pi} \sum_{i=1}^k f^{\zeta_i}\; v_i, \] where \begin{equation}\label{Equation: Elementary Joyce solution} f^{\zeta_i}(x,y) := \big((x-\zeta_i) + iy\big)\big((x-\zeta_i)^2 + y^2\big)^{-1/2}.\end{equation} Furthermore, the representative of the conformal structure in (\ref{Equation: real conformal structure}) transforms to
\begin{equation}\label{Equation: Joyce conformal structure} \frac{\tn{d}x^2 + \tn{d}y^2}{y^2} + \frac{(P_2 \tn{d}\theta_1 - P_1 \tn{d}\theta_2)^2+(Q_2 \tn{d} \theta_1 - Q_1 \tn{d} \theta_2)^2}{(P_1Q_2 - P_2Q_1)^2}.\end{equation}

In Theorem 3.3.1 of \cite{J95} Joyce proves that, when $M_\s\cong k\overline{\cp}^2$, the conformal structure (\ref{Equation: Joyce conformal structure}) on $\mc{H}^2\times F$ extends to a conformal structure on $M_\s$. The proof of Theorem 3.3.1 of \cite{J95} involves showing that \begin{equation}\label{Equation: Boundary non-degenacy}\frac{\delta}{y}(P_1Q_2 - P_2Q_1) > 0\end{equation} is satisfied on $\mc{H}^2$ and along $\partial \mc{H}^2$, where $\delta.\big((x-\zeta_i)^2 + y^2\big)^{-1/2}$ is bounded and non-zero, for $i = 1,\ldots, k$. However, provided $\s$ is chosen so that $P$ and $Q$ satisfy $$P_1Q_2 - P_2Q_1 > 0,$$ his proof generalizes in a straightforward manner to construct conformal structures on $M_\s$, as noted in \cite{CS04}. In Lemma \ref{Lemma: non-degeneracy} we proved that $P_1Q_2 - P_2Q_1 \neq 0$. If $P_1Q_2 - P_2Q_1 < 0$, then the conformal structure on $\mc{H}^2\times F$ would be self-dual, since reversing the orientation on $F$ would produce an anti-self-dual conformal structure. Therefore, $P_1Q_2 - P_2Q_1 > 0$ and so, $\mrs$ is conformally equivalent to a metric arising from Joyce's construction, by Proposition \ref{Proposition: Main proposition}. This completes the proof of part (iii) of Theorem A.

\begin{TR}\label{Remark: Joyce error}
We remark that the solution to Joyce's equation (\ref{Equation: Joyce solution}), which we use to construct a conformal structure on $M$, does \emph{not} agree with the solution written down by Joyce in equation (44) of \cite{J95} that is intended for the same purpose. This is due to a minor error in \cite{J95}. However, if we define $u^{\perp} \in \mf{f}^*$ to be the rotation by $-\pi/2$ of the dual vector to $u \in \mf{f}$, then we obtain (44) of \cite{J95}, by replacing the generator of the stabilizer subgroup $u_i$ in (\ref{Equation: Joyce solution}) by its annihilator $u_i^\perp$.
\end{TR}

\subsection{Negative-definite intersection form}\label{Subsection: Negative-definiteness}
To complete the proof of Theorem A, we must now prove part (ii): that $M$ has \emph{negative-definite intersection form}. We will denote $\ms\backslash\{x_1\}$ by $Y$. In \cite{J95}, Joyce showed that the conformal structure on $\mrs$ restricted to $Y$ contains a \emph{scalar-flat K\"ahler} representative $g_0$, which is obtained by multiplying (\ref{Equation: Joyce conformal structure}) by $$y.(P_1Q_2 - P_2Q_1).$$ We note that the associated complex structure on $Y$ corresponds to the divisor $$(O_1^+ \cup O_1^-) \cap (Z\backslash \{L_1\}).$$
%The complex structures $\pm J$ on the K\"ahler surface $Y$ defines a divisor $X \subset Z_Y$, which is $G$-invariant, fixed by the real structure and intersects with every twistor line twice.

\begin{figure}[htbp]
  \psfrag{p1}{$\zeta_1$}
  \psfrag{p2}{$\zeta_2$}
  \psfrag{p3}{$\zeta_3$}
  \psfrag{pk}{$\zeta_k$}
  \psfrag{inf}{$\zeta_1 = \infty$}
  \psfrag{q2}{$\zeta_2$}
  \psfrag{q3}{$\zeta_3$}
  \psfrag{qk}{$\zeta_k$}
  \psfrag{wk}{$u_k$}
  \psfrag{cg}{$[g]$}
  \psfrag{v1}{$u_1$}
  \psfrag{v2}{$u_2$}
  \psfrag{vk}{$u_k$}
  \psfrag{w1}{$u_1$}
  \psfrag{w2}{$u_2$}
  \psfrag{wk}{$u_k$}
  \psfrag{g}{$\scriptstyle g\;=\;V(dx^2 + dy^2) + V^{-1}\big((P_1 d\theta_1 - P_2 d\theta_2)^2 + (Q_1 d\theta_1 - Q_2 d\theta_2)^2\big)$}
  \centering \scalebox{0.8}{\includegraphics{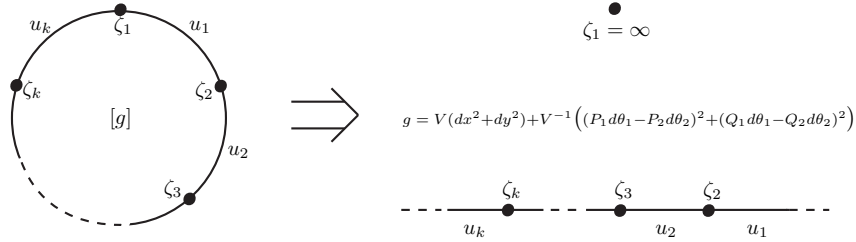}}
  \caption{Choosing a scalar-flat K\"ahler structure}
\end{figure}

In the next lemma we establish negative definiteness of the intersection form by equating it with \emph{convexity} of the moment polytope.

\begin{TL}\label{Lemma: Intersection form}
The intersection form on $M$ is negative-definite.
\end{TL}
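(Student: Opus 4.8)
The strategy is to move the question to the dense open suborbifold $Y := \mrs \setminus \{x_1\}$, which by Joyce's construction carries the scalar-flat K\"ahler metric $g_0$ obtained by rescaling the representative (\ref{Equation: Joyce conformal structure}) by $y(P_1Q_2-P_2Q_1)$, and to read the intersection form off the moment polytope of the resulting toric K\"ahler structure. Since $\{x_1\}$ has codimension four, deleting it does not change rational second homology or the intersection pairing, so $H_2(M;\mathbb{Q}) \cong H_2(Y;\mathbb{Q})$ compatibly with the forms, and it is enough to show the form on $Y$ is negative-definite.

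Next I would describe the moment polytope. The torus $F$ acts holomorphically and Hamiltonianly on $(Y,g_0)$ with moment map $\mu : Y \to \mf{f}^*$, and, $Y$ being a toric K\"ahler orbifold, the image $\mc{P} := \mu(Y)$ is a rational polyhedral region. Because $Y$ is ALE --- near $x_1$ (which is $\zeta_1 = \infty$ in the half-plane model) $g_0$ is asymptotic to a flat $\C^2/\Gamma$ --- the region $\mc{P}$ is non-compact, with recession cone the strictly convex rational cone $\sigma$ whose associated affine toric orbifold is $\C^2/\Gamma$: its two unbounded edges are the images of $B_1$ and $B_k$ (which become non-compact in $Y$, since $B_k\cap B_1 = \{x_1\}$), its $k-2$ bounded edges are the images of the compact invariant divisors $B_2,\dots,B_{k-1}$, its vertices are $\mu(x_2),\dots,\mu(x_k)$, and the $i$-th edge has inward conormal the primitive vector underlying $u_i \in \mathcal{S}$. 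Thus $B_2 \cup \cdots \cup B_{k-1}$ is the exceptional set of the proper birational contraction $Y \to \C^2/\Gamma$ collapsing the corner of $\sigma$; since $Y$ deformation retracts onto this compact core, the classes $[B_2],\dots,[B_{k-1}]$ span $H_2(Y;\mathbb{Q})$.

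The decisive point is that $\mc{P}$ is convex. I would deduce this from the positivity $P_1Q_2 - P_2Q_1 > 0$ established in Subsection \ref{Subsection: Joyce's construction} (equivalently, from a convexity theorem for the Hamiltonian $F$-action on $Y$): convexity of $\mc{P}$ says precisely that the conormals $u_1,\dots,u_k$ turn monotonically, and in Joyce's coordinates this amounts to a sign condition on the determinants $\det(v_i,v_{i+1})$, with $v_i \in \mc{T}$ as in Definition \ref{Definition: Combinatorial data}, which is exactly what $P_1Q_2-P_2Q_1>0$ controls along and inside $\partial\mc{H}^2$. Granting convexity, the chain $B_2,\dots,B_{k-1}$ is contractible to the quotient singularity $\C^2/\Gamma$, so Grauert's contractibility criterion gives that $(B_i\cdot B_j)_{2\le i,j\le k-1}$ is negative-definite; by the previous paragraph this matrix represents the intersection form of $Y$, hence of $M$, which completes the proof.

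The step I expect to be the main obstacle is establishing the convexity of $\mc{P}$: one must either invoke an appropriate non-compact (ALE) version of the Atiyah--Guillemin--Sternberg convexity theorem for the toric orbifold $Y$, or --- more in the spirit of the explicit viewpoint of this paper --- extract the monotone turning of $u_1,\dots,u_k$ directly from Joyce's solution (\ref{Equation: Joyce solution}), which requires carefully matching the edge conormals of $\mc{P}$ with $\mathcal{S}$ and verifying that $P_1Q_2-P_2Q_1 > 0$ forces the sign of the relevant $2\times 2$ determinants. A secondary and routine point is the orbifold bookkeeping in the toric self-intersection formula and in Grauert's criterion, since the $B_i$ may carry $\mathbb{Z}_p$-isotropy; this only rescales the pairing by positive rationals and does not affect definiteness.
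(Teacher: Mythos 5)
Your route coincides with the paper's up to the decisive point: both pass to the scalar-flat K\"ahler representative $g_0$ on $Y=\mrs\setminus\{x_1\}$, form the moment map $\mu:Y\rightarrow\mf{f}^*$, and read off negative-definiteness from the geometry of $\mu(Y)$. The genuine gap is exactly the step you flag and then leave open: convexity of $\mc{P}=\mu(Y)$ is never established, and everything downstream of it in your argument depends on it --- the identification of the recession cone, and in particular the existence of the proper toric contraction $Y\rightarrow\C^2/\Gamma$, which only makes sense once the fan of $Y$ subdivides a strictly convex cone, i.e.\ once the conormals $u_1,\dots,u_k$ turn monotonically; so Grauert/Mumford negativity cannot be invoked before convexity is known. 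Your suggested shortcut, extracting convexity from $P_1Q_2-P_2Q_1>0$, is not a formality: that inequality is an open-orbit nondegeneracy condition, and converting it into control of the corner behaviour at $\mu(x_2),\dots,\mu(x_k)$ requires a boundary analysis of Joyce's solution of the kind carried out in Theorem 3.3.1 of \cite{J95}, which you do not supply. The paper closes this gap much more cheaply: from the explicit formula for $\mu$ one sees that preimages of compact sets are compact, so $\mu$ is proper, and the convexity theorem of \cite{LMTW98} for proper Hamiltonian torus actions then gives convexity of $\mu(Y)$ outright. That properness-plus-\cite{LMTW98} input is the one non-obvious ingredient missing from your write-up.

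Granting convexity, your ending is a legitimate alternative to the paper's. The paper translates convexity into the combinatorial condition $u_i^{\perp}.u_{i-1}<0$ on the edge conormals and cites Joyce \cite{J95} (smooth case) and Calderbank--Singer \cite{CS06} (orbifold case) for the equivalence of that condition with negative-definiteness of the intersection form; you instead note that $[B_2],\dots,[B_{k-1}]$ span $H_2(M;\mathbb{Q})$ (this does need a word --- either the retraction of $Y$ onto its compact core, or the observation that $u_k,u_1$ are linearly independent so the two relations among $[B_1],\dots,[B_k]$ can be solved for $[B_1],[B_k]$) and then apply Mumford--Grauert negativity to the exceptional set of the contraction to $\C^2/\Gamma$, with harmless rational rescalings from the orbifold isotropy. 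This buys independence from the cited equivalence at the cost of the toric-contraction machinery, and it also quietly imports the ALE asymptotics of $g_0$ (used in the paper only in the following subsection, via \cite{RS05}) to identify the recession cone. As it stands, though, the proposal is incomplete: its decisive hypothesis, convexity of the moment image, is proposed rather than proved.
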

\begin{proof}
In Subsection \ref{Subsection: Boundary orbits and fixed points} we confined $\s$ to the $180^\circ$ sector in $\mf{f}$ by requiring $u_i$ to satisfy either $u_i.(0,1) > 0$ or $u_i = (p,0)$, for $p > 0$. Then in Subsection \ref{subsection: constructing B} we fixed the coordinates on $F$ so that $u_1 = (p,0)$.

The K\"ahler form on $(Y,g_0)$ is given by $$\tn{d}x \wedge (Q_2\tn{d}\theta_1 - Q_1\tn{d}\theta_2) + \tn{d}y \wedge (P_2\tn{d}\theta_1 - P_1\tn{d}\theta_2).$$
Therefore, the K\"ahler structure together with the action of $F$ define a \emph{moment map} $\mu: Y \rightarrow \mf{f}^*$, which can be written as
\begin{equation}\label{Equation: moment map} x.v_1^\perp + \sum_{i=2}^{k} \big((\zeta - \zeta_i)(\bar{\zeta} - \zeta_i)\big)^{1/2}.v_i^{\perp},\end{equation} where $\perp$ was defined in Remark \ref{Remark: Joyce error}. The preimage of compact subsets of $\mf{f}^*$ are compact subsets of $Y$ and so, $\mu$ is proper. This provides a sufficient condition for the moment polytope $\mu(Y)$ to be convex, as shown in \cite{LMTW98}. The set $B$ is mapped to the boundary of this polytope: the surfaces $B_i$ are mapped to edges; the fixed points $x_2, \ldots, x_k$ are mapped to vertices; and $x_1$ is mapped to $\infty$. It follows from equation (\ref{Equation: moment map}) that the edge $\mu(B_i)$ has slope $u_i^{\perp} \in \Lambda^*$.

With the choice of coordinates we have made for the action of $F$, convexity implies that $u_i$ is the outward pointing normal to the face $\mu(B_i)$ of the moment polytope in $\mf{f}^*$, or equivalently $$u_i^{\perp}. u_{i-1} < 0,$$ for $i = 1, \ldots, k$. It was proven, in the smooth case by Joyce \cite{J95} and in the orbifold case by Calderbank and Singer \cite{CS06}, that this condition is equivalent to the intersection form being negative-definite.
\end{proof}
%Integral to the proof is an application of the Atiyah-Bott fixed point theorem to a vector field generating a generic $S^1$-subgroup of $F$. This yields a formula for the index of the intersection form in terms of $u_i^{\perp}$ and $u_i$, from which the result follows.

\subsection{The proof of Theorem B}\label{Subsection: Theorem B}
Using Theorem A it is a simple task to classify \emph{asymptotically locally Euclidean} (ALE) scalar-flat K\"ahler toric 4-orbifolds, following the procedure suggested by Chen, LeBrun and Weber in \cite{CLW07}.

\begin{TD}[Joyce \cite{J00}]\label{Definition: ALE}
For a discrete subgroup $\Gamma \subset SO(n)$, the smooth orbifold $\mathbb{R}^n/\Gamma$ has a Riemannian (orbifold) metric induced from the standard Euclidean metric on $\mathbb{R}^n$. We will denote this metric by $g$ and its Levi-Civita connection by $\nabla$.

Let $(M^n,h)$ be a non-compact Riemannian orbifold. If there exists a compact set $K \subset M$ such that:
\begin{itemize}
\item{there are finitely many connected components of $M\backslash K$;}
\item{for some $R > 0$, there exists a diffeomorphism $a$ from each connected component onto $$\{x\in \mathbb{R}^n: |x| > R  \}/\Gamma;$$}
\item{and, for some $l>0$ and all $m \in \mathbb{Z}_{\geq 0}$ the push-forward metric $a_*(h)$ satisfies  $$|\nabla^m (a_*(h) - g)| = O(|x|^{-m-l});$$}
\end{itemize}
then $(M, h)$ is ALE to order $l$.
\end{TD}

We will suppose that $(X,g)$ is a scalar-flat K\"ahler toric 4-orbifold that is ALE to order $l > 3/2$. Note that the scalar-flat K\"ahler metrics constructed in Subsection \ref{Subsection: Negative-definiteness} satisfy this property, since they are ALE to order 2, as shown in \cite{RS05}.

A K\"ahler metric is anti-self-dual if and only if the scalar curvature vanishes; thus, $g$ is an anti-self-dual orbifold metric. A single point can be added to each ALE end of $X$ to produce a compact toric 4-orbifold $M$ with a conformal structure $[g]$ extending $g$. Each of these additional points cannot belong to any of the orbits in $X$ and so, they must be fixed by the torus action. By the Poincar\'{e}-Hopf theorem for orbifolds (given in Subsection \ref{Subsection: Tensors}), each fixed point makes a positive contribution to the Euler characteristic and so, $\chi_{orb}(M)>0$. Proposition 12 of \cite{CLW07} states that an anti-self-dual metric, which is ALE to order $l > 3/2$, extends to an anti-self-dual conformal structure on the orbifold compactification. Therefore, Theorem A can be applied to $M$. Then, without loss of generality, we can assume that $x_1 \in M - X$. On $Y := M\backslash \{x_1\}$ there is a unique K\"ahler representative $g_0 \in [g]$ and therefore, $(X, g) = (Y, g_0)$. This proves Theorem B, which we restate below in a more compact form.
%This can be seen from the remark about the divisor, or space of twistor functions.

\begin{TheoremB}
Let $X$ be a K\"ahler toric 4-orbifold that is scalar-flat and ALE to order $l > 3/2$. Then, up to homothety, there is a torus equivariant isometry between $X$ and the scalar-flat K\"ahler representative in $\mrs\backslash\{x_1\}$, for some $\mc{R}$ and $\s$.
\end{TheoremB}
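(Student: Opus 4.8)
The plan is to reduce Theorem B to Theorem A together with the compactification argument carried out in the preceding text. First I would take $(X,g)$ a scalar-flat K\"ahler toric $4$-orbifold that is ALE to order $l>3/2$, and invoke the standard fact (LeBrun--Singer \cite{LS93}) that a K\"ahler metric in real dimension four is anti-self-dual precisely when its scalar curvature vanishes; hence $g$ is an anti-self-dual orbifold metric. Next I would add one point to each ALE end of $X$. Each ALE end is modeled on $(\R^4\setminus B_R)/\Gamma$ for some finite $\Gamma$, and adding a point at infinity to this end gives an orbifold chart about a new point; since the torus action on $X$ is proper and each end is torus-invariant, the added points are fixed by the torus action (they cannot lie on a free, exceptional or boundary orbit, as the orbit through such a point would be noncompact near infinity). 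This produces a compact toric $4$-orbifold $M$ with $X \cong M\setminus\{x_1,\dots,x_r\}$ for finitely many fixed points.

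The second step is to extend the conformal structure and verify the hypotheses of Theorem~A. I would quote Proposition~12 of \cite{CLW07}, which states that an anti-self-dual metric ALE to order $l>3/2$ extends to an anti-self-dual conformal structure $[g]$ on the orbifold compactification $M$. For the Euler characteristic condition, I would apply the orbifold Poincar\'e--Hopf theorem from Subsection~\ref{Subsection: Tensors}: taking a vector field generating a generic $S^1\subset F$, its zeros are the $F$-fixed points of $M$, each contributing a positive rational number to $\tn{ind}_{orb}$, and the added points are among them; hence $\chi_{orb}(M) = \tn{ind}_{orb}(X) > 0$. (One should check $r=1$, i.e. that there is a single ALE end: this follows because $M$ must have the orbit-space structure forced by Theorem~A, where $N$ is a disc and only one fixed orbit $\zeta_1$ sits ``at infinity''; alternatively, an ALE scalar-flat K\"ahler orbifold has one end by the asymptotic analysis, so only one point is added.) Now Theorem~A applies: $M$ is $F$-equivariantly diffeomorphic to a quotient of a simply-connected orbifold, has negative-definite intersection form, and $[g]$ is equivalent to one arising from Joyce's construction, so $M \cong M_{\mc{R},\s}$ for some $\mc{R}\subset\partial N$, $\s\subset\Lambda$.

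The third step is to identify $X$ with the distinguished scalar-flat K\"ahler representative. Relabeling the fixed points so that $x_1$ is the point added at the ALE end, I have $X = M\setminus\{x_1\} = M_{\mc{R},\s}\setminus\{x_1\} = Y$. In Subsection~\ref{Subsection: Negative-definiteness} it is recorded (following Joyce \cite{J95}) that the conformal class $[g]$ restricted to $Y$ contains a K\"ahler representative $g_0$, obtained by multiplying \eqref{Equation: Joyce conformal structure} by $y\cdot(P_1Q_2-P_2Q_1)$, and this representative corresponds to the complex structure given by the divisor $(O_1^+\cup O_1^-)\cap(Z\setminus\{L_1\})$. To conclude $(X,g)=(Y,g_0)$ up to homothety, I would argue that a K\"ahler representative of an anti-self-dual conformal class on $Y$ that is ALE is unique up to scale: two such representatives $g$ and $g_0$ are conformal, $g = e^{2\varphi}g_0$, and both being K\"ahler with respect to (possibly different) complex structures compatible with the orientation forces $\varphi$ to be pluriharmonic, hence (with the ALE decay and the structure of $Y$) constant. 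Since $g$ was assumed toric and the torus action determines the same combinatorial data $\s$ and conformal data $\mc{R}$, the identification is torus-equivariant.

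The main obstacle I anticipate is the uniqueness assertion in the last step --- pinning down that the K\"ahler metric $g$ on $X$ really is the \emph{specific} representative $g_0$ in $\mrs\setminus\{x_1\}$ and not merely conformal to it. This requires knowing that within the anti-self-dual conformal class, scalar-flat K\"ahler metrics (for orientation-compatible complex structures) are unique up to homothety on a space with the topology of $Y$; the pluriharmonicity-of-the-conformal-factor argument is the crux, and one must use the ALE asymptotics to rule out nonconstant pluriharmonic functions. A secondary point needing care is verifying that the torus action on $X$ extends smoothly over the added point (so that $M$ genuinely is a \emph{toric} orbifold in the sense of Section~\ref{Section: Compact Toric 4-Orbifolds}) and that exactly one point is added; both are handled by the local ALE model and the orbit-space description, but should be stated explicitly.
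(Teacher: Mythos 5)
Your proposal follows essentially the same route as the paper's proof: anti-self-duality from the scalar-flat K\"ahler equivalence, one-point orbifold compactification of the ALE end(s) with the added point(s) fixed by the torus, positivity of $\chi_{orb}$ via the orbifold Poincar\'e--Hopf theorem, extension of the conformal structure by Proposition 12 of \cite{CLW07}, an application of Theorem A, and finally the identification of $(X,g)$ with the scalar-flat K\"ahler representative on $\mrs\backslash\{x_1\}$ via uniqueness (up to homothety) of the K\"ahler representative in the conformal class. The paper simply asserts that uniqueness and is equally brief about only one point being added, so your additional discussion of those two points is elaboration of the same argument rather than a different approach.
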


Now we assume that $X$ is smooth; accordingly, coordinates can be chosen for the action of $F$ on $M$ so that $u_1 = (0,1)$, $u_{2} = (1,0)$ and $u_k = (p,q)$, where $p$ and $q$ are positive and coprime. In these coordinates, the restrictions imposed on $\s$ by convexity of the moment polytope (cf. Lemma \ref{Lemma: Intersection form}) are equivalent to those imposed on an \emph{admissible sequence}, in the sense of \cite{CS04}. It follows that the metric we have constructed on $X$ belongs to the family of metrics constructed by Calderbank and Singer in \cite{CS04}. This proves Corollary \ref{Corollary: Smooth SFK}.

\begin{TR}
The surface-orthogonal surface $S$, which was defined in Remark \ref{Remark: S Definition}, can be constructed by gluing together four copies of $N$ along $\partial N$ according to the data $\mc{R}$ and $\s$. Thus, the anti-self-dual structures on $M_\s$ can be parameterized by the conformal structures on a surface constructed in this way, rather than by $\mc{R}$. The same can be said about the ALE scalar-flat K\"ahler structures on $X$. In this case, $S$ can be identified with the extension to $X$ of the real surface $$\R^2 \cap (\C^*\times\C^*) \subset X.$$ %Put in conjecture + mention Guillemin.
\end{TR}

\end{document}